\title{On Universality of Non-Separable Approximate Message Passing Algorithms}
\author{Max Lovig$^*$, Tianhao Wang$^\dagger$, Zhou Fan$^*$}
\email{max.lovig@yale.edu, tianhaowang@ucsd.edu, zhou.fan@yale.edu}
\address{$^*$Department of Statistics and Data Science, Yale University}
\address{$^\dagger$Halicioglu Data Science Institute, University of California, San Diego}
\begin{document}

\maketitle

\begin{abstract}
Mean-field characterizations of first-order iterative algorithms --- including
Approximate Message Passing (AMP), stochastic and proximal gradient descent,
and Langevin diffusions --- have enabled a precise understanding of learning
dynamics in many statistical applications. For algorithms whose non-linearities
have a coordinate-separable form, it is known that such characterizations enjoy
a degree of universality with respect to the underlying data distribution.
However, mean-field characterizations of non-separable algorithm dynamics
have largely remained restricted to i.i.d.\ Gaussian or rotationally-invariant data.

In this work, we initiate a study of universality for non-separable AMP
algorithms. We identify a general condition for AMP with polynomial
non-linearities, in terms of a Bounded Composition Property (BCP) for their
representing tensors, to admit a state evolution that holds universally for
matrices with non-Gaussian entries. We then
formalize a condition of BCP-approximability for Lipschitz AMP algorithms to
enjoy a similar universal guarantee. We demonstrate that many common
classes of non-separable non-linearities are BCP-approximable, including local
denoisers, spectral denoisers for generic signals, and compositions of
separable functions with generic linear maps, implying the universality of state
evolution for AMP algorithms employing these non-linearities.
\end{abstract}

\setcounter{tocdepth}{2}
{\normalsize
\tableofcontents
}

\section{Introduction}

First-order iterative algorithms play a central role in modern optimization and
sampling-based paradigms of statistical learning, where it is increasingly
recognized that algorithm dynamics may be equally important as model
specification in determining the properties and efficacy of trained models.
Motivated by learning applications, in recent years there has been a marked
advance in our understanding of mean-field characterizations of the dynamics of
iterative algorithms applied to high-dimensional and random data. We highlight,
as several examples, precise asymptotic characterizations of the iterates of
Approximate Message Passing (AMP) algorithms
\cite{Donoho2009,Bayati2011,Rangan2012,Javanmard2012,Rangan2018,Feng2021,Gerbelot2022,bao2023leave,hachem2024approximate},
gradient descent and proximal gradient descent
\cite{mannelli2019passed,mannelli2020complex,Celentano2021,han2024entrywise,han2024gradient,montanari2025dynamical}, and
stochastic gradient and stochastic diffusion methods 
\cite{arous1995large,arous1997symmetric,mannelli2020marvels,arous2021online,ben2022high,collins2024hitting,paquette2024homogenization,gerbelot2024rigorous,fan2025dynamicalI,fan2025dynamicalII}.

In the context of an asymmetric data matrix $\bW \in \RR^{m \times n}$, a
general form for first-order iterative algorithms alternates between
multiplication by $\bW$ or $\bW^\top$ and entrywise applications of non-linear
functions \cite{Celentano2020}. As a concrete example, given linear observations
$\bx=\bW\btheta_*+\be \in \RR^m$
of an unknown signal $\btheta_* \in \RR^n$ with noise $\be \in \RR^m$, a well-studied AMP algorithm
\cite{Donoho2009} for estimating $\btheta_*$ takes an iterative form
\begin{equation}\label{eq:AMPintro}
\begin{aligned}
\br_t&=\bx-\bW\btheta_t+b_t \br_{t-1}\\
\btheta_{t+1}&=\eta_t(\btheta_t+\bW^\top \br_t)
\end{aligned}
\end{equation}
with a non-linearity $\eta_t:\RR^n \to \RR^n$ applied in each iteration.
The accompanying \emph{state evolution} theory of AMP
prescribes that, when $\bW$ has i.i.d.\ Gaussian entries, the iterates $\br_t$
and $\btheta_t$ satisfy
\begin{equation}\label{eq:SEintro}
\br_t \approx \bY_t, \qquad
\btheta_t+\bW^\top \br_t \approx \btheta_*+\bZ_t
\end{equation}
where $\bY_t \in \RR^m$ and $\bZ_t \in \RR^n$ are Gaussian vectors with
laws $\bY_t \sim \cN(0,\sigma_t^2\Id)$ and $\bZ_t \sim \cN(0,\omega_t^2\Id)$,
and $\{\sigma_t^2,\omega_t^2\}_{t \geq 1}$ are two recursively defined
sequences of variance parameters. When $\eta_t:\RR^n \to \RR^n$ consists of
a scalar function $\mathring \eta_t:\RR \to \RR$ applied
entrywise --- often called the \emph{separable} setting --- it was shown
in \cite{Bayati2011,Javanmard2012} that the
approximations (\ref{eq:SEintro}) hold in a sense of equality of asymptotic
limits for the empirical distributions of entries, and we refer to
\cite{rush2018finite,li2023approximate,han2024entrywise} for
quantitative and non-asymptotic results. As shown in
\cite{Celentano2021,han2024entrywise,fan2025dynamicalI}, such
guarantees can serve as a basis for analogous state evolution characterizations
(with more complex forms) of broad classes of first-order iterative
algorithms, including commonly used variants of Langevin dynamics and
gradient descent.

The separable setting is most natural from the perspective of mean-field theory,
and is typically motivated in practice by applications where $\btheta_* \in
\RR^n$ has entrywise structure such as sparsity or i.i.d.\ coordinates drawn
from a Bayesian prior. However, it is also
understood from \cite{berthier2017,Ma2019,Gerbelot2022,gerbelot2024rigorous}
that state evolution characterizations of the type (\ref{eq:SEintro}) may hold more
broadly for iterative algorithms where $\eta_t:\RR^n \to \RR^n$ is a more
general \emph{non-separable} function in high dimensions. Such generalizations
have been useful across a variety of applications with more
complex data structure, including:
\begin{itemize}
\item Image reconstruction, where $\btheta_* \in \RR^n \equiv \RR^{M \times N}$
represents a 2D-image
\cite{tan2015compressive,metzler2015bm3d,metzler2016denoising,metzler2017learned}.
\item Matrix sensing, where $\btheta_* \in \RR^n \equiv \RR^{M \times N}$ is a 
matrix of approximately low rank
\cite{donoho2013phase,berthier2017,romanov2018near,xu2025fundamental}.
\item Recovery of signals $\btheta_*$ having sequential structure, such as in
Markov chain or changepoint models
\cite{ma2016approximate,ma2017analysis,arpino2024inferring}.
\item Recovery of signals $\btheta_*$ described by a graphical model or deep
generative prior
\cite{schniter2010turbo,som2012compressive,tramel2016approximate,aubin2019spiked,Ma2019,amalladinne2021unsourced}.
\item Analyses of proximal gradient methods for convex optimization with
non-separable regularizers
\cite{manoel2018approximate,bu2020algorithmic,bu2023characterizing}.
\item Analyses of iterative algorithms with correlated data matrices $\bW$,
where row/column correlations may be incorporated into $\eta_t(\cdot)$ via
variable reparametrization
\cite{loureiro2021learning,loureiro2022fluctuations,zhang2024matrix,wang2025glamp}.
\end{itemize}
Motivated by this broad range of practical applications, our work seeks to
advance our understanding of mean-field characterizations for non-separable
algorithm dynamics, which is currently substantially more limited than in the
separable setting.

In this work, we initiate a study of \emph{universality} of state evolution
characterizations of the form (\ref{eq:SEintro}) for non-separable AMP
algorithms. In the separable setting, universality was first
studied by \cite{Bayati2015}, who showed that state evolution characterizations
of separable AMP procedures with polynomial non-linearities remain valid when $\bW$ has
independent non-Gaussian entries, and also that AMP algorithms with Lipschitz
non-linearities admit polynomial approximants that enjoy such universal
guarantees. Universality was later shown directly for separable Lipschitz AMP
methods in \cite{Chen2021} and for instances of Langevin-type diffusions in
\cite{dembo2021universality,dembo2021diffusions}, and extended to 
other first-order algorithms in
\cite{Celentano2021,han2024entrywise,fan2025dynamicalI}.
The picture which emerges from these works may be summarized as:

\begin{center}
\emph{Mean-field characterizations of separable first-order algorithms for
i.i.d.\ Gaussian matrices $\bW$ hold universally for matrices $\bW$ with
independent non-Gaussian entries.}
\end{center}

\noindent We note that broader statements of universality for semi-random
matrices beyond the i.i.d.\ universality class have also been
investigated more recently in \cite{dudeja2023universality,Dudeja2022,Wang2024}.

It is tempting to surmise that a statement analogous to the above may hold for
non-separable algorithms. However, the following simple example illustrates
that this cannot be true in full generality:

\begin{example}[Failure of universality]\label{ex:nonuniversalexample}
Let $g:\RR^n \to \RR^n$ be a separable function given by
$g(\bz)[i]=\mathring{g}(\bz[i])$, where $\mathring{g}:\RR \to \RR$ is
Lipschitz and applied entrywise. Let $\bO \in \RR^{n \times n}$ be an orthogonal
matrix, and consider the AMP algorithm (\ref{eq:AMPintro}) where
$\eta_t(\bz) \equiv \eta(\bz)=\bO g(\bz)$ for all $t \geq 1$,
initialized at $\btheta_1=0$ and $\br_0=0$. Let us suppose, for simplicity and
concreteness of discussion, that $\btheta_*=\bone \equiv (1,1,\ldots,1)$ is the
all-1's vector in $\RR^n$, the measurements $\bx=\bW\btheta_*$ are
noiseless, the number of measurements is $m=n$, and
the first row and column of $\bO$ are also given by $n^{-1/2}\bone$.

For any covariance matrix $\bSigma \in \RR^{2 \times 2}$, if
$[\bZ,\bZ'] \in \RR^{n \times 2}$ has i.i.d.\ rows with distribution
$\cN(0,\bSigma)$, then it is readily checked that
\[\lim_{n \to \infty} \frac{1}{n} \btheta_*^\top \btheta_*=1,
\qquad
\lim_{n \to \infty} \frac{1}{n} \E[\btheta_*^\top \eta(\btheta_*+\bZ)]=0,\]
\[\lim_{n \to \infty} \frac{1}{n} \E[\eta(\btheta_*+\bZ)^\top \eta(\btheta_*+\bZ')]
=\E[\mathring g(1+\bZ[1])\mathring g(1+\bZ'[1])].\]
Thus if $\bW \in \RR^{n \times n}$ has i.i.d.\ $\cN(0,\frac{1}{n})$ entries,
then the assumptions of \cite[Theorem 14]{berthier2017} hold, ensuring
that the state evolution approximation (\ref{eq:SEintro}) is valid in the
sense
\[\frac{1}{n}\sum_{i=1}^n \phi(\br_t[i])
-\frac{1}{n}\sum_{i=1}^n \E \phi(\bY_t[i]) \to 0
\text{ in probability as } n \to \infty\]
for any pseudo-Lipschitz test function $\phi:\RR \to \RR$,
and similarly for $\btheta_t+\bW^\top \br_t$ and $\bZ_t$.

Consider instead a setting where $\sqrt{n}\,\bW$ has i.i.d.\ entries with a
fixed non-Gaussian law having mean 0 and variance 1, and suppose that
$\E_{\xi \sim \cN(0,1)}[\mathring g(1+\xi)]=c \neq 0$.
Then it follows from the form of the dynamics (\ref{eq:AMPintro})
that the first coordinate of $\btheta_2$ is
\[\btheta_2[1]=\eta(\bW^\top \bW \btheta_*)[1]=\frac{1}{\sqrt{n}}
\sum_{i=1}^n \mathring{g}((\bW^\top \bW \btheta_*)[i]) \approx c\sqrt{n}\]
where the last approximation holds with high probability.
This renders the distribution of coordinates of $\bW\btheta_2$
non-universal even in the large-$n$ limit,
as it depends on the non-Gaussian distribution of coordinates in
the first column of $\sqrt{n}\,\bW$. Hence (\ref{eq:SEintro}) will not 
hold for the second iterate $\br_2$.
\end{example}

The mechanism of non-universality in this example is simple, and
illustrates the more general and central issue that non-separable functions
$\eta_t:\RR^n \to \RR^n$ which satisfy $\ell_2$-boundedness and Lipschitz
conditions need not be bounded in the entrywise sense
\begin{equation}\label{eq:linftybound}
\|\eta_t(\bx)\|_\infty \leq C\|\bx\|_\infty
\end{equation}
for a dimension-free constant $C>0$. This can lead to a strong dependence
of the algorithm's iterates on the distribution of individual entries of $\bW$. 
Thus, $\ell_2$-type conditions on
$\eta_t(\cdot)$ alone are not enough to ensure the universality of state
evolution guarantees such as (\ref{eq:SEintro}). On the other hand, imposing an
assumption such as (\ref{eq:linftybound}) is often too strong, as many
examples of non-separable functions of interest in applications do not satisfy
such an assumption uniformly over $\RR^n$. This motivates
a more refined understanding of the behavior of the non-linearities
$\eta_t(\cdot)$ when restricted to the (random) iterates of the algorithm.

\subsection{Main results}

In this work, we study a class of Approximate Message Passing
(AMP) algorithms which encompasses (\ref{eq:AMPintro}), and develop conditions
under which their state evolutions hold universally for matrices $\bW$ having
independent non-Gaussian entries. Our results are summarized as follows:

\begin{enumerate}
\item For AMP algorithms with polynomial non-linearities, we introduce a general
condition on the polynomial functions --- that they are representable by tensors
satisfying a certain \emph{Bounded Composition Property (BCP)} --- which is
sufficient to guarantee the validity and universality of their state
evolution. Representing the homogeneous degree-$d$ components of each
polynomial function by tensors of order $d+1$, this property is defined as
an abstract bound on certain types of products/contractions between these
tensors.
\item For AMP algorithms with Lipschitz non-linearities, we formally
define a condition for approximability of the Lipschitz functions by
BCP-representable polynomials, so that state evolution for the Lipschitz AMP
is also valid and universal.
\item The above BCP-approximability condition is abstract, and may not be
simple to check for concrete examples. Motivated by many of the aforementioned
applications, and to illustrate methods of verifying this condition, we show
that three classes of non-separable Lipschitz functions are BCP-approximable:
\begin{itemize}
\item Local functions $\eta:\RR^n \to \RR^n$ such as sliding-window filters or
local belief-propagation algorithms on bounded-degree graphs, where each output
coordinate of $\eta(\cdot)$ depends on only $O(1)$ input coordinates, and each
input coordinate of $\eta(\cdot)$ affects only $O(1)$ output coordinates.
\item Anisotropic functions $\eta(\cdot)=h'(g(h(\cdot)))$ that arise in
analyses with data matrices $\bW$ having row or column correlations, where
$h,h':\RR^n \to \RR^n$ are sufficiently generic linear maps
and $g:\RR^n \to \RR^n$ is a separable function.
\item Spectral functions $\eta:\RR^{M \times N} \to \RR^{M \times N}$, where
the input space $\RR^n \equiv \RR^{M \times N}$ is identified with
matrices of dimensions $MN=n$, the true signal $\btheta_* \in \RR^n \equiv
\RR^{M \times N}$
has sufficiently generic singular vectors, and $\eta(\cdot)$ represents a scalar
function applied spectrally to the singular values of its matrix input.
\end{itemize}
\end{enumerate}

Our proofs of the universality results in (1) and (2) above follow a
general strategy of previous works
\cite{Bayati2015,dudeja2023universality,Wang2024}, resting on a moment-method comparison of polynomial AMP between Gaussian and
non-Gaussian matrices $\bW$. However, we note that even for Gaussian
matrices $\bW$, the validity of the AMP state evolution for a sufficiently
rich class of non-separable polynomial functions (or more generally, functions
with polynomial growth) is not available in the existing literature. We highlight
here a last contribution that may be of independent interest:

\begin{enumerate}
\setcounter{enumi}{3}
\item For AMP algorithms driven by matrices $\bW \sim \GOE(n)$ with Gaussian
entries, we provide a general condition for non-separable functions
$\eta:\RR^n \to \RR^n$ --- that they are stable with high probability
under $O(\operatorname{polylog} n)$ $\ell_2$-perturbations of
random Gaussian inputs --- which ensures the validity of a state evolution
approximation in a strong quantitative sense.
\end{enumerate}

We discuss the above results (1--3) further in Sections \ref{sec:results}
and \ref{sec:SetupRect}, and defer a discussion of (4) for Gaussian matrices
to Section \ref{sec:proof}.

Figures \ref{fig:local} and \ref{fig:spectral} illustrate our main results in
the context of the AMP algorithm (\ref{eq:AMPintro}) for the linear measurement
model with noise $\bx=\bW\btheta_*+\be$. Figure
\ref{fig:local} depicts an example of local smoothing, where $\RR^n \equiv
\RR^{M \times N}$ is a space of images, and $\eta_t:\RR^{M \times N} \to \RR^{M \times N}$ in
(\ref{eq:AMPintro}) represents the application of a sliding window kernel.
Such a function $\eta_t$ belongs to the class of
local functions for which our universality results apply. We observe that the
denoised AMP iterates with Gaussian and Rademacher sensing matrices are nearly
identical, and that their reconstruction mean-squared-errors both closely match
the theoretical prediction prescribed by the state evolution (\ref{eq:SEintro}).

Figure \ref{fig:spectral} depicts an example of matrix sensing, where
again $\RR^n \equiv \RR^{M \times N}$, and
$\eta_t:\RR^{M \times N} \to \RR^{M \times N}$ in (\ref{eq:AMPintro}) represents
soft-thresholding of the singular values of its matrix input. The true signal
$\btheta_* \in \RR^{M \times N}$
has Haar-orthogonal singular vectors, and this function $\eta_t$
belongs to the class of spectral functions for which our universality results
also apply. We observe that the singular value profiles
of the AMP iterates with Gaussian and Rademacher
sensing matrices are nearly identical, and that their reconstruction
mean-squared-errors again both closely match the theoretical prediction
prescribed by (\ref{eq:SEintro}). Further details of these examples 
are provided in Section \ref{sec:SetupRect}.

\begin{figure}[t]
    \centering
    \newcommand{\imgscale}{0.15}
    \newcommand{\imgvspace}{.1cm}
  
    % left block: iterations 1–3
    \begin{minipage}{0.98\linewidth}
      
      \hspace{-4cm}{\bf Gaussian $\bW$} \hspace{1.5cm} {\bf Rademacher $\bW$}\\[\imgvspace]
      \centering
      {\bf Iteration 1} \hspace{6.5cm} {\bf Iteration 2}\\[\imgvspace]
      \includegraphics[scale=\imgscale]{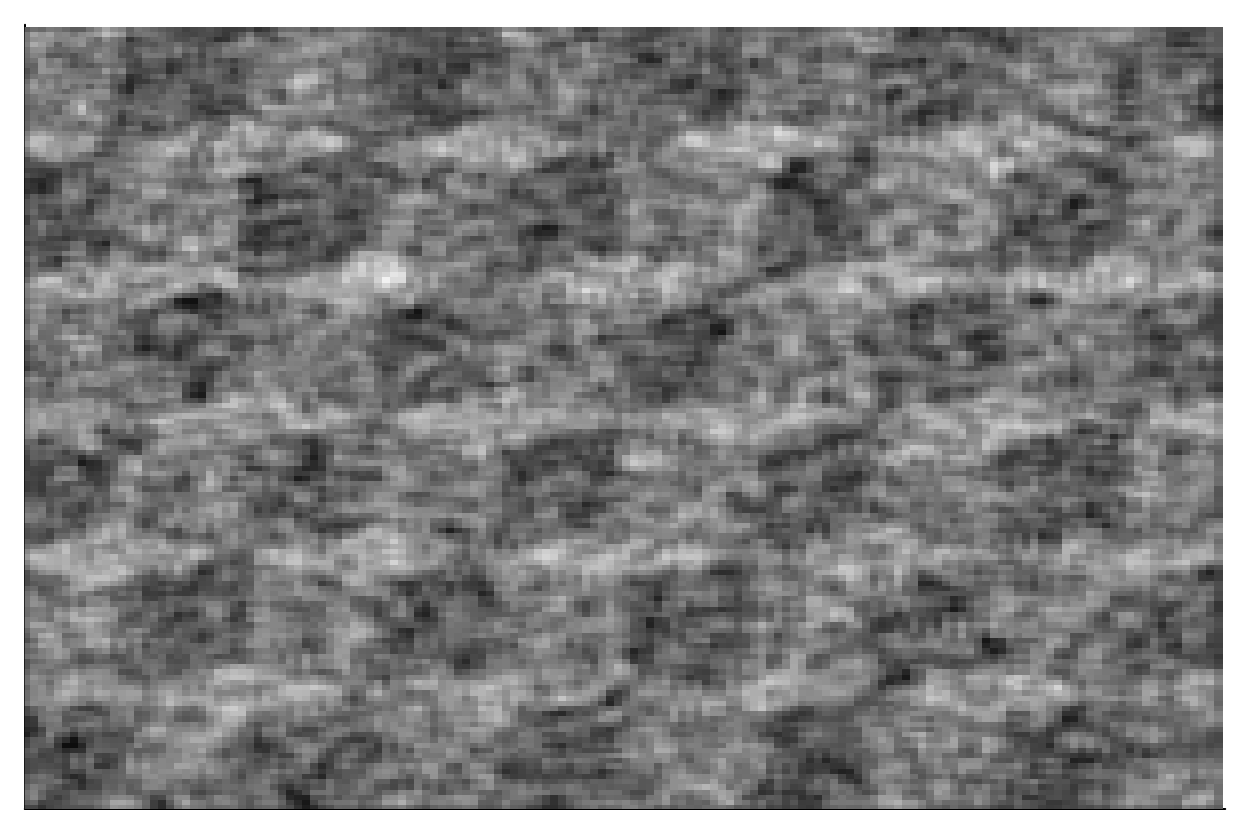}\hspace{1cm}%
      \includegraphics[scale=\imgscale]{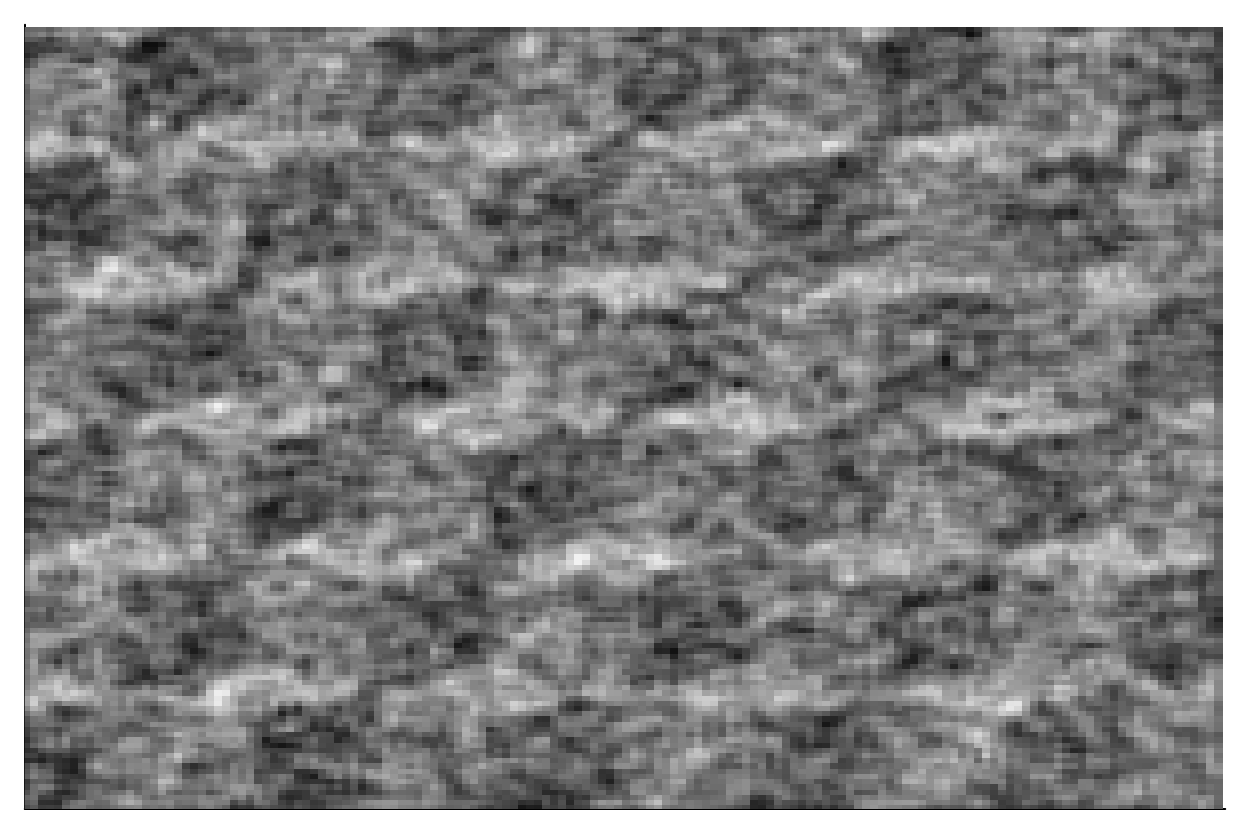}
      \hspace{1cm}
      \includegraphics[scale=\imgscale]{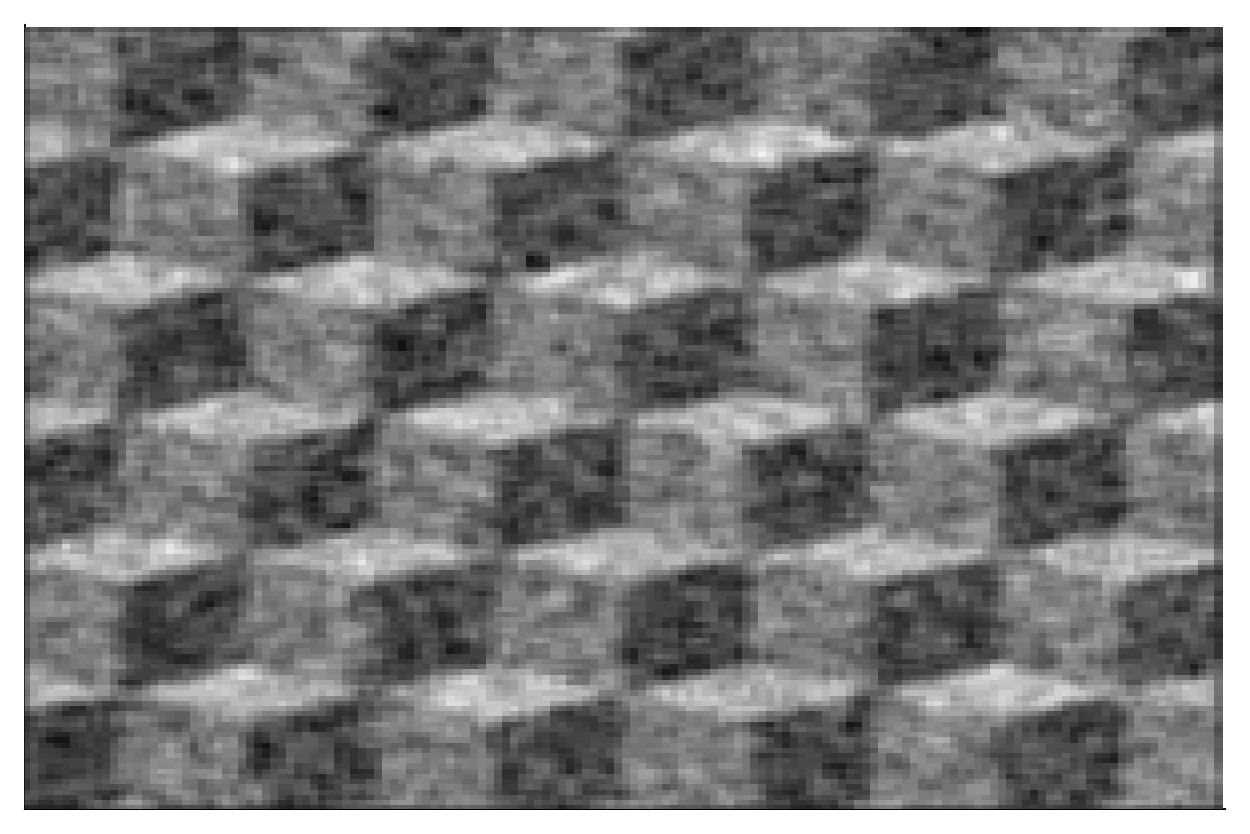}\hspace{1cm}%
      \includegraphics[scale=\imgscale]{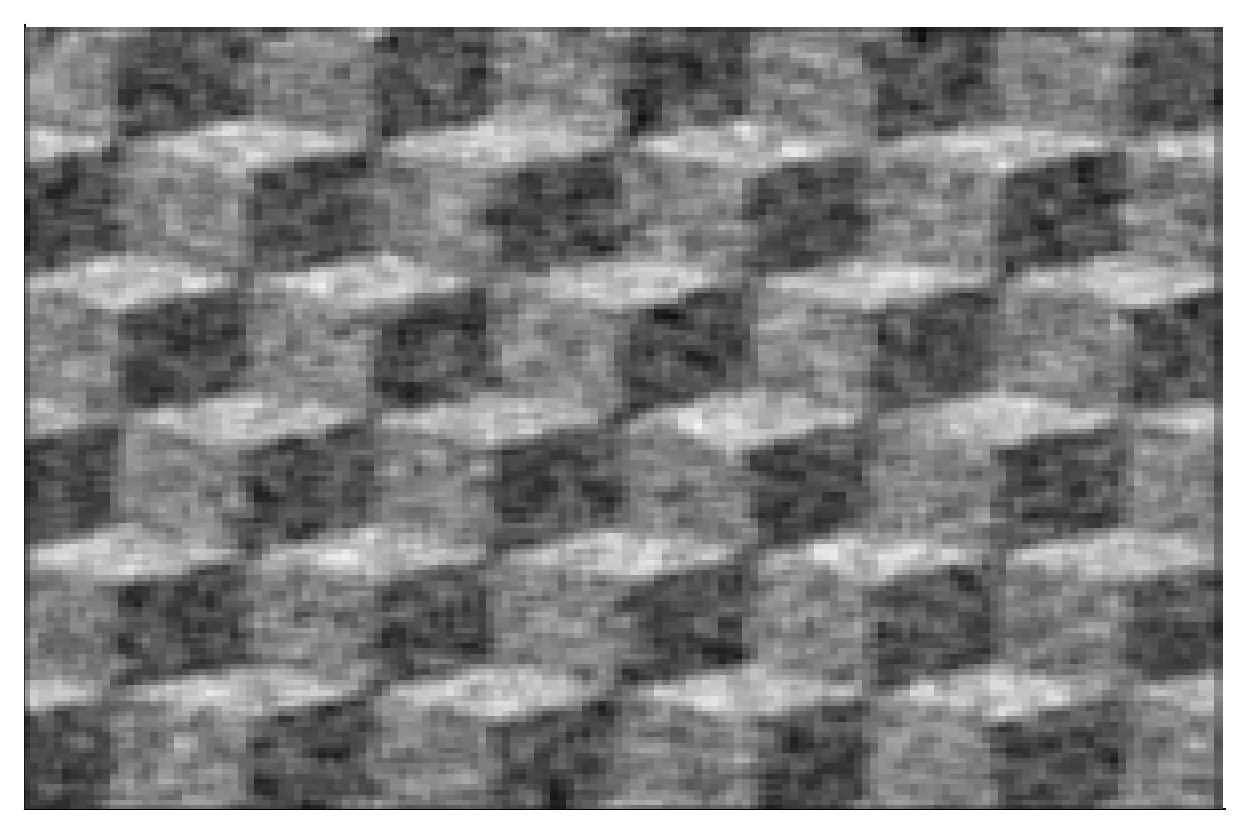}\\[\imgvspace]

      {\bf Iteration 3} \hspace{6.5cm} {\bf Iteration 4}\\[\imgvspace]
      \includegraphics[scale=\imgscale]{Figs/iter/IterationGAU3.pdf}\hspace{1cm}%
      \includegraphics[scale=\imgscale]{Figs/iter/IterationRAD3.pdf}
      \hspace{1cm}
      \includegraphics[scale=\imgscale]{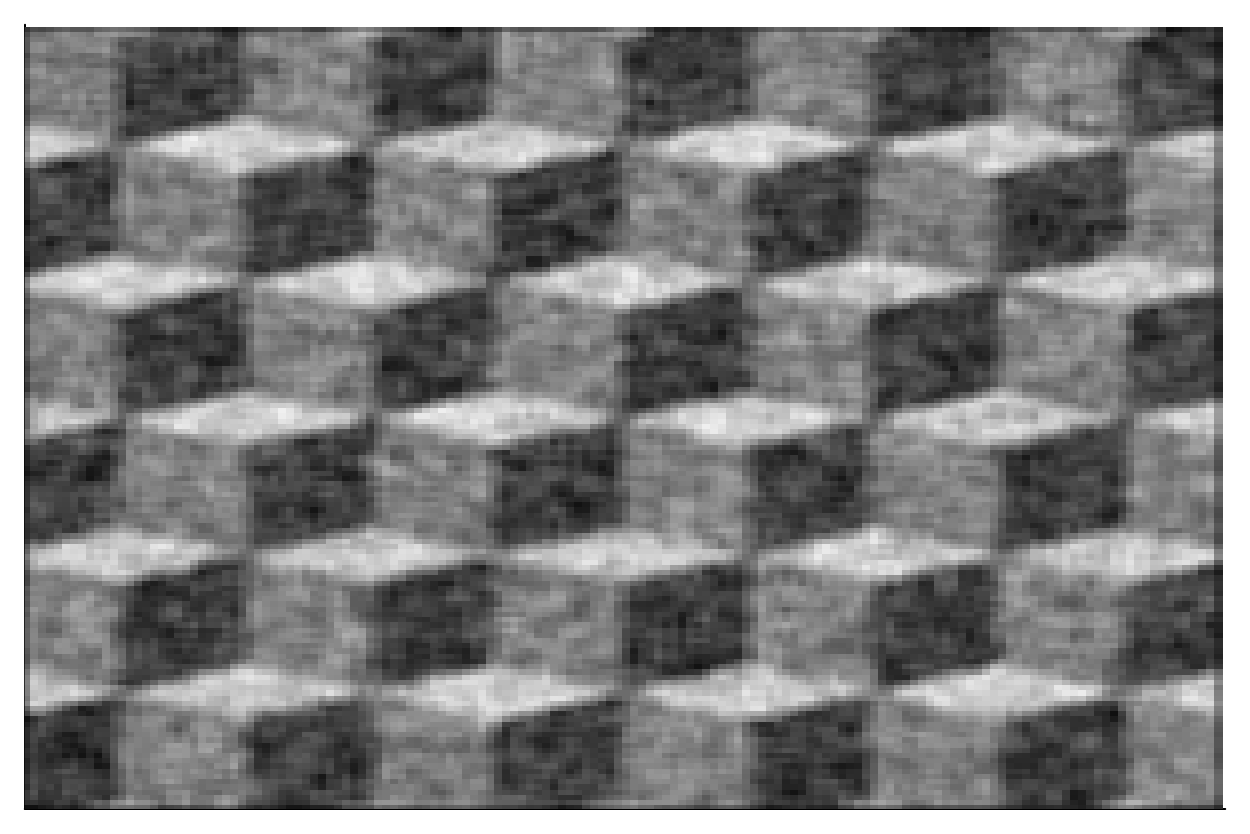}\hspace{1cm}%
      \includegraphics[scale=\imgscale]{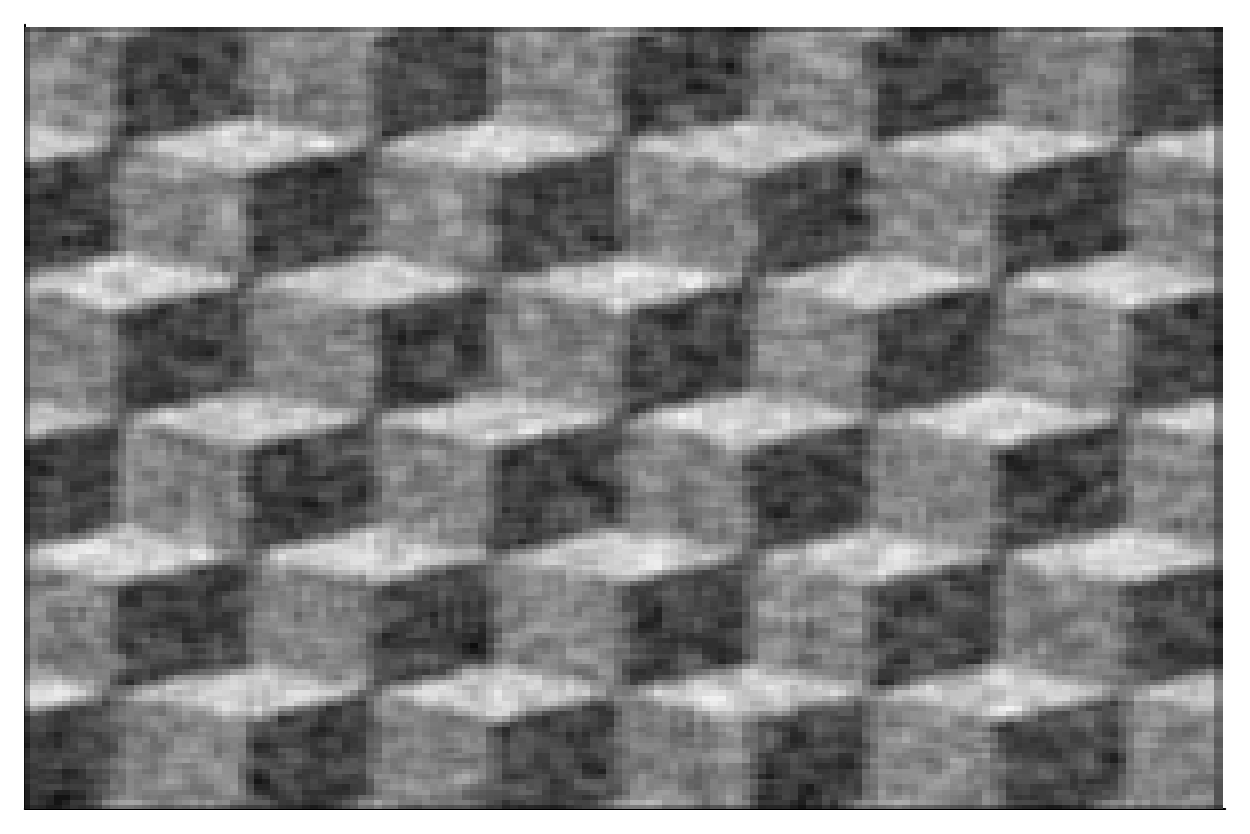}\\[\imgvspace]
    \end{minipage}
    \vspace{1cm}

    \begin{minipage}{0.6\linewidth}
      \hfill
      \rotatebox{90}{\hspace{35pt}$\bm{\log_{10}(\textbf{MSE})}$}
      \includegraphics[scale = .3]{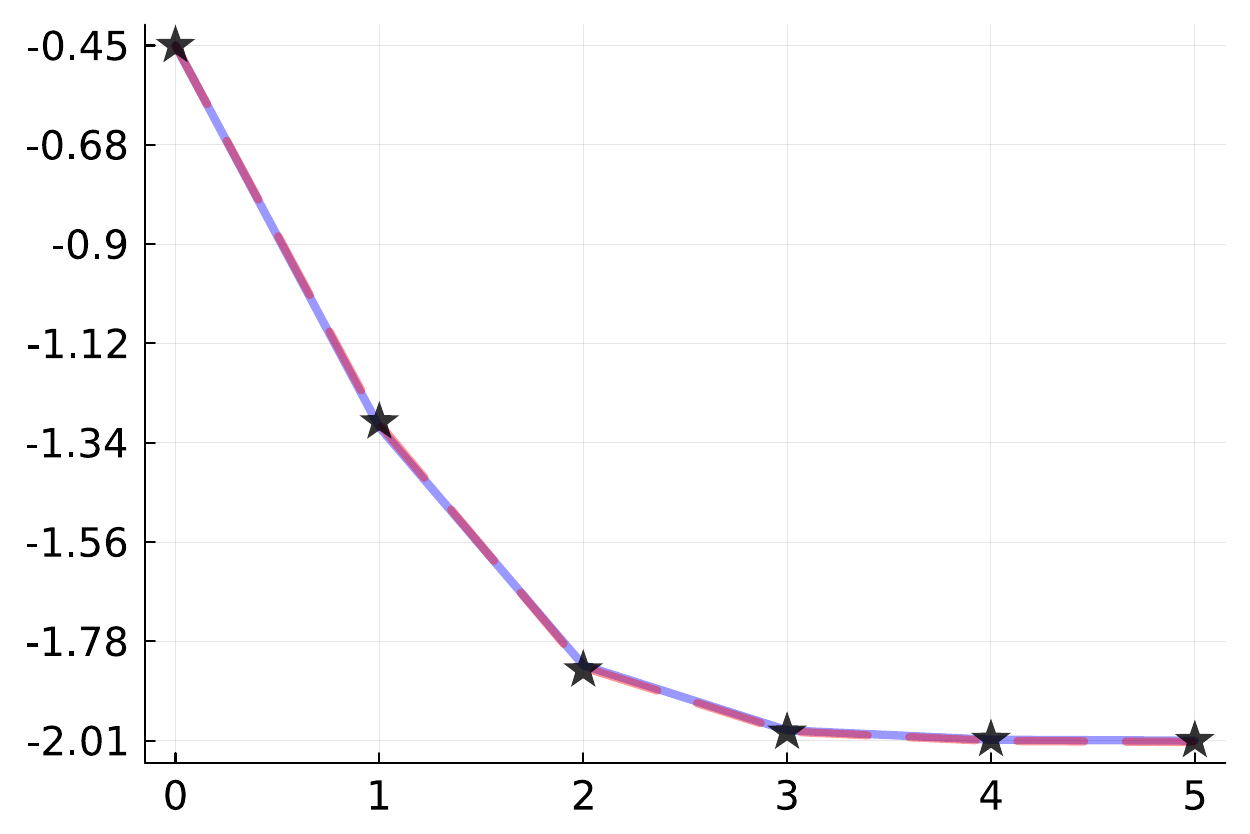}

      \hspace{175pt}\textbf{Iteration}
    \end{minipage}
    \begin{minipage}[l]{.38\linewidth}
    \resizebox{5cm}{!}{
    \begin{tikzpicture}
      \draw[thick, black, fill=white] (-0.5,-2.5) rectangle (9,.5);
      % Gaussian matrix — solid blue
      \draw[blue, ultra thick, opacity=0.4] (0,0) -- (2,0);
      \node[anchor=west] at (2.3,0) {\Large Gaussian $\bW$};

      % Rotationally invariant matrix — dashed red
      \draw[red, dashed, ultra thick, dash pattern=on 20pt off 10pt, opacity=0.4] (0,-1) -- (2,-1);
      \node[anchor=west] at (2.3,-1) {\Large Rademacher $\bW$};

      % State Evolution prediction — dashed red + black star
      \draw[white, dashed, thick] (0,-2) -- (2,-2);
      \node[anchor=west] at (2.3,-2) {\Large State Evolution Prediction};
      \node[star, star points=5, star point ratio=2.25, fill=black, draw=black, inner sep=0pt, minimum size=10pt] at (1,-2) {};
    \end{tikzpicture}
    }
    \end{minipage}
    \caption{(a) AMP iterates $\btheta_t \in \RR^{M \times N}$
of \eqref{eq:AMPintro} applied
with a local kernel-smoothing denoiser and with a matrix $\bW$ having either
i.i.d.\ $\cN(0,1/m)$ or Rademacher $\pm 1/\sqrt{m}$ entries.
(b) Mean-squared-errors $\frac{1}{n}\|\btheta_t-\btheta_*\|_2^2$ for the two
matrices $\bW$, and the state evolution prediction.
Here $M=N=150$, $n = 22500$, and $m = 0.95\,n$.}
    \label{fig:local}
  \end{figure}

\begin{figure}[t]
    \centering
    \newcommand{\imgscale}{0.3}
    \newcommand{\imgvspace}{.1cm}
  
    % left block: iterations 1–3
    \begin{minipage}{0.98\linewidth}
      \centering

      \hspace{.5cm}{\bf Iteration 1} \hspace{5cm} {\bf Iteration 4}\\[\imgvspace]
      
      \rotatebox{90}{\hspace{40pt}\textbf{Density}}\hspace{-5pt}
      \begin{overpic}[scale=\imgscale,percent]{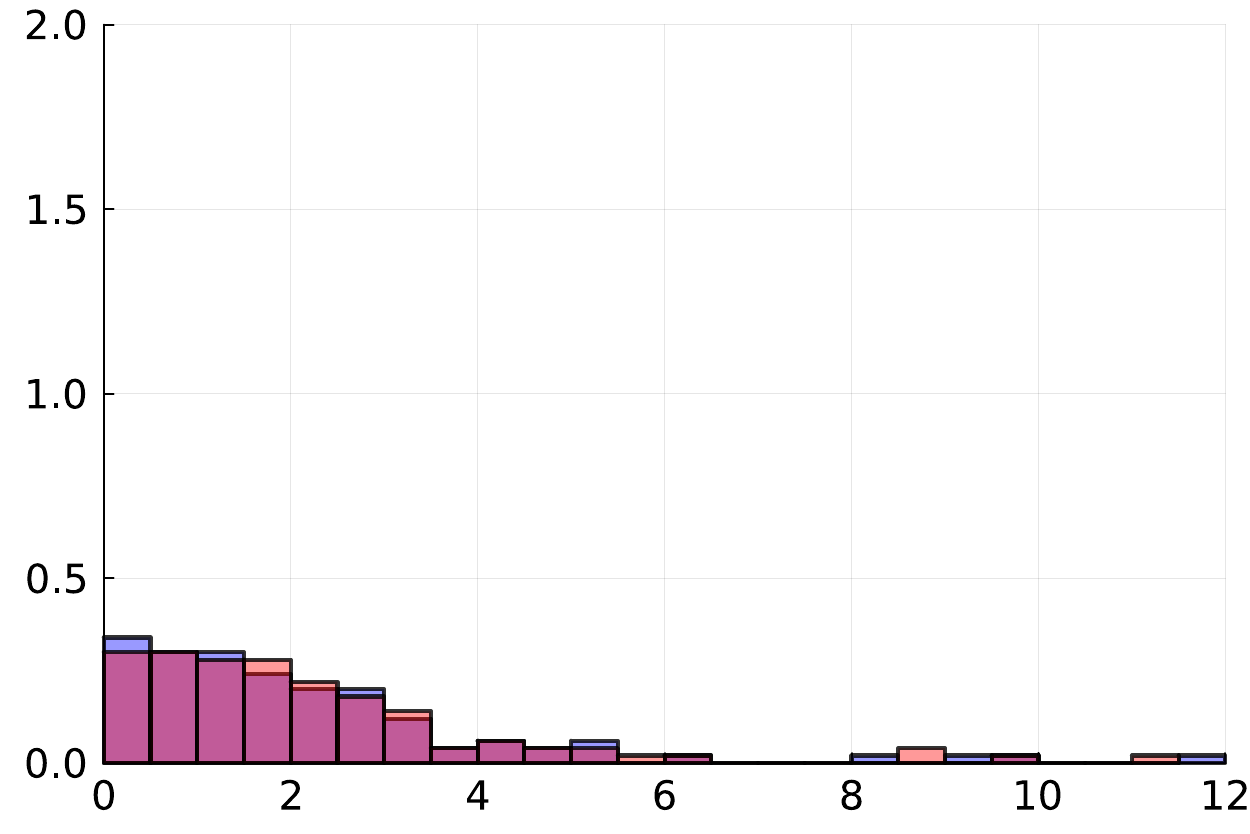}
        % place your tikz drawing at (x%,y%) of the image
        \put(19.5,50){%
            \resizebox{5cm}{!}{%
                \begin{tikzpicture}
              \draw[thick,black,fill=white] (-0.5,-1.5) rectangle (10.5,0.5);

              % Gaussian matrix — blue swatch
              \draw[fill=blue, fill opacity=0.4, draw=black,draw opacity=1, ultra thick]
                (0, 0.15) rectangle (2,-0.15);
              \node[anchor=west] at (2.3,0) {\Large Singular Values for Gaussian $\bW$};

              % Rotationally invariant matrix — red swatch
              \draw[fill=red, fill opacity=0.4, draw=black, draw opacity=1, ultra thick]
                (0,-1+0.15) rectangle (2,-1-0.15);
              \node[anchor=west] at (2.3,-1) {\Large Singular Values for Rademacher $\bW$};

                % Shared 1
              %\draw[fill=blue, fill opacity=0.4, draw=black,draw opacity=1, ultra thick]
              %  (0, -2+0.15) rectangle (2,-2-0.15);
              %\node[anchor=west] at (2.3,-2) {\Large Overlapping Singular Values};

              % Shared 2
              %\draw[fill=red, fill opacity=0.4, draw=black, draw opacity=1, ultra thick]
              %  (0,-2+0.15) rectangle (2,-2-0.15);
              %\node[anchor=west] at (2.3,-2) {};

                \end{tikzpicture}%
              }
        }
      \end{overpic}
      \hspace{1cm}
      \includegraphics[scale=\imgscale]{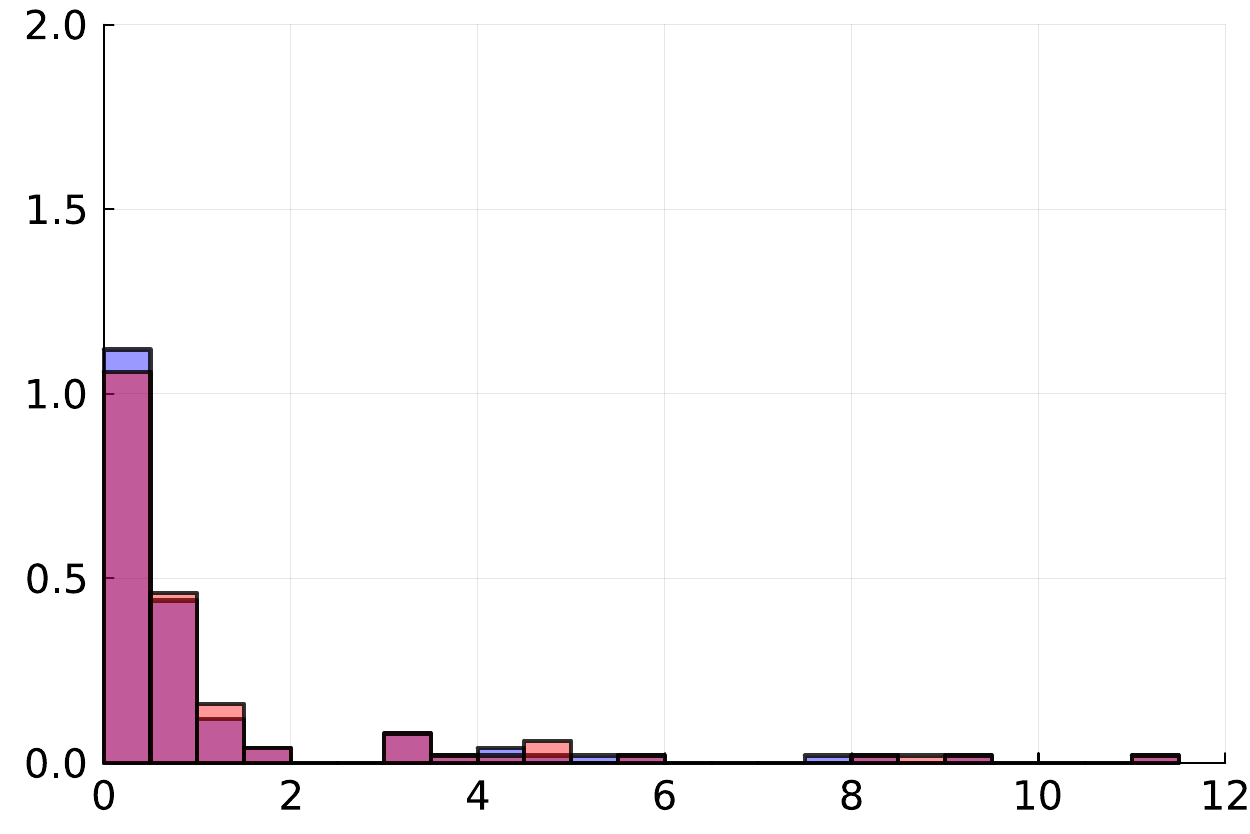}\\[\imgvspace]
      \hspace{.5cm}{\bf Iteration 7} \hspace{5cm}{\bf Iteration 10}\\[\imgvspace]
      
      \rotatebox{90}{\hspace{40pt}\textbf{Density}}
      \includegraphics[scale=\imgscale]{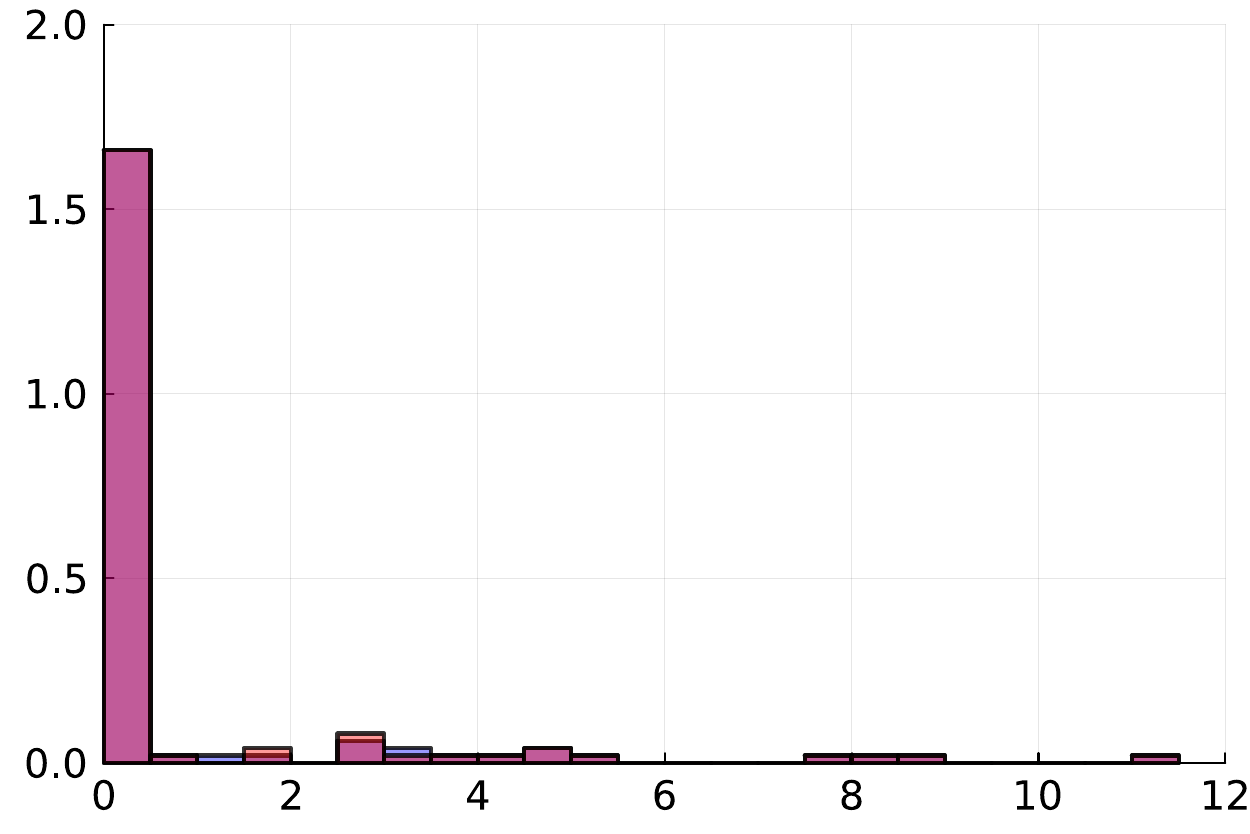}\hspace{1cm}
      \includegraphics[scale=\imgscale]{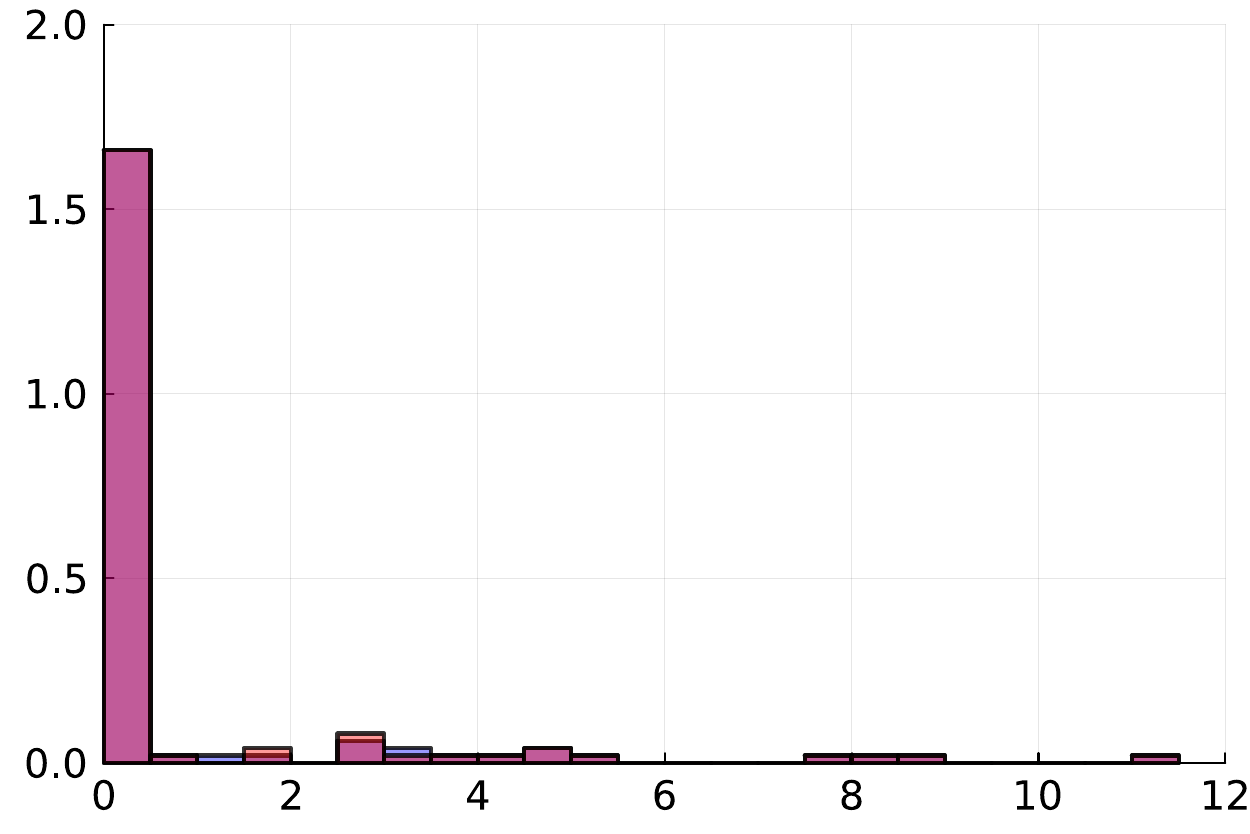}\\[\imgvspace]

      \hspace{.75cm}\textbf{Singular Value Profile}\hspace{3.25cm}\textbf{Singular Value Profile}

    \end{minipage}
    \vspace{1cm}

        \begin{minipage}{0.6\linewidth}

          \hfill
      \rotatebox{90}{\hspace{35pt}$\bm{\log_{10}(\textbf{MSE})}$}
      \includegraphics[scale = .3]{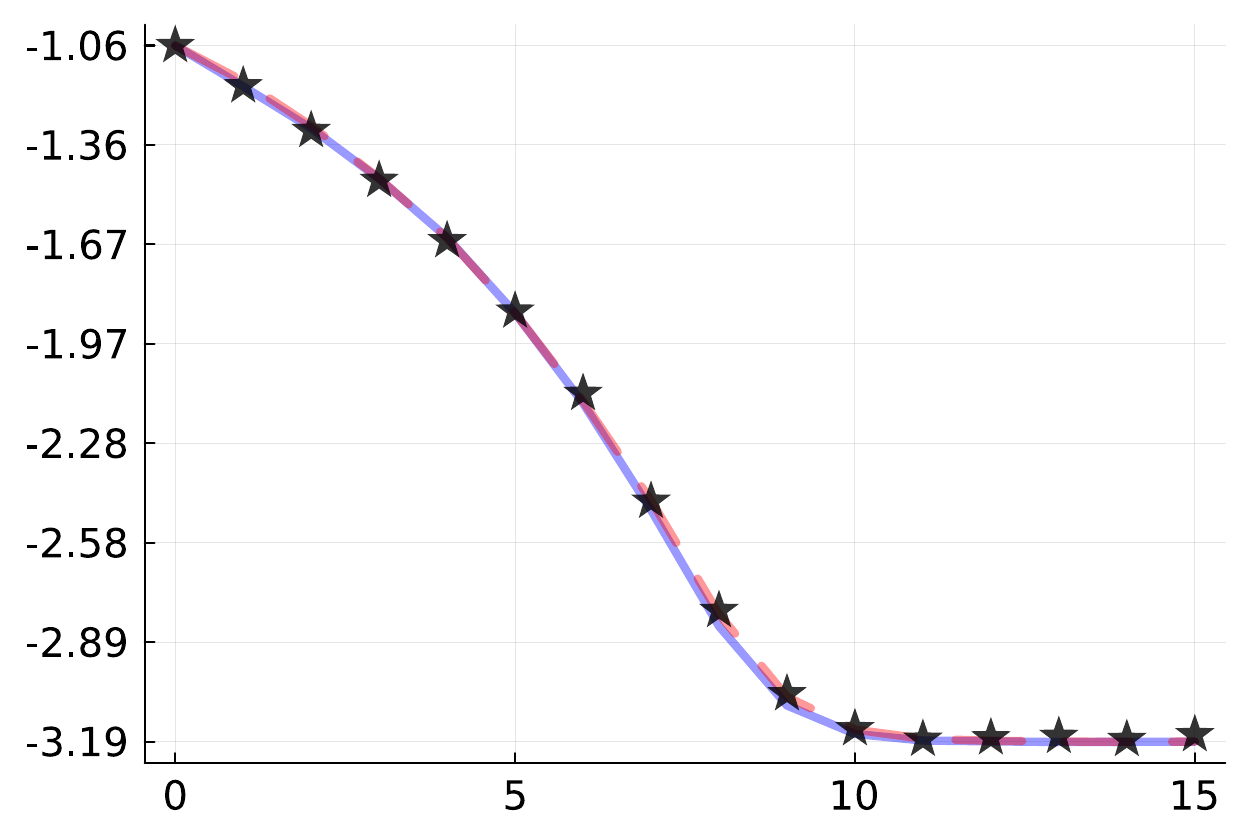}

      \hspace{175pt}\textbf{Iteration}
    \end{minipage}
    \begin{minipage}[c]{.38\linewidth}
  \resizebox{5cm}{!}{%
    \begin{tikzpicture}
      \draw[thick, black, fill=white] (-0.5,-2.5) rectangle (9,.5);
      % Gaussian matrix — solid blue
      \draw[blue, ultra thick, opacity=0.4] (0,0) -- (2,0);
      \node[anchor=west] at (2.3,0) {\Large Gaussian $\bW$};

      % Rot. invariant — dashed red
      \draw[red, dashed, ultra thick, dash pattern=on 20pt off 10pt, opacity=0.4] (0,-1) -- (2,-1);
      \node[anchor=west] at (2.3,-1) {\Large Rademacher $\bW$};

      % State Evolution prediction + black star
      \draw[white, dashed, thick] (0,-2) -- (2,-2);
      \node[anchor=west] at (2.3,-2) {\Large State Evolution Prediction};
      \node[shape=star, star points=5, star point ratio=2.25, fill=black, draw=black, inner sep=0pt, minimum size=10pt] at (1,-2) {};
    \end{tikzpicture}%
  }
\end{minipage}
    
    \caption{(a) Singular value spectra of the AMP iterates $\btheta_t \in
\RR^{M \times N}$ of \eqref{eq:AMPintro} applied with a singular-value
thresholding denoiser and with a matrix $\bW$ having either i.i.d.\ $\cN(0,1/m)$
or Rademacher $\pm 1/\sqrt{m}$ entries.
(b) Mean-squared-errors $\frac{1}{n}\|\btheta_t-\btheta_*\|_2^2$ for the two
matrices $\bW$, and the state evolution prediction.
Here $M=100$, $N = 150$, and $m=n=15000$.}\label{fig:spectral}
  \end{figure}

\subsection{Notation and conventions}

We use $\bv[i]$, $\bM[i,j]$, and $\bT[i_1,\ldots,i_k]$
for vector, matrix, and tensor indexing. For index subsets $S,S' \subseteq
\{1,\ldots,n\}$, we write $\bv[S] \in \RR^{|S|}$, $\bM[S,S'] \in \RR^{|S| \times
|S'|}$ etc.\ for the rows belonging to $S$ and columns belong to $S'$.
For vectors $\bz_1,\ldots,\bz_t \in \RR^n$, we will often abbreviate
$\bz_{1:t}=(\bz_1,\ldots,\bz_t) \in \RR^{n \times t}$.

For a function $f:\RR^{n \times t} \to \RR^n$, $f(\cdot)[i]$ denotes the
$i^\text{th}$ coordinate of its output. Function $\div_s f$ is the divergence with
respect to the $s^\text{th}$ column of its input, i.e.\
\[\div\nolimits_s f(\bz_{1:t})=\sum_{i=1}^n \partial_{\bz_s[i]} f(\bz_{1:t})[i].\]
Functions $f:\RR^{n \times t} \to \RR^n$ for $t=0$ are understood as
constant vectors in $\RR^n$.

Tensor $\Id^k \in (\RR^n)^{\otimes k}$ denotes the order-$k$ diagonal
tensor with diagonal entries equal to 1 and all other entries equal to 0, i.e.\
$\Id^k[i_1,\ldots,i_k]=\1\{i_1=\ldots=i_k\}$. For the identity matrix (i.e.\
$k=2$) we often abbreviate this as $\Id \in \RR^{n \times n}$. We write these as
$\Id^k_n$ and $\Id_n$ if needed to clarify the dimension. For a covariance
matrix $\bSigma \in \RR^{t \times t}$, $\cN(0,\bSigma\otimes \Id_n)$
is the multivariate normal distribution on $\RR^{n \times t}$ having
i.i.d.\ rows with law $\cN(0,\bSigma)$.

We write $\sigma_{\min},\sigma_{\max}$ and $\lambda_{\min},\lambda_{\max}$ for
the minimum and maximum singular value and eigenvalue of a matrix. $\|\cdot\|_2$
is the $\ell_2$-norm for vectors, $\|\cdot\|_\op$ is the $\ell_2$-to-$\ell_2$
operator norm for matrices, and $\|\bT\|_\Fro=(\sum_{i_1,\ldots,i_k}
\bT[i_1,\ldots,i_k]^2)^{1/2}$ is the Frobenius norm for matrices and tensors.

We denote $[n]=\{1,2,\ldots,n\}$. For a set $\cE$, we denote by $[n]^\cE$ the
set of index tuples $(i_e:e \in \cE)$ where $i_e \in [n]$ for each $e \in \cE$.
Given partitions $\pi,\tau$ of $\cE$, we write $\tau \geq \pi$ if $\pi$ refines
$\tau$, i.e.\ every block of $\tau$ is a union of one or more blocks of $\pi$.
The number of blocks in $\pi$ is denoted $|\pi|$.

\section{Universality of symmetric AMP}\label{sec:results}

To illustrate the main ideas, let us consider first the setting of an AMP
algorithm driven by a symmetric random matrix $\bW\in\RR^{n\times n}$.

Let $\bu_1\in\RR^n$ be an initialization, and $f_1,f_2,f_3,\ldots$ a sequence of
non-linear functions where $f_t:\RR^{n \times t} \to \RR^n$. We consider an AMP
algorithm consisting of the iterations, for $t=1,2,3,\ldots$
\begin{align}
    \label{eq:AMP}
    \begin{aligned}
        \bz_t
        &= \bW \bu_t - \sum_{s=1}^{t-1}  b_{ts} \bu_s\\
        \bu_{t+1}
        &= f_t(\bz_1, \ldots, \bz_t).
    \end{aligned}
\end{align}
It will be convenient to identify the initialization
\begin{equation}\label{eq:f1}
\bu_1 \equiv f_0(\cdot)
\end{equation}
as the output of an additional constant function $f_0(\cdot)$ with no inputs,
i.e.\ to understand $f_t(\bz_{1:t})$ for $t=0$ as this initialization.
Our interest will be in applications where $f_1,f_2,f_3,\ldots$ need not be
separable or exchangeable across its $n$ input coordinates.

In the first iteration, we have $\bz_1=\bW\bu_1$. In subsequent iterations,
the scalar Onsager coefficients $\{b_{ts}\}_{s<t}$ are defined so that
$\{\bz_t\}_{t \geq 1}$ admit an asymptotic characterization by a
Gaussian state evolution. These are given by the following definitions.

\begin{definition}[Onsager coefficients and state evolution]
\label{def:non_asymp_se}
Let $\bSigma_1 = \frac{1}{n}\|\bu_1\|_2^2 \in \RR^{1\times 1}$. Iteratively
for each $t\geq 1$, given $\bSigma_t\in\RR^{t\times t}$, let
$\bZ_{1:t} \sim \cN(0,\bSigma_t \otimes \Id_n)$, i.e.\
$\bZ_{1:t} \in\RR^{n \times t}$ has i.i.d.\ rows with distribution
$\cN(0,\bSigma_t)$. Define $\bSigma_{t+1}\in\RR^{(t+1)\times(t+1)}$ entrywise by
\[\bSigma_{t+1}[r\+1,s\+1]=\frac{1}{n}\,
\EE[f_r(\bZ_{1:r})^\top f_s(\bZ_{1:s})] \text{ for } r,s=0,1,\ldots,t\]
with the above identification $f_0(\cdot) \equiv \bu_1$.
For $t \geq 2$, the Onsager coefficients $\{b_{ts}\}_{s<t}$ in
(\ref{eq:AMP}) are defined\footnote{We will assume in all of our results
that $f_t(\cdot)$ is weakly differentiable and that the minor of $\bSigma_t$
corresponding to iterates $\{\bz_s\}_{s \leq t}$ on which $f_t(\cdot)$ depends
is non-singular, so that (\ref{eq:def_b_ts}) is well-defined.}
as
\begin{align}\label{eq:def_b_ts}
b_{ts} = \frac{1}{n}\,\EE[\div\nolimits_s f_{t-1}(\bZ_1,\ldots,\bZ_{t-1})]
\equiv
\frac{1}{n}\sum_{i=1}^n \EE[\partial_{\bZ_s[i]}
f_{t-1}(\bZ_1,\ldots,\bZ_{t-1})[i]]
\end{align}
The state evolution approximation of the iterates $\{\bz_t\}_{t \geq 1}$ in
(\ref{eq:AMP}) is the sequence of Gaussian vectors $\{\bZ_t\}_{t \geq 1}$.
\end{definition}

We clarify that $\bSigma_s$ is the upper-left $s \times s$ submatrix of
$\bSigma_t$ for any $s \leq t$, and that $\{b_{ts}\}_{s<t}$ and $\bSigma_t$
thus defined are deterministic but $n$-dependent.
Our assumptions will ensure that $b_{ts}$ and $\bSigma_t$
remain bounded as $n \to \infty$ (c.f.\
Lemma \ref{lem:BCP_SEbound}), but we will not require that they 
have asymptotic limits.

When $f_1,f_2,\ldots$ are Lipschitz functions and $\bW \sim \GOE(n)$ is a
symmetric Gaussian matrix, results of \cite{berthier2017} show that the AMP
iterates $\{\bz_t\}_{t \geq 1}$ may be approximated in the large-$n$ limit by
the multivariate Gaussian vectors
$\{\bZ_t\}_{t \geq 1}$ of Definition \ref{def:non_asymp_se}, in the sense
\begin{equation}\label{eq:SEapprox}
\lim_{n \to \infty}
\phi(\bz_1,\ldots,\bz_t)-\E[\phi(\bZ_1,\ldots,\bZ_t)]=0
\end{equation}
for a class of pseudo-Lipschitz test functions $\phi:\R^{n \times t} \to \R$.
Our main results will extend the validity of such an
approximation to certain classes of polynomial and Lipschitz
functions $f_1,f_2,\ldots$ and test functions $\phi$, when $\bW$ is any
non-Gaussian Wigner matrix satisfying the following conditions.
\begin{assumption}\label{assump:Wigner}
$\bW \in \RR^{n\times n}$ is a symmetric random matrix with
independent entries on and above the diagonal
$\{\bW[i,j]\}_{1\leq i\leq j\leq n}$, such that
for some constants $C_2,C_3,C_4,\ldots>0$,
\begin{itemize}
\item $\E \bW[i,j]=0$ for all $i \leq j$.
\item $\E \bW[i,j]^2=1/n$ for all $i<j$,
and $\E \bW[i,i]^2 \leq C_2/n$ for all $i=1,\ldots,n$.
\item $\E|\bW[i,j]|^k \leq C_kn^{-k/2}$ for each $k \geq 3$ and all $i \leq j$.
\end{itemize}
We write $\bW\sim\GOE(n)$ in the case where $\bW[i,j] \sim \cN(0,1/n)$ for $i<j$
and $\bW[i,i] \sim \cN(0,2/n)$.
\end{assumption}

\subsection{State evolution and universality for polynomial AMP}

We first study the validity and universality of the state evolution
approximation (\ref{eq:SEapprox}) in a setting where (each component of)
$f_t:\R^{n \times t} \to \R^n$ is a polynomial function.

We note that the mechanism of non-universality exhibited in
Example \ref{ex:nonuniversalexample} can hold just as well for AMP algorithms
with polynomial non-linearities,
upon replacing $\mathring g:\RR \to \RR$ in that example by a polynomial
function. Thus, universality of the state evolution requires a restriction of
the polynomial function class. We will consider such a restriction
given by polynomials representable by tensors satisfying the following
condition.

\begin{definition}\label{def:BCP}
A set of deterministic tensors $\cT=\bigsqcup_{k=1}^K \cT_k$, where
$\cT_k \subseteq (\R^n)^{\otimes k}$ for each $k=1,\ldots,K$,
satisfies the {\bf Bounded Composition Property (BCP)} if
the following holds:\footnote{We clarify that $\cT \equiv \cT(n)$ is a
$n$-dependent set, and the BCP is an asymptotic condition for the
sequence $\{\cT(n)\}_{n=1}^\infty$ as $n \to \infty$. We write
``$\cT$ satisfies the BCP'' rather than ``$\{\cT(n)\}_{n=1}^\infty$ satisfies
the BCP'' for succinctness.}
Fix any integers $m,\ell \geq 1$ and 
$k_1,\ldots,k_m \in \{1,\ldots,K\}$ independent of $n$, and define $k_0^+=0$ and
$k_a^+=k_1+k_2+\ldots+k_a$.
Fix any surjective map $\pi:[k_m^+] \to [\ell]$ such that
\begin{itemize}
\item For each $j \in [\ell]$, the set of indices $\{k \in [k_m^+]:\pi(k)=j\}$
has even cardinality.
\item There does not exist a partition of $\{1,\ldots,m\}$ into two disjoint
sets $A,A'$ for which the indices
$\pi(\bigcup_{a \in A} \{k_{a-1}^++1,\ldots,k_a^+\})$ are disjoint from
$\pi(\bigcup_{a \in A'}\{k_{a-1}^++1,\ldots,k_a^+\})$.
\end{itemize}
Then there exists a constant $C>0$ depending only on
$m,\ell,k_1,\ldots,k_m,\pi$ and independent of $n$ such that
\begin{equation}\label{eq:BCPcondition}
\limsup_{n \to \infty}
\sup_{\bT_1 \in \cT_{k_1},\ldots,\bT_m \in \cT_{k_m}}
\frac{1}{n}\left|\sum_{i_1,\ldots,i_\ell=1}^n \prod_{a=1}^m
\bT_a[i_{\pi(k_{a-1}^++1)},\ldots,i_{\pi(k_a^+)}]\right| \leq C.
\end{equation}
\end{definition}

For example, in the case of $m=2$ tensors of orders $k_1=k_2=4$, and for
$\ell=4$ indices, this definition requires an expression such as
\[\frac{1}{n}\left|\sum_{i_1,i_2,i_3,i_4=1}^n \bT_1[i_1,i_1,i_2,i_3]
\bT_2[i_2,i_3,i_4,i_4]\right|\]
to be uniformly bounded for all large $n$ over all order-4 tensors
$\bT_1,\bT_2 \in \cT$. The first condition of
Definition \ref{def:BCP} requires that each index $i_1,\ldots,i_4$ appears an
even number of times in this expression, and the second condition requires that
the indices of $\bT_1$ have non-empty intersection with those of $\bT_2$.
We will show in Appendix \ref{sec:BCP} several elementary properties of
Definition \ref{def:BCP}, including closure under tensor contractions and under
the additional inclusion of a finite number of independent Gaussian vectors.

For any tensor $\bT \in (\R^n)^{\otimes (d+1)}$, considering its first $d$
dimensions as inputs and the last dimension as output, we may associate $\bT$
with a polynomial function $p:\R^{n \times d} \to \R^n$ that is homogeneous of
degree $d$, given by
\[p(\bz_1,\ldots,\bz_d)=\bT[\bz_1,\ldots,\bz_d,\,\cdot\,] \in \R^n.\]
The right side denotes the partial contraction of $\bT$ with
$\bz_1,\ldots,\bz_d$ in the first $d$ dimensions, i.e.\ its $j^\text{th}$
coordinate is
$\sum_{i_1,\ldots,i_d=1}^n \bT[i_1,\ldots,i_d,j]\bz_1[i_1]\ldots \bz_d[i_d]$.
For $d=0$, this association is given by the constant function $p(\cdot)
=\bT \in \R^n$ with no inputs. The following then defines a restricted set of
bounded-degree polynomials, representable as a sum of homogeneous polynomials
associated in this way to tensors that satisfy the BCP.

\begin{definition}\label{def:BCPpoly}
Let $\cP=\bigsqcup_{t=0}^T \cP_t$ be a set of polynomials, where
$\cP_t$ consists of polynomials $p:\RR^{n \times t} \to \RR^n$ and $\cP_0$
consists of constant vectors in $\RR^n$. $\cP$ is {\bf BCP-representable}
if there exists a constant $D \geq 0$ independent of $n$ and a set of
tensors $\cT \subseteq \bigsqcup_{k=1}^{D+1} \cT_k$ satisfying the BCP,
such that each $p \in \cP_t$ has a representation
\begin{equation}\label{eq:tensorpolyrepr}
p(\bz_1,\ldots,\bz_t)=\bT^{(0)}+\sum_{d=1}^D
\sum_{\sigma \in \cS_{t,d}}
\bT^{(\sigma)}[\bz_{\sigma(1)},\ldots,\bz_{\sigma(d)},\,\cdot\,]
\end{equation}
where $\cS_{t,d}$ is the set of all maps $\sigma:[d] \to [t]$,
and $\bT^{(0)} \in \cT_1$ and
$\bT^{(\sigma)} \in \cT_{d+1}$ for each $\sigma \in \cS_{t,d}$.
\end{definition}

In the representation (\ref{eq:tensorpolyrepr}),
$D$ denotes the maximum degree of polynomials in $\cP$,
$\bT^{(0)}$ represents the constant term of $p$,
and $\{\bT^{(\sigma)}\}_{\sigma \in \cS_{t,d}}$ represent the terms of
degree $d$. We note that the tensors
$\bT^{(\sigma)}$ in (\ref{eq:tensorpolyrepr}) are, in
general, not symmetric. Given a polynomial $p$,
the representation (\ref{eq:tensorpolyrepr}) also need not be unique due to
reordering of the inputs $\bz_{\sigma(1)},\ldots,\bz_{\sigma(d)}$
and choices of symmetrization for $\bT^{(\sigma)}$;
Definition \ref{def:BCPpoly} requires simply the existence of at least
one such representation.

Although the main focus of our work is in non-separable functions, for clarity
let us illustrate Definition \ref{def:BCP} in a separable example.

\begin{example}[Separable polynomials are BCP-representable]\label{ex:Sep}
Fix any $D,B>0$, and let $\cP=\bigsqcup_{t=0}^T \cP_t$ be a set of
separable polynomials such that each $p \in \cP_t$ is given by
$p(\bz_1,\ldots,\bz_t)[i]=\mathring{p}(\bz_t[i])$
for some univariate polynomial $\mathring{p}:\R \to \R$ having degree at most $D$ and
all coefficients bounded in magnitude by $B$.
Then $\cP$ is BCP-representable via a set of tensors
\[\cT \subseteq
\bigsqcup_{k=1}^{D+1} \Big\{\text{diagonal tensors } \bT \in
(\RR^n)^{\otimes k} \text{ with } \max_{i=1}^n |\bT[i,\ldots,i]| \leq B\Big\}.\]
This set $\cT$ must satisfy the BCP, because for diagonal tensors
the expression inside the supremum of (\ref{eq:BCPcondition}) reduces to
\[\frac{1}{n}\left|\sum_{i=1}^n \prod_{a=1}^m \bT_a[i,\ldots,i]\right|\]
which is at most $B^m$.
\end{example}

Our first main result shows that BCP-representability
is sufficient to ensure both the validity and universality of the
state evolution approximation (\ref{eq:SEapprox}) for a corresponding class of
polynomial test functions under polynomial AMP.

\begin{theorem}\label{thm:universality_poly_amp}
Fix any $T \geq 1$, consider an AMP algorithm (\ref{eq:AMP}) defined by
$f_0,f_1,\ldots,f_{T-1}$, and consider a test function
\begin{equation}\label{eq:testfunction}
\phi(\bz_{1:T})=\frac{1}{n}\phi_1(\bz_{1:T})^\top \phi_2(\bz_{1:T})
\end{equation}
where $\phi_1,\phi_2:\RR^{n \times T} \to \RR^n$.
Let $b_{ts}$, $\bSigma_t$, and $\bZ_t$ be as in
Definition \ref{def:non_asymp_se}.

Suppose that $\cP=\{f_0,f_1,\ldots,f_{T-1},\phi_1,\phi_2\}$ is a 
BCP-representable set of polynomial functions, and
$\lambda_{\min}(\bSigma_t)>c$ for all $t=1,\ldots,T$ and a
constant $c>0$. If $\bW$ is any Wigner matrix satisfying
Assumption \ref{assump:Wigner}, then almost surely
\[\lim_{n \to \infty} \phi(\bz_{1:T})-\E[\phi(\bZ_{1:T})]=0.\]
\end{theorem}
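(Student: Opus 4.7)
The plan is to follow the moment-method universality strategy of \cite{Bayati2015,dudeja2023universality,Wang2024}: reduce the almost-sure statement to a comparison of expectations, unroll the polynomial AMP so that $\phi(\bz_{1:T})$ becomes an explicit polynomial in the entries $\{\bW[i,j]\}$ with tensor-valued coefficients drawn from the BCP set $\cT$ representing $\cP$, and compare expectations across Wigner laws via a cumulant expansion. The BCP condition is exactly the tensor-contraction bound needed to control each diagram that arises.

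I would decompose the target as
\[
\phi(\bz_{1:T})-\EE[\phi(\bZ_{1:T})]
=\bigl(\phi(\bz_{1:T})-\EE\phi(\bz_{1:T})\bigr)
+\bigl(\EE\phi(\bz_{1:T})-\EE_{\GOE}\phi(\bz_{1:T})\bigr)
+\bigl(\EE_{\GOE}\phi(\bz_{1:T})-\EE\phi(\bZ_{1:T})\bigr),
\]
where $\EE_{\GOE}$ denotes expectation when $\bW\sim\GOE(n)$. The third bracket is the Gaussian state-evolution statement for BCP-polynomial AMP, which follows from the paper's Gaussian result (contribution~(4) of the introduction) applied to $\cP$, using $\lambda_{\min}(\bSigma_t)>c$ to keep the Onsager coefficients $b_{ts}$ well-defined. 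The first bracket is concentration of the polynomial $\phi(\bz_{1:T})$ around its mean; since this is a polynomial of $n$-independent degree in the $\bW$-entries with bounded coefficients, the same diagrammatic expansion used below controls $\EE|\phi(\bz_{1:T})-\EE\phi(\bz_{1:T})|^{2q}$ at scale $n^{-q}$ for any fixed $q$, upgrading convergence in probability to almost-sure convergence by Borel--Cantelli.

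For the middle universality bracket, iterated substitution of the representation \eqref{eq:tensorpolyrepr} into the recursion writes $\phi_1(\bz_{1:T})^\top\phi_2(\bz_{1:T})/n$ as a finite linear combination of bipartite tensor networks of the form encountered in Definition~\ref{def:BipCon}, with $\bT_v$-vertices drawn from $\cT$ and $\bW$-edges encoding each matrix multiplication. A joint cumulant expansion of $\EE[\prod_e \bW[i_{e_1},i_{e_2}]]$ then decomposes each diagram as a sum over partitions $\pi_\cE$ of its $\bW$-edges. Pair partitions contribute identically under any Wigner law satisfying Assumption~\ref{assump:Wigner}, by matching of the second moments; every remaining partition has at least one block of size $k\geq 3$, contributing an extra factor of at most $n^{-k/2+1}$ via the cumulant/moment bound, while the corresponding tensor-network contraction is $O(n)$ by the BCP applied to each connected component of the diagram (each cumulant block forces its incident tensors to share indices, restoring the non-splittability hypothesis of Definition~\ref{def:BCP} within each component). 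The universality bracket is therefore $o(1)$.

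The main technical obstacle is the combinatorial bookkeeping needed to show that no ``degenerate'' pair-partition diagram survives: specifically, the pair diagrams in which two $\bW$-edges meet at a common $\bT_v$-vertex internally reduce to a divergence, and these must be matched bijectively against the Onsager subtractions $-b_{ts}\bu_s$ so that their contributions cancel exactly, leaving only the non-degenerate pair diagrams that reconstruct the Gaussian covariance $\bSigma_T$. This cancellation is the classical heart of AMP analysis, and in the non-separable BCP setting it is where the divergence definition \eqref{eq:def_b_ts} of the Onsager coefficients, the non-degeneracy assumption $\lambda_{\min}(\bSigma_t)>c$, and the bipartite tensor-network formalism all enter most crucially; establishing it rigorously in the tensor-network language, uniformly over all BCP-representable $\cP$, is the principal combinatorial work of the proof.
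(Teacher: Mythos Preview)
Your high-level decomposition is sound and close to the paper's route: unroll $\phi(\bz_{1:T})$ into a bounded collection of connected tensor networks (Lemma~\ref{lem:tenUnroll}), compare their values across Wigner laws by partitioning the $\bW$-edges, and handle the $\GOE$ case separately via the Gaussian conditioning result. However, your last paragraph misidentifies the main obstacle, and is in fact inconsistent with your earlier commitment.

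Once you invoke the paper's Gaussian result (Theorem~\ref{thm:StrongSE0}) for the third bracket, the Onsager cancellation is already done --- it is absorbed entirely into the conditioning argument for $\GOE(n)$, not the moment method. In the tensor-network unrolling the Onsager subtractions $-b_{ts}\bu_s$ are simply one branch of the recursion and appear as bounded scalar prefactors $a_m$ in front of each network; they require no further combinatorial matching. For the universality bracket you have already noted that pair partitions contribute identically across laws matching in second moments, so they drop out of the \emph{difference} $\EE\phi-\EE_{\GOE}\phi$ without any cancellation mechanism. There is no ``degenerate pair-partition'' obstruction in this approach; the need to ``reconstruct the Gaussian covariance $\bSigma_T$'' combinatorially would only arise if you tried to compute $\EE_{\GOE}\phi$ itself via moments, which you have explicitly chosen not to do.

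The actual technical work --- which you undersell with ``the corresponding tensor-network contraction is $O(n)$ by the BCP'' --- is bounding the non-pair partitions. After removing single blocks (which vanish by mean zero) and paired blocks (which contribute $\bM_2=1$ off-diagonally and hence identically across laws), the paper applies inclusion-exclusion to drop the distinct-index constraint, then Cauchy--Schwarz over the indices incident to unpaired blocks. This produces a squared bipartite tensor network $\tilde G$ labeled by $\cT\cup\{\Id\}$; one must verify that its $\Id$-vertices all have even degree and, crucially, count its connected components (claim~(4) in the proof of Lemma~\ref{lem:ExpVal}) to show the BCP yields the correct power of $n$. This connected-component bookkeeping, not Onsager cancellation, is the heart of the argument and is genuinely more delicate in the non-separable setting than your sketch suggests.

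A smaller point: the paper compares $\phi(\bz_{1:T})$ directly to the $\GOE$ iterate $\phi(\bz'_{1:T})$ almost surely (via a fourth-moment bound on each $\val_G$, Lemma~\ref{lem:ConcValBCP}), rather than going through expectations. Your route through $\EE_{\GOE}\phi(\bz_{1:T})$ requires passing from the almost-sure Gaussian statement to convergence of the expectation, i.e.\ uniform integrability of $\phi(\bz'_{1:T})$ under $\GOE$; this is obtainable from the same moment bounds but is an avoidable detour.
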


We remark that the polynomials $\cP$ need not be Lipschitz, or even
pseudo-Lipschitz in the sense of \cite[Eq.\ (20)]{berthier2017} or
\cite[Definition 4]{Gerbelot2022}. Such a result is new even in the setting of
a Gaussian matrix $\bW \sim \GOE(n)$, where it is a consequence of a more
general statement that we give
in Section \ref{sec:gaussian} for AMP algorithms defined by 
a general class of functions $f_1,f_2,\ldots$ having polynomial growth.
Theorem \ref{thm:universality_poly_amp} then follows from a
combination of this result for GOE matrices together with a combinatorial
analysis over a class of tensor networks, which we discuss in
Section \ref{sec:tensor}.

\subsection{State evolution and universality for Lipschitz AMP}

To extend the preceding universality guarantee to 
AMP algorithms (\ref{eq:AMP}) defined by Lipschitz functions $f_1,f_2,\ldots$,
we define the following polynomial approximability condition.

\begin{definition}\label{def:BCP_approx}
Let $\cF=\bigsqcup_{t=0}^T \cF_t$ be a set of functions, where $\cF_t$
consists of functions $f:\RR^{n \times t} \to \RR^n$ and $\cF_0$ consists of
constant vectors in $\RR^n$.
$\cF$ is {\bf BCP-approximable} if, for any fixed $C_0,\epsilon>0$:
\begin{enumerate}
\item There exists a BCP-representable set of polynomial functions
$\cP=\bigsqcup_{t=0}^T \cP_t$ such that for each $t=0,1,\ldots,T$, each
$f \in \cF_t$, and each $\bSigma \in \RR^{t \times t}$ with
$\|\bSigma\|_\op<C_0$, there exists $p \in \cP_t$ for which
\begin{equation}\label{eq:BCPpolyapprox}
\frac{1}{n}\,\E_{\bZ \sim \cN(0,\bSigma \otimes \Id_n)}
\|f(\bZ)-p(\bZ)\|_2^2<\epsilon
\end{equation}
(For $t=0$, this requires $n^{-1}\|f(\cdot)-p(\cdot)\|_2^2<\epsilon$ for the
constant functions $f \in \cF_0$ and $p \in \cP_0$.)
If $f \in \cF_t$ depends only inputs $\{\bz_s:s \in S_t\}$
for a subset of columns $S_t \subset \{1,\ldots,t\}$, then so does $p$.
\item There exists a set $\cQ=\bigsqcup_{t=0}^T \cQ_t$ of polynomial functions
(typically of unbounded degree) for which the following holds:

Fix any $t=1,\ldots,T$ and any ($n$-indexed sequences of)
$f \in \cF_t$, $\bSigma \in \RR^{t \times t}$ with $\|\bSigma\|_\op<C_0$,
and possibly random $\bz \in \RR^{n \times t}$. Suppose, for any
($n$-indexed sequences of) $q_1,q_2 \in \cQ_t$ with degrees bounded
independently of $n$, that $\cP \cup \{q_1,q_2\}$ remains BCP-representable,
and
\begin{equation}\label{eq:momentconvergence}
\lim_{n \to \infty} \frac{1}{n}q_1(\bz)^\top q_2(\bz)
-\frac{1}{n}\,\E_{\bZ \sim \cN(0,\bSigma \otimes \Id_n)} q_1(\bZ)^\top q_2(\bZ)=0 \text{ a.s.}
\end{equation}
Then for the above polynomial $p \in \cP_t$ satisfying (\ref{eq:BCPpolyapprox}),
also
\[\limsup_{n \to \infty} \frac{1}{n} \|f(\bz)-p(\bz)\|_2^2<\epsilon
\text{ a.s.}\]
\end{enumerate}
\end{definition}

Condition (1) for BCP-approximability
is a statement about approximability of $\cF$ by $\cP$,
while condition (2) may be understood as a $L^2$-density condition for $\cQ$.
The following illustrates a simple example of both conditions of
this definition for separable Lipschitz functions.

\begin{example}[Separable Lipschitz functions are BCP-approximable]\label{ex:Sep2}
Fix any $L>0$, and let $\cF=\bigsqcup_{t=0}^T \cF_t$ be a set of
separable Lipschitz functions such that each $f \in \cF_t$ is given
by $f(\bz_1,\ldots,\bz_t)[i]=\mathring{f}(\bz_t[i])$ for some
$\mathring{f}:\R \to \R$ satisfying
\begin{equation}\label{eq:scalarlipschitz}
|\mathring{f}(0)| \leq L,
\quad |\mathring{f}(x)-\mathring{f}(y)| \leq L|x-y| \text{ for all } x,y \in \RR.
\end{equation}
We claim that $\cF$ is BCP-approximable.
To see this, note that fixing any $L,C_0,\epsilon>0$, there exist constants
$D,B>0$ such that for any function $\mathring{f}:\R \to \R$ satisfying
(\ref{eq:scalarlipschitz}), there exists a polynomial
$\mathring{p}:\R \to \R$ of degree at most $D$ and coefficients
bounded in magnitude by $B$ for which
\[\E_{Z \sim \cN(0,\sigma^2)}|\mathring{f}(Z)-\mathring{p}(Z)|^2<\epsilon\]
for any $\sigma^2 \leq C_0$.
(We will verify a more general version of this statement in the proof of
Proposition \ref{prop:local} to follow.) Letting $\cP$ be the set of
corresponding polynomials $p:\RR^{n \times t} \to \RR^n$ given by
$p(\bz_1,\ldots,\bz_t)[i]=\mathring p(\bz_t[i])$, this set $\cP$ is
BCP-representable by Example \ref{ex:Sep}. Condition (1) of
Definition \ref{def:BCP_approx} holds since
\[\frac{1}{n}\,\E_{\bZ \sim \cN(0,\bSigma \otimes \Id_n)}
\|f(\bZ)-p(\bZ)\|_2^2
=\E_{Z \sim \cN(0,\bSigma[t,t])} |\mathring f(Z)-\mathring p(Z)|^2<\epsilon.\]

Furthermore, let $\cQ=\bigsqcup_{t=0}^T \cQ_t$ where $\cQ_t$ is the set of
separable monomial functions defined by
$q(\bz_1,\ldots,\bz_t)[i]=\bz_t[i]^k$, over all $k=0,1,2,\ldots$
By Example \ref{ex:Sep}, $\cP \cup \{q_1,q_2\}$ is also BCP-representable
for any $q_1,q_2 \in \cQ_t$ of bounded degrees. If
$\bz=\bz_{1:t} \in \RR^{n \times t}$
satisfies (\ref{eq:momentconvergence}) for any such $q_1,q_2$,
then the differences in moments between the empirical distribution
$\frac{1}{n}\sum_{i=1}^n \delta_{\bz_t[i]}$ and $\cN(0,\bSigma[t,t])$
converge to 0 a.s. This implies that their Wasserstein-$k$ distance converges to
0 a.s.\ for any fixed order $k \geq 1$, which in turn implies
\begin{align}
&\lim_{n \to \infty} \frac{1}{n}\|f(\bz)-p(\bz)\|_2^2
-\frac{1}{n}\,\E\|f(\bZ)-p(\bZ)\|_2^2\\
&=\lim_{n \to \infty} \frac{1}{n}\sum_{i=1}^n (\mathring f(\bz_t[i])-\mathring
p(\bz_t[i]))^2-\E_{Z \sim \cN(0,\bSigma[t,t])}
(\mathring{f}(Z)-\mathring{p}(Z))^2 =0 \text{ a.s.}
\end{align}
since $(\mathring f-\mathring p)^2$ is a continuous function of polynomial
growth.
(We will also carry out a more general version of this argument in the proof
of Proposition \ref{prop:local} to follow.)
Then condition (2) of Definition \ref{def:BCP_approx} also holds,
verifying the BCP-approximability of $\cF$.
\end{example}

Our second main result shows that BCP-approximability for uniformly
Lipschitz functions $f_0,f_1,f_2,\ldots$
is sufficient to ensure the validity and universality of the state
evolution approximation (\ref{eq:SEapprox}) for a corresponding class of
pseudo-Lipschitz test functions.

\begin{theorem}\label{thm:main_universality}
Fix any $T \geq 1$, consider an AMP algorithm (\ref{eq:AMP}) defined by
$f_0,f_1,\ldots,f_{T-1}$, and consider a test function
\[\phi(\bz_{1:T})=\frac{1}{n}\phi_1(\bz_{1:T})^\top \phi_2(\bz_{1:T})\]
where $\phi_1,\phi_2:\RR^{n \times T} \to \RR^n$. Let
$b_{ts}$, $\bSigma_t$, and $\bZ_t$ be as in Definition \ref{def:non_asymp_se}.

Suppose that $\cF=\{f_0,f_1,\ldots,f_{T-1},\phi_1,\phi_2\}$ is
BCP-approximable, and there exists a constant $L>0$ such that
for each $f \in \cF$ and any arguments $\bx,\by$ of $f(\cdot)$,
\begin{equation}\label{eq:Lipschitz}
\|f(0)\|_2 \leq L\sqrt{n},
\qquad \|f(\bx)-f(\by)\|_2 \leq L\|\bx-\by\|_{\Fro}.
\end{equation}
For each $t=1,\ldots,T-1$, suppose there is a fixed set of preceding iterates
$S_t \subseteq \{1,\ldots,t\}$ for which $f_t(\bz_{1:t})$ depends only on
$\{\bz_s:s \in S_t\}$, and $\lambda_{\min}(\bSigma_t[S_t,S_t])>c$ for a constant $c>0$.
If $\bW$ is any Wigner matrix satisfying Assumption \ref{assump:Wigner}, then
almost surely
\begin{equation}\label{eq:thmSE}
\lim_{n \to \infty} \phi(\bz_{1:T})-\E[\phi(\bZ_{1:T})]=0.
\end{equation}
\end{theorem}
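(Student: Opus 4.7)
The plan is a reduction to Theorem~\ref{thm:universality_poly_amp} by polynomial approximation. Fix $\epsilon>0$ arbitrary, and apply BCP-approximability condition~(1) of Definition~\ref{def:BCP_approx} to produce a BCP-representable set $\cP=\{p_0,\ldots,p_{T-1},p^{\phi_1},p^{\phi_2}\}$ whose elements approximate the corresponding element of $\cF=\{f_0,\ldots,f_{T-1},\phi_1,\phi_2\}$ within $\epsilon$ in mean-squared error under Gaussian input with the associated Lipschitz state-evolution covariance $\bSigma_t[S_t,S_t]$ (which is bounded in operator norm by the Lipschitz hypothesis~(\ref{eq:Lipschitz})). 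Let $\til\bz_1,\ldots,\til\bz_T$ denote the iterates of the polynomial AMP~(\ref{eq:AMP}) driven by the same $\bW$ but with $p_{t-1}$ replacing $f_{t-1}$ and Onsager coefficients $\til b_{ts}$ recomputed via Definition~\ref{def:non_asymp_se}, and let $\til\bSigma_T,\til\bZ_T$ be the resulting state evolution. A Stein/Gaussian-interpolation argument, combined with the $L^2$-approximation and the Lipschitz hypothesis, inductively yields $\|\bSigma_T-\til\bSigma_T\|_\op$ and $|b_{ts}-\til b_{ts}|$ both $O(\sqrt\epsilon)$. By Theorem~\ref{thm:universality_poly_amp}, the polynomial AMP satisfies state evolution universally for any $\bW$ obeying Assumption~\ref{assump:Wigner}.

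The heart of the proof is an inductive a.s.\ bound $\frac{1}{n}\|\bz_t-\til\bz_t\|_2^2 \lesssim \epsilon$ for each $t\leq T$. Subtracting the two AMP recursions and writing $\bu_t=f_{t-1}(\bz_{1:t-1})$, $\til\bu_t=p_{t-1}(\til\bz_{1:t-1})$, one must control
\[\tfrac{1}{\sqrt n}\|\bW(\bu_t-\til\bu_t)\|_2,\quad \sum_{s<t}|b_{ts}-\til b_{ts}|\cdot\tfrac{1}{\sqrt n}\|\bu_s\|_2,\quad \sum_{s<t}|\til b_{ts}|\cdot\tfrac{1}{\sqrt n}\|\bu_s-\til\bu_s\|_2.\]
The first term uses $\|\bW\|_\op=O(1)$ a.s.\ (standard under Assumption~\ref{assump:Wigner}) together with the split $\bu_t-\til\bu_t=(f_{t-1}(\bz_{1:t-1})-f_{t-1}(\til\bz_{1:t-1}))+(f_{t-1}(\til\bz_{1:t-1})-p_{t-1}(\til\bz_{1:t-1}))$: the first piece is bounded by the Lipschitz hypothesis~(\ref{eq:Lipschitz}) and the inductive hypothesis, while the second is controlled by BCP-approximability condition~(2). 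To invoke condition~(2) with $\bz=\til\bz_{1:t-1}$, one must verify the moment convergence~(\ref{eq:momentconvergence}) for all $q_1,q_2\in\cQ_{t-1}$ of bounded degree with $\cP\cup\{q_1,q_2\}$ still BCP-representable; this is exactly what Theorem~\ref{thm:universality_poly_amp} delivers when applied to the polynomial AMP $\til\bz$ with test function $\frac{1}{n}q_1(\til\bz)^\top q_2(\til\bz)$, modulo a correction absorbing $\|\bSigma_{t-1}-\til\bSigma_{t-1}\|_\op=O(\sqrt\epsilon)$ via continuity of polynomial moments in the covariance. The remaining two error terms use the Onsager-coefficient closeness and the uniform bounds $\|\bu_s\|_2,\|\til\bu_s\|_2=O(\sqrt n)$ from~(\ref{eq:Lipschitz}) and the inductive hypothesis.

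Once iterate closeness is established, the test function is handled by the telescoping decomposition
\[\phi(\bz_{1:T})-\E[\phi(\bZ_{1:T})]=\big[\tfrac{1}{n}\phi_1(\bz)^\top\phi_2(\bz)-\tfrac{1}{n}p^{\phi_1}(\til\bz)^\top p^{\phi_2}(\til\bz)\big]+\big[\tfrac{1}{n}p^{\phi_1}(\til\bz)^\top p^{\phi_2}(\til\bz)-\tfrac{1}{n}\E[p^{\phi_1}(\til\bZ)^\top p^{\phi_2}(\til\bZ)]\big]+\big[\tfrac{1}{n}\E[p^{\phi_1}(\til\bZ)^\top p^{\phi_2}(\til\bZ)]-\tfrac{1}{n}\E[\phi_1(\bZ)^\top\phi_2(\bZ)]\big].\]
The middle bracket vanishes a.s.\ as $n\to\infty$ by Theorem~\ref{thm:universality_poly_amp}. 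The left bracket is $O(\sqrt\epsilon)$ a.s.\ by the iterate-closeness bound, the Lipschitz and polynomial $L^2$-approximations of $\phi_j$, and uniform $L^2$-bounds on $\phi_j(\bz),p^{\phi_j}(\til\bz)$. The right bracket is $O(\sqrt\epsilon)$ by Gaussian integration together with $\|\bSigma_T-\til\bSigma_T\|_\op=O(\sqrt\epsilon)$ and the quadratic-in-polynomial form of the test function. Letting $\epsilon\to 0$ delivers~(\ref{eq:thmSE}).

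The main obstacle is closing the induction: applying BCP-approximability condition~(2) requires verifying a universal moment-convergence hypothesis for the polynomial AMP iterates $\til\bz$, which must be furnished by Theorem~\ref{thm:universality_poly_amp} with a sufficiently rich test class, uniformly in $\epsilon$ (even though the approximating set $\cP$ itself depends on $\epsilon$). A secondary difficulty is the covariance mismatch between the Lipschitz $\bSigma_t$ used in condition~(1) and the polynomial $\til\bSigma_t$ along which $\til\bz$ concentrates; this is absorbed into the $O(\sqrt\epsilon)$ budget via continuity of bounded-degree polynomial moments in the covariance. The non-degeneracy hypothesis $\lambda_{\min}(\bSigma_t[S_t,S_t])>c$ is essential so that the Onsager coefficients stay uniformly bounded and the Stein-type comparisons between Lipschitz and polynomial state evolutions remain quantitative.
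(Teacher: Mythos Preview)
Your overall architecture---polynomial approximation via BCP-approximability, inductive iterate closeness, and a three-term telescoping decomposition---matches the paper's proof. But there is a genuine gap in your invocation of Theorem~\ref{thm:universality_poly_amp} for the auxiliary polynomial AMP $\til\bz$. That theorem requires $\lambda_{\min}(\til\bSigma_t)>c$ for the \emph{full} polynomial state-evolution covariance at every $t\leq T$, and you have no such guarantee: the hypothesis of Theorem~\ref{thm:main_universality} bounds only the submatrix $\bSigma_t[S_t,S_t]$ of the Lipschitz state evolution, and even granting $\|\til\bSigma_t-\bSigma_t\|_\op=O(\sqrt\epsilon)$ inductively, the full $\bSigma_t$ (and hence $\til\bSigma_t$) may well be singular. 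Without this non-degeneracy you cannot apply Theorem~\ref{thm:universality_poly_amp} to obtain the moment convergence~\eqref{eq:momentconvergence} needed to trigger BCP-approximability condition~(2), nor to handle the middle bracket of your decomposition. The paper closes this gap by randomly perturbing the auxiliary AMP, setting $p_t^\epsilon(\cdot)=p_t(\cdot)+\epsilon\,\bxi_{t+1}$ with independent $\bxi_{t+1}\sim\cN(0,\Id_n)$; one then shows (Corollary~\ref{cor:universality_perturbed}) that the perturbed covariance $\til\bSigma_t^\delta$ has $\lambda_{\min}$ bounded below almost surely, so Theorem~\ref{thm:universality_poly_amp} applies conditionally on $\bxi_{1:T}$, and the extra $\epsilon\,\bxi_{t+1}$ is absorbed into the $\iota(\epsilon)$ budget.

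A second, related issue: you pick $p_t$ to approximate $f_t$ under the Lipschitz covariance $\bSigma_t$, but condition~(2) of Definition~\ref{def:BCP_approx} controls $\tfrac{1}{n}\|f_t(\til\bz)-p_t(\til\bz)\|_2^2$ only when the moments of $\til\bz$ converge to those under the \emph{same} $\bSigma$ used to select $p_t$. Since $\til\bz$ concentrates around $\til\bSigma_t\neq\bSigma_t$, there is a mismatch that is not obviously repaired by ``continuity of polynomial moments in the covariance''---$f_t$ is not a polynomial, so $\tfrac{1}{n}\|f_t(\cdot)-p_t(\cdot)\|_2^2$ is not a polynomial moment. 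The paper avoids this by constructing $p_t$ \emph{iteratively with respect to} $\til\bSigma_t$ itself (see~\eqref{eq:poly_approx_assumption}), interleaving the definitions of $p_t$ and $\til\bSigma_{t+1}$; this makes the inductive hypothesis $\|\til\bSigma_t-\bSigma_t\|_\op<\iota(\epsilon)$ (Lemma~\ref{lem:auxiliary_amp}) do double duty, both guaranteeing that such a $p_t$ exists (since then $\|\til\bSigma_t\|_\op<C_0$) and matching the covariance in condition~(2) exactly.
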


Theorem \ref{thm:main_universality} is proven using the
preceding Theorem \pref{thm:universality_poly_amp} and a polynomial
approximation argument that is similar to that of
\cite{dudeja2023universality,Wang2024}, and we carry
this out in Appendix \pref{sec:univ_poly_approx}. In applications where
$f_t(\cdot)$ depends only on the single preceding iterate $\bz_t$, we have
$S_t=\{t\}$ so the above condition $\lambda_{\min}(\bSigma_t[S_t,S_t])>c$
requires only that the diagonal entries of
$\bSigma_t$ are bounded away from 0, weakening the requirement
$\lambda_{\min}(\bSigma_t)>c$ of Theorem \ref{thm:universality_poly_amp}.

The following corollary helps clarify that Theorem
\ref{thm:main_universality} remains valid under
asymptotically equivalent definitions of the Onsager coefficients and state
evolution covariances.

\begin{corollary}\label{cor:equivalent_SE}
Let $\{b_{ts}\}_{s<t}$ and $\{\bSigma_t\}_{t \geq 1}$
be the ($n$-dependent) quantities of
Definition \ref{def:non_asymp_se}, and let $\{\bar  b_{ts}\}_{s<t}$ and
$\{\bar \bSigma_t\}_{t \geq 1}$ by any (possibly random, $n$-dependent)
quantities satisfying
\[\lim_{n \to \infty} b_{ts}-\bar b_{ts}=0,
\quad \lim_{n \to \infty} \bSigma_t-\bar \bSigma_t=0 \text{ a.s.}\]
for each fixed $s,t$.
Then Theorem \ref{thm:main_universality} continues to hold for
the AMP algorithm defined with $\{\bar b_{ts}\}$ in place of $\{b_{ts}\}$, and
with Gaussian state evolution vectors $\bZ_{1:t}$ defined by
$\bar\bSigma_t$ in place of $\bSigma_t$.
\end{corollary}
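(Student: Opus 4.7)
The plan is to reduce the corollary to Theorem \ref{thm:main_universality} via a perturbation argument. Let $\bar\bz_t$ denote the iterates of the AMP algorithm \eqref{eq:AMP} when $b_{ts}$ is replaced by $\bar b_{ts}$, and let $\bar\bZ_{1:T}$ have (conditional) law $\cN(0,\bar\bSigma_T\otimes\Id_n)$ on an enlarged probability space. I would split the quantity of interest by the triangle inequality into three pieces:
$$\phi(\bar\bz_{1:T})-\E[\phi(\bar\bZ_{1:T})\mid\bar\bSigma_T]
=\bigl[\phi(\bar\bz_{1:T})-\phi(\bz_{1:T})\bigr]
+\bigl[\phi(\bz_{1:T})-\E\phi(\bZ_{1:T})\bigr]
+\bigl[\E\phi(\bZ_{1:T})-\E[\phi(\bar\bZ_{1:T})\mid\bar\bSigma_T]\bigr].$$
The middle term vanishes a.s.\ by Theorem \ref{thm:main_universality}, so it suffices to handle the first and third terms.

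For the first term, I would prove by induction on $t=1,\ldots,T$ that $n^{-1}\|\bar\bz_t-\bz_t\|_2^2\to 0$ a.s.\ and that $n^{-1/2}\|\bar\bu_t\|_2$ is a.s.\ bounded. The base case is immediate since $\bar\bz_1=\bW\bu_1=\bz_1$. For the inductive step I would write
$$\bar\bz_{t+1}-\bz_{t+1}=\bW(\bar\bu_{t+1}-\bu_{t+1})-\sum_{s\leq t}(\bar b_{t+1,s}-b_{t+1,s})\bar\bu_s-\sum_{s\leq t}b_{t+1,s}(\bar\bu_s-\bu_s),$$
bound the first summand using that $\|\bW\|_\op$ is a.s.\ bounded for Wigner matrices of Assumption \ref{assump:Wigner} and the inductive Lipschitz bound $\|\bar\bu_{t+1}-\bu_{t+1}\|_2\leq L\|\bar\bz_{1:t}-\bz_{1:t}\|_\Fro$, bound the second summand by $|\bar b_{t+1,s}-b_{t+1,s}|\cdot\|\bar\bu_s\|_2=o(1)\cdot O(\sqrt n)$, and bound the third summand by boundedness of $b_{t+1,s}$ (following from $\|\bSigma_t\|_\op=O(1)$, which itself follows from \eqref{eq:Lipschitz}) and the inductive hypothesis. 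The first-term difference in $\phi$ then vanishes by the bilinear pseudo-Lipschitz form of $\phi$: $\|\phi_j(\bar\bz_{1:T})-\phi_j(\bz_{1:T})\|_2\leq L\|\bar\bz_{1:T}-\bz_{1:T}\|_\Fro=o(\sqrt n)$ combined with $\|\phi_j(\bz_{1:T})\|_2=O(\sqrt n)$.

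For the third term I would couple the state evolutions through a common Gaussian matrix $\bG\in\RR^{n\times T}$ with i.i.d.\ $\cN(0,1)$ entries, setting $\bZ=\bG\bSigma_T^{1/2}$ and $\bar\bZ=\bG\bar\bSigma_T^{1/2}$ (using the PSD projection of $\bar\bSigma_T$ if necessary, which does not affect the limit since $\bar\bSigma_T\to\bSigma_T$ a.s.\ and $\bSigma_T$ is PSD). Then Cauchy--Schwarz, Lipschitzness of $\phi_1,\phi_2$, and the identity $\E_{\bG}\|\bG(\bSigma_T^{1/2}-\bar\bSigma_T^{1/2})\|_\Fro^2=n\|\bSigma_T^{1/2}-\bar\bSigma_T^{1/2}\|_\Fro^2$ give
$$\bigl|\E\phi(\bZ_{1:T})-\E[\phi(\bar\bZ_{1:T})\mid\bar\bSigma_T]\bigr|
\lesssim L^2\,\|\bSigma_T^{1/2}-\bar\bSigma_T^{1/2}\|_\Fro\bigl(1+\|\bSigma_T^{1/2}\|_\Fro+\|\bar\bSigma_T^{1/2}\|_\Fro\bigr).$$
This vanishes a.s.\ using the (Hölder-$1/2$) continuity of the matrix square root on the PSD cone applied to $\bar\bSigma_T\to\bSigma_T$.

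The main technical nuisance is the third step: because the corollary's hypothesis $\lambda_{\min}(\bSigma_t[S_t,S_t])>c$ does not imply that $\bSigma_T$ itself is uniformly positive definite, one cannot use local Lipschitzness of the square root and must instead rely on Hölder continuity on all of PSD. The first step is conceptually straightforward but requires carrying the inductive bounds on $\|\bar\bu_s\|_2$ alongside the perturbation bound, and uses the a.s.\ boundedness of $\|\bW\|_\op$ under Assumption \ref{assump:Wigner}, which follows from the moment bounds therein.
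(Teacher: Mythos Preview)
Your proposal is correct and follows essentially the same route as the paper: the same three-term decomposition, the same induction on $t$ using $\|\bW\|_\op<3$ a.s.\ and the Lipschitz property to control $\|\bar\bz_t-\bz_t\|_2$, and the same coupling argument for the state-evolution term. Your explicit remark about needing H\"older-$1/2$ continuity of the matrix square root (since only $\bSigma_t[S_t,S_t]$, not $\bSigma_T$, is assumed uniformly nonsingular) is a detail the paper elides; one small correction is that boundedness of $b_{t+1,s}$ comes not from $\|\bSigma_t\|_\op=O(1)$ alone but from Stein's lemma together with the lower bound $\lambda_{\min}(\bSigma_t[S_t,S_t])>c$.
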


For example, if $f_1,f_2,\ldots$ are such that
the limits $\bar b_{ts}=\lim_{n \to \infty} b_{ts}$ and
$\bar\bSigma_t=\lim_{n \to \infty} \bSigma_t$ exist, then
Theorem \ref{thm:main_universality} holds equally with these asymptotic
quantities $\bar b_{ts}$ and $\bar \bSigma_t$ in place of $b_{ts}$ and
$\bSigma_t$. In practice, one typically uses
data-driven estimates of these quantities, and 
Theorem \ref{thm:main_universality} holds as long as these estimates are
consistent in the almost-sure sense as $n \to \infty$.

\begin{remark}[Incorporating side information]
Many applications of AMP require the functions $f_1,f_2,\ldots$ to depend on
auxiliary ``side information'' vectors, in order to cast a desired algorithm
for an inference problem into an AMP form. We will discuss several such
examples in Section \ref{sec:SetupRect} to follow, where side information
vectors represent the signal and noise vectors in a statistical model.

The generality of the functions $f_t$ --- which need not be
exchangeable across their $n$ input coordinates --- allows us to incorporate
such side information vectors into the function definitions themselves.
For example, Theorem \ref{thm:main_universality} encompasses the more general
AMP algorithm
\[\bz_t=\bW\bu_t-\sum_{s=1}^{t-1} b_{ts}\bu_s,
\qquad \bu_{t+1}=\tilde f_t(\bz_1,\ldots,\bz_t,\bgamma_1,\ldots,\bgamma_k)\]
for Lipschitz functions $\tilde f_t:\R^{n \times (t+k)} \to \R^n$ depending 
on side information vectors $\bgamma_1,\ldots,\bgamma_k \in \R^n$, upon
identifying $f_t(\cdot) \equiv \tilde f_t(\,\cdot\,,\bgamma_1,\ldots,\bgamma_k)$.

If $\bgamma_j \equiv \bgamma_j(n) \in \R^n$ for $j=1,\ldots,k$ are random and independent of $\bW$,
then Theorem \ref{thm:main_universality} may be applied in such settings
conditionally on $\bgamma_1,\ldots,\bgamma_k$, provided that
$\cF=\{f_1,\ldots,f_{T-1},\phi_1,\phi_2\}$ is BCP-approximable
almost surely with respect to the randomness of the infinite sequences
$\{\bgamma_1(n),\ldots,\bgamma_k(n)\}_{n=1}^\infty$. In this context,
\begin{align}
b_{t+1,s}&=\frac{1}{n}\,\E[\div\nolimits_s \tilde
f_t(\bZ_{1:t},\bgamma_{1:k}) \mid \bgamma_{1:k}]\\
\bSigma_{t+1}[r+1,s+1]&=\frac{1}{n}\,
\E[\tilde f_r(\bZ_{1:r},\bgamma_{1:k})^\top \tilde
f_s(\bZ_{1:s},\bgamma_{1:k}) \mid \bgamma_{1:k}]
\end{align}
of Definition \ref{def:non_asymp_se} are also defined conditionally on
$\bgamma_{1:k}$. Corollary \ref{cor:equivalent_SE} implies that in such
settings, we may replace these by the deterministic unconditional quantities
\begin{align}
\bar b_{t+1,s}&=\frac{1}{n}\,\E[\div\nolimits_s \tilde
f_t(\bar\bZ_{1:t},\bgamma_{1:k})]\\
\bar \bSigma_{t+1}[r+1,s+1]&=\frac{1}{n}\,
\E[\tilde f_r(\bar\bZ_{1:r},\bgamma_{1:k})^\top \tilde f_s(\bar\bZ_{1:s},\bgamma_{1:k})]
\end{align}
defined iteratively with $\bar \bZ_{1:t} \sim \cN(0,\bar\bSigma_t \otimes
\Id_n)$ independent of $\bgamma_{1:k}$, as long as for each fixed $s,t$
we have the almost-sure concentration
\[\lim_{n \to \infty} b_{ts}-\bar b_{ts}=0,
\quad \lim_{n \to \infty} \bSigma_t-\bar \bSigma_t=0,\]
which can often be established inductively on $t$.
\end{remark}

\subsection{Examples}\label{sec:examples}

We next establish that three examples of uniformly Lipschitz non-separable
functions, which arise across a variety of applications,
satisfy this condition of BCP-approximability. Proofs of the results of this
section are given in Appendix \ref{sec:BCPexamples}.

\subsubsection{Local functions}

Consider a natural extension of separable functions, where each output
coordinate of $f_t:\RR^{n \times t} \to \RR^n$ depends on only $O(1)$ rows of
its input, and conversely each row of its input affects only $O(1)$ coordinates
of its output. Such functions include
convolution kernels and sliding-window filters with bounded support, for which
AMP algorithms have been developed and studied previously in
\cite{metzler2016denoising,ma2016approximate,ma2017analysis,berthier2017,Ma2019}.

We will call such functions ``local'' (although we note that this locality need
not be with respect to any sequential or spatial interpretation of
the coordinates of $\RR^n$). We define formally the following classes.

\begin{definition}\label{def:polylocal}
$\cP=\bigsqcup_{t=0}^T \cP_t$ is a set of {\bf polynomial local functions}
if, for some constants $A,D,B>0$ independent of $n$,
every function $p \in \cP_t$ satisfies the following properties:
\begin{enumerate}
\item (\emph{Locality}) For each $i\in[n]$, there exists a
subset $A_i \subset [n]$ and a function
$\mathring{p}_i:\RR^{|A_i| \times t} \to \RR$ such that
$p(\bz)[i]=\mathring{p}_i(\bz[A_i])$, where $\bz[A_i] \in \RR^{|A_i| \times t}$
are the rows of $\bz$ belonging to $A_i$. For each $i \in [n]$, we have
$|A_i| \leq A$ and $|\{j\in[n]:i\in A_j\}| \leq A$.
\item (\emph{Boundedness}) All such polynomials $\mathring p_i$ have degree at most
$D$ and all coefficients bounded in magnitude by $B$.
\end{enumerate}
\end{definition}

\begin{definition}\label{def:local} 
$\cF=\bigsqcup_{t=0}^T \cF_t$ is a set of {\bf Lipschitz local functions}
if, for some constants $A,L>0$ independent of $n$,
every function $f \in \cF_t$ satisfies the following properties:
\begin{enumerate}
\item (\emph{Locality}) 
Each $f \in \cF_t$ is given by $f=(\mathring f_i)_{i=1}^n$, for functions
$\mathring f_i:\RR^{|A_i| \times t} \to \RR$ satisfying the same locality
condition (1) as in Definition \ref{def:polylocal}.
\item (\emph{Lipschitz continuity}) Each $\mathring{f}_i:\RR^{|A_i| \times t}
\to \RR$ satisfies
\begin{equation}\label{eq:Lidef}
|\mathring{f}_i(0)| \leq L, \qquad |\mathring{f}_i(\bx)-\mathring{f}_i(\by)|
\leq L \|\bx-\by\|_F
\text{ for all } \bx,\by \in \RR^{|A_i| \times t}.
\end{equation}
\end{enumerate}
\end{definition}

These definitions allow the functions $\mathring p_i$ and $\mathring f_i$ to
differ across coordinates, so that they may incorporate differing local
function definitions and also side information vectors.
The following proposition shows that any such function classes $\cP$/$\cF$ are
BCP-representable/BCP-approximable.

\begin{proposition}\label{prop:local}
\phantom{}
\begin{enumerate}[(a)]
\item If $\cP=\{f_0,\ldots,f_{T-1},\phi_1,\phi_2\}$ in
Theorem \ref{thm:universality_poly_amp} is a set of polynomial
local functions, then it is BCP-representable.
\item If $\cF=\{f_0,\ldots,f_{T-1},\phi_1,\phi_2\}$ in
Theorem \ref{thm:main_universality} is a set of Lipschitz local functions,
then it is BCP-approximable.
\end{enumerate}
\end{proposition}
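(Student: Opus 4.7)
My plan is to represent each $p \in \cP_t$ by an explicit set of \emph{sparse} tensors and use the sparsity to verify the BCP. Writing each $\mathring{p}_i$ as a sum of monomials of degree at most $D$ and accumulating coefficients, each monomial contributes a single entry to one of the tensors $\bT^{(\sigma)}$ in the representation \eqref{eq:tensorpolyrepr}. By the locality constraint, every resulting tensor $\bT \in (\RR^n)^{\otimes(d+1)}$ has entries bounded by a constant depending only on $B$ and $D$, and is supported on index tuples satisfying $\{i_1,\ldots,i_d\} \subseteq A_j$, where $j$ is the output slot. Combined with the bound $|\{j : i \in A_j\}| \leq A$, this implies that fixing any single index of $\bT$ leaves at most $O_A(1)$ choices for the other indices. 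To verify \eqref{eq:BCPcondition}, given a connected partition $\pi$, I would build a spanning tree on the hypergraph whose vertices are the $\ell$ index classes and whose hyperedges are the tensors $\bT_1,\ldots,\bT_m$; connectivity is guaranteed by the second bullet of Definition \ref{def:BCP}. Rooting the tree at one vertex gives $n$ choices for that index, and each subsequent index class, reached via an adjacent tensor's locality neighborhood, has only $O_A(1)$ choices. Thus the number of nonzero summands is $O(n)$ with each bounded by $O(1)$, and division by $n$ produces the required uniform bound.

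\medskip

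\noindent\textbf{Part (b), condition (1).} I would build $p \in \cP_t$ coordinatewise from scalar polynomial approximations of each $\mathring{f}_i$. Because $|A_i| \cdot t \leq At$ is bounded uniformly in $n$ and $i$, each $\mathring{f}_i$ is $L$-Lipschitz on a fixed-dimensional Euclidean space. A standard application of Hermite expansion shows that for any $L, C_0, \epsilon > 0$, there exist $D, B$ (independent of the particular function) such that every $L$-Lipschitz $\mathring{f} : \RR^k \to \RR$ with $k \leq At$ admits a polynomial approximant of degree at most $D$ with coefficient magnitudes at most $B$ achieving $L^2$ error below $\epsilon$ under any centered Gaussian measure whose covariance has operator norm at most $C_0$. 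Taking $\mathring{p}_i$ to be such an approximant of $\mathring{f}_i$ with respect to the marginal covariance of $\bZ[A_i]$ yields a polynomial local function $p$, and BCP-representability of $\cP$ follows from part (a).

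\medskip

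\noindent\textbf{Part (b), condition (2).} I would take $\cQ = \bigsqcup_t \cQ_t$ to be the class of all local polynomials of arbitrary (elementwise bounded) degree. Any bounded-degree $q_1, q_2 \in \cQ_t$ augment $\cP$ only by further local polynomials, so $\cP \cup \{q_1, q_2\}$ remains BCP-representable by part (a). To establish convergence of $\frac{1}{n}\|f(\bz) - p(\bz)\|_2^2$ to its Gaussian expectation, I would use a truncate-and-approximate scheme. Setting $g_i(\bx) = (\mathring{f}_i(\bx) - \mathring{p}_i(\bx))^2$ and fixing $R > 0$, Weierstrass approximation provides bounded-degree polynomials $\mathring{r}_i$ (uniformly over $i$) with $\sup_{\|\bx\|_F \leq R}|g_i(\bx) - \mathring{r}_i(\bx)| < \epsilon'$. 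Defining $r \in \cQ_t$ by $r(\bz)[i] = \mathring{r}_i(\bz[A_i])$, applying \eqref{eq:momentconvergence} with $q_1 = r$ and $q_2 = \bone$ yields convergence of $\frac{1}{n}\sum_i \mathring{r}_i(\bz[A_i])$ to its Gaussian expectation. The residual tail $\frac{1}{n}\sum_i (g_i - \mathring{r}_i)(\bz[A_i]) \cdot \1\{\|\bz[A_i]\|_F > R\}$ and its Gaussian counterpart are each controlled by the polynomial growth of $g_i - \mathring{r}_i$ combined with moment convergence applied to local monomial test functions such as $q(\bz)[i] = \|\bz[A_i]\|_F^{2N}$, yielding tail bounds decaying polynomially in $R$. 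Sending $R \to \infty$ and $\epsilon' \to 0$ completes the argument.

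\medskip

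\noindent\textbf{Main obstacle.} The most delicate step is maintaining uniformity in $i$ throughout part (b). The functions $\mathring{f}_i$ can differ arbitrarily across coordinates, yet we require a single polynomial degree and coefficient budget that works for all of them simultaneously, and moreover the tail bound in condition (2) must be uniform over $i$ even though the $g_i$ differ. Both uniformities are made possible by the locality assumption, which caps the relevant input dimension at $At$; in this fixed-dimensional regime, quantitative $L^2$-Gaussian polynomial approximation of Lipschitz functions is uniform over the Lipschitz class, paralleling the separable argument of Example \ref{ex:Sep2}.
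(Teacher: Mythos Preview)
Part (a) is correct and essentially coincides with the paper's proof: both observe that each representing tensor has the property that fixing any one index leaves only $O_A(1)$ choices for the remaining indices, and both bound the contraction by a spanning-tree / leaf-peeling argument on the connected index structure (the paper packages this as Lemma~\ref{lem:summabletensor}).

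For part (b), your route differs from the paper's. The paper first passes through a \emph{finite $\zeta$-net} $\cN_a$ of the Lipschitz class $\mathring{\cF}_a$ (in sup-norm on a large ball): each $\mathring{f}_i$ is replaced by a net element $\mathring{g}_i \in \cN_a$, and only those finitely many net elements receive polynomial approximants. This net then does double duty in condition (2): grouping indices as $\cI_{a,\mathring g} = \{i : \mathring{g}_i = \mathring g\}$, the paper shows that moment convergence of the empirical law of $\{\bz[A_i]\}_{i \in \cI_{a,\mathring g}}$ to the Gaussian implies Wasserstein-$k$ convergence for every $k$, and since the \emph{same} function $(\mathring g - p_{\mathring g})^2$ is averaged over the whole group, the conclusion follows directly. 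You skip the net and instead re-approximate each $g_i = (\mathring{f}_i - \mathring{p}_i)^2$ by a Weierstrass polynomial $\mathring{r}_i$ in condition (2), controlling the off-ball contribution via high-moment local test functions. This also works---uniformity of the degree and coefficients of $\mathring{r}_i$ over $i$ follows from precompactness of $\{g_i\}$ in $C(B_R)$---but it introduces a second approximation layer and requires you to check that all the auxiliary local polynomials ($r$, the moment functions $\|\bz[A_i]\|_\Fro^{2N}$, etc.) sit in a BCP-representable class with uniform degree and coefficient bounds. The paper's net approach is cleaner precisely because it reduces condition (2) to finitely many fixed functions and a single empirical-to-Gaussian comparison.

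One imprecision: your condition (1) invokes ``a standard application of Hermite expansion'' to produce, uniformly over the Lipschitz class, a degree-$D$ approximant with bounded coefficients achieving $L^2$ error below $\epsilon$ under \emph{every} $\cN(0,\bSigma)$ with $\|\bSigma\|_\op < C_0$. Hermite expansion is tied to a specific covariance, and it is not obvious that the monomial-basis coefficients stay uniformly bounded as $\bSigma$ ranges over this set (including near-singular $\bSigma$). The paper sidesteps this by using weighted sup-norm polynomial approximation (Lemma~\ref{lem:polyapprox} with weight $e^{-\sum_j |x_j|^{3/2}}$), which yields a single polynomial per net element that works uniformly over all such $\bSigma$; this is the more robust tool here.
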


Thus the universality statements of Theorems \ref{thm:universality_poly_amp}
and \ref{thm:main_universality} hold for AMP algorithms where both the driving
functions $f_0,f_1,\ldots,f_{T-1}$ and test function $\phi$ are local in this
sense.

\subsubsection{Anisotropic functions}

A second example is motivated by applications in which a separable AMP
algorithm of the form (\ref{eq:AMP}) is applied to a
matrix having row and column correlation. We consider here an algorithm
\[\label{eq:AMPAniso2}
\begin{split}
    \tilde{\bz}_t
    = \tilde{\bW}\tilde \bu_t - \text{Onsager correction},
\qquad \tilde{\bu}_{t+1} &= \tilde f_t(\tilde \bz_1, \dots, \tilde \bz_t)
\end{split}\]
where $\tilde f_1,\tilde f_2,\ldots$ are separable functions, and
$\tilde\bW=\bK^\top \bW\bK$ where $\bW \in \R^{n \times n}$ is a
Wigner matrix and $\bK \in \RR^{n \times n}$ is a bounded and invertible
linear transform.

To analyze such an algorithm, the following type of reduction to a
non-separable AMP algorithm has been used previously in
e.g.\ \cite{loureiro2021learning,loureiro2022fluctuations,zhang2024matrix,wang2025glamp}, and suggested also for the analysis of SGD in \cite{gerbelot2024rigorous}: Note that the iterations (\ref{eq:AMPAniso2})
are equivalent to the algorithm (\ref{eq:AMP}) applied to $\bW$,
upon identifying
\[\bu_t=\bK \tilde \bu_t,
\qquad \bz_t=(\bK^{-1})^\top \tilde\bz_t,
\qquad f_t(\bz_{1:t})=\bK \tilde f_t(\bK^\top \bz_{1:t}).
\]
(The Onsager correction for $\bz^t$ in (\ref{eq:AMP}) is given by
$\sum_{s=1}^{t-1} b_{ts}\bu_s$, leading to a form
$\sum_{s=1}^{t-1} b_{ts}\bK^\top\bK \tilde \bu_s$ for the Onsager correction for
$\tilde \bz_t$ in (\ref{eq:AMPAniso2}).) 
 Thus the iterates of (\ref{eq:AMPAniso2}) may be
studied via analysis of \eqref{eq:AMP} for non-separable functions
belonging to the following classes.

\begin{definition}\label{def:polyaniso}
$\cP=\bigsqcup_{t=0}^T \cP_t$ is a set of {\bf polynomial anisotropic
functions} with respect to $\cK \subset \RR^{n \times n}$
if there exist constants $D,B>0$ such that
\begin{itemize}
\item For each $t=0,1,\ldots,T$ and $p \in \cP_t$, 
there is a separable function $q:\RR^{n\times t}\to\RR^n$ given by
$q(\bz_{1:t})[i]=\mathring q_i(\bz_{1:t}[i])$ for some functions $\mathring
q_i:\RR^t \to \RR$, and two matrices $\bK',\bK \in \cK$, such that
\[p(\bz_{1:t})=\bK' q(\bK^\top \bz_{1:t}).\]
\item All components $\mathring q_i:\RR^t \to \RR$ of $q$
have degree at most $D$ and all coefficients bounded in magnitude by $B$.
\end{itemize}
(For $t=0$, this means $q(\cdot)$ is a constant function that is
entrywise bounded by $B$, and $p(\cdot)=\bK'q(\cdot)$ for some $\bK' \in \cK$.)
\end{definition}

\begin{definition}\label{def:aniso}
$\cF=\bigsqcup_{t=0}^T \cF_t$ is a set of {\bf Lipschitz anisotropic functions}
with respect to a set of matrices $\cK \subset \RR^{n \times n}$ if
there exists a constant $L>0$ such that:
\begin{itemize}
\item For each $t=0,1,\ldots,T$ and $f \in \cF_t$, there is
a separable function $g:\RR^{n\times t}\to\RR^n$ given by
$g(\bz_{1:t})[i]=\mathring g_i(\bz_{1:t}[i])$ for some functions $\mathring
g_i:\RR^t \to \RR$, and two matrices $\bK',\bK \in \cK$, such that
\[f(\bz_{1:t})=\bK' g(\bK^\top \bz_{1:t})\]
\item Each function $\mathring g_i:\RR^t\to\RR$ above satisfies
\begin{equation}\label{eq:anisolipschitz}
|\mathring g_i(0)| \leq L, \qquad
|\mathring g_i(\bx)-\mathring g_i(\bx)| \leq L\|\bx-\by\|_2
\text{ for all } \bx,\by \in \RR^t.
\end{equation}
\end{itemize}
\end{definition}

We note that $\cP$ may not be BCP-representable (and $\cF$ may be not be
BCP-approximable) if rows or columns of matrices
in $\cK$ align with the constant components of $q(\cdot)$ (resp.\ of $g(\cdot)$),
for reasons similar to Example \ref{ex:nonuniversalexample}. The
following proposition shows that if, instead, the matrices $\cK$ are bounded
in $\ell_\infty \to \ell_\infty$ operator norm or have suitably generic shared
singular vectors, then BCP-representability and BCP-approximability hold.

\begin{proposition}\label{prop:anisotropic}
Let $\cK \subset \RR^{n \times n}$ be a set of matrices such that
for a constant $C>0$, either
\begin{enumerate}
\item $\|\bK\|_{\ell_\infty \to \ell_\infty} \equiv \max_i \sum_j
|\bK[i,j]|<C$ and $\|\bK^\top\|_{\ell_\infty \to \ell_\infty}<C$
for all $\bK \in \cK$, or
\item $\cK=\{\bO\bD\bU^\top:\bD \in \cD\}$ for a set of deterministic diagonal
matrices $\cD \subset \RR^{n \times n}$ with $\sup_{\bD \in \cD} \|\bD\|_\op<C$,
and two independent random orthogonal matrices
$\bO \equiv \bO(n) \in \RR^{n \times n}$ and $\bU \equiv
\bU(n) \in \RR^{n \times n}$ (which are also independent of $\bW$ and all other
randomness, and shared by all $\bK \in \cK$) whose laws
have densities with respect to Haar measure uniformly bounded above by $C$.
\end{enumerate}
Then the following hold.
\begin{enumerate}[(a)]
\item Let $\cP=\{f_0,\ldots,f_{T-1},\phi_1,\phi_2\}$ in
Theorem \ref{thm:universality_poly_amp} be a class of polynomial anisotropic
functions with respect to $\cK$.
Then $\cP$ is BCP-representable, almost surely with respect to 
$\{\bO(n),\bU(n)\}_{n=1}^\infty$ under condition (2).
\item Let $\cF=\{f_0,\ldots,f_{T-1},\phi_1,\phi_2\}$ in
Theorem \ref{thm:main_universality} be a class of Lipschitz anisotropic
functions with respect to $\cK$.
Then $\cF$ is BCP-approximable, almost surely with respect to
$\{\bO(n),\bU(n)\}_{n=1}^\infty$ under condition (2).
\end{enumerate}
\end{proposition}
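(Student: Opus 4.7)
The overall strategy is to deduce part (b) from part (a) via a polynomial approximation in the spirit of Example \ref{ex:Sep2}, and to prove part (a) by writing down an explicit tensor representation of each $p = \bK' q(\bK^\top \cdot) \in \cP$ and then verifying the BCP bound \eqref{eq:BCPcondition} directly using the structural assumptions on $\cK$. Specifically, if $q$ contains the monomial $q(\bz_{1:t})[j] = c_j \prod_{r=1}^d \bz_{\sigma(r)}[j]$ with $\sigma \in \cS_{t,d}$ and $|c_j| \le B$, then the corresponding contribution to $p$ is represented by the order-$(d+1)$ tensor
\[
\bT^{(\sigma)}[i_1,\ldots,i_d,i_{d+1}] = \sum_{j=1}^n c_j\, \bK'[i_{d+1},j] \prod_{r=1}^d \bK[i_r,j],
\]
and the constant component $\bT^{(0)} \in \cT_1$ is $\bK' c$ for a vector $c \in \RR^n$ with entries bounded by $B$. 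The candidate tensor set $\cT$ will consist of all such representations as $\bK,\bK' \in \cK$ and $c$ varies over bounded vectors.

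To verify BCP under condition (1), substitute the representation above into \eqref{eq:BCPcondition}. The inner sum over the $\ell$ external indices factorizes into one-index sums of the form $\sum_{i=1}^n \prod_{r=1}^{2m_k} \bK_{a_r}[i,j_{a_r}]$ (with some factors possibly replaced by $\bK'_{a_r}$), where $2m_k$ is the even number of times index $i_k$ appears. Each such inner sum is bounded by $C^{2m_k}$ by combining the pointwise bound $|\bK[i,j]|\le C$ with the column-sum bound $\sum_i |\bK[i,j]|\le C$, both consequences of the $\ell_\infty \to \ell_\infty$ hypothesis. The remaining sum over the auxiliary indices $j_1,\ldots,j_m$ is then handled by iterating Cauchy--Schwarz together with the estimate $\|\bK_a^\top \bK_b\|_\Fro = O(\sqrt n)$, which follows from $\|\bK_a^\top \bK_b\|_\op \le C^2$. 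The connectivity assumption on $\pi$ is essential so that the $j$-indices are linked sufficiently for the Cauchy--Schwarz bookkeeping to close with overall bound $O(n)$, which after division by $n$ gives the desired $O(1)$.

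Under condition (2) the matrices $\bK_a = \bO \bD_a \bU^\top$ and $\bK_a' = \bO \bD_a' \bU^\top$ share the orthogonal factors $\bO,\bU$, yielding the deterministic identities $\bK_a^\top \bK_b = \bU \bD_a \bD_b \bU^\top$ and $\bK_a' (\bK_b')^\top = \bO \bD_a' \bD_b' \bO^\top$, each with operator norm at most $C^2$. Substituting $\bK=\bO\bD\bU^\top$ into the tensor formula and exploiting $\bO^\top \bO = \bU^\top \bU = \Id$ wherever possible collapses many of the contractions, reducing the BCP sum to a contraction involving only a bounded number of surviving entries of $\bO$ and $\bU$ weighted by diagonal factors and coefficients $c_j$. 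These residual expressions are controlled almost surely via classical moment bounds for Haar orthogonal matrices (obtained from the Weingarten calculus), transferred to the laws of $\bO,\bU$ through the uniform density bound $\le C$, together with a Borel--Cantelli argument over $n$.

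To obtain part (b) from (a), given $f = \bK' g(\bK^\top \cdot) \in \cF$ and $\epsilon>0$, approximate each scalar component $\mathring g_i : \RR^t \to \RR$ of $g$ by a polynomial $\mathring p_i$ of degree and coefficients bounded in terms of $L,C_0,\epsilon$ exactly as in Example \ref{ex:Sep2}. The resulting $p = \bK' \tilde g(\bK^\top \cdot) \in \cP$ satisfies
\[
\frac{1}{n}\,\E_{\bZ \sim \cN(0,\bSigma \otimes \Id_n)} \|f(\bZ)-p(\bZ)\|_2^2 \;\le\; \|\bK'\|_\op^2 \cdot \frac{1}{n}\,\E \|g(\bK^\top \bZ) - \tilde g(\bK^\top \bZ)\|_2^2,
\]
which is small since $\bK^\top \bZ$ has row covariance $\bK^\top \bSigma \bK$ of operator norm at most $C^2 C_0$. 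For Definition \ref{def:BCP_approx}(2), take $\cQ$ to consist of anisotropic monomials $\bK' \tilde q(\bK^\top \cdot)$ and argue as in Example \ref{ex:Sep2}, using that convergence of polynomial moments of $\bz$ transfers to convergence of the polynomial approximations by a Wasserstein-type bound. The main obstacle will be the BCP verification under condition (2): a careful combinatorial argument is required to enumerate the contraction patterns that survive after the orthogonal cancellations, and the connectivity hypothesis on $\pi$ must be leveraged precisely so that no surviving pattern contributes an extra factor of $n$ from the Haar expectations.
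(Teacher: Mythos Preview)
Your tensor representation for part (a) matches the paper's, but the verification of the BCP has real gaps in both conditions.

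\textbf{Condition (1).} Your two-step plan --- bound each $i_k$-sum $\sum_{i}\prod_r \bK_{a_r}[i,j_{a_r}]$ by $C^{2m_k}$, then handle the remaining $j_1,\ldots,j_m$-sum by Cauchy--Schwarz and $\|\bK_a^\top\bK_b\|_\Fro=O(\sqrt n)$ --- does not close. Once you replace each $i$-sum by a constant, the $\bK$-structure is gone and the $j$-sum has $n^m$ terms; Cauchy--Schwarz against Frobenius norms cannot be invoked because there are no matrix entries left to contract. The paper avoids this entirely: it expands $\val_G(\cL)$ as a larger tensor network whose vertices carry diagonal tensors $\bD_v$, identity tensors $\Id^{\deg(u)}$, and matrices $\bK_e$ on the edges (equation \eqref{eq:diagval}). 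Each of these individually satisfies the summability bound \eqref{eq:summabletensor}, so Lemma \ref{lem:summabletensor} (a leaf-stripping/tree-reduction argument) gives $|\val_G(\cL)|\le Cn$ directly, with no Cauchy--Schwarz.

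\textbf{Condition (2).} The deterministic identity $\bO^\top\bO=\Id$ does not ``collapse'' anything here: in the BCP expression the $\bO$-factors carry distinct row indices $i_1,\ldots,i_\ell$, so no product $\bO^\top\bO$ forms. The paper instead takes expectation and applies the Weingarten formula \eqref{eq:weingarten} to expand $\E\val_G(\cL)$ over pairings of the edge set, and bounds the result via the metric $d(\pi,\pi')=|\pi|+|\pi'|-2|\pi\vee\pi'|$ on the partition lattice together with its triangle inequality. For almost-sure control one then needs a fourth-moment bound $\E[(\val_G(\cL)-\val_G(\bar\cL))^4]\le Cn^2$; this requires a case split on $|\pi_T\vee\pi\vee\pi'\vee\tau'\vee\tau\vee\pi_\Id|$ and, crucially, sign cancellations in both the leading \emph{and} sub-leading Weingarten coefficients $\Wg^{(0)}$ and $\Wg^{(1)}$ across the $2^4$ subsets $S\subseteq\{1,2,3,4\}$. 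None of this is captured by ``classical moment bounds for Haar orthogonal matrices.''

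\textbf{Part (b).} Your error bound is off because the rows of $\bK^\top\bZ$ do \emph{not} share a common covariance: row $i$ has law $\cN(0,\|\bK[\cdot,i]\|_2^2\,\bSigma)$, with $\|\bK[\cdot,i]\|_2^2$ varying across $i$ (and possibly vanishing). A single polynomial $\mathring p$ cannot approximate $\mathring g_i$ uniformly in $i$ in $L^2$ against all these laws. The paper addresses this by discarding columns with $\|\bK[\cdot,i]\|_2^2<\zeta$, rescaling to $\tilde g_i(\bx)=\mathring g_i(\|\bK[\cdot,i]\|_2\,\bx)$ so that the relevant input is standardized, and then passing through a finite $\zeta$-net $\cN$ of Lipschitz functions before polynomial approximation. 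The verification of Definition \ref{def:BCP_approx}(2) then proceeds net-element-by-net-element via a Wasserstein argument on the empirical law of $\{\bK[\cdot,i]^\top\bz/\|\bK[\cdot,i]\|_2\}_{i\in\cI_h}$, not on the raw $\bz$.
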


Thus the universality claims of Theorems \ref{thm:universality_poly_amp} and
\ref{thm:main_universality} hold for the analysis of \eqref{eq:AMPAniso2} as
long as $\cK=\{\bK\}$ satisfies one of these two conditions.

\subsubsection{Spectral functions}\label{subsec:spec}

A third example is motivated by matrix sensing applications
\cite{donoho2013phase,berthier2017,romanov2018near,xu2025fundamental},
in which we explicitly identify $\RR^n \equiv \RR^{M \times N}$ as a matrix
space with $n=MN$ and $M \asymp N \asymp \sqrt{n}$.
We consider non-linear functions given by
transformations of singular values on this matrix space.

Formally, consider the vectorization map $\vec:\RR^{M \times N} \to \RR^n$
given by
\[\vec(\bX)
=(\bX[1,1],\ldots,\bX[M,1],\ldots,\bX[1,N],\ldots,\bX[M,N])^\top \in \RR^n\]
and its inverse map $\mat:\RR^n \to \RR^{M \times N}$.
For a scalar function $g:[0,\infty) \to \RR$ and matrix $\bX \in \RR^{M \times N}$
with singular value decomposition $\bX=\bO\bD\bU^\top$ and singular values
$\bD=\diag(d_1,\ldots,d_{\min(M,N)}) \in \RR^{M \times N}$,
we define $g(\bX)$ via the spectral calculus
\begin{equation}\label{eq:spectralcalculus}
g(\bX)=\bO g(\bD)\bU^\top,
\qquad g(\bD)=\diag(g(d_1),\ldots,g(d_{\min(M,N)})) \in \RR^{M \times N}.
\end{equation}
Thus $g(\cdot)$ is applied spectrally to the singular values of $\bX$. We
consider the following class of functions, given by sums of 
Lipschitz spectral maps applied to linear combinations of 
$\mat(\bz_1),\ldots,\mat(\bz_t)$ and a signal matrix $\bTheta_* \in
\RR^{M \times N}$.

\begin{definition}\label{def:spectral}
$\cF=\bigsqcup_{t=0}^T \cF_t$ is a set of {\bf Lipschitz spectral functions}
with shift $\bTheta_* \in \RR^{M \times N}$ if, for some constants $C,K,L>0$:
\begin{itemize}
\item For each $t=0,1,\ldots,T$ and each $f \in \cF_t$, there exist scalar
functions $g_1,\ldots,g_K:[0,\infty) \to \RR$ and coefficients
$\{c_{ks}\}_{k \in [K],s \in [t]}$ with $|c_{ks}|<C$ for which
\begin{align}\label{eq:def_spectral_func}
f(\bz_1,\ldots,\bz_t)=\sum_{k=1}^K \vec\bigg(g_k\bigg(
\sum_{s=1}^t c_{ks}\mat(\bz_s)+\bTheta_*\bigg)\bigg)
\end{align}
where $g_k(\cdot)$ is applied spectrally to the singular values of its input
as in (\ref{eq:spectralcalculus}).
\item Each function $g_k$ satisfies
\[g_k(0)=0,
\qquad |g_k(x)-g_k(y)| \leq L|x-y| \text{ for all } x,y \geq 0.\]
\end{itemize}
\end{definition}

In our examples to follow, $\bTheta_* \in \RR^{M \times N}$ will play the role
of a signal matrix, and $g_k(\cdot)$ may represent a singular value
thresholding function such as $g_k(x)=\sign(x)(x-\lambda\sqrt{N})_+$ for some
constant $\lambda>0$. The following proposition ensures that if the singular
vectors of $\bTheta_*$ are suitably generic, then such functions are
BCP-approximable. We defer a discussion of a corresponding class of polynomial
spectral functions that are BCP-representable to Appendix \ref{sec:BCPexamples}.

\begin{proposition}\label{prop:spectral}
Let $\cF=\{f_0,\ldots,f_{T-1},\phi_1,\phi_2\}$ in Theorem
\ref{thm:main_universality} be a set of Lipschitz spectral functions with
shift $\bTheta_* \in \RR^{M \times N}$, where $MN=n$. As $n \to \infty$,
suppose $M/N \to \delta$ for some constant $\delta \in (0,\infty)$,
and $\bTheta_*=\bO\bD\bU^\top$ where
\begin{itemize}
\item $\bD \in \RR^{M \times N}$ is a deterministic diagonal matrix satisfying
$\|\bD\|_\op<C\sqrt{N}$.
\item $\bO \equiv \bO(n) \in \RR^{M \times M}$
and $\bU \equiv \bU(n) \in \RR^{N \times N}$ are independent random
orthogonal matrices (also independent of $\bW$ and
all other randomness) having densities with respect to Haar measure uniformly
bounded above by $C$.
\end{itemize}
Then $\cF$ is BCP-approximable, almost surely with respect to
$\{\bO(n),\bU(n)\}_{n=1}^\infty$.
\end{proposition}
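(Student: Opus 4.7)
The overall strategy mirrors Example \ref{ex:Sep2} and extends the argument behind Proposition \ref{prop:anisotropic}(2), with the additional ingredient of the spectral equivariance $g_k(\bO\bY\bU^\top)=\bO\,g_k(\bY)\,\bU^\top$ for orthogonal $\bO,\bU$. Applying this identity with the Haar factors of $\bTheta_*=\bO\bD\bU^\top$, I can rewrite
\begin{equation*}
f(\bz_{1:t})=\sum_{k=1}^K\vec\bigl(\bO\,g_k(\tilde\bX_k)\,\bU^\top\bigr),
\qquad \tilde\bX_k=\sum_{s=1}^t c_{ks}\,\bO^\top\mat(\bz_s)\bU+\bD,
\end{equation*}
so that the spectral shift becomes the deterministic diagonal $\bD$ and all randomness in the singular vectors of $\bTheta_*$ is absorbed into multiplicative Haar factors acting on the input and output. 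This representation makes the problem structurally parallel to the anisotropic case of Proposition \ref{prop:anisotropic}(2), but with spectral calculus on $\tilde\bX_k$ playing the role of pre/post-multiplication by a generic matrix.

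For condition (1) of Definition \ref{def:BCP_approx}, I extend each $g_k$ oddly to $\RR$ and approximate the rescaled function $y\mapsto g_k(R\sqrt{N}\,y)/(R\sqrt{N})$ on $[-1,1]$ by an odd polynomial of fixed degree via Weierstrass approximation, where $R$ is chosen large enough that $\|\bX_k\|_\op\leq R\sqrt{N}$ with overwhelming probability under $\bz_s\sim\cN(0,\bSigma\otimes\Id_n)$. Rescaling back gives an odd polynomial $\tilde p_k(x)=x\,h_k(x^2)$ of bounded degree whose coefficients scale polynomially in $N^{-1/2}$, and because $\tilde p_k$ is odd, spectral calculus yields the matrix-polynomial identity $\tilde p_k(\bX)=\bX\,h_k(\bX^\top\bX)$. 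The resulting approximant
\begin{equation*}
p(\bz_{1:t})=\sum_{k=1}^K\vec\bigl(\tilde p_k(\bX_k)\bigr),
\qquad \bX_k=\sum_s c_{ks}\mat(\bz_s)+\bTheta_*,
\end{equation*}
is then a polynomial in $\bz_{1:t}$ of bounded degree, and the bound (\ref{eq:BCPpolyapprox}) reduces to $\|g_k(\bX_k)-\tilde p_k(\bX_k)\|_\Fro^2=\sum_i(g_k(d_i)-\tilde p_k(d_i))^2$ (with $d_i$ the singular values of $\bX_k$), handled by singular-value concentration together with a crude $L^\infty$ tail bound on $(g_k-\tilde p_k)^2$.

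The heart of the proof is the BCP-representability of the resulting class $\cP$. Expanding $\bX_k(\bX_k^\top\bX_k)^j$ gives a sum of matrix monomials whose factors alternate between $\mat(\bz_s)$, $\bTheta_*$, and their transposes; after vectorizing the output and grouping the $\bz$-independent data, each summand is a polynomial in $\bz_{1:t}$ whose coefficient tensor is built from a bounded number of copies of $\bO$, $\bU$, and $\bD$ (from each occurrence of $\bTheta_*=\bO\bD\bU^\top$) contracted in the patterns dictated by the matrix multiplications. I plan to verify (\ref{eq:BCPcondition}) for this family by adapting the argument for Proposition \ref{prop:anisotropic}(2): the bounded-density hypothesis on $\bO,\bU$ reduces the BCP sums to expectations of polynomial expressions in entries of Haar orthogonal matrices, which can be estimated by Weingarten-type bounds, and the diagonal factors $\bD$ contribute a multiplicative scale that is absorbed by the $N^{-j}$ rescaling present in the coefficients of $\tilde p_k$ on the window $[-R\sqrt N,R\sqrt N]$. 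The combinatorics is more involved than in the anisotropic case, since each monomial contains several interleaved $\bTheta_*$ factors producing multiple nested Haar contractions.

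For condition (2), I take $\cQ=\bigsqcup_t\cQ_t$ to contain, in addition to the separable monomial functions of Example \ref{ex:Sep2}, their spectral analogues, namely vectorizations of $(\bX^\top\bX)^j$ and $\bX(\bX^\top\bX)^j$ for fixed $j$. The enlargement $\cP\cup\{q_1,q_2\}$ inherits the same Haar-plus-diagonal tensor structure and is BCP-representable by the same argument as above. Under the moment-convergence hypothesis (\ref{eq:momentconvergence}) applied to these $q_1,q_2$, the quantities $\tfrac1n\mathrm{tr}[(\bX_k^\top\bX_k)^j]$ concentrate for each fixed $j$, so the empirical singular value distribution of $\bX_k$ converges almost surely in every Wasserstein-$k$ distance to its Gaussian expectation; since $(g_k-\tilde p_k)^2$ is a continuous function of polynomial growth, this yields $\tfrac1n\|g_k(\bX_k)-\tilde p_k(\bX_k)\|_\Fro^2\to\tfrac1n\,\E\|g_k(\bX_k)-\tilde p_k(\bX_k)\|_\Fro^2<\epsilon/K$ a.s., and summing over $k$ with the triangle inequality gives the claim. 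The main obstacle I anticipate is the BCP step: unlike the anisotropic setting, spectral expansions require controlling tensor contractions built from several interleaved copies of the Haar factors $\bO,\bU$ per monomial, so a careful enumeration of the admissible contraction patterns together with Weingarten estimates for bounded-density perturbations of Haar orthogonals will be the crux of the argument.
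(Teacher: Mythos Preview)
Your approximation of $g_k$ by odd polynomials with $N^{-1/2}$ rescaling, and your choice of $\cQ$ consisting of spectral monomials to control the singular-value distribution via moment convergence, both match the paper closely. The divergence is at the BCP step, and here the paper takes a substantially different and simpler route that you have missed.

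You propose to keep the shift $\bTheta_*=\bO\bD\bU^\top$ with its arbitrary diagonal $\bD$ and verify BCP for the resulting tensors by Weingarten calculus over $\bO,\bU$, in analogy with Proposition~\ref{prop:anisotropic}(2). The paper instead replaces $\bTheta_*$ by a polynomial of an auxiliary matrix: it introduces $\bX_*=\bO\bS\bU^\top$ where $\bS$ has the law of the singular values of an i.i.d.\ Gaussian matrix $\bG_*\in\RR^{M\times N}$, and fits an odd polynomial $r_0$ so that $r_0$ applied to the sorted Marcenko--Pastur quantiles approximates the sorted entries of $\bD/\sqrt N$ (Lemma~\ref{lem:poly_approx_bounded_monotone}); thus $r_0(\bX_*)\approx\bTheta_*$ in Frobenius norm. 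The approximating polynomial becomes $p(\bz_{1:t})=\sum_k\vec(r_k(\sum_s c_{ks}\mat(\bz_s)+r_0(\bX_*)))$, and the BCP is only needed for the set of \emph{alternating tensors} $\bT_\alt^k$ (encoding matrix multiplication) together with the single vector $\bX_*\in\RR^n$. By a change of measure this reduces to alternating tensors plus an i.i.d.\ Gaussian vector $\bG_*$; BCP for the alternating tensors alone is a purely combinatorial fact (Lemma~\ref{lemma:graphlemma}), and Gaussian augmentation preserves BCP by hypercontractivity (Corollary~\ref{cor:BCPgaussian}). No Weingarten calculus appears anywhere in the spectral proof.

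Your route may be executable, but it is not an adaptation of Proposition~\ref{prop:anisotropic}(2): there each tensor has a simple star shape (one diagonal center, copies of a single $\bK\in\RR^{n\times n}$ on the rays), and the Weingarten bound reduces to a single triangle inequality in the partition metric. In your expansion each representing tensor is a contraction of an alternating tensor $\bT_\alt^k$ with several copies of $\bTheta_*$; the Haar factors live in $\RR^{M\times M}$ and $\RR^{N\times N}$ with $M,N\asymp\sqrt n$, and you would need to simultaneously control the red/blue cycle combinatorics of the $\bT_\alt^k$ and the Weingarten pairings of two independent Haar groups, balanced against the $\sqrt N$ scale of $\bD$. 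Your sketch does not indicate how these two combinatorial structures interact to produce the $O(n)$ bound the BCP requires. The paper's $\bX_*$ trick sidesteps this interaction entirely by absorbing $\bD$ into an outer Lipschitz layer rather than into the BCP tensors.
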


\section{Universality of asymmetric AMP}\label{sec:SetupRect}

The preceding ideas are readily extendable to AMP algorithms beyond the
symmetric matrix setting of (\ref{eq:AMP}). 
We discuss here the extension to asymmetric
matrices, as this encompasses many applications of interest for
non-separable AMP algorithms. We anticipate that similar
extensions may be developed for more general procedures such as the class of
graph-based AMP methods discussed in \cite{Gerbelot2022}.

Let $\bu_1 \in \RR^n$ be an initialization, and let
$f_t:\RR^{m \times t} \to \RR^m$ and $g_t:\RR^{n \times t} \to \RR^n$ be two
sequences of non-linear functions for $t=1,2,3,\ldots$ For a matrix
$\bW \in \RR^{m \times n}$, we consider the AMP algorithm
\[\label{eq:RectAMP}\begin{split} 
    \bz_t &= \bW \bu_t - \sum_{s=1}^{t-1} b_{ts} \bv_s\\
    \bv_t &= f_t(\bz_1,\ldots,\bz_t)\\
    \by_t &= \bW^\top \bv_t - \sum_{s=1}^t a_{ts} \bu_s\\
    \bu_{t+1} &= g_t(\by_1,\ldots,\by_t).
\end{split}\]
For convenience, we define the constant function $g_0(\cdot)$ by the
initialization
\[\bu_1 \equiv g_0(\cdot).\]

The Onsager coefficients $b_{ts},a_{ts}$ and corresponding state evolution
are defined analogously to Definition \ref{def:non_asymp_se} as follows.

\begin{definition}\label{def:RectSE}
Let $\bOmega_1 = \frac{1}{n}\|\bu_1\|_2^2 \in \RR^{1\times 1}$. Iteratively
for each $t \geq 1$, given $\bOmega_t \in \RR^{t \times t}$, let
$\bZ_{1:t} \sim \cN(0,\bOmega_t \otimes \Id_m)$, i.e.\ $\bZ_{1:t} \in
\RR^{m \times t}$ has i.i.d.\ rows with distribution
$\cN(0,\bOmega_t)$. Define $\bSigma_t \in \R^{t \times t}$ entrywise by
\[\bSigma_t[r,s]=\frac{1}{m}\EE[f_r(\bZ_{1:r})^\top f_s(\bZ_{1:s})] \text{ for }
r,s=1,\ldots,t.\]
Then, given $\bSigma_t \in \RR^{t \times t}$, let
$\bY_{1:t} \sim \cN(0,\bSigma_t \otimes \Id_n)$, and define $\bOmega_{t+1} \in
\RR^{(t+1) \times (t+1)}$ entrywise by
\[\bOmega_{t+1}[r+1,s+1] 
=\frac{1}{m}\EE[g_r(\bY_{1:r})^\top g_s(\bY_{1:s})] \text{ for }
r,s=0,\ldots,t\]
where $g_0(\cdot) \equiv \bu_1$.
The Onsager coefficients $\{b_{ts}\}_{s<t}$ and $\{a_{ts}\}_{s \leq t}$
in (\ref{eq:RectAMP}) are defined as
    \[
    b_{ts} = \frac{1}{m} \EE[\div\nolimits_s g_{t-1}(\bY_{1:(t-1)})],
    \qquad a_{ts} = \frac{1}{m} \EE[\div\nolimits_s f_t(\bZ_{1:t})].
    \]
    The state evolution approximations of the iterates $\{\by_t,\bz_t\}_{t \geq
1}$ in \eqref{eq:RectAMP} are the sequences of Gaussian vectors
$\{\bY_t,\bZ_t\}_{t \geq 1}$.\end{definition}

We will show the validity and universality of this state evolution
approximation for the following class of asymmetric matrices $\bW \in \RR^{m
\times n}$ having independent entries.

\begin{assumption}\label{assump:Ginibre}
$\bW\in\RR^{m\times n}$ is a random matrix with independent entries
$\{\bW[i,j]\}_{i \leq m,\,j \leq n}$, such that for some constants
$C_3,C_4,\ldots>0$ independent of $n$ and all $i \in [m]$ and $j \in [n]$:
    \begin{itemize}
    \item $\E \bW[i,j]=0$.
    \item $\E \bW[i,j]^2=1/m$.
    \item $\E|\bW[i,j]|^k \leq C_k m^{-k/2}$ for each $k \geq 3$.
    \end{itemize}
\end{assumption}

Our main result is the following
guarantee for the AMP algorithm \eqref{eq:RectAMP} driven
by BCP-representable polynomial functions or BCP-approximable
Lipschitz functions, which parallels Theorems 
\ref{thm:universality_poly_amp} and \ref{thm:main_universality}.

\begin{theorem}\label{thm:RectSE}
Fix any $T \geq 1$, consider an AMP algorithm (\ref{eq:RectAMP}) defined by
$g_0,g_1,\ldots,g_{T-1}$ and $f_1,f_2,\ldots,f_T$, and consider the test
functions
\[\phi(\bz_{1:T})=\frac{1}{m}\phi_1(\bz_{1:T})^\top\phi_2(\bz_{1:T}),
\qquad \psi(\by_{1:T})=\frac{1}{m}\psi_1(\by_{1:T})^\top\psi_2(\by_{1:T})\]
where $\phi_1,\phi_2:\RR^{m \times T} \to \RR^m$
and $\psi_1,\psi_2:\RR^{n \times T} \to \RR^n$. Let
$a_{ts},b_{ts},\bSigma_t,\bOmega_t,\bY_t,\bZ_t$ be as in Definition~\ref{def:RectSE}. Suppose that either:
\begin{enumerate}[(a)]
\item $\cF=\{f_1,\ldots,f_T,\phi_1,\phi_2\}$ and
$\cG=\{g_0,\ldots,g_{T-1},\psi_1,\psi_2\}$ are each a set of BCP-representable
polynomial functions with degrees bounded by a constant $D>0$, and
$\lambda_{\min}(\bOmega_t)>c$ and $\lambda_{\min}(\bSigma_t)>c$ for a constant
$c>0$ and each $t=1,\ldots,T$, or
\item $\cF=\{f_1,\ldots,f_T,\phi_1,\phi_2\}$
and $\cG=\{g_0,\ldots,g_{T-1},\psi_1,\psi_2\}$ are each a set of
BCP-approximable Lipschitz functions for which
there exists a constant $L>0$ such that
for any $f \in \cF$ or $f \in \cG$ and arguments $\bx,\by$ to $f$,
\begin{equation}\label{eq:RectLipschitz}
\|f(0)\|_2 \leq L\sqrt{n},
\qquad \|f(\bx)-f(\by)\|_2 \leq L\|\bx-\by\|_\Fro.
\end{equation}
Furthermore, for each $t=1,\ldots,T$, suppose there is a fixed
set $S_t \subseteq \{1,\ldots,t\}$ of preceding iterates $\{\bz_s:s \in S_t\}$
on which $f_t$ depends, and
$\lambda_{\min}(\bOmega_t[S_t,S_t])>c$ for a constant $c>0$, and 
the same holds for $g_t$ and $\bSigma_t$ for each $t=1,\ldots,T-1$.
\end{enumerate}
If $m,n \to \infty$ such that $c<m/n<C$ for some constants $C,c>0$,
and if $\bW$ is any matrix satisfying Assumption \ref{assump:Ginibre}, then
almost surely
    \[\lim_{m,n \to \infty} \phi(\bz_{1:t})-\EE \phi(\bZ_{1:t})=0, \qquad
    \lim_{m,n \to \infty} \psi(\by_{1:t})-\EE \psi(\bY_{1:t})=0.\]
\end{theorem}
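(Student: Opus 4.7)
The natural strategy is to reduce Theorem~\ref{thm:RectSE} to the symmetric universality results (Theorems~\ref{thm:universality_poly_amp} and~\ref{thm:main_universality}) via a standard symmetric dilation. Set $N = m+n$ and define
\[
\bar\bW = \sqrt{m/N}\,\begin{pmatrix} 0 & \bW \\ \bW^\top & 0 \end{pmatrix} \in \RR^{N \times N}.
\]
Under Assumption~\ref{assump:Ginibre} together with $c < m/n < C$, this $\bar\bW$ satisfies Assumption~\ref{assump:Wigner}: its diagonal is identically zero, off-diagonal entries have variance $1/N$, and the $N^{-k/2}$ moment bounds are inherited from $\bW$ since the rescaling factor $\sqrt{m/N}$ is bounded above and below. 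Embed~\eqref{eq:RectAMP} into a symmetric AMP of length $2T$ on $\RR^N$ by alternating blocks,
\[
\bar\bu_{2t-1} = \sqrt{N/m}\begin{pmatrix} 0 \\ \bu_t \end{pmatrix}, \qquad \bar\bu_{2t} = \sqrt{N/m}\begin{pmatrix} \bv_t \\ 0 \end{pmatrix},
\]
and defining lifted non-linearities $\bar f_s : \RR^{N \times s} \to \RR^N$ (for $s = 1, \ldots, 2T$) that extract the appropriate block of their input, apply $g_{(s-1)/2}$ or $f_{s/2}$, pad with zeros in the complementary block, and absorb the $\sqrt{N/m}$ scaling. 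A direct calculation then confirms that the symmetric iteration~\eqref{eq:AMP} driven by $\bar\bW$ and these lifted functions reproduces $\bz_t$ (resp.\ $\by_t$) in the top-$m$ (resp.\ bottom-$n$) block of $\bar\bz_{2t-1}$ (resp.\ $\bar\bz_{2t}$), and that the symmetric Onsager coefficients $\bar b$ reduce exactly to the pair $\{a_{ts}, b_{ts}\}$ from Definition~\ref{def:RectSE} after block decomposition.

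Next I would transfer the BCP hypotheses from $\cF,\cG$ to $\bar\cF \cup \bar\cG$. Each lifted tensor is supported on a single block ($[m]^k$ or $[n]^k$), so every contraction sum appearing in Definition~\ref{def:BCP} applied to $\bar\cF \cup \bar\cG$ collapses to an analogous sum over $\cF$ or $\cG$; combined with $c < m/n < C$, this shows that BCP-representability (under hypothesis (a)) and BCP-approximability (under hypothesis (b), lifting both the approximating polynomial class $\cP$ and the density set $\cQ$ blockwise) transfer with only bounded changes of constants. The state-evolution covariance $\bar\bSigma_s$ produced by Definition~\ref{def:non_asymp_se} on the lifted system then becomes block-diagonal in parity: its odd-parity submatrix equals $\bOmega_{\lceil s/2 \rceil}$ and its even-parity submatrix equals $\bSigma_{\lfloor s/2 \rfloor}$, with the $\sqrt{N/m}$ rescaling in the iterates cancelling the $1/N$ normalization to recover the $1/m$ normalization in Definition~\ref{def:RectSE}. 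Lifting the test functions $\phi,\psi$ similarly (into a single symmetric test function $\bar\phi$ on $\RR^N$ with a factor of $N/m$) and invoking the appropriate symmetric universality theorem on the lifted system then yields the desired conclusion.

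The main obstacle is that the lifted covariance $\bar\bSigma_s$ is rank-deficient for every $s \geq 2$, since its cross-parity entries vanish because odd-indexed iterates live in the top-$m$ block while even-indexed iterates live in the bottom-$n$ block. One therefore cannot invoke the full $\lambda_{\min}(\bar\bSigma_s) > c$ hypothesis of Theorem~\ref{thm:universality_poly_amp}, but must instead rely on the $S_t$-restricted minimum-eigenvalue condition from Theorem~\ref{thm:main_universality}: by construction each $\bar f_s$ depends only on iterates of matching parity $\bar S_s = \{s, s-2, s-4, \ldots\}$, and on this sub-block the restricted covariance equals either $\bOmega_t$ or $\bSigma_t$, both invertible by hypothesis. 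For part~(a) this requires the observation that Theorem~\ref{thm:universality_poly_amp} (or equivalently the Gaussian result of Section~\ref{sec:gaussian}) actually extends to the $S_t$-restricted invertibility condition, which should follow directly from the combinatorial nature of its proof in the iteration indices on which each $f_t$ genuinely depends. The remaining work is careful bookkeeping of the $\sqrt{m/N}$ scaling through the Onsager corrections, state-evolution covariances, and test-function rescaling.
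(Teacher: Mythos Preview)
Your overall strategy---embedding the rectangular AMP into a symmetric AMP of length $2T$ on $\RR^{m+n}$---is exactly the paper's approach, and your identification of the block structure, the lifting of non-linearities and test functions, and the transfer of BCP properties is essentially correct. However, there are two concrete errors.

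\textbf{The zero-block dilation fails Assumption~\ref{assump:Wigner}.} Your matrix $\bar\bW$ has zero entries in the $m\times m$ and $n\times n$ diagonal blocks, so all off-diagonal entries $(i,j)$ with $i<j\leq m$ (or $m<i<j$) have variance $0$, not $1/N$. Assumption~\ref{assump:Wigner} requires $\E\bar\bW[i,j]^2=1/N$ for \emph{every} $i<j$, so the symmetric universality theorems do not apply to your $\bar\bW$. The paper's fix is to pad the diagonal blocks with independent Gaussian matrices $\bA\in\RR^{m\times m}$, $\bB\in\RR^{n\times n}$ of the correct variance. This costs nothing: since the lifted non-linearities $\bar f_s$ read only the relevant coordinate block of their input and output zero in the complementary block, the contributions of $\bA,\bB$ to $\bar\bz_s$ are discarded by the subsequent non-linearity.

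\textbf{The lifted covariance is not rank-deficient.} You correctly observe that $\bar\bSigma^\sym_s$ has vanishing cross-parity entries, since $\bar f_{2r-1}$ and $\bar f_{2t}$ have disjoint coordinate supports. But this makes $\bar\bSigma^\sym_s$ \emph{block-diagonal} (after permuting rows/columns by parity), with the odd-parity block equal to $\bOmega_{\lceil s/2\rceil}$ and the even-parity block equal to $\bSigma_{\lfloor s/2\rfloor}$. Under hypothesis (a), both blocks have $\lambda_{\min}>c$, so $\lambda_{\min}(\bar\bSigma^\sym_s)=\min\{\lambda_{\min}(\bOmega),\lambda_{\min}(\bSigma)\}>c$ and the full hypothesis of Theorem~\ref{thm:universality_poly_amp} is met directly. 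No extension to an $S_t$-restricted condition is needed for part (a); that mechanism is only used for part (b), exactly as in Theorem~\ref{thm:main_universality}.
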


The main assumption of Theorem \ref{thm:RectSE}
is that the sets of functions $\cF$ and $\cG$ are
separately BCP-representable or BCP-approximable as $m,n \to \infty$,
in the sense of Definitions \ref{def:BCPpoly} and \ref{def:BCP_approx}.
This encompasses the three classes of Lipschitz functions
discussed previously in Propositions \ref{prop:local}, \ref{prop:anisotropic},
and \ref{prop:spectral}, where we do not require $\cF$ and $\cG$ to consist of
functions of the same class.
Theorem \ref{thm:RectSE} is proven as a corollary of Theorems
\ref{thm:universality_poly_amp} and \ref{thm:main_universality} using
an embedding argument as introduced in \cite{Javanmard2012}, which
we provide in Appendix \ref{sec:Rectangle}.

To close out our results, let us illustrate three applications of Theorem
\ref{thm:RectSE} to the AMP algorithm (\ref{eq:AMPintro}) for matrix/vector
estimation discussed in the introduction, which parallel the three
function classes discussed in Section \ref{sec:examples}.

\begin{example}[AMP with local averaging]\label{ex:sensinglocal}
We observe measurements
\begin{equation}\label{eq:sensingmodel}
\bx=\bW\btheta_*+\be \in \RR^m
\end{equation}
of an unknown signal $\btheta_* \in \RR^n$, with measurement error/noise
$\be \in \RR^m$. Consider the AMP algorithm (\ref{eq:AMPintro}), whose form
we reproduce here for convenience:
\[\label{eq:Berthier}
\begin{aligned}
    \br_t &= \bx - \bW\btheta_t + b_t \br_{t-1},\\
    \btheta_{t+1} &= \eta_t(\btheta_t + \bW^\top \br_t)
\end{aligned}
\]
This algorithm is initialized at $\btheta_1=\br_0=0$, with Onsager
coefficient $b_t=\frac{1}{m}\div \eta_{t-1}(\btheta_{t-1}+\bW^\top \br_t)$.

Applying the change-of-variables $\bu_t=\btheta_*-\btheta_t$ and
$\bz_t=\br_t-\be$ (see e.g.\ \cite[Section 3.3]{Bayati2011}), this
procedure \eqref{eq:Berthier} is equivalent to the AMP
iterations \eqref{eq:RectAMP} given by
\begin{equation}\label{eq:equivAMP}
\begin{aligned}
    \bz_t &= \bW \bu_t - b_{t,t-1} \bv_{t-1}\\
    \bv_t &= f_t(\bz_t) \equiv \bz_t+\be\\
    \by_t &= \bW^\top \bv_t - \bu_t \quad \text{ (where $a_{tt}=1$)} \\
    \bu_{t+1} &= g_t(\by_t) \equiv \btheta_*-\eta_t(\by_t+\btheta_*)
\end{aligned}
\end{equation}
with $g_0(\cdot)=\bu_1=\btheta_*$. After $T$ iterations, the
reconstruction mean-squared-error of $\btheta_{T+1}$ is
\[\text{MSE}=\frac{1}{n}\|\btheta_{T+1}-\btheta_*\|_2^2
=\frac{1}{n}\|\psi_T(\by_T)\|_2^2, \text{ where } \psi_T=g_T.\]
Defining $\omega_1^2=\frac{1}{m}\|\bu_1\|_2^2=\frac{1}{m}\|\btheta_*\|_2^2$ and
the sequence of variances
\[\sigma_t^2=\frac{1}{m}\E_{\bZ_t \sim \cN(0,\omega_t^2 \Id)}[\|f_t(\bZ_t)\|_2^2],
\qquad \omega_{t+1}^2=\frac{1}{m}\E_{\bY_t \sim \cN(0,\sigma_t^2 \Id)}[\|g_t(\bY_t)\|_2^2],\]
state evolution predicts that
\begin{equation}\label{eq:sensingSE}
\lim_{m,n \to \infty} \text{MSE}
-\frac{1}{n}\E_{\bY_T \sim \cN(0,\sigma_T^2
\Id)}[\|\btheta_*-\eta_T(\bY_T+\btheta_*)\|_2^2]=0.
\end{equation}

Suppose that $\bTheta_*=\mat(\btheta_*) \in \RR^{M \times N}$ is an image,
where we identify $\RR^{M \times N} \equiv \RR^n$ with $n=MN$ via the maps
$\vec:\RR^{M \times N} \to \RR^n$ and
$\mat:\RR^n \to \RR^{M \times N}$ as in Section \ref{sec:examples}.
Motivated by settings where $\bTheta_*$ is locally smooth,
let us consider an instantiation of this algorithm (\ref{eq:Berthier})
where $\eta_t:\RR^{M \times N} \to \RR^{M \times N}$
is given by a local averaging kernel smoother
\[\eta_t(\bz)[j,j']=\frac{1}{|\cS_{j,j'}^t|}
\sum_{(k,k') \in \cS_{j,j'}^t} \bX[k,k']\]
where $\cS_{j,j'}^t=\{(k,k'):|j-k|,|j'-k'| \leq h_t\}$ for a bandwidth parameter
$h_t \geq 0$. For any $\btheta_* \in \RR^n$ and $\be \in
\RR^m$ satisfying $\|\btheta_*\|_\infty,\|\be\|_\infty \leq C$,
the corresponding functions $\{f_1,\ldots,f_T\}$ and $\{g_0,\ldots,g_T\}$ in
(\ref{eq:equivAMP}) constitute two sets of Lipschitz local functions in the
sense of Definition \ref{def:local}. Then Theorem~\ref{thm:main_universality}
and Proposition \ref{prop:local} imply the validity of (\ref{eq:sensingSE}) for 
any i.i.d.\ measurement matrix $\bW$ satisfying
Assumption \ref{assump:Ginibre}. This universality guarantee has been depicted
in Figure \ref{fig:local}, corresponding to $M=N=150$, $n = 22500$, $m =
0.95\,n$, and fixed bandwidth $h_t=1$.
\end{example}

\begin{example}[AMP with spectral denoising]
Consider the same model (\ref{eq:sensingmodel})  and
algorithm (\ref{eq:Berthier}) as in Example \ref{ex:sensinglocal},
with the identification $\RR^{M \times N} \equiv \RR^n$.
Motivated by settings where $\bTheta_*=\mat(\btheta_*) \in \RR^{M \times N}$
is approximately of low rank, consider the
instantiation of (\ref{eq:Berthier}) where $\eta_t:\RR^{M \times N} \to \RR^{M
\times N}$ is given by a soft-thresholding function
\[\mathring \eta_t(x)=\sign(x) \cdot (x-\lambda_t\sqrt{N})_+\] applied
spectrally to the singular values of its input in $\RR^{M \times N}$, and
$\lambda_t>0$ is a $t$-dependent threshold level. Then the corresponding
functions $\{g_0,\ldots,g_T\}$ of (\ref{eq:equivAMP}) constitute a set of
Lipschitz spectral functions in the sense of Definition \ref{def:spectral}.
Suppose that $\bTheta_* \in \RR^{M \times N}$ has singular value decomposition
$\bTheta_*=\bO\bD\bU^\top$ where $\bO \in \RR^{M \times M}$ and $\bU \in \RR^{N
\times N}$ are generic in the sense of Proposition \ref{prop:spectral}, and
$\|\bD\|_\op<C\sqrt{N}$ and $\|\be\|_\infty<C$ for a constant $C>0$. Then
Theorem \ref{thm:main_universality}, Proposition \ref{prop:spectral},
and Proposition \ref{prop:local} again imply the validity of the state
evolution prediction (\ref{eq:sensingSE}) for any matrix $\bW$ satisfying
Assumption \ref{assump:Ginibre}.

This universality guarantee has been depicted in Figure
\ref{fig:spectral}, corresponding to $M=100$, $N = 150$, $m=n=15000$, and
a signal $\bTheta_*=\bO\bD\bU^\top$ where $\bO,\bU$ are Haar-uniform,
the first 20 diagonal elements of $\bD$ are generated uniformly from $[0,\sqrt{N}]$,
and the remaining 80 diagonal elements are zero. The threshold
$\lambda_t=0.05$ is fixed for all $t$, and the Onsager correction term $b_t$
is estimated using the Monte Carlo procedure of \cite{metzler2016denoising}.
\end{example}

\begin{example}[AMP for correlated measurement]\label{ex:sensinganisotropic}
We observe measurements
\[\bx=\tilde\bW\btheta_*+\be \in \RR^m\]
with a signal $\btheta_* \in \RR^n$ that is entrywise sparse, and a
measurement matrix $\tilde \bW$ that is of a colored form
$\tilde \bW=\bW\bK$ where $\bW$ is an i.i.d.\ matrix
satisfying Assumption \ref{assump:Ginibre} and $\bK \in \RR^{n \times n}$ is
an invertible linear map. Consider the AMP algorithm
\[\label{eq:Berthieranisotropic}
\begin{aligned}
    \br_t &= \bx - \tilde \bW\btheta_t + b_t\br_{t-1}\\
    \btheta_{t+1} &= \eta_t(\btheta_t + (\bK^\top \bK)^{-1}\tilde \bW^\top \br_t)
\end{aligned}
\]
with initializations $\btheta_1=\br_0=0$,
where $\eta_t(\cdot)$ consists of a separable soft-thresholding function
$\mathring \eta_t(x)=\sign(x) \cdot (x-\lambda_t)_+$
applied entrywise, and
$b_t=\frac{1}{m}\div \eta_{t-1}(\btheta_{t-1} + \tilde \bW^\top \br_{t-1})$.

Applying the changes-of-variables $\bu_t=\bK(\btheta_*-\btheta_t)$ and
$\bz_t=\br_t-\be$, this procedure \eqref{eq:Berthieranisotropic}
is equivalent to the AMP iterations \eqref{eq:RectAMP} given by
\begin{align}
    \bz_t &= \bW \bu_t - b_{t,t-1} \bv_{t-1}\\
    \bv_t &= f_t(\bz_t) \equiv \bz_t+\be\\
    \by_t &= \bW^\top \bv_t - \bu_t \quad \text{ (with $a_{tt}=1$)}\\
    \bu_{t+1} &= g_t(\by_t) \equiv \bK[\btheta_*-\eta_t((\bK^\top\bK)^{-1}\bK^\top
\by_t+\btheta_*)].
\end{align}
Writing the singular value decomposition $\bK=\bO\bD\bU^\top$,
after $T$ iterations, the reconstruction mean-squared-error of $\btheta_{T+1}$
may be expressed as
\[\text{MSE}=\frac{1}{n}\|\btheta_{T+1}-\btheta_*\|_2^2
=\frac{1}{n}\|\psi_T(\by_T)\|_2^2,
\text{ where }
\psi_T(\by)=\bO\bU^\top[\btheta_*-\eta_T((\bK^\top\bK)^{-1}\bK^\top
\by+\btheta_*)].\]
The state evolution predicts
\begin{equation}\label{eq:SEsensinganisotropic}
\lim_{m,n \to \infty} \text{MSE}-\frac{1}{n}\E_{\bY_T \sim \cN(0,\sigma_T^2\Id)}
[\|\btheta_*-\eta_T((\bK^\top \bK)^{-1}\bK^\top \bY_T+\btheta_*)\|_2^2]=0.
\end{equation}
We note that the functions $\{g_0,\ldots,g_{T-1},\psi_T\}$ constitute a set
of Lipschitz anisotropic functions with respect to
$\cK=\{\bO\bU^\top,\bK,\bK(\bK^\top \bK)^{-1}\}$ in the sense of Definition
\ref{def:aniso}. Thus, assuming that the singular vectors $\bO,\bU$ of $\bK$ are
generic in the sense of condition (2) in Proposition \ref{prop:anisotropic},
and that $\|\bD\|_\op,\|\bD^{-1}\|_\op,\|\be\|_\infty<C$ for a constant $C>0$,
Theorem \ref{thm:main_universality} together with
Propositions \ref{prop:anisotropic} and \ref{prop:local} imply 
the validity of the state evolution
prediction (\ref{eq:SEsensinganisotropic}) for any matrix $\bW$ satisfying
Assumption \ref{assump:Ginibre}.
\end{example}

\section{Proof ideas}\label{sec:proof}

A primary technical contribution of our work is
Theorem \ref{thm:universality_poly_amp} on the validity of the state evolution
approximation for AMP algorithms with BCP-representable polynomial functions.
We summarize in this section the two main steps in the proof of this result.

\subsection{State evolution for Gaussian matrices}\label{sec:gaussian}

The first step establishes Theorem \ref{thm:universality_poly_amp} in the 
Gaussian setting where $\bW \sim \GOE(n)$. This rests on the following more
general result, of independent interest, which establishes a quantitative
version of the state evolution approximation when $f_0,f_1,\ldots,f_{T-1}$ are
general (non-Lipschitz) functions satisfying a certain stability condition.

To simplify notation, for any $n$-dependent random variable $X$ and any
$a \geq 0$, we introduce the shorthand
\begin{equation}\label{eq:precnotation}
X \prec n^{-a} \qquad \text{ or } \qquad X=\Oprec(n^{-a})
\end{equation}
to mean, for any constant $D>0$, there exists a constant $C \equiv C(D)>0$
such that
\[\P[|X|>(\log n)^C n^{-a}]<n^{-D} \text{ for all large } n.\]
Thus, with high probability, $|X|$ is of size $n^{-a}$ up to a
poly-logarithmic factor. Our stability condition for $f_0,\ldots,f_{T-1}$ is
summarized as the following assumption.

\begin{assumption}\label{assump:GOESE}
Given $f_0,f_1,\ldots,f_{T-1}$, let $\bSigma_t$ and $\bZ_{1:t}$ be as in
Definition \ref{def:non_asymp_se} for each $t=1,\ldots,T$,
and let $\bE_{1:T} \in \R^{n \times T}$ be any random matrix in the
probability space of $\bZ_{1:T}$ such that
\begin{equation}\label{eq:Econd}
\|\bE_{1:T}\|_\Fro \prec 1.
\end{equation}
Then for all $0 \leq s,t \leq T-1$,
\begin{align}\label{eq:makeStateEv0}
    \frac{1}{n}\Big|f_t(\bZ_{1:t}+\bE_{1:t})^\top
f_s(\bZ_{1:s}+\bE_{1:s})
    -\EE\big[f_t(\bZ_{1:t})^\top f_s(\bZ_{1:s})\big]\Big| 
    &\prec \frac{1}{\sqrt{n}},
\end{align}
and for all $1 \leq t \leq T$ and $0 \leq s \leq T-1$,
\begin{align}
    \frac{1}{n}\Big|(\bZ_t+\bE_t)^\top f_s(\bZ_{1:s}+\bE_{1:s})-
    \EE\big[\bZ_t^\top f_s(\bZ_{1:s})\big]\Big|
    &\prec \frac{1}{\sqrt{n}}.\label{eq:Stein0}
\end{align}
\end{assumption}

Informally, this assumption requires that the functions
$n^{-1}f_t(\bz_{1:t})^\top f_s(\bz_{1:s})$ and
$n^{-1}\bz_t^\top f_s(\bz_{1:s})$, when evaluated on Gaussian inputs
$\bZ_{1:T}$, are stable under perturbations of
size $\Oprec(1)$ in $\ell_2$ and concentrate around their mean.
The following theorem shows that when Assumption \ref{assump:GOESE} holds and
$\bW \sim \GOE(n)$, the iterates $\bz_{1:T}$ of the AMP algorithm
(\ref{eq:AMP}) may be approximated by the Gaussian state evolution vectors
$\bZ_{1:T}$ up to $\Oprec(1)$ error. Its proof uses a version of the Gaussian
conditioning arguments of \cite{bolthausen2014iterative,Bayati2011} and is given
in Appendix \ref{sec:StrongSE}.

\begin{theorem}\label{thm:StrongSE0}
Fix any $T \geq 1$, let $\bW \sim \GOE(n)$, and let $f_1,\ldots,f_{T-1}$ be
weakly differentiable. Let $b_{ts}$ and $\bSigma_t$ be as in Definition
\ref{def:non_asymp_se}, and suppose there exist constants $C,c>0$ such that
$\lambda_{\min}(\bSigma_t)>c$, $\|\bSigma_t\|_\op<C$, and
$|b_{ts}|<C$ for all $1 \leq s<t \leq T$.

If Assumption \ref{assump:GOESE} holds,
then the iterates $\bz_{1:T}$ of the AMP algorithm (\ref{eq:AMP})
admit a decomposition
\begin{align}\label{eq:strong_se_decomposition}
    [\bz_1,\ldots,\bz_T]=[\bZ_1,\ldots,\bZ_T] + [\bE_1,\ldots,\bE_T],
\end{align}
where $\bZ_{1:T} \sim \cN(0,\bSigma_T \otimes \Id_n) \in\RR^{n\times T}$ and
$\|\bE_{1:T}\|_\Fro\prec 1$.
\end{theorem}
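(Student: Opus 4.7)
The plan is to prove Theorem \ref{thm:StrongSE0} by a quantitative version of the Bolthausen--Bayati--Montanari Gaussian conditioning method \cite{bolthausen2014iterative,Bayati2011}, inducting on $t$ to establish the decomposition $\bz_{1:t} = \bZ_{1:t} + \bE_{1:t}$ with $\|\bE_{1:t}\|_\Fro \prec 1$. The base case $t=1$ is a direct computation: $\bz_1 = \bW\bu_1$ is Gaussian with covariance $(\|\bu_1\|^2/n)\Id + \bu_1\bu_1^\top/n$, so after removing the scalar Gaussian component aligned with $\bu_1$, the discrepancy from a target $\bZ_1 \sim \cN(0, \bSigma_1 \otimes \Id)$ is captured by a rank-one $\bE_1$ of size $\Oprec(1)$.

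At the inductive step, let $\bU_{t-1} = [\bu_1|\cdots|\bu_{t-1}]$ and $\bV_{t-1} = [\bW\bu_1|\cdots|\bW\bu_{t-1}]$, and condition on the $\sigma$-algebra generated by $(\bu_{1:t}, \bz_{1:t-1})$. Gaussian conditioning of the symmetric matrix $\bW$ on the linear observations $\bW\bu_s$ for $s \leq t-1$ yields a representation of the form
\[
\bW\bu_t \eqd \bV_{t-1}\boldsymbol{\alpha} + \Pi_{\bU_{t-1}}^\perp \tilde\bW \bu_t + \Delta,
\]
where $\boldsymbol{\alpha} = (\bU_{t-1}^\top\bU_{t-1})^{-1}\bU_{t-1}^\top \bu_t$, $\tilde\bW$ is an independent GOE matrix, and $\Delta$ is a low-rank symmetric correction from the GOE diagonal whose contribution to $\bE_t$ is $\Oprec(1)$. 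Substituting $\bW\bu_s = \bz_s + \sum_{r<s} b_{sr}\bu_r$ from the AMP recursion turns $\bV_{t-1}\boldsymbol{\alpha}$ into a linear combination of past $\bz_s$ and $\bu_r$. The Onsager coefficient $b_{ts}$ of Definition \ref{def:non_asymp_se} is designed, via Stein's identity, so that when combined with $-\sum_{s<t} b_{ts}\bu_s$ in \eqref{eq:AMP}, the $\bu_r$ contributions cancel and the coefficient of each past $\bz_s$ matches the Gaussian regression coefficient of $\bZ_t$ on $\bZ_s$ under $\bSigma_t$. This leaves $\bz_t \approx \Pi_{\bU_{t-1}}^\perp \tilde\bW\bu_t$, which has the correct conditional Gaussian law.

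Converting this algebraic cancellation into an $\Oprec(1)$ bound requires quantitative control of the sample inner products $\bu_r^\top\bu_s/n$ and $\bz_r^\top\bu_s/n$ that enter $\boldsymbol{\alpha}$ and the cross terms. Since $\bu_s = f_{s-1}(\bz_{1:s-1}) = f_{s-1}(\bZ_{1:s-1}+\bE_{1:s-1})$ with $\|\bE_{1:s-1}\|_\Fro \prec 1$ by the induction hypothesis, Assumption \ref{assump:GOESE}, specifically \eqref{eq:makeStateEv0}--\eqref{eq:Stein0}, provides exactly the $\Oprec(n^{-1/2})$ concentration of these inner products around $\bSigma_t[r,s]$ and $\E[\bZ_r^\top f_{s-1}(\bZ_{1:s-1})]$. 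A standard application of Stein's lemma then rewrites the latter as a $\bSigma_t$-weighted sum of the divergences defining $b_{ts}$ in \eqref{eq:def_b_ts}, closing the cancellation.

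The main obstacle is obtaining quantitative $\Oprec(1)$ error estimates in place of the asymptotic zero limits of the classical argument, and three ingredients address this. First, $(\bU_{t-1}^\top\bU_{t-1}/n)^{-1}$ must have $\Oprec(1)$ operator norm, which follows from $\lambda_{\min}(\bSigma_t) > c$ combined with \eqref{eq:makeStateEv0} applied to the pairs $(f_{r-1}, f_{s-1})$. Second, the fresh Gaussian vector $\Pi_{\bU_{t-1}}^\perp \tilde\bW \bu_t$ must be coupled to a full-dimensional $\bZ_t$ having the right Schur-complement conditional covariance; the missing directions along $\bU_{t-1}$, together with $\Delta$, contribute only an $\Oprec(1)$ low-rank correction to $\bE_t$. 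Third, the induction must close: the regression coefficients $\boldsymbol{\alpha}$ are $O(1)$, so the $\Oprec(n^{-1/2})$ sample-to-population discrepancies in the inner products translate, after multiplication by $\|\bU_{t-1}\|_\op = \Oprec(\sqrt{n})$, into $\Oprec(1)$ contributions to $\bE_t$. Since these contributions accumulate additively over the fixed number $T$ of iterations and the relation $\prec$ absorbs poly-logarithmic factors, the overall bound $\|\bE_{1:T}\|_\Fro \prec 1$ follows.
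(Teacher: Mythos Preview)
Your proposal is correct and follows essentially the same Bolthausen--Bayati--Montanari conditioning argument as the paper's proof: induction on $t$, the same base case via the rank-one covariance correction, and the inductive step using the GOE conditional law of $\bW$ given $\bW\bu_{1:t-1}$, with Assumption \ref{assump:GOESE} supplying the $\Oprec(n^{-1/2})$ control of the sample inner products and Stein's lemma delivering the Onsager cancellation. The paper's version is organized around three auxiliary statements (I$_t$), (II$_t$), (III$_t$) and writes the symmetric-matrix decomposition explicitly as $\bW = \by(\bu^\top\bu)^{-1}\bu^\top + \bu(\bu^\top\bu)^{-1}\by^\top\bP_\bu^\perp + \bP_\bu^\perp\bW\bP_\bu^\perp$, so that your low-rank ``$\Delta$'' unpacks into the term $\bu(\bu^\top\bu)^{-1}\bz^\top\bu_{t,\perp}$ arising from the symmetry of $\bW$; this is the only place where your sketch is slightly compressed, but the structure and ingredients are the same.
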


This result strengthens known state evolution statements from
\cite{berthier2017,Gerbelot2022} for non-separable AMP algorithms of the
form (\ref{eq:AMP}) in two ways:

\begin{enumerate}
\item Assumption \ref{assump:GOESE} encompasses a class of
functions that does not satisfy the conditions of these preceding
works. For example, suppose for each $t \geq 1$ and some $L,k>0$, we have that
\begin{align}
\|f_t(0)\|_2 \leq L\sqrt{n}, \qquad
\|f_t(\bx)-f_t(\by)\|_2 &\leq L(1+\|\bx\|_\infty^k+\|\by\|_\infty^k)
\cdot \|\bx-\by\|_\Fro,\label{eq:locallipschitz}
\end{align}
where $\|\bx\|_\infty=\max_{i=1}^n\max_{j=1}^t |\bx[i,j]|$. This includes
Lipschitz functions, as well as separable functions that are uniformly
pseudo-Lipschitz in each coordinate $i \in [n]$, whereas this latter
separable class does not necessarily satisfy the pseudo-Lipschitz condition
$\|f_t(\bx)-f_t(\by)\|_2 \leq
L(1+(\|\bx\|_2/\sqrt{n})^k+(\|\by\|_2/\sqrt{n})^k)\|\bx-\by\|_\Fro$
required in the results of \cite{berthier2017,Gerbelot2022}.

It is not hard to check that any functions satisfying (\ref{eq:locallipschitz})
also satisfy Assumption \ref{assump:GOESE}. Indeed, applying
(\ref{eq:locallipschitz}) together with the bounds
$\|\bZ_{1:T}\|_\infty \prec 1$, $\|\bZ_{1:T}+\bE_{1:T}\|_\infty \prec 1$, 
$\|\bZ_{1:T}\|_\Fro \prec \sqrt{n}$, and
$\|\bE_{1:T}\|_\Fro \prec 1$ from (\ref{eq:Econd}), one may check that
\[\frac{1}{n}\Big|
f_t(\bZ_{1:t}+\bE_{1:t})^\top f_s(\bZ_{1:s}+\bE_{1:s})
-f_t(\bZ_{1:t})^\top f_s(\bZ_{1:s})\Big| \prec \frac{1}{\sqrt{n}}.\]
Applying (\ref{eq:locallipschitz}) and a Gaussian concentration argument (to
a Lipschitz function that coincides with $f_t^\top f_s$ on a high-probability
set $\{\bz_{1:t}:\|\bz_{1:t}\|_\infty \prec 1\}$ for $\bZ_{1:t}$),
one may also check that
\[\frac{1}{n}\Big|f_t(\bZ_{1:t})^\top f_s(\bZ_{1:s})
-\E f_t(\bZ_{1:t})^\top f_s(\bZ_{1:s})\Big| \prec \frac{1}{\sqrt{n}},\]
thus verifying (\ref{eq:makeStateEv0}). A similar argument verifies
(\ref{eq:Stein0}).

\item The guarantee $\|\bE_{1:T}\|_\Fro \prec 1$ for the decomposition
(\ref{eq:strong_se_decomposition}) is stronger than the usual statement of
state evolution ensuring that the empirical distribution of rows
of $\bz_{1:T}$ is close to $\cN(0,\bSigma_T)$ in a metric of weak convergence.
Indeed, for this statement, a bound of the form $\|\bE_{1:T}\|_\Fro \prec
n^{1/2-\epsilon}$ for any $\epsilon>0$ would suffice to have an asymptotically
negligible effect on this empirical distribution.
\end{enumerate}

Importantly for our purposes, Assumption \ref{assump:GOESE} is sufficiently
general to include all BCP-representable polynomial functions. We show this
also in Appendix \ref{sec:StrongSE}, by using the BCP to bound the means
and variances of $n^{-1}f_t(\bZ_{1:t})^\top f_s(\bZ_{1:s})$ and
$n^{-1}\bZ_t^\top f_s(\bZ_{1:s})$ when $\bZ_{1:T}$ are Gaussian inputs
and $f_s(\cdot),f_t(\cdot)$ are BCP-representable. Combined with Theorem
\ref{thm:StrongSE0}, this will show Theorem \ref{thm:universality_poly_amp} in
the Gaussian setting of $\bW \sim \GOE(n)$.

\subsection{Moment-method analysis of tensor networks}\label{sec:tensor}

The second step then establishes Theorem~\ref{thm:universality_poly_amp} for
general Wigner matrices using a moment-method analysis.
Since $f_1,\ldots,f_{T-1}$ and the test functions $\phi_1,\phi_2$ 
in Theorem \ref{thm:universality_poly_amp} are polynomials, it is clear that
the value
\[\phi(\bz_{1:T})=\frac{1}{n}\,\phi_1(\bz_{1:T})^\top \phi_2(\bz_{1:T})\]
may be expressed as a polynomial function of the entries of $\bW$.
We will represent this function as a linear combination of contracted values
of tensor networks, defined as follows.

\begin{definition}\label{def:TN}
An {\bf ordered multigraph} $\TN=(\cV,\cE)$ is an undirected multigraph
with vertices $\cV$ and edges $\cE$, having no self-loops and no isolated
vertices, and with a specified ordering $e_1,\ldots,e_{\deg(v)}$ 
of the edges incident to each vertex $v \in \cV$. Here, $\deg(v)$
is the degree of $v$ (the total number of edges incident to $v$,
counting multiplicity).

Given a set of tensors $\cT=\bigsqcup_{k=1}^K \cT_k$ where $\cT_k \subseteq
(\R^n)^{\otimes k}$, a
{\bf $\cT$-labeling} $\cL$ of $\TN$ is an assignment of a tensor $\bT_v \in
\cT_{\deg(v)}$ to each vertex $v \in \cV$, where the order of
$\bT_v$ equals the degree of $v$. We call $(\TN,\cL)$ a {\bf tensor network}.
The {\bf value} of this tensor network is
\[\val_\TN(\mathcal{L})=\sum_{\bi \in [n]^\cE}
\prod_{v \in \cV}\bT_v\ss{i_e : e \sim v}\label{eq:val}\]
where $[i_e:e \sim v]$ denotes the ordered tuple of indices
$[i_{e_1},\ldots,i_{e_{\deg(v)}}]$, and $e_1,\ldots,e_{\deg(v)}$ are the
ordered edges incident to $v$.
\end{definition}

When $G$ is connected (i.e.\ $(\cV,\cE)$ consists of a single connected
component), $\val_G(\cL)$ may be understood as the scalar value
obtained by contracting the tensor-tensor product associated to
each edge. When $G$ consists of multiple connected components, $\val_G(\cL)$
factorizes as the product of each such value across the components.
We note that specifying an edge ordering is needed to define $\val_G(\cL)$, as
the tensors $\{\bT_v\}_{v \in \cV}$ need not be symmetric.

Our representation of $\phi(\bz_{1:T})$ is then
summarized by the following lemma.

\begin{lemma}\label{lem:tenUnroll}
Fix any constants $T,D,C_0>0$. Suppose that $f_0,f_1,\ldots,f_{T-1}$ and
$\phi_1,\phi_2$ defining $\phi$ in (\ref{eq:testfunction})
are polynomial functions that admit a representation (\ref{eq:tensorpolyrepr}) 
via a set of tensors $\cT=\bigsqcup_{k=1}^{D+1} \cT_k$. Suppose also that
$\{b_{ts}\}$ in (\ref{eq:AMP}) satisfy
$|b_{ts}|<C_0$ for all $1 \leq s<t \leq T$.

Then there exist constants $C,M>0$, a list of connected ordered
multigraphs $G_1,\ldots,G_M$ depending only on $T,D,C_0$ and independent of
$n$, and a list of $\{\cT \cup \bW\}$-labelings $\cL_1,\ldots,\cL_M$ of
$G_1,\ldots,G_M$ and coefficients $a_1,\ldots,a_M \in \R$ with $|a_m|<C$,
such that
\[\phi(\bz_1,\ldots,\bz_T)=\sum_{m=1}^M \frac{a_m\val_{\TN_m}(\cL_m)}{n}.\]
\end{lemma}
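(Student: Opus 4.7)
The plan is to unroll the AMP recursion \eqref{eq:AMP} by strong induction on $t$, expressing each iterate as a finite sum of partial tensor networks with one ``dangling'' half-edge, and then combine these via the inner product defining $\phi$. The inductive claim is that there exist finite data $(V_\alpha, c_\alpha)$, uniformly bounded in number and in $|c_\alpha|$ by a constant depending only on $T,D,C_0$, such that $\bz_t = \sum_\alpha c_\alpha V_\alpha$ and $\bu_{t+1} = \sum_\beta c'_\beta V'_\beta$, where each $V_\alpha, V'_\beta \in \RR^n$ is the value of a connected network with exactly one uncontracted half-edge (indexing the output coordinate in $[n]$) and vertices labeled by tensors from $\cT\cup\{\bW\}$ of matching order with a fixed edge ordering inherited from the polynomial representation.

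The base case $\bu_1 = f_0(\cdot) \in \cT_1$ is a single-vertex network with one dangling edge. For the step from $\bu_s$ ($s\leq t$) to $\bz_t = \bW\bu_t - \sum_{s<t} b_{ts}\bu_s$, multiplication by $\bW$ extends each tensor network for $\bu_t$ by attaching a fresh degree-$2$ $\bW$-vertex, gluing one half-edge to the current dangling edge and leaving the other as the new dangling edge (preserving connectedness), while the Onsager terms contribute scaled copies of earlier expressions with $|b_{ts}|<C_0$. For the step to $\bu_{t+1}=f_t(\bz_{1:t})$, I invoke \eqref{eq:tensorpolyrepr}: each of the at most $(D{+}1)T^D$ summands contracts some $\bT^{(\sigma)}\in\cT_{d+1}$ with $d$ iterates $\bz_{\sigma(i)}$, so substituting each $\bz_{\sigma(i)}$ by its inductive representation, gluing the dangling edge of each to the corresponding index of $\bT^{(\sigma)}$, and designating the last index of $\bT^{(\sigma)}$ as the new dangling edge produces a connected tree-like network with $\bT^{(\sigma)}$ as a new central vertex. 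Applying the same argument to $\phi_1,\phi_2$ (which are themselves BCP-representable polynomials), each $\phi_j(\bz_{1:T})$ is a finite sum of connected networks with one dangling edge and bounded coefficients. Finally, the inner product
\begin{equation*}
\phi(\bz_{1:T}) \;=\; \frac{1}{n}\,\phi_1(\bz_{1:T})^\top\phi_2(\bz_{1:T})
\;=\; \sum_m \frac{a_m}{n} \sum_{i=1}^n V_{\alpha_m}^{(1)}[i]\,V_{\beta_m}^{(2)}[i]
\end{equation*}
glues each pair of dangling edges into one internal edge, producing a graph $G_m$ that is connected (two connected subnetworks joined by a shared edge), whose fully contracted value is $\val_{G_m}(\cL_m)$, with $|a_m| = |c_{\alpha_m}^{(1)} c_{\beta_m}^{(2)}|$ bounded and the number of terms $M$ finite and independent of $n$.

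The main task is essentially bookkeeping --- tracking edge orderings (since tensors in $\cT$ are not assumed symmetric), verifying that connectedness is preserved at each inductive step, and controlling the number and magnitudes of the coefficients independently of $n$. No deep argument is required: the key structural observation is that every iterate admits a tensor-network representation with a single dangling edge, and this property propagates cleanly through both the linear step (multiplication by $\bW$ with Onsager correction) and the polynomial step (application of $f_t$ via \eqref{eq:tensorpolyrepr}), while the inner product provides the single gluing that turns each pair of such partial networks into a connected fully contracted graph.
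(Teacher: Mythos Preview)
Your proposal is correct and follows essentially the same approach as the paper: both introduce the notion of a connected tensor network with one uncontracted (``dangling'') edge to represent each iterate, build this representation inductively through the AMP recursion, and then glue pairs of such partial networks via the inner product $\phi_1^\top\phi_2$ to obtain closed connected graphs. The only organizational difference is that the paper first decomposes $\bu_T$ into a sum over all ``single-choice'' paths through the recursion (each coefficient being a product of Onsager terms, each summand a single tree), whereas you maintain a running sum representation and expand at each step; these are equivalent bookkeeping schemes.
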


Lemma \ref{lem:tenUnroll} follows from an elementary unrolling of the AMP
iterates that is similar to previous analyses
of \cite{Bayati2015,Wang2024,jones2024fourier}, and we provide its proof in
Appendix \ref{sec:GenAlg}. The primary difference in our
setting is that, since the polynomial functions $f_t,\phi_1,\phi_2$
are non-separable, the resulting tensors $\bT_v$ which represent these
polynomials are non-diagonal. This leads to a more involved moment-method
analysis, in which the BCP condition for $\cT$ is used crucially to bound the
moments of $\val_G(\cL)$. Universality of the first moment of $\val_G(\cL)$
is summarized in the following lemma, which underlies the universality of
Theorem \ref{thm:universality_poly_amp}.

\begin{lemma}\label{lem:ExpVal} 
Let $\cT=\bigsqcup_{k=1}^K \cT_k$ be a set of 
tensors satisfying the BCP, and let $\bW,\bW'$ be two
Wigner matrices satisfying Assumption \ref{assump:Wigner}. Fix any
connected ordered multigraph $\TN$ independent of $n$,
let $\cL$ be a $\{\cT \cup \bW\}$-labeling of $\TN$, and let
$\cL'$ be the $\{\cT \cup \bW'\}$-labeling that replaces $\bW$ by $\bW'$.
Then there is a constant $C>0$ independent of $n$ for which
    \[\EE\bigg[\frac{1}{n}\val_G(\cL)\bigg]
-\EE\bigg[\frac{1}{n}\val_G(\cL')\bigg] \leq \frac{C}{\sqrt{n}}.\]
\end{lemma}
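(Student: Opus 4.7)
My approach is a direct moment expansion. I would begin by writing
\[
  \E[\val_G(\cL)]
  \;=\; \sum_{\bi \in [n]^{\cE}}
  \Bigl(\prod_{v \in \cV_\cT}\bT_v[\bi_v]\Bigr)\,
  \E\Bigl[\prod_{v \in \cV_\bW}\bW[i_{e_v^1}, i_{e_v^2}]\Bigr],
\]
where $\cV_\bW \subseteq \cV$ denotes the vertices labeled by $\bW$ and $\cV_\cT = \cV \setminus \cV_\bW$ those labeled by $\cT$-tensors. Since the entries of $\bW$ above the diagonal are independent with mean zero, the $\bW$-expectation depends only on the partition $\pi(\bi)$ of $\cV_\bW$ induced by equality of the unordered index pairs $\{i_{e_v^1}, i_{e_v^2}\}$: it equals $\prod_{B \in \pi(\bi)}\E\bW[i_B,j_B]^{|B|}$ and vanishes unless every block of $\pi(\bi)$ has size at least $2$. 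The analogous decomposition applies to $\bW'$.

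I would then classify each partition $\pi$ as either a pairing (every block of size $2$) or a higher partition (some block of size $\geq 3$). Assumption~\ref{assump:Wigner} gives $\E\bW[i,j]^2 = \E\bW'[i,j]^2 = 1/n$ whenever $i \neq j$, so the contribution of any pairing restricted to off-diagonal index assignments is identical for $\bW$ and $\bW'$ and cancels in the difference. What remains to bound are contributions from (i) pairings with at least one diagonal event $i_B = j_B$, and (ii) higher partitions. For each such residual $\pi$, I would interpret the constrained sum $\sum_{\bi : \pi(\bi) \geq \pi}$ as a contraction of the $\cT$-tensors through a collapsed tensor network in which the $\bW$-vertices of each block of $\pi$ are merged into a single vertex carrying two index slots, with the scalar moment factor $\prod_B\E\bW[i_B,j_B]^{|B|}$ pulled outside.

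The key structural facts are that each original edge index $i_e$ appears in exactly two tensor slots (the endpoints of $e$), which is preserved by any collapsing of $\bW$-vertices, so the parity hypothesis of Definition~\ref{def:BCP} is automatic; and because $G$ is connected, the collapsed network also satisfies the non-separability hypothesis of Definition~\ref{def:BCP}. Hence BCP bounds each such contracted sum by a constant times $n$, uniformly in $\pi$ and the $\cT$-tensors. Comparing this against $|\prod_B\E\bW[i_B,j_B]^{|B|}| \leq C n^{-|\cV_\bW|/2}$ recovers the $1/\sqrt{n}$ rate: for case (i), each diagonal identification $i_B = j_B$ removes one free index and thus saves $1/n$, which combined with the $O(1/n)$ discrepancy between $\E\bW[i,i]^2$ and $\E\bW'[i,i]^2$ yields an $O(1/n)$ contribution to the difference; for case (ii), a size-$k$ block with $k \geq 3$ merges what would have been $k/2$ independent pair values into one, costing at least a factor of $n$ in the constrained sum, which together with $\E|\bW[i,j]|^k \leq C_k n^{-k/2}$ (and likewise for $\bW'$) gives an $O(n^{1/2})$ contribution to the difference. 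Dividing by $n$ then yields the claimed $O(1/\sqrt{n})$ bound. The main technical obstacle will be the careful combinatorial translation of each partition-collapsed network into the explicit surjection $\pi:[k_m^+]\to[\ell]$ of Definition~\ref{def:BCP}, verifying parity and non-separability uniformly across all residual partitions, and rigorously quantifying the index-count savings in cases (i) and (ii).
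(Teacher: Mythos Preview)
Your overall strategy matches the paper's: decompose by the coincidence pattern among the $\bW$-entries, observe that off-diagonal pairings contribute identically for $\bW$ and $\bW'$, and bound each residual term via BCP. But the step where you ``apply BCP to the collapsed network'' has a genuine gap.

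After merging each block $B$ of $\bW$-vertices, the merged vertex carries the moment weight $\bM_{|B|}[i_B,j_B]=\E[n^{|B|/2}\bW[i_B,j_B]^{|B|}]$ together with the constraint that all incident index pairs coincide. This tensor is neither in $\cT$ nor an identity tensor, so BCP does not apply to the collapsed network. You cannot pull the moment factor out as a scalar, since $\bM_{|B|}[i_B,j_B]$ depends on $(i_B,j_B)$; bounding it entrywise by $C_{|B|}$ effectively replaces the merged vertex by (a multiple of) the all-ones matrix $\bone\bone^\top$, and $\cT\cup\{\bone\bone^\top\}$ need not satisfy BCP (already $\frac{1}{n}\sum_{i,j}(\bone\bone^\top)[i,j]^2=n$). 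If instead you delete the $\bW$-vertices and keep only the $\cT$-factors, each edge index formerly incident to a $\bW$-vertex now appears only once in the remaining tensors, so the parity hypothesis of Definition~\ref{def:BCP} fails --- contrary to your claim that it ``is automatic.''

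The paper resolves this via Cauchy--Schwarz over the indices incident to unpaired $\bW$-blocks: this separates the $\bM_k$-factors (bounded crudely by $Cn^{|\tau^b|/2}$) from the $\cT$-factors and \emph{squares} the latter, producing a duplicated bipartite $(\Id,\cT)$-network $\tilde G$ in which every $\Id$-vertex provably has even degree. BCP then gives $|\val_{\tilde G}|\leq Cn^{\c(\tilde G)}$, but the price of squaring is that $\tilde G$ may have many connected components; the bulk of the argument is the combinatorial bound $\c(\tilde G)\leq 1+2N_2+N_3$ (tracking how removing paired versus unpaired blocks disconnects the network), combined with $|\cV_W|\geq 3N_3+2N_2+2N_1$ and $|\tau^b|\leq 2N_3+N_1$. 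Your heuristic ``a size-$k$ block costs at least a factor of $n$'' does not capture this: the savings come from the component structure of $\tilde G$, not from a raw index count. A smaller point: the paper partitions the \emph{edges} $\cE$ rather than $\cV_W$, which cleanly avoids the orientation ambiguity in ``same unordered pair'' and enables the M\"obius inversion needed to drop the distinctness constraint on block indices before Cauchy--Schwarz.
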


In Appendix \ref{sec:Univ}, we prove Lemma \ref{lem:ExpVal}, and then
strengthen this to a statement of almost-sure convergence by bounding also
the fourth central moment $\E(\val_G(\cL)-\E \val_G(\cL))^4$.
Combining with Lemma \ref{lem:tenUnroll}, this will conclude
the proof of Theorem \ref{thm:universality_poly_amp} for general Wigner matrices
$\bW$.

\subsection*{Acknowledgments}

This research was supported in part by NSF DMS2142476 and a Sloan Research
Fellowship.

\bibliographystyle{plain}
\bibliography{reference}

\pagebreak
\appendix
\section{Elementary properties of the BCP}\label{sec:BCP}

We collect in this appendix several closure properties for sets of
tensors $\cT$ that satisfy the BCP.

\begin{lemma}\label{lemma:BCPcontraction}
Suppose $\cT=\bigsqcup_{k=1}^K \cT_k$ satisfies the BCP, where $\cT_k \subseteq
(\R^n)^{\otimes k}$.
\begin{enumerate}[(a)]
\item If $\bT \in \cT$ and $|a_n|<C$ for a constant $C>0$,
then $\cT \cup \{a_n \bT\}$ satisfies the BCP.
\item If $\bT \in \cT$ and $\tilde \bT$ is any transposition of $\bT$
(e.g.\ $\tilde\bT[i_1,i_2,i_3]=\bT[i_3,i_1,i_2]$ for all $i_1,i_2,i_3 \in [n]$)
then $\cT \cup \{\tilde \bT\}$ satisfies the BCP.
\item If $\bT_1 \in \cT_{k_1}$, $\bT_2 \in \cT_{k_2}$, and $\bT$ is a
contraction of $\bT_1,\bT_2$, i.e.\ there exist transpositions $\tilde
\bT_1,\tilde \bT_2$ of $\bT_1,\bT_2$ and an index $k \leq \min(k_1,k_2)$ for
which $\bT \in (\RR^n)^{k_1+k_2-2k}$ is given by
\[\bT[j_1,\ldots,j_{k_1-k},\ell_1,\ldots,\ell_{k_2-k}]
=\sum_{i_1,\ldots,i_k=1}^n \tilde \bT_1[i_1,\ldots,i_k,j_1,\ldots,j_{k_1-k}]
\tilde \bT_2[i_1,\ldots,i_k,\ell_1,\ldots,\ell_{k_2-k}],\]
then $\cT \cup \{\bT\}$ satisfies the BCP.
\end{enumerate}
\end{lemma}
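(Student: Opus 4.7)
The plan is to verify each closure property by reducing the BCP sum (\ref{eq:BCPcondition}) for the enlarged family to the BCP condition already assumed for $\cT$, after a suitable rewriting of the contraction pattern $\pi$ and the tensor factors.

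For (a), any BCP sum involving $m$ tensors drawn from $\cT \cup \{a_n \bT\}$, of which $j \leq m$ equal $a_n\bT$, factors as $a_n^{\,j}$ times the same sum with $a_n\bT$ replaced by $\bT$. Since $|a_n|^j \leq C^m$, the bound from the BCP of $\cT \cup \{\bT\}$ transfers with constant $C^m$ times the original constant, so (a) is immediate.

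For (b), let $\tilde\bT$ arise from $\bT \in \cT_k$ by a fixed permutation $\sigma \in S_k$ of its axes. In a BCP sum (\ref{eq:BCPcondition}), each factor equal to $\tilde\bT$ can be rewritten as a factor of $\bT$ by composing the local index assignment at that position with $\sigma^{-1}$. This replaces $\pi$ by a new map $\pi'$ agreeing with $\pi$ outside the affected block and permuted by $\sigma^{-1}$ within it. Since permuting indices inside a single tensor's own block preserves its image setwise, $\pi'$ is still surjective onto the same $[\ell]$ with the same fiber cardinalities as $\pi$ (so the even-cardinality hypothesis persists), and since such a permutation is internal to one factor it cannot split a bipartition among distinct factors (so the no-bipartition hypothesis persists). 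Thus the BCP for $\cT \cup \{\tilde\bT\}$ reduces to the BCP for $\cT$ under the reshuffled pattern $\pi'$.

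For (c), which is the main step, by (b) we may assume $\tilde\bT_1, \tilde\bT_2 \in \cT$. Take any BCP sum over $m$ tensors drawn from $\cT \cup \{\bT\}$ with $j$ copies of $\bT$. Expanding each copy of $\bT$ through its defining identity $\bT[\cdots] = \sum_{i_1,\ldots,i_k} \tilde\bT_1[\cdots]\,\tilde\bT_2[\cdots]$ and swapping the summation orders produces a sum of the form (\ref{eq:BCPcondition}) with $m' = m + j$ tensors, all in $\cT$, with $\ell' = \ell + jk$ summation indices and a new surjection $\pi' : [k_{m'}^+] \to [\ell']$. To conclude, I verify the two hypotheses of Definition \ref{def:BCP} for $\pi'$: the $\ell$ original indices keep their $\pi$-fiber cardinality (even by assumption), while each of the $jk$ newly introduced contraction indices appears in exactly two positions, one in $\tilde\bT_1$ and one in $\tilde\bT_2$, hence even; and for the no-bipartition condition, any candidate bipartition of the $m'$ factors must keep each paired $(\tilde\bT_1, \tilde\bT_2)$ together (they share $k$ contraction indices) and so induces a bipartition of the original $m$ factors, which by hypothesis on $\pi$ cannot be index-disjoint. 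The BCP for $\cT$ applied to the pattern $\pi'$ then yields the required uniform bound.

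The main obstacle is the bookkeeping in (c): making the relabeling $\pi \mapsto \pi'$ explicit enough to check cleanly that both combinatorial conditions of Definition \ref{def:BCP} survive the expansion, and that the constant absorbed depends only on $m, \ell, k_1,\ldots,k_m, \pi$ (not on $n$). Once this is laid out, the three parts follow directly from the BCP hypothesis on $\cT$.
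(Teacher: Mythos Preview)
Your proposal is correct and follows the same approach as the paper: parts (a) and (b) are dispatched as immediate from the definition, and part (c) proceeds by expanding each occurrence of $\bT$ into its constituent pair $\tilde\bT_1,\tilde\bT_2$, introducing $k$ new summation indices per copy that each appear exactly twice. The paper's proof is terser---it does not spell out the verification of the no-bipartition condition for the expanded pattern $\pi'$---so your explicit check that each $(\tilde\bT_1,\tilde\bT_2)$ pair must remain on the same side (forcing any bipartition of the $m'$ factors to descend to one of the original $m$) is a welcome elaboration rather than a departure.
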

\begin{proof}
Statements (a) and (b) are immediate from Definition \ref{def:BCP}. For
statement (c), note that any expression inside the supremum of
(\ref{eq:BCPcondition}) that has $\ell$ indices $i_1,\ldots,i_\ell$
and $m' \in \{1,\ldots,m\}$ copies of $\bT$ may be expanded into an expression
using $\bT_1,\bT_2$ with $\ell+km'$ indices, where each additional index
$i_{\ell+1},\ldots,i_{\ell+km'}$ appears twice. Then the BCP for
$\cT \cup \{\bT\}$ follows from the BCP for $\cT$.
\end{proof}

\begin{lemma}\label{lemma:BCPId}
Let $\Id \in (\RR^n)^{\otimes 2}$ denote the identity matrix, viewed as a
tensor of order 2. If $\cT$ satisfies the BCP, then so does $\cT \cup \{\Id\}$.
\end{lemma}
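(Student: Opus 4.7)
The plan is to argue by induction on the number $m'$ of factors $\bT_a$ equal to $\Id$ appearing in a generic BCP expression (\ref{eq:BCPcondition}) for $\cT\cup\{\Id\}$. The base case $m'=0$ is immediate from the BCP hypothesis on $\cT$. For the inductive step, I fix one factor $\bT_a=\Id$ with slot labels $p=\pi(k_{a-1}^++1)$ and $q=\pi(k_a^+)$, and eliminate it: if $p=q$ then $\Id[i_p,i_p]=1$ and the factor simply vanishes, while if $p\neq q$ then $\1\{i_p=i_q\}$ collapses $i_p$ and $i_q$ into a single summation index, effectively merging the labels $p$ and $q$ in the remaining tensors. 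Either sub-case yields an expression of the same form as (\ref{eq:BCPcondition}) but with one fewer $\Id$, and my goal is to verify that this reduced expression still meets the BCP's two combinatorial hypotheses so that the inductive hypothesis closes the argument.

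Parity is a direct check: in case $p=q$ the count of label $p$ drops by two, and in case $p\neq q$ the merged label inherits the combined counts of $p$ and $q$ minus two; all parities remain even and the other counts are unchanged. The main obstacle is verifying the non-partition/connectivity hypothesis for the reduced expression. My plan is to argue contrapositively. Any putative bad partition $A,A'$ of the remaining tensors would produce disjoint reduced label sets $L_A,L_{A'}$; I would first observe that the (possibly merged) label $p$ must lie in one of them, since otherwise $\Id$ would be the only tensor using the labels $\{p,q\}$ in the original expression, and the partition $\{a\}$ versus the rest would already violate the connectivity hypothesis of the original. I then reattach $\Id$ to whichever side contains $p$: that side absorbs the labels $\{p,q\}$, while the opposite side's tensors use no label in $\{p,q\}$ (since the merged label $p$ is absent from its reduced label set), so the lifted partition has disjoint label sets in the original expression, contradicting the original connectivity hypothesis.

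Granted these two checks, the inductive hypothesis produces a bound depending only on the combinatorial data $m,\ell,k_1,\ldots,k_m,\pi$ (passed through at most $m$ successive reductions, each of which only alters these parameters by removing a tensor and possibly collapsing one label). The induction therefore terminates in at most $m$ steps, yielding the BCP for $\cT\cup\{\Id\}$.
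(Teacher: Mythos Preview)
Your proposal is correct and follows the same approach as the paper: iteratively eliminate each $\Id$ factor (drop it when $p=q$, merge the two labels when $p\neq q$), check that the reduced expression still satisfies the parity and connectivity hypotheses, and apply the BCP for $\cT$ once all identities are gone. You are in fact more explicit than the paper in verifying that parity and connectivity survive each reduction. One small boundary case you should state explicitly: if \emph{every} $\bT_a$ equals $\Id$, your induction bottoms out at an expression with zero tensors, where your base case ``$m'=0$ via the BCP for $\cT$'' does not apply (Definition~\ref{def:BCP} requires $m\geq 1$); the paper handles this by observing that the reduction then terminates at $\sum_{i=1}^n \Id[i,i]=n$, so $n^{-1}|\val|=1$ directly.
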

\begin{proof}
Consider any expression inside the supremum of
(\ref{eq:BCPcondition}) where the first
$m'$ tensors are given by $\Id$ and the last $m-m'$ are tensors in $\cT$.
Such an expression is equal to $n^{-1}|\val|$ for a value of the form
\[\val=\sum_{i_1,\ldots,i_\ell=1}^n
\prod_{a=1}^{m'} \Id[i_{\pi(2a-1)},i_{\pi(2a)}]
\prod_{a=m'+1}^m \bT_a[i_{\pi(k_{a-1}^++1)},\ldots,i_{\pi(k_a^+)}].\]
For each $a \in \{1,\ldots,m'\}$, if $\pi(2a-1)=\pi(2a)$, then $\val$ is
unchanged upon removing the factor $\Id[i_{\pi(2a-1)},i_{\pi(2a)}]$.
If $\pi(2a-1) \neq \pi(2a)$, then $\val$ is unchanged upon
removing the factor $\Id[i_{\pi(2a-1)},i_{\pi(2a)}]$ and
identifying $i_{\pi(2a)}$ with $i_{\pi(2a-1)}$ (i.e.\ replacing all instances of
$i_{\pi(2a)}$ by $i_{\pi(2a-1)}$ and then removing $i_{\pi(2a)}$ from the
summation). Iterating this procedure for $a=1,\ldots,m'$, we reduce either
to a form $\sum_{i=1}^n \Id[i,i]$ with a single identity tensor, or to
a form where $m'=0$ and all remaining tensors belonging to $\cT$. In the
former case we have $n^{-1}\val=1$, while in the latter case we have
$n^{-1}|\val| \leq C$ for all large $n$ uniformly over all
$\bT_{m'+1},\ldots,\bT_m \in \cT$ by the BCP for $\cT$.
Thus the BCP holds for $\cT \cup \{\Id\}$.
\end{proof}

The next lemma considers expressions of the form (\ref{eq:BCPcondition}) in the
definition of the BCP, when a subset of the tensors have order 1 and are given
by standard Gaussian vectors $\bxi_1,\ldots,\bxi_t \in \RR^n$. The lemma bounds
the mean and variance of the resulting expression over $\bxi_1,\ldots,\bxi_t$.

\begin{lemma}\label{lem:GauPolyVar}
Fix any integers $m \geq m' \geq 1$, $k_1=\ldots=k_{m'}=1$, and
$k_{m'+1},\ldots,k_m \in \{1,\ldots,K\}$, and define $k_0^+=0$ and
$k_a^+=k_1+k_2+\ldots+k_a$. Fix $\ell \geq 1$ and a surjective map
$\pi:[k_m^+] \to [\ell]$ satisfying the two conditions of Definition
\ref{def:BCP}. Fix also $t \geq 1$ and a coordinate map $\sigma:[m'] \to [t]$.

Suppose $\cT$ is a set of tensors satisfying the BCP,
and $\bxi_1,\ldots,\bxi_t \in \RR^n$ are independent vectors with
i.i.d.\ $\cN(0,1)$ entries. Then there is a
constant $C>0$ such that for any $\bT_{m'+1},\ldots,\bT_m \in \cT$ of the
appropriate orders $k_{m'+1},\ldots,k_m$, the function
\begin{equation}\label{eq:GauPoly}
\val(\bxi_{1:t})=\sum_{i_1,\ldots,i_\ell=1}^n
\left(\prod_{a=1}^{m'} \bxi_{\sigma(a)}[i_{\pi(a)}]\right)
\left(\prod_{a=m'+1}^m \bT_a[i_{\pi(k_{a-1}^++1)},\ldots,i_{\pi(k_a^+)}]\right)
\end{equation}
satisfies
\[|\E \val(\bxi_{1:t})| \leq Cn, \qquad \Var[\val(\bxi_{1:t})] \leq Cn.\]
\end{lemma}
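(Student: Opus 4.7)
The plan is to apply Isserlis' formula (Wick's theorem) to the expectations $\E[\val(\bxi_{1:t})]$ and $\E[\val(\bxi_{1:t})^2]$, and then invoke the BCP closure under extension by the identity tensor (Lemma \ref{lemma:BCPId}) to bound each of the finitely many resulting tensor contractions by $Cn$.

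For the mean bound, Isserlis gives
\[
\E\Big[\prod_{a=1}^{m'} \bxi_{\sigma(a)}[i_{\pi(a)}]\Big]
=\sum_{\tau}\prod_{\{a,b\}\in\tau}\Id[i_{\pi(a)},i_{\pi(b)}],
\]
where $\tau$ ranges over the finite set of perfect matchings of $[m']$ that are $\sigma$-compatible (i.e.\ $\sigma(a)=\sigma(b)$ on every pair), with the sum vanishing when $m'$ is odd. Substituting into \eqref{eq:GauPoly} expresses $\E[\val(\bxi_{1:t})]$ as a finite sum, one term per matching $\tau$, of tensor contractions over $i_1,\ldots,i_\ell$ involving the $\bT_{m'+1},\ldots,\bT_m\in\cT$ together with $m'/2$ identity tensors (a pair $\{a,b\}$ with $\pi(a)=\pi(b)$ contributes the scalar $1$). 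I then verify that each such contraction still satisfies the two hypotheses of Definition \ref{def:BCP}: the even multiplicity of each index is preserved, since non-trivial pairs pass the counts of $\pi(a),\pi(b)$ through the new $\Id$-node while trivial pairs drop one index count by two; and the resulting bipartite tensor-index graph remains connected, because each Gaussian node has degree one and hence can only appear as a leaf of the original connected graph, so fusing a leaf pair into a single $\Id$-node or removing a trivial leaf pair from an index that still has other neighbors cannot split the graph. Lemma \ref{lemma:BCPId} then bounds each matching's term by $Cn$, giving $|\E[\val(\bxi_{1:t})]|\leq Cn$.

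For the variance, I expand $\E[\val(\bxi_{1:t})^2]$ via Isserlis over the $2m'$ Gaussian factors from two independent copies of $\val$, indexed by $[m']\sqcup[m']'$. The matchings split into purely within-copy matchings $\tau\sqcup\tau'$, whose contributions factorize and sum to $(\E[\val])^2$, cancelling the centering term exactly, and cross-copy matchings containing at least one bridging pair. Each bridging pair introduces a cross-identity $\Id[i_{\pi(a)},i'_{\pi(b)}]$ that merges the two otherwise-disjoint per-copy contractions into a single connected contraction over $[\ell]\sqcup[\ell]'$; the same BCP reasoning as in the mean bound then gives an $O(n)$ bound on each cross-copy term, and summing the finitely many such terms yields $\Var[\val(\bxi_{1:t})]\leq Cn$.

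The chief technical point is the connectivity verification in the mean bound: a trivial pair $\{a,b\}$ with $\pi(a)=\pi(b)=j$ could in principle isolate the index $j$ if its only original neighbors were the two paired $\bxi$-nodes. However, the connectedness of the original bipartite graph forces this scenario only in the degenerate case $m=m'=2$ and $\ell=1$, where $\val=\sum_i \bxi_{\sigma(1)}[i]\bxi_{\sigma(2)}[i]$ and the claimed bounds $|\E[\val]|\leq n$ and $\Var[\val]\leq 3n$ follow by direct computation.
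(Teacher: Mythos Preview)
Your approach is the same as the paper's---Wick/Isserlis on the Gaussian factors, then the BCP for $\cT\cup\{\Id\}$ via Lemma~\ref{lemma:BCPId}---and the variance argument (within-copy pairings cancel $(\E\val)^2$, cross-copy pairings give connected doubled contractions) matches exactly.

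However, your connectivity verification for the mean is more involved than necessary, and the ``degenerate case'' characterization is not quite right. You remove trivial pairs $\{a,b\}$ with $\pi(a)=\pi(b)$ and then worry about isolating the shared index; you claim this can only occur when $m=m'=2$, $\ell=1$. But with $m=m'=4$, $\ell=1$ and all four $\bxi$-leaves at the same index~$j$, removing two trivial pairs also isolates~$j$. More generally, $j$ becomes isolated precisely when \emph{all} of its neighbors are $\bxi$-leaves lying in trivial pairs at~$j$; since those leaves have degree~1, $\{j\}\cup\{\text{leaves}\}$ is then a full connected component, forcing $m=m'$ and $\ell=1$ (any even $m'$). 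The conclusion $T(\tau)=n$ still holds in these cases, so your bound survives, but the case analysis as written is incomplete.

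The paper sidesteps this entirely: it does \emph{not} remove trivial pairs. Each $\sigma$-compatible pair $\{a,b\}$ is simply replaced by the factor $\Id[i_{\pi(a)},i_{\pi(b)}]$ regardless of whether $\pi(a)=\pi(b)$. The resulting expression $T(\tau)$ then has the \emph{same} index map $\pi$ and the \emph{same} total arity $k_m^+$; only the tensor labels change (pairs of order-1 leaves merged into order-2 $\Id$'s). Condition~(1) of Definition~\ref{def:BCP} depends only on~$\pi$ and is unchanged; condition~(2) can only improve, since merging two tensor-side vertices of the bipartite graph cannot disconnect it. Lemma~\ref{lemma:BCPId} then applies directly, with no degenerate cases to handle.
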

\begin{proof}
For the expectation, let $\sP$ be the set of all pairings $\tau$ of $[m']$
for which every pair $\{a,b\} \in \tau$ satisfies $\sigma(a)=\sigma(b)$.
Then applying Wick's rule (Lemma \ref{lem:wick}),
\[\EE \val(\bxi_{1:t})=\sum_{\tau \in \sP}
\underbrace{\sum_{\bi \in [n]^\ell}
\prod_{\{a,b\} \in \tau} \Id[i_{\pi(a)},i_{\pi(b)}]
\prod_{a=m'+1}^m \bT_a[i_{\pi(k_{a-1}^++1)},\ldots,i_{\pi(k_a^+)}]}_{:=T(\tau)}.\]
For each $\tau \in \sP$, this summand $T(\tau)$ is of the form
(\ref{eq:BCPcondition}) with tensors belonging to $\cT \cup \{\Id\}$, and
continues to satisfy both conditions of Definition \ref{def:BCP}. Then by the
BCP for $\cT \cup \{\Id\}$ given in Lemma \ref{lemma:BCPId},
we have $|T(\tau)| \leq Cn$ and hence also
$|\E \val(\bxi_{1:t})| \leq C'n$ for some constants $C,C'>0$.

For the variance, let us write $i_{\ell+1},\ldots,i_{2\ell}$ for a duplication
of the indices $i_1,\ldots,i_\ell$. We duplicate also the set of tensors,
setting $\bxi_{\sigma(m+a)}=\bxi_{\sigma(a)}$ for $a=1,\ldots,m'$ and
$\bT_{m+a}=\bT_a$ for $a=m'+1,\ldots,m$, having orders $k_{m+a}=k_a$ for all
$a=1,\ldots,m$. Then, defining $k_a^+=k_1+\ldots+k_a$ for each $a \in [2m]$
and extending $\pi$ to a map $\pi:[2k_m^+] \to [2\ell]$ by
$\pi(k_m^++k)=\pi(k)+\ell$ for all $k \in [k_m^+]$, we have
\begin{align}
\EE[\val(\bxi_{1:t})^2] &= \sum_{\bi \in [n]^{2\ell}}
\E \Bigg[\prod_{a=1}^{m'} \bxi_{\sigma(a)}[i_{\pi(a)}]
\prod_{a=m+1}^{m+m'} \bxi_{\sigma(a)}[i_{\pi(a)}]\Bigg]\\
&\hspace{1in}\cdot
\prod_{a=m'+1}^m \bT_a[i_{\pi(k_{a-1}^++1)},\ldots,i_{\pi(k_a^+)}]
\prod_{a=m+m'+1}^{2m} \bT_a[i_{\pi(k_{a-1}^++1)},\ldots,i_{\pi(k_a^+)}].
\end{align}
Let $\sP$ be the set of all pairings $\tau$ of $\{1,\ldots,m'\} \cup
\{m+1,\ldots,m+m'\}$ for which every pair $\{a,b\} \in \tau$ satisfies
$\sigma(a)=\sigma(b)$. Then again by Wick's rule,
\[\EE[\val(\bxi_{1:t})^2]=\sum_{\tau \in \sP} T(\tau)\]
where
\begin{equation}\label{eq:Ttau}
T(\tau)=\sum_{\bi \in [n]^{2\ell}}
\prod_{\{a,b\} \in \tau} \Id[i_{\pi(a)},i_{\pi(b)}]
\prod_{a=m'+1}^m \bT_a[i_{\pi(k_{a-1}^++1)},\ldots,i_{\pi(k_a^+)}]
\prod_{a=m+m'+1}^{2m} \bT_a[i_{\pi(k_{a-1}^++1)},\ldots,i_{\pi(k_a^+)}].
\end{equation}
Now let $\sP' \subset \sP$ be those pairings for which
each pair $\{a,b\}$ has both elements in $\{1,\ldots,m'\}$ or both elements in
$\{m+1,\ldots,m+m'\}$, and observe similarly by Wick's rule that
\begin{align}
\big(\EE \val(\bxi_{1:t})\big)^2
&=\sum_{\bi \in [n]^{2\ell}}
\Bigg(\E \prod_{a=1}^{m'} \bxi_{\sigma(a)}[i_{\pi(a)}]
\cdot \E \prod_{a=m+1}^{m+m'} \bxi_{\sigma(a)}[i_{\pi(a)}]\Bigg)\\
&\hspace{1in}\cdot
\prod_{a=m'+1}^m \bT_a[i_{\pi(k_{a-1}^++1)},\ldots,i_{\pi(k_a^+)}]
\prod_{a=m+m'+1}^{2m} \bT_a[i_{\pi(k_{a-1}^++1)},\ldots,i_{\pi(k_a^+)}]\\
&=\sum_{\tau \in \sP'} T(\tau).
\end{align}
Thus
\[\Var[\val(\bxi_{1:t})]=\sum_{\tau \in \sP \setminus \sP'} T(\tau).\]
For each $\tau \in \sP$, this summand $T(\tau)$ in (\ref{eq:Ttau}) is of the form
(\ref{eq:BCPcondition}) with tensors belonging to $\cT \cup \{\Id\}$.
The first even cardinality condition of Definition \ref{def:BCP} holds for
$T(\tau)$, because it holds for the original expression (\ref{eq:GauPoly}).
The second connectedness condition of Definition \ref{def:BCP} also holds for
$T(\tau)$: This is because, by the given condition that $\pi$ defining
(\ref{eq:GauPoly}) satisfies Definition \ref{def:BCP},
there is no partition of $i_1,\ldots,i_{\ell}$ or of
$i_{\ell+1},\ldots,i_{2\ell}$ into two index sets that appear on disjoint
sets of tensors, and furthermore since $\tau \notin \sP'$,
there is also at least one
pair $\{a,b\} \in \tau$ for which $i_{\pi(a)}$ is one of $i_1,\ldots,i_{\ell}$
and $i_{\pi(b)}$ is one of $i_{\ell+1},\ldots,i_{2\ell}$.
Then by the BCP property for $\cT \cup \{\Id\}$ given in
Lemma \ref{lemma:BCPId}, we have $T(\tau) \leq Cn$ and hence also
$\Var[\val(\bxi_{1:t})] \leq C'n$ for some constants $C,C'>0$.
\end{proof}

\begin{corollary}\label{cor:BCPgaussian}
Suppose $\cT$ satisfies the BCP and has cardinality $|\cT| \leq C$ for a
constant $C>0$ independent of $n$. Let $\bxi_s \equiv \bxi_s(n) \in \R^n$ for
$s=1,\ldots,t$ be independent vectors with i.i.d.\ $\cN(0,1)$ entries,
viewed as tensors of order 1, where $t$ is also independent of $n$.
Then $\cT \cup \{\bxi_1,\ldots,\bxi_t\}$ satisfies the BCP almost surely with
respect to $\{\bxi_1(n),\ldots,\bxi_t(n)\}_{n=1}^\infty$.
\end{corollary}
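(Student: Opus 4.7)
My plan is to reduce the claim to Lemma \ref{lem:GauPolyVar} together with a Borel--Cantelli argument, then union-bound. Fix any configuration $(m,\ell,k_1,\ldots,k_m,\pi)$ satisfying the two conditions of Definition \ref{def:BCP}, and any assignment of each of the $m$ tensor slots to either a tensor from $\cT$ or to one of the Gaussian vectors $\bxi_1,\ldots,\bxi_t$ (the latter requires $k_a=1$, with the additional data of a map $\sigma$ recording which $\bxi_s$ appears at each order-$1$ slot). For each fixed such configuration and each fixed choice of tensors from $\cT$ in the remaining slots, the expression inside the supremum in \eqref{eq:BCPcondition} is exactly the polynomial $\val(\bxi_{1:t})$ studied in Lemma \ref{lem:GauPolyVar}, so that lemma immediately yields $|\E\val|\le Cn$ and $\Var[\val]\le Cn$ for some constant $C$ independent of $n$.

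To upgrade mean/variance control to an almost-sure bound I would use Gaussian hypercontractivity. For each fixed deterministic choice of tensors, $\val(\bxi_{1:t})$ is a polynomial of total degree $m'\le m$ in the i.i.d.\ $\cN(0,1)$ entries of $\bxi_1,\ldots,\bxi_t$, so hypercontractivity gives $\E[(\val-\E\val)^{2k}]\le C_k(\Var\val)^k\le C'_k n^k$ for every fixed integer $k\ge 1$. Taking $k\ge 2$ and applying Markov's inequality, $\P[\,|n^{-1}\val-n^{-1}\E\val|>\epsilon\,]\le C'_k\epsilon^{-2k}n^{-k}$, which is summable in $n$; Borel--Cantelli then gives $n^{-1}\val-n^{-1}\E\val\to 0$ a.s., and combining with $|n^{-1}\E\val|\le C$ produces $\limsup_n n^{-1}|\val(\bxi_{1:t})|\le C$ almost surely for that configuration and tensor choice.

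The final step is a union bound. The parameters $(m,\ell,k_1,\ldots,k_m,\pi,\sigma)$ range over a countable set, and for each fixed choice the additional data of which element of $\cT$ occupies each non-Gaussian slot is a finite set since $|\cT|$ and $t$ are bounded uniformly in $n$. Intersecting the full-measure events from the previous paragraph over this countable family produces a single full-measure event on which condition \eqref{eq:BCPcondition} holds for $\cT\cup\{\bxi_1,\ldots,\bxi_t\}$ uniformly in every admissible configuration.

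The main obstacle, modest but worth flagging, is the higher-moment bound. If one prefers not to appeal to hypercontractivity as a black box, one can instead derive it by hand in the spirit of the variance calculation in Lemma \ref{lem:GauPolyVar}: expanding $\E[(\val-\E\val)^{2k}]$ via Wick's rule produces a sum of terms each of the form \eqref{eq:BCPcondition} with tensors drawn from $\cT\cup\{\Id\}$. One must then check that each such term still satisfies both the even-cardinality condition and the connectedness condition of Definition \ref{def:BCP}, which is inherited from the single-copy expression through the structure of Wick pairings in the same way as in the proof of Lemma \ref{lem:GauPolyVar}; the bound $\E[(\val-\E\val)^{2k}]\le C'_k n^k$ then follows term-by-term from Lemma \ref{lemma:BCPId}.
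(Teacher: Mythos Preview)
Your proposal is correct and follows essentially the same route as the paper: reduce to Lemma~\ref{lem:GauPolyVar} for the mean and variance of $\val(\bxi_{1:t})$, invoke Gaussian hypercontractivity, then apply Borel--Cantelli and a union bound over the (finitely many, since $|\cT|\le C$) tensor choices. The only cosmetic difference is that the paper uses the exponential tail form of hypercontractivity (Lemma~\ref{lem:hypercontractivity}) directly, whereas you pass through moment bounds and Markov's inequality; these are equivalent here.
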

\begin{proof}
Consider any expression inside the supremum of (\ref{eq:BCPcondition}), where the
first $m'$ tensors belong to $\{\bxi_1,\ldots,\bxi_t\}$ and the last $m-m'$
belong to $\cT$. Such an expression is given by $n^{-1}|\val(\bxi_{1:t})|$ where
$\val(\bxi_{1:t})$ is a value of the form (\ref{eq:GauPoly}).
Lemma \ref{lem:GauPolyVar} implies $\Var[n^{-1}\val(\bxi_{1:T})] \leq Cn^{-1}$
for some constant $C>0$.
As $n^{-1}\val(\bxi_{1:T})$ is a polynomial of degree $m'$ in
the standard Gaussian variables $\bxi_{1:t}$, it follows from Gaussian
hypercontractivity (Lemma \ref{lem:hypercontractivity}) that
there exist constants $C',c>0$ for which, for any $\epsilon>0$, 
\begin{align}
\P[|n^{-1}\val(\bxi_{1:t})-n^{-1}\E\val(\bxi_{1:t})|>\epsilon] &\leq
C'e^{-(c\epsilon^2 n)^{1/m'}}.
\end{align}
Applying this and the bound $|n^{-1}\E \val(\bxi_{1:t})| \leq C$ from
Lemma \ref{lem:GauPolyVar}, we obtain for some constants $C,C',c>0$ that
\begin{equation}\label{eq:valtailbound}
\P[|n^{-1}\val(\bxi_{1:t})|>C] \leq C'e^{-(cn)^{1/m'}}.
\end{equation}
As $|\cT|$ is bounded independently of $n$, the number of choices for
$\bT_1,\ldots,\bT_m \in \cT \cup \{\bxi_1,\ldots,\bxi_t\}$
in (\ref{eq:BCPcondition})
is also bounded independently of $n$. Taking the union bound of
(\ref{eq:valtailbound}) over all such choices and applying the Borel-Cantelli
lemma, we obtain that (\ref{eq:BCPcondition}) holds almost surely, and thus
$\cT \cup \{\bxi_1,\ldots,\bxi_t\}$ almost surely satisfies the BCP.
\end{proof}

\section{State evolution for Gaussian matrices}\label{sec:StrongSE}

In this appendix, we prove Theorem \ref{thm:StrongSE0} on the state evolution
for AMP algorithms defined by stable functions $f_0,\ldots,f_{T-1}$
when $\bW \sim \GOE(n)$.
We then show Theorem \ref{thm:universality_poly_amp} in this Gaussian setting.

Recall the notation $X \prec n^{-a}$ from (\ref{eq:precnotation}). We will
use throughout the basic properties that
$X,Y \prec n^{-a} \Rightarrow X+Y \prec n^{-a}$
and $X \prec n^{-a},Y \prec n^{-b} \Rightarrow XY \prec n^{-(a+b)}$.

\begin{proof}[Proof of Theorem \ref{thm:StrongSE0}]\label{pf:StrongSE}
Consider the following statements,
where the constant $C \equiv C(D)>0$ underlying $\prec$ may depend on $t$.
    \begin{itemize}
        \item[(\rom{1}$_t$)] There exist random vectors $\bZ_{1:t},\bE_{1:t}
\in\RR^{n\times t}$ in the probability space of $\bW$,
with $\bZ_{1:t} \sim \cN(0,\bSigma_t \otimes \Id)$
and $\|\bE_{1:t}\|_\Fro \prec 1$, such that
        \[
            \bz_{1:t}=\bZ_{1:t}+\bE_{1:t}.
        \]
Here $\bZ_{1:t}$ is $\cF_t$-measurable for some $\sigma$-algebra $\cF_t$
generated by $\bW\bu_{1:t} \in \RR^{n \times t}$
and auxiliary random variables independent of $\bW$.
        \item[(\rom{2}$_t$)] For all $s,\tau \in \{1,\ldots,t\}$, 
        \[
            \frac{1}{n}\langle \bz_s, \bz_\tau\rangle-
            \frac{1}{n}\langle \bu_s, \bu_\tau\rangle \prec
\frac{1}{\sqrt{n}}.
        \]
        \item[(\rom{3}$_t$)] For all $s,\tau \in \{1,\ldots,t\}$,
        \[
            \frac{1}{n}\langle \bz_s, \bu_{\tau+1}\rangle
            -\sum_{r=1}^\tau b_{\tau+1,r} \cdot \frac{1}{n}\langle
            \bz_s,\bz_r\rangle \prec \frac{1}{\sqrt{n}}.
        \]
    \end{itemize}
    We will show inductively that (\rom{1}$_t$) holds for $t=1$, and that
for each $t=1,\ldots,T-1$, (\rom{1}$_t$) implies (\rom{2}$_t$,\rom{3}$_t$),
and (\rom{1}$_t$,\rom{2}$_t$,\rom{3}$_t$) imply (\rom{1}$_{t+1}$).
The theorem then follows from (\rom{1}$_t$) for $t=T$.\\

    \textbf{Base case (\rom{1}$_t$ for $t=1$):}
Recall from \eqref{eq:AMP} that $\bz_1 = \bW\bu_1$ and $\bu_2=f_1(\bz_1)$.
    As each entry of $\bW$ is a mean-zero Gaussian random variable, so is
    each entry of $\bz_1$.
A direct calculation using the law of $\bW \sim \GOE(n)$
shows that the covariance of $\bz_1=\bW\bu_1$ is given by
    \[
        \label{eq:base_covariance}
        \EE[\bz_1 \bz_1^\top]
=\EE[\bW\bu_1\bu_1^\top\bW]= \frac{1}{n}\|\bu_1\|_2^2 \cdot \Id +
        \frac{1}{n} \bu_1 \bu_1^\top.
    \]
Note that $n^{-1}\|\bu_1\|_2^2=\bSigma_1$, which is strictly positive by
assumption. Let $\bP_{\bu_1}=\bu_1\bu_1^\top/\|\bu_1\|_2^2$ be the projection
onto the span of $\bu_1$, and $\bP_{\bu_1}^\perp=\Id-\bP_{\bu_1}$. Let
$\bxi_1 \sim \cN(0,n^{-1}\|\bu_1\|_2^2 \cdot \bP_{\bu_1}) \in \R^n$ be a
Gaussian vector independent of $\bW$, and set
\[\bZ_1=\bP_{\bu_1}^\perp\bz_1+\bxi_1, \qquad \bE_1=\bP_{\bu_1}\bz_1-\bxi_1.\]
Then $\bz_1=\bZ_1+\bE_1$, where $\bZ_1\sim\cN(0,n^{-1}\|\bu_1\|_2^2 \cdot
\Id)=\cN(0,\bSigma_1 \otimes \Id)$
and $\bE_1 \sim \cN(0,3n^{-1}\bu_1 \bu_1^\top)$.
Letting $\cF_1$ be
the $\sigma$-algebra generated by $(\bW\bu_1,\bxi_1)$, we note that $\bZ_1$ is
$\cF_1$-measurable. Also $\|\bE_1\|_2$ is equal in law to
$(3n)^{-1/2}\|\bu_1\|_2 \cdot |\xi|$ where $\xi \sim \cN(0,1)$, so
$\|\bE_1\|_2 \prec 1$ by the assumption
$n^{-1}\|\bu_1\|_2^2=\|\bSigma_1\|_\op<C$ and a Gaussian tail bound.
This establishes (\rom{1}$_1$).\\

    \textbf{Induction step: (\rom{1}$_t$) $\Rightarrow$ (\rom{2}$_t$,
\rom{3}$_t$)} Suppose (\rom{1}$_t$) holds for some $t \leq T-1$.
For any $s,\tau \in \{1,\ldots,t\}$, note that
$n^{-1}\langle \bZ_s,\bZ_\tau \rangle=n^{-1}\EE\langle \bZ_s,\bZ_\tau
\rangle+\Oprec(n^{-1/2})$
by a standard concentration argument for Gaussian vectors.  Here
$n^{-1}\EE\langle \bZ_s,\bZ_\tau \rangle=\bSigma_t[s,\tau] \leq C$. Then by
(\rom{1}$_t$), the bounds $\|\bE_s\|_2,\|\bE_\tau\|_2 \prec 1$, and
Cauchy-Schwarz,
    \[
        \frac{1}{n}\langle \bz_s, \bz_\tau \rangle=\frac{1}{n}
        \langle \bZ_s + \bE_s, \bZ_\tau
        + \bE_\tau \rangle=\bSigma_t[s,\tau] + \Oprec(n^{-1/2}).
    \]
Recall that $\bu_s=f_{s-1}(\bz_{1:(s-1)})$.
Then by (\rom{1}$_t$), also
\[\frac{1}{n}\langle \bu_s,\bu_\tau \rangle
=\frac{1}{n}\langle f_{s-1}(\bZ_{1:(s-1)}+\bE_{1:(s-1)}),
f_{\tau-1}(\bZ_{1:(\tau-1)}+\bE_{1:(s-1)}) \rangle
=\bSigma_t[s,\tau]+\Oprec(n^{-1/2})\]
where the last equality applies condition \eqref{eq:makeStateEv0} of Assumption
\ref{assump:GOESE} and 
Definition \ref{def:non_asymp_se} for $\bSigma_t$. Combining these two
statements shows (\rom{2}$_t$).
To show (\rom{3}$_t$), using (\rom{1}$_t$) and condition \eqref{eq:Stein0} of
Assumption \ref{assump:GOESE}, we have for
any $s,\tau \in \{1,\ldots,t\}$ that
    \begin{align}
        \frac{1}{n}\langle \bz_s,\bu_{\tau + 1} \rangle
        &=\frac{1}{n}\langle \bZ_s + \bE_s,\,
        f_\tau(\bZ_{1:\tau} + \bE_{1:\tau}) \rangle
        =\frac{1}{n}\EE\langle \bZ_s,f_\tau(\bZ_{1:\tau})\rangle
+\Oprec(n^{-1/2}).\label{eq:beforeStein}
    \end{align}
   Stein's lemma (c.f.\ Lemma \ref{lem:stein}) gives, for each
coordinate $i=1,\ldots,n$,
$\E[\bZ_s[i]f_\tau(\bZ_{1:\tau})[i]]
=\sum_{r=1}^\tau \E[\partial_{\bZ_r[i]}f_\tau(\bZ_{1:\tau})[i]]
\cdot \bSigma_t[s,r]$. Then
    \begin{align}
        \frac{1}{n}\EE\langle \bZ_s, f_\tau(\bZ_{1:\tau})
        \rangle &= \sum_{r=1}^\tau \frac{1}{n}\E[\div\nolimits_r f_\tau(\bZ_{1:\tau})]
\cdot \bSigma_t[s,r]
        =\sum_{r=1}^\tau b_{\tau+1,r}\bSigma_t[s,r]
    \end{align}
    where $b_{\tau+1,r}$ is defined in (\ref{eq:def_b_ts}).
    Combining this with $n^{-1}\langle \bz_s,\bz_r
\rangle=\bSigma_t[s,r]+\Oprec(n^{-1/2})$ as shown above and the assumption
$|b_{\tau+1,r}| \leq C$, this shows (\rom{3}$_t$).\\

\textbf{Induction step: (\rom{1}$_t$, \rom{2}$_t$, \rom{3}$_t$)
$\Rightarrow$ (\rom {1}$_{t+1}$)}
Suppose (\rom{1}$_t$,\rom{2}$_t$,\rom{3}$_t$) hold for some $t \leq T-1$.
Recall that $\bu_{s}=f_{s-1}(\bz_{1:(s-1)})$. Then
by the induction hypothesis (\rom{1}$_t$),
the condition \eqref{eq:makeStateEv0} of Assumption \ref{assump:GOESE}, and
Definition \ref{def:non_asymp_se} for $\bSigma_{t+1}$,
\begin{equation}\label{eq:uuapprox}
n^{-1}\bu_s^\top\bu_\tau=\bSigma_{t+1}[s,\tau]+\Oprec(n^{-1/2})
\text{ for any } s,\tau \in \{1,\ldots,t+1\}.
\end{equation}
Define the event
\[\cE=\{n^{-1}\bu_{1:(t+1)}^\top \bu_{1:(t+1)} \in \RR^{(t+1) \times (t+1)}
\text{ is invertible}\}.\]
The bound (\ref{eq:uuapprox}) and assumption
$\lambda_{\min}(\bSigma_{t+1})>c$ imply that
\begin{equation}\label{eq:goodeventbound}
\P[\cE]>1-n^{-D}
\end{equation}
for any fixed $D>0$ and all large $n$.
To ease notation, let us denote
\[\bu=\bu_{1:t}=(\bu_1,\ldots,\bu_t) \in \RR^{n \times t},
\quad \bz=\bz_{1:t}=(\bz_1,\ldots,\bz_t) \in \RR^{n \times t},\]
and also introduce
\[\bb=(b_{t+1,1},\ldots,b_{t+1,t})^\top\in\RR^t,
\quad \by=\bigg(\bz_1,\,\bz_2+b_{2,1}\bu_1,\,\ldots,\,
        \bz_t+\sum_{s=1}^{t-1} b_{t,s}\bu_s\bigg)\in \RR^{n\times t}.\]
On the event $\cE$,
let $\bP_\bu=\bu(\bu^\top \bu)^{-1}\bu^\top$ be the projection onto the column
span of $\bu$, and $\bP_\bu^\perp=\Id-\bP_\bu$. 
By definition of the AMP algorithm (\ref{eq:AMP}) and the above quantities,
we have $\by=\bW\bu$. This implies that on $\cE$,
\begin{align}
\bW&=\bW \bP_\bu+\bP_\bu \bW\bP_\bu^\perp
+\bP_\bu^\perp \bW\bP_\bu^\perp\\
&=\by(\bu^\top \bu)^{-1}\bu^\top
+\bu(\bu^\top\bu)^{-1}\by^\top \bP_\bu^\perp
+\bP_\bu^\perp \bW \bP_\bu^\perp.\label{eq:Wdecomp}
\end{align}
Let $\bu_{t+1,\parallel} = \bP_\bu \bu_{t+1}$ and 
    $\bu_{t+1,\perp} = \bP_\bu^\perp \bu_{t+1}=\bu_{t+1}
    -\bu_{t+1,\parallel}$. 
Then applying the definition of $\bz_{t+1}$ in the AMP algorithm (\ref{eq:AMP})
and (\ref{eq:Wdecomp}) gives
    \begin{align}
        \bz_{t+1}
        &=\bW\bu_{t+1}-\bu \bb
        = \by (\bu^\top\bu)^{-1} \bu^\top \bu_{t+1,\parallel} 
        + \bu(\bu^\top\bu)^{-1} \by^\top \bu_{t+1,\perp}+ 
        \bP_\bu^\perp \bW \bu_{t+1,\perp} - \bu\bb.
        \label{eq:useYUSwap}
    \end{align}
    Using $\bu_{t+1,\perp}^\top \bu_s=0$ for all $s \leq t$
and the definition of $\by$,
    we have $\by^\top \bu_{t+1,\perp} = \bz^\top 
    \bu_{t+1,\perp}$, so on $\cE$,
    \begin{align}
        \bz_{t+1}
        &=\by
        (\bu^\top\bu)^{-1} \bu^\top \bu_{t+1,\parallel} + \bu
        (\bu^\top\bu)^{-1} \bz^\top \bu_{t+1,\perp} +
        \bP_{\bu}^\perp \bW \bu_{t+1,\perp} -
        \bu\bb\\
        &= \underbrace{(\by-\bz)(\bu^\top\bu)^{-1}
        \bu^\top\bu_{t+1,\parallel} + \bu(\bu^\top\bu)^{-1}
        \bz^\top \bu_{t+1,\perp} - \bu\bb}_{:=\bv_1}\\
        &\qquad +
        \underbrace{\bz (\bu^\top\bu)^{-1} \bu^\top
        \bu_{t+1,\parallel} + \bP_{\bu}^\perp \bW \bu_{t+1,\perp}}_{:=\bv_2}.
\label{eq:ztplus1decomp}
    \end{align}

    We first establish that
\begin{equation}\label{eq:v1bound}
\1\{\cE\} \cdot \|\bv_1\|_2 \prec 1.
\end{equation}
Restricting to the event $\cE$, since
$\by-\bz$ belongs to the column span of $\bu$, 
we have $\bv_1=\sum_{s=1}^t \alpha_s\bu_s$ for some
    coefficients $\alpha_s \in \RR$. Let us calculate these
    coefficients. The $\tau$-th column of $\by-\bz$ contains $\bu_s$ only when
    $\tau>s$, with the corresponding coefficient being $b_{\tau,s}$. Therefore,
    \[
        \label{eq:alpha_s} \alpha_s = \sum_{\tau=s+1}^t b_{\tau,s} 
        \Big((\bu^\top\bu)^{-1} \bu^\top 
        \bu_{t+1,\parallel}\Big)\ss{\tau} + \Big((\bu^\top\bu)^{-1}
        \bz^\top\bu_{t+1,\perp}\Big)\ss{s} - b_{t+1,s}.
    \]
    Defining $\beta_\tau=((\bu^\top\bu)^{-1}\bu^\top
\bu_{t+1,\parallel})\ss{\tau}$ for each $\tau=1,\ldots,t$, we have
    $\bu_{t+1,\parallel}=\bu(\bu^\top\bu)^{-1} \bu^\top
    \bu_{t+1,\parallel}=\sum_{\tau=1}^t\beta_\tau\bu_\tau$, and correspondingly
    $\bu_{t+1,\perp} = \bu_{t+1}-\sum_{\tau=1}^t\beta_\tau\bu_\tau$.
    This allows us to expand the second term on the right side of
     \eqref{eq:alpha_s} as
    \[\label{eq:alpha_s_second_term_1}\begin{split}
        \Big((\bu^\top\bu)^{-1}\bz^\top\bu_{t+1,\perp}\Big)
        \ss{s}&=\sum_{\tau=1}^t \left((\bu^\top 
        \bu)^{-1}\ss{s,\tau}\right) \bz_\tau^\top\bu_{t+1,\perp}\\
        &= \sum_{\tau=1}^t \left((\bu^\top
        \bu)^{-1}\ss{s,\tau}\right) \bigg(\bz_\tau^\top \bu_{t+1} 
        - \sum_{r=1}^t \beta_r\bz_\tau^\top\bu_r\bigg).
    \end{split}\]
    Using the induction hypotheses (\rom{2}$_t$, \rom{3}$_t$), we have for any
$\tau,r \leq t$ that
    \[
        \label{eq:alpha_s_second_term_2} \frac{1}{n} \bz_\tau^\top \bu_{r+1}
=\sum_{q=1}^r b_{r+1,q} \cdot \frac{1}{n} \bz_\tau^\top \bz_q
+\Oprec(n^{-1/2})
=\sum_{q=1}^r b_{r+1,q} \cdot \frac{1}{n} \bu_\tau^\top \bu_q
+\Oprec(n^{-1/2}).
    \]
Using (\rom{1}$_t$), the bound $n^{-1}\|\bu_1\|_2^2=\|\bSigma_1\|_\op \leq C$,
and a Gaussian tail bound, we have also
\[\label{eq:alpha_s_second_term_2a} \frac{1}{n}\bz_\tau^\top \bu_1
=\frac{1}{n}\bZ_\tau^\top \bu_1+
\frac{1}{n}\bE_\tau^\top \bu_1 \prec n^{-1/2}.\]
    Combining \eqref{eq:alpha_s_second_term_1},
    \eqref{eq:alpha_s_second_term_2}, \eqref{eq:alpha_s_second_term_2a},
    and the bound $\1\{\cE\}\|n^{-1}(\bu^\top
\bu)^{-1}\|_\op \prec 1$ which follows from (\ref{eq:uuapprox})
and $\lambda_{\min}(\bSigma_t)>c$, we have
    \begin{align}
        &\1\{\cE\}\Big((\bu^\top\bu)^{-1}\bz^\top\bu_{t+1,\perp}\Big)\ss{s}\\
        &=\1\{\cE\} \sum_{\tau=1}^t \bigg(\frac{1}{n}\bu^\top
        \bu\bigg)^{-1}\ss{s,\tau}\bigg(\sum_{r=1}^t b_{t+1,r} 
        \cdot \frac{1}{n}\bu_\tau^\top \bu_r - \sum_{r=2}^t \beta_r 
        \sum_{q=1}^{r-1} b_{r,q} \cdot \frac{1}{n} \bu_\tau^\top\bu_{q}\bigg)
+\Oprec(n^{-1/2}).\label{eq:alpha_s_second_term_3}
    \end{align}
    Plugging \eqref{eq:alpha_s_second_term_3} back into \eqref{eq:alpha_s}, and
    rearranging terms, we get
    \begin{align}
        \1\{\cE\} \cdot \alpha_s
        &= \1\{\cE\}\Bigg(\sum_{\tau=s+1}^t \beta_\tau b_{\tau,s} + \sum_{\tau=1}^t 
        \bigg(\frac{1}{n}\bu^\top \bu\bigg)^{-1}\ss{s,\tau}
        \bigg(\sum_{r=1}^t b_{t+1,r} \cdot \frac{1}{n}\bu_\tau^\top \bu_r 
        - \sum_{r=1}^t \beta_r \sum_{q=1}^{r-1}b_{r,q} \cdot \frac{1}{n} 
        \bu_\tau^\top\bu_{q}\bigg)\\
&\hspace{1in}-b_{t+1,s}\Bigg)+\Oprec(n^{-1/2})\\
        &= \1\{\cE\}\Bigg( \sum_{\tau=s+1}^t \beta_\tau b_{\tau,s} - \sum_{r=1}^t \beta_r
        \sum_{q=1}^{r-1} b_{r,q} \sum_{\tau=1}^t \bigg(\frac{1}{n}\bu^\top
        \bu\bigg)^{-1}\ss{s,\tau}\cdot \frac{1}{n} \bu_\tau^\top\bu_{q}
        \\
        &\hspace{1in} + \sum_{r=1}^t b_{t+1,r} \sum_{\tau=1}^t \bigg(\frac{1}{n}
        \bu^\top \bu\bigg)^{-1}\ss{s,\tau} \cdot \frac{1}{n}
        \bu_\tau^\top \bu_r - b_{t+1,s}\Bigg)
        + \Oprec(n^{-1/2})\\
        &=\1\{\cE\}\Bigg(\sum_{\tau=s+1}^t \beta_\tau b_{\tau,s} - \sum_{r=1}^t \beta_r 
        \sum_{q=1}^{r-1} b_{r,q} \ind\{s=q\} + \sum_{r=1}^t b_{t+1,r}\cdot 
        \ind\{s=r\} - b_{t+1,s}\Bigg) + \Oprec(n^{-1/2})\\
        &= \Oprec(n^{-1/2}).
    \end{align}
Hence $\1\{\cE\} \cdot \alpha_s \prec n^{-1/2}$ for all $s=1,\ldots,t$.
    Moreover, we have $n^{-1/2}\|\bu_s\|_2 \prec 1$ by (\ref{eq:uuapprox}),
and thus (\ref{eq:v1bound}) holds.
    
    Next, let us define the Gaussian vector $\bZ_{t+1}$ and $\sigma$-algebra
$\cF_{t+1}$. Note that by definition of the AMP algorithm (\ref{eq:AMP}),
$\bu_2=f_1(\bz_1)$ is a function of $\bW\bu_1$,
$\bu_3=f_2(\bz_1,\bz_2)$ is then a function of $\bW\bu_{1:2}$, etc., and
$\bu_{t+1}=f_t(\bz_1,\ldots,\bz_t)$ is then a function of $\bW\bu_{1:t}$.
Thus by the assumption for $\cF_t$ in the induction hypothesis (\rom{1}$_t$),
$\bu_{1:(t+1)}$ and the above event $\cE$ are $\cF_t$-measurable. On this
event $\cE$, we construct a vector $\tilde \bZ_{t+1}$ as follows:
Let $\bP_{\bu_{1:(t+1)}}$ be the projection onto the column span of
$\bu_{1:(t+1)}$, and let $\bP_{\bu_{1:(t+1)}}^\perp=\Id-\bP_{\bu_{1:(t+1)}}$.
Let $\bxi_{t+1} \in \RR^n$ be a function of $\bu_{1:(t+1)}$ and some auxiliary
randomness independent of $\bW$ and $\cF_t$, such that conditional on $\cF_t$
and on the event $\cE$,
we have that $\bxi_{t+1}$ and $\bW$ are independent with
$\bxi_{t+1} \sim \cN(0,\bP_{\bu_{1:(t+1)}})$. Define
\begin{align}
\tilde\bZ_{t+1}&=\frac{\bP_{\bu_{1:(t+1)}}^\perp
\bW\bu_{t+1,\perp}}{n^{-1/2}\|\bu_{t+1,\perp}\|_2}+\bxi_{t+1}.
\label{eq:tildeZtplus1}
\end{align}
Note that by rotational invariance of $\GOE(n)$,
the law of $\bP_\bu^\perp \bW\bP_\bu^\perp$ conditioned on $\cF_t$ 
is equal to that conditioned on $(\bu,\bW\bu)$, which is
Gaussian and equal to that of $\bP_\bu^\perp \widetilde \bW\bP_\bu^\perp$ where
$\widetilde\bW \sim \GOE(n)$ is independent of $\cF_t$. Then conditional on
$\cF_t$, the law of $\bP_{\bu_{1:(t+1)}}^\perp \bW\bu_{t+1,\perp}
=\bP_{\bu_{1:(t+1)}}^\perp \bP_\bu^\perp \bW\bP_\bu^\perp \bu_{t+1}$
is that of a mean-zero Gaussian vector with covariance given by
\begin{align}
&\EE\Big[\big(\bP_{\bu_{1:(t+1)}}^\perp \bW\bu_{t+1,\perp}\big)
\big(\bP_{\bu_{1:(t+1)}}^\perp \bW\bu_{t+1,\perp}\big)^\top\Big|\cF_t\Big]\\
&\qquad=\EE\Big[\bP_{\bu_{1:(t+1)}}^\perp \big(\widetilde
\bW\bu_{t+1,\perp}\big)\big(\widetilde
\bW\bu_{t+1,\perp}\big)^\top\bP_{\bu_{1:(t+1)}}^\perp \Big|\cF_t\Big]\\
&\qquad=\bP_{\bu_{1:(t+1)}}^\perp\bigg(
        \frac{1}{n}\|\bu_{t+1,\perp}\|_2^2 \cdot \mathrm{Id} +
\frac{1}{n}\bu_{t+1, \perp} \bu_{t+1,
\perp}^\top\bigg)\bP_{\bu_{1:(t+1)}}^\perp
=\frac{1}{n}\|\bu_{t+1,\perp}\|_2^2 \cdot 
\bP_{\bu_{1:(t+1)}}^\perp,\label{eq:GtildeCov}
\end{align}
the second equality applying a calculation for the expectation over $\widetilde
\bW$ that is similar to \eqref{eq:base_covariance}. Then, applying this
and the definition of $\bxi_{t+1}$, conditional on $\cF_t$ and on the event
$\cE$,
\[\tilde\bZ_{t+1}=\frac{\bP_{\bu_{1:(t+1)}}^\perp
\bW\bu_{t+1,\perp}}{n^{-1/2}\|\bu_{t+1,\perp}\|_2}+\bxi_{t+1} \sim
\cN(0,\Id).\]
On the complementary event $\cE^c$, let us simply set $\tilde \bZ_{t+1}$ to be
equal to an auxiliary $\cN(0,\Id)$ random vector that is independent of
$\cF_t$ and $\bW$. Then, since the law of $\tilde \bZ_{t+1}$ conditional on
$\cF_t$ does not depend on $\cF_t$, we have that
$\tilde \bZ_{t+1}$ is independent of $\cF_t$ and 
$\tilde \bZ_{t+1} \sim \cN(0,\Id)$ unconditionally. Now 
let $\bSigma_{1:t,t+1}$, $\bSigma_{t+1,1:t}$, and $\bSigma_{t+1,t+1}$ denote
the entries in the last row/column of $\bSigma_{t+1}$, and set
\[\bZ_{t+1}=\bZ_{1:t}\bSigma_t^{-1}\bSigma_{1:t,t+1}
+\Big(\bSigma_{t+1,t+1}
-\bSigma_{t+1,1:t}\bSigma_t^{-1}\bSigma_{1:t,t+1}\Big)^{1/2}
\tilde\bZ_{t+1}.\label{eq:Ztplus1}\]
Then by the induction hypothesis (\rom{1}$_{t}$) that $\bZ_{1:t}$ is
$\cF_t$-measurable with $\bZ_{1:t} \sim \cN(0,\bSigma_t \otimes \Id)$,
we may check that $\bZ_{1:(t+1)} \sim \cN(0,\bSigma_{t+1} \otimes \Id)$.
Furthermore, letting $\cF_{t+1}$ be the $\sigma$-algebra generated by
$\cF_t$, $\bW\bu_{t+1}$, and the auxiliary randomness defining
$\tilde \bZ_{t+1}$ above, we have that $\bZ_{t+1}$ is $\cF_{t+1}$-measurable.

To conclude the proof of $(\mathrm{I}_{t+1})$, it remains to show for $\bv_2$
in (\ref{eq:ztplus1decomp}) that
\begin{equation}\label{eq:v2bound}
\1\{\cE\} \|\bv_2-\bZ_{t+1}\|_2 \prec 1.
\end{equation}
On the event $\cE$, recall that $\bv_2=\bz (\bu^\top\bu)^{-1} \bu^\top
    \bu_{t+1,\parallel} + \bP_{\bu}^\perp \bW \bu_{t+1,\perp}$.
For the first term, note by (\ref{eq:uuapprox}) that $n^{-1}\bu^\top
\bu_{t+1,\parallel}=n^{-1}\bu^\top \bu_{t+1}=\bSigma_{1:t,t+1}+\Oprec(n^{-1/2})$
and $\1\{\cE\} \|(n^{-1}\bu^\top \bu)^{-1}-\bSigma_t^{-1}\|_\op \prec n^{-1/2}$.
Combining these bounds with
$\bz=\bZ_{1:t}+\bE_{1:t}$ by $(\mathrm{I}_t)$ where
$\|\bZ_s\|_2 \prec n^{1/2}$ and $\|\bE_s\|_2 \prec 1$
for each $s=1,\ldots,t$, we see that
\[\label{eq:I2approx1}
\1\{\cE\} \big\|\bz (\bu^\top\bu)^{-1} \bu^\top
    \bu_{t+1,\parallel}-\bZ_{1:t}\bSigma_t^{-1}\bSigma_{1:t,t+1}\big\|_2 \prec
1.\]
For the second term, recall the definition of $\tilde \bZ_{t+1}$ from
(\ref{eq:tildeZtplus1}). Let us approximate the denominator
$n^{-1/2}\|\bu_{t+1,\perp}\|_2$: By (\ref{eq:uuapprox}),
$n^{-1}\|\bu_{t+1}\|_2^2=\bSigma_{t+1,t+1}+\Oprec(n^{-1/2})$
and $\1\{\cE\} \cdot n^{-1}\|\bu_{t+1,\parallel}\|^2
        =\1\{\cE\} \cdot (n^{-1}\bu_{t+1}^\top \bu)(n^{-1}\bu^\top
        \bu)^{-1}(n^{-1}\bu^\top \bu_{t+1})=\1\{\cE\} \cdot
\bSigma_{t+1,1:t}\bSigma_t^{-1}\bSigma_{1:t,t+1}+\Oprec(n^{-1/2})$. Then
\begin{align}
\1\{\cE\} \cdot n^{-1}\|\bu_{t+1,\perp}\|_2^2&=
\1\{\cE\} \Big(n^{-1}\|\bu_{t+1}\|_2^2
-n^{-1}\|\bu_{t+1,\parallel}\|^2\Big)\\
&=\1\{\cE\} \Big(\bSigma_{t+1,t+1}-
\bSigma_{t+1,1:t}\bSigma_t^{-1}\bSigma_{1:t,t+1}\Big)+\Oprec(n^{-1/2}).
\end{align}
We note that $(\bSigma_{t+1,t+1}-
\bSigma_{t+1,1:t}\bSigma_t^{-1}\bSigma_{1:t,t+1})^{-1}$ is the lower-right entry
of $\bSigma_{t+1}$, which is bounded below by
$\lambda_{\min}(\bSigma_{t+1})>c$. So the above implies also
\[\1\{\cE\} \cdot \frac{1}{n^{-1/2}\|\bu_{t+1,\perp}\|_2}=
\1\{\cE\} \Big(\bSigma_{t+1,t+1}-
\bSigma_{t+1,1:t}\bSigma_t^{-1}\bSigma_{1:t,t+1}\Big)^{-1/2}+\Oprec(n^{-1/2}).\]
In the definition of $\tilde \bZ_{t+1}$ in \eqref{eq:tildeZtplus1},
we have $\|\bW\bu_{t+1,\perp}\|_2 \leq \|\bW\|_\op\|\bu_{t+1}\|_2 \prec
n^{1/2}$, and $\|\bxi_{t+1}\|_2^2 \sim \chi^2_{t+1}$ conditional on $\cF_t$,
hence $\|\bxi_{t+1}\|_2 \prec 1$. Applying these statements to 
\eqref{eq:tildeZtplus1} shows
\[\label{eq:I2approx2}
\1\{\cE\} \Big\|\bP_{\bu}^\perp \bW \bu_{t+1,\perp}-
\Big(\bSigma_{t+1,t+1}-
\bSigma_{t+1,1:t}\bSigma_t^{-1}\bSigma_{1:t,t+1}\Big)^{1/2} \tilde
\bZ_{t+1}\Big\|_2 \prec 1.\] Then combining \eqref{eq:I2approx1} and
\eqref{eq:I2approx2} shows (\ref{eq:v2bound}) as claimed.

Applying (\ref{eq:v1bound}) and (\ref{eq:v2bound}) to (\ref{eq:ztplus1decomp})
gives $\1\{\cE\} \cdot \|\bz_{t+1}-\bZ_{t+1}\|_2 \prec 1$. Then, defining
$\bE_{t+1}=\bz_{t+1}-\bZ_{t+1}$ and applying also the probability bound
(\ref{eq:goodeventbound}) for $\cE^c$, we have $\|\bE_{t+1}\|_2 \prec 1$,
establishing (\rom{1}$_{t+1}$) and completing the induction.
\end{proof}

We now show Theorem \ref{thm:universality_poly_amp} in the Gaussian case,
by checking the conditions of Theorem \ref{thm:StrongSE0}.

\begin{lemma}\label{lem:BCP_SEbound}
In the AMP algorithm (\ref{eq:AMP}),
suppose $\cP=\{f_0,f_1,\ldots,f_{T-1}\}$ is a BCP-representable set of
polynomial functions, where $f_0(\cdot) \equiv \bu_1$. Then there is a constant
$C>0$ such that $\|\bSigma_t\|_\op<C$ and $|b_{ts}|<C$ for all $1 \leq s<t \leq
T$.
\end{lemma}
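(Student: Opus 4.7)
The plan is to induct on $t$, showing that $\|\bSigma_t\|_\op \leq C$ for a constant $C$ depending only on the BCP bounds and on $D, T$, and then bounding $|b_{t+1,s}|$ by the same machinery. The base case $\bSigma_1 = \tfrac{1}{n}\|\bu_1\|_2^2 = \tfrac{1}{n}\sum_i \bT^{(0)}[i]^2$ (with $\bu_1 = f_0(\cdot)$ represented by a single tensor $\bT^{(0)} \in \cT_1$) follows immediately from the BCP applied with $m=2$ copies of $\bT^{(0)}$, $\ell = 1$, and $\pi(1) = \pi(2) = 1$, which satisfies both conditions of Definition \ref{def:BCP}.

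For the inductive step, suppose $\|\bSigma_t\|_\op \leq C$. Since $\bSigma_t \in \RR^{t \times t}$ has fixed dimension and bounded operator norm, $\bSigma_t^{1/2}$ has all entries bounded in absolute value by $\sqrt{C}$. Write $\bZ_{1:t} = \bxi\, \bSigma_t^{1/2}$ where $\bxi \in \RR^{n \times t}$ has i.i.d.\ $\cN(0,1)$ entries, so that $\bZ_\tau = \sum_{\tau'} \bSigma_t^{1/2}[\tau', \tau]\, \bxi_{\tau'}$. Substituting this into the polynomial representation \eqref{eq:tensorpolyrepr} of $f_r, f_s$ and expanding $\tfrac{1}{n}\EE[f_r(\bZ_{1:r})^\top f_s(\bZ_{1:s})]$ yields a finite linear combination, with coefficients uniformly bounded in $n$, of expressions of the form
\begin{equation*}
\tfrac{1}{n}\,\EE\Bigg[\sum_{i, i_1, \ldots, i_d, j_1, \ldots, j_{d'}} \bT^{(\sigma,r)}[i_1, \ldots, i_d, i]\, \bT^{(\sigma',s)}[j_1, \ldots, j_{d'}, i]\, \prod_{k=1}^{d}\bxi_{\tau_k}[i_k]\, \prod_{k=1}^{d'}\bxi_{\tau'_k}[j_k]\Bigg],
\end{equation*}
plus a purely constant term $\tfrac{1}{n}\sum_i \bT^{(0,r)}[i]\bT^{(0,s)}[i]$ bounded directly by the BCP. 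The associated map $\pi$ satisfies the even-cardinality condition (each $i_k$ and $j_k$ occurs twice, once in a $\bT$ and once in a $\bxi$; the index $i$ occurs once in each of $\bT^{(\sigma,r)}$ and $\bT^{(\sigma',s)}$) and the connectedness condition (the two $\bT$'s share the index $i$, each $\bxi_{\tau_k}$ shares $i_k$ with $\bT^{(\sigma,r)}$, and likewise on the primed side). Lemma \ref{lem:GauPolyVar} therefore bounds each such expectation by $Cn$, giving $|\bSigma_{t+1}[r+1, s+1]| \leq C'$; since $\bSigma_{t+1}$ has bounded dimension, this yields $\|\bSigma_{t+1}\|_\op \leq C''$.

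For the Onsager coefficient $b_{t+1,s} = \tfrac{1}{n}\,\EE[\div\nolimits_s f_t(\bZ_{1:t})]$, the divergence $\sum_i \partial_{\bZ_s[i]} f_t(\bZ_{1:t})[i]$ unfolds, after substituting $\bZ = \bxi\,\bSigma_t^{1/2}$, into a bounded-coefficient linear combination of expressions of the form
\begin{equation*}
\sum_{i, i_1, \ldots, i_{\ell-1}, i_{\ell+1}, \ldots, i_d} \bT^{(\sigma,t)}[i_1, \ldots, i_{\ell-1}, i, i_{\ell+1}, \ldots, i_d, i]\, \prod_{k \neq \ell}\bxi_{\tau_k}[i_k],
\end{equation*}
in which the differentiated slot and the output slot of $\bT^{(\sigma,t)}$ have been identified. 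Again every index has even cardinality ($i_k$ via its pairing with the corresponding $\bxi$, and $i$ via its two occurrences inside $\bT^{(\sigma,t)}$) and connectedness holds trivially since the $\bxi$'s all share an index with the lone tensor. Lemma \ref{lem:GauPolyVar} then gives $|b_{t+1,s}| \leq C$ independently of $n$, completing the induction.

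The main point to watch is verifying the combinatorial conditions of Definition \ref{def:BCP} for each monomial produced by the expansion, in particular the connectedness clause; this is routine but requires going through the cases distinguished by which, if any, of the polynomial factors contributing to a given term are pure constants. The induction hypothesis $\|\bSigma_t\|_\op \leq C$ is essential so that the change of variables $\bZ = \bxi\,\bSigma_t^{1/2}$ contributes only $O(1)$ scalar coefficients rather than amplifying the size of the contractions.
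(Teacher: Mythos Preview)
The proposal is correct and takes essentially the same approach as the paper: induct on $t$, write $\bZ_{1:t}=\bxi\,\bSigma_t^{1/2}$ using the inductive bound on $\|\bSigma_t\|_\op$, expand $f_r^\top f_s$ and $\div_s f_t$ via the tensor representation \eqref{eq:tensorpolyrepr}, and bound each resulting contraction with Lemma~\ref{lem:GauPolyVar}. Your explicit mention of the constant-term case and the verification of the connectedness condition are slightly more detailed than the paper's presentation but add no new ideas.
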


\begin{proof}
Let $\cT=\bigsqcup_{k=1}^K \cT_k$ be the set of
tensors satisfying the BCP which represent $\cP$. We induct on $t$. 

{\bf Base case ($t=1$):}
$\bu_1 \in \cT_1$ by assumption, as the constant function $f_0(\cdot) \equiv
\bu_1$ belongs to $\cP$. Then
\[\|\bSigma_1\|_\op=\frac{1}{n}\|\bu_1\|_2^2
=\frac{1}{n}\sum_{i=1}^n u_1[i]u_1[i] \leq C\]
for a constant $C>0$, by the definition of the BCP for $\cT$.
The bound for $b_{ts}$ is vacuous when $t=1$.

{\bf Induction step, bound for $\bSigma_{t+1}$:} Assume the lemma holds up to
some iteration $t \leq T-1$.
Fixing any tensors $\bT,\bT' \in \cT$ of some orders $d+1,d'+1$
and a coordinate map $\sigma:[d+d'] \to [t]$, consider first
the expression
\begin{align}\label{eq:Sigmarepr}
n^{-1}\E \val(\bxi_{1:t})=n^{-1}\E \Big\<\bT[\bxi_{\sigma(1)},\ldots,\bxi_{\sigma(d)},\cdot],
\bT'[\bxi_{\sigma(d+1)},\ldots,\bxi_{\sigma(d+d')},\cdot]\Big\>
\end{align}
where $\bxi_1,\ldots,\bxi_t \overset{iid}{\sim} \cN(0,\Id)$.
By the BCP for $\cT$ and Lemma \ref{lem:GauPolyVar},
$|n^{-1}\E \val(\bxi_{1:t})| \leq C$ for a constant $C>0$.
Observe that each entry of
$\bSigma_{t+1}$ takes a form $n^{-1}\E[f_r(\bZ_{1:r})^\top
f_s(\bZ_{1:s})]$ for some $r,s \in \{0,\ldots,t\}$. Applying the
representation \eqref{eq:tensorpolyrepr} of $f_r$ and
$f_s$, this is a linear combination of terms of the form
\[n^{-1}\E \Big\<\bT[\bZ_{\sigma(1)},\ldots,\bZ_{\sigma(d)},\cdot],
\bT'[\bZ_{\sigma(d+1)},\ldots,\bZ_{\sigma(d+d')},\cdot]\Big\>\]
over tensors $\bT,\bT' \in \cT$ (of some orders $d+1,d'+1$) and coordinate maps
$\sigma:[d+d'] \to [t]$. Writing
$[\bZ_1,\ldots,\bZ_t]=[\bxi_1,\ldots,\bxi_t]\bSigma_t^{1/2}$,
this is further a linear combination of terms of the
form (\ref{eq:Sigmarepr}), with coefficients given
by products of entries of $\bSigma_t^{1/2}$. The inductive hypothesis 
implies that $\bSigma_t^{1/2}$ is bounded independently of $n$,
so this and the boundedness of (\ref{eq:Sigmarepr}) argued above shows that
$\|\bSigma_{t+1}\|_\op \leq C$ for some constant $C>0$ independent of $n$.

{\bf Induction step, bound for $b_{t+1,1},\ldots,b_{t+1,t}$:} 
Fix any $\bT \in \cT$ of some order $d+1$ and a coordinate map $\sigma:[d] \to
[t]$, and consider the expression
\[n^{-1}\E\val(\bxi_{1:t})=n^{-1}\sum_{j=1}^n
\E\,\bT[\bxi_{\sigma(1)},\ldots,\bxi_{\sigma(k-1)},\be_j,
\bxi_{\sigma(k+1)},\ldots,\bxi_{\sigma(d)},\be_j]\label{eq:ThisIsBounded}\]
with the standard basis vector $\be_j \in \RR^n$ in positions $k$ and $d+1$.
Then again by Lemma \ref{lem:GauPolyVar}, $|n^{-1}\E \val(\bxi_{1:t})| \leq C$
for a constant $C>0$. For any such $\bT$ and $\sigma$, note that
the function $(\bZ_1,\ldots,\bZ_t) \mapsto
\bT[\bZ_{\sigma(1)},\ldots,\bZ_{\sigma(d)},\cdot]$
has divergence with respect to $\bZ_s$ given by
\[\div\nolimits_s \bT[\bZ_{\sigma(1)},\ldots,\bZ_{\sigma(d)},\cdot]
=\sum_{k \in \sigma^{-1}(s)} \sum_{j=1}^n
\bT[\bZ_{\sigma(1)},\ldots,\bZ_{\sigma(k-1)},\be_j,\bZ_{\sigma(k+1)},
\ldots,\bZ_{\sigma(d)},\be_j]\]
Thus, applying the representation \eqref{eq:tensorpolyrepr} of $f_t$,
observe that $b_{t+1,s}$ is a linear combination of terms of this form, scaled
by $n^{-1}$. Again using the representation
$[\bZ_1,\ldots,\bZ_t]=[\bxi_1,\ldots,\bxi_t]\bSigma_t^{1/2}$,
it follows from linearity, the inductive
hypothesis for $\bSigma_t$, and the boundedness of 
(\ref{eq:ThisIsBounded}) argued above that
$|b_{t+1,s}|<C$ for each $s=1,\ldots,t$ and some constant $C>0$ independent of
$n$. This completes the induction.
\end{proof}

\begin{lemma}\label{lem:BCPimpliesstability}
In the AMP algorithm (\ref{eq:AMP}),
suppose $\cP=\{f_0,f_1,\ldots,f_{T-1}\}$ is a BCP-representable set of
polynomial functions, where $f_0(\cdot) \equiv \bu_1$. 
Then Assumption \ref{assump:GOESE} holds.
\end{lemma}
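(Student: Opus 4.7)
The plan is to split each quantity appearing in Assumption~\ref{assump:GOESE} into a ``pure Gaussian'' part at $\bE=0$ and a ``perturbation'' part, and to bound each piece with BCP-based moment bounds plus Gaussian hypercontractivity.

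First I would reduce to $\bxi$-form: writing $\bZ_{1:t}=\bxi_{1:t}\bSigma_t^{1/2}$ with i.i.d.\ standard Gaussian $\bxi_s$, and using the tensor representation \eqref{eq:tensorpolyrepr} of each $f_s\in\cP$ together with the boundedness $\|\bSigma_t^{1/2}\|_\op\le C$ from Lemma~\ref{lem:BCP_SEbound}, the quantity $\tfrac{1}{n}f_t(\bZ_{1:t})^\top f_s(\bZ_{1:s})$ is a finite linear combination (with $n$-independent coefficients) of expressions of exactly the form $\tfrac{1}{n}\val(\bxi_{1:T})$ in Lemma~\ref{lem:GauPolyVar}, whose underlying tensor list lives in the BCP family $\cT\cup\{\Id\}$ (the $\Id$ coming from the inner-product contraction). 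Exactly the same is true of $\tfrac{1}{n}\bZ_t^\top f_s(\bZ_{1:s})$, treating $\bZ_t$ as an order-one $\bxi$-linear tensor.

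For the pure Gaussian part, Lemma~\ref{lem:GauPolyVar} gives variance $\Var[\tfrac{1}{n}\val(\bxi_{1:T})]\le Cn^{-1}$, so Gaussian hypercontractivity (Lemma~\ref{lem:hypercontractivity}, applicable since the degree is $n$-independent) upgrades this to $\tfrac{1}{n}[f_t(\bZ)^\top f_s(\bZ)-\E f_t(\bZ)^\top f_s(\bZ)]\prec n^{-1/2}$, and identically for $\tfrac{1}{n}[\bZ_t^\top f_s(\bZ)-\E\bZ_t^\top f_s(\bZ)]$. It only remains to bound the perturbation contributions
\[
\tfrac{1}{n}\bigl[f_t(\bZ+\bE)^\top f_s(\bZ+\bE)-f_t(\bZ)^\top f_s(\bZ)\bigr]
\quad\text{and}\quad
\tfrac{1}{n}\bigl[(\bZ_t+\bE_t)^\top f_s(\bZ+\bE)-\bZ_t^\top f_s(\bZ)\bigr].
\]

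Expanding the representation \eqref{eq:tensorpolyrepr} multilinearly, each difference is a finite sum of terms of the form $\tfrac{1}{n}\sum_{\bi}\bT_1[\cdots]\bT_2[\cdots]\prod_j\bZ_{\alpha_j}[i_{p_j}]\prod_k\bE_{\beta_k}[i_{q_k}]$, each containing at least one $\bE$ factor. To bound such a term I would peel off the $\bE$ factors by Cauchy--Schwarz: pulling out one $\bE_{\beta_1}[i_{q_1}]$ gives the bound $\tfrac{1}{n}\|\bE_{\beta_1}\|_2\cdot\|g\|_2$, where $g(i_{q_1})$ is the remaining sum over the other indices and $\|\bE_{\beta_1}\|_2\prec 1$. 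The squared quantity $\|g\|_2^2=\sum_{i_{q_1}}g(i_{q_1})^2$ is then a polynomial in $\bxi$ obtained by ``doubling'' the original tensor contraction, and one checks that after identifying duplicated indices (adding $\Id$ factors via Lemma~\ref{lemma:BCPId}) the doubled expression again satisfies the BCP hypotheses of Lemma~\ref{lem:GauPolyVar}. That lemma bounds its mean by $Cn$ and its variance by $Cn^2$ after a further doubling, so hypercontractivity gives $\|g\|_2^2\prec n$, i.e.\ $\|g\|_2\prec n^{1/2}$, and the whole term is $\prec n^{-1}\cdot n^{1/2}=n^{-1/2}$. Terms containing several $\bE$ factors are handled by iterating this peel-off (each extra $\bE$ contributes only an additional $\|\bE_\cdot\|_2\prec 1$ factor).

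The main technical obstacle I foresee is the bookkeeping in the doubling step: one must verify that after Cauchy--Schwarz and after applying Wick's rule to compute $\E\|g\|_2^2$, the resulting tensor-contraction expressions (involving two copies of $\bT_1,\bT_2$ plus the pairing $\Id$'s) continue to satisfy the even-multiplicity and connectedness conditions of Definition~\ref{def:BCP} (or decompose cleanly into connected pieces that do). Once that combinatorial check is in place, the same machinery yields both \eqref{eq:makeStateEv0} and \eqref{eq:Stein0}, with the $\bZ_t$ in the latter playing the role of an additional order-one factor among the $\bxi$'s.
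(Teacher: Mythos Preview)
Your approach is essentially the paper's: split into a pure-Gaussian concentration piece (Lemma~\ref{lem:GauPolyVar} plus hypercontractivity) and a perturbation piece handled by multilinear expansion, Cauchy--Schwarz to strip the $\bE$ factors, and another application of Lemma~\ref{lem:GauPolyVar} to the remaining Gaussian polynomial. The connectedness check you flag is exactly what the paper verifies.

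Two points to sharpen. First, the paper applies Cauchy--Schwarz over \emph{all} $\bE$-indices at once: for a term with $\bE$-factors indexed by $S$, one bounds by $(\tfrac{1}{n}\prod_{a\in S}\|\bE_{\sigma(a)}\|_2^2)^{1/2}\cdot A_2(S)^{1/2}$ where $A_2(S)$ is purely Gaussian. Your one-at-a-time peeling runs into the problem that after stripping one $\bE$, the residual $g$ still contains the other $\bE$'s, so Lemma~\ref{lem:GauPolyVar} and hypercontractivity do not apply to $\|g\|_2^2$ directly; the all-at-once step avoids this. Second, the variance bound you quote is off: Lemma~\ref{lem:GauPolyVar} gives $\Var[\|g\|_2^2]\le Cn$, not $Cn^2$, and this matters --- with variance $Cn^2$, hypercontractivity would not yield $\|g\|_2^2\prec n$ (the tail bound at scale $n$ would be a constant). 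With the correct $Cn$ bound and the simultaneous Cauchy--Schwarz, your argument goes through exactly as in the paper.
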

\begin{proof}
For any two tensors $\bT,\bT' \in \cT$ of orders $d+1,d'+1$ and
any coordinate map $\sigma:[d+d'] \to [T]$, define a function
$f_{\bT,\bT',\sigma}:\R^{n \times T} \to \R$ by
\[\label{eq:fTTsigma} f_{\bT,\bT',\sigma}(\bx_{1:T})
=\frac{1}{n}\Big\<\bT[\bx_{\sigma(1)},\ldots,\bx_{\sigma(d)},\cdot],\bT'[\bx_{\sigma(d+1)},\ldots,\bx_{\sigma(d+d')},\cdot]\Big\>.\]  
Letting $\bxi_{1:T} \sim \cN(0,\Id_T \otimes \Id)$,
Lemma \ref{lem:GauPolyVar}
implies $\Var[f_{\bT,\bT',\sigma}(\bxi_{1:T})] \leq C/n$ for some constant
$C>0$. As $f_{\bT,\bT',\sigma}(\bxi_{1:T})$ is a polynomial of degree $d+d'$ in
the standard Gaussian variables $\bxi_{1:T}$, it follows from Gaussian
hypercontractivity (Lemma \ref{lem:hypercontractivity}) that
there exist constants $C',c>0$ such that, for any $\epsilon>0$, 
\begin{align}
\P[|f_{\bT,\bT',\sigma}(\bxi_{1:T})-\E
f_{\bT,\bT',\sigma}(\bxi_{1:T})|>\epsilon] &\leq
C'e^{-(c\epsilon^2 n)^{\frac{1}{d+d'}}}.
\end{align}
Applying this with $\epsilon=(\log n)^C/\sqrt{n}$ for sufficiently large
$C>0$ shows
\[f_{\bT,\bT',\sigma}(\bxi_{1:T})-\E f_{\bT,\bT',\sigma}(\bxi_{1:T}) \prec
n^{-1/2}.\label{eq:fxicontra} \]
Recall that $[\bZ_1,\ldots,\bZ_T]=[\bxi_1,\ldots,\bxi_T]\bSigma_T^{1/2}$
where 
$\|\bSigma_T\|_\op<C$ for a constant $C>0$ by Lemma \ref{lem:BCP_SEbound}.
Then by linearity, if \eqref{eq:fxicontra} holds for every $\sigma:[d+d']
\to [T]$, then also for every $\sigma:[d+d'] \to [T]$ we have
\[\label{eq:fTTconcentration} f_{\bT,\bT',\sigma}(\bZ_{1:T})-\E
f_{\bT,\bT',\sigma}(\bZ_{1:T}) \prec n^{-1/2}.\]
Defining similarly
\[\label{eq:fTsigma} f_{\bT,\sigma}(\bx_{1:T})
=\frac{1}{n}\Big\<\bT[\bx_{\sigma(1)},\ldots,\bx_{\sigma(d)},\cdot]
,\bx_{\sigma(d+1)}\Big\>
=\frac{1}{n}\bT[\bx_{\sigma(1)},\ldots,\bx_{\sigma(d)},\bx_{\sigma(d+1)}],\]
we have by Lemma \ref{lem:GauPolyVar} that
$\Var[f_{\bT,\sigma}(\bxi_{1:T})] \leq C/n$.
Then by a similar application of Gaussian hypercontractivity and linearity,
for any $\bT \in \cT$ and $\sigma:[d+1] \to [T]$,
\[\label{eq:fTconcentration}
f_{\bT,\sigma}(\bZ_{1:T})-\E f_{\bT,\sigma}(\bZ_{1:T}) \prec n^{-1/2}.\]

Now consider the error
\[f_{\bT,\bT',\sigma}(\bZ_{1:T}+\bE_{1:T})-f_{\bT,\bT',\sigma}(\bZ_{1:T}).\]
where $\bE_{1:T}$ is any random matrix satisfying the assumption
$\|\bE_{1:T}\|_2 \prec 1$ in (\ref{eq:Econd}).
Using multi-linearity and the form of $f_{\bT,\bT',\sigma}$ from
\eqref{eq:fTTsigma}, we can expand
\begin{align}
&f_{\bT,\bT',\sigma}(\bZ_{1:T}+\bE_{1:T})-f_{\bT,\bT',\sigma}(\bZ_{1:T})\\
&=\mathop{\sum_{S \subseteq [d+d']}}_{S \neq \varnothing}
\underbrace{\frac{1}{n}\sum_{\bi \in [n]^{d+d'}} \sum_{j \in [n]}
\bT[i_1,\ldots,i_d,j]\bT'[i_{d+1},\ldots,i_{d+d'},j]
\prod_{a \in S} \bE_{\sigma(a)}[i_a]
\prod_{a \in [d+d'] \setminus  S} \bZ_{\sigma(a)}[i_a]}_{:=A(S)}.\label{eq:SnotEmpty}
\end{align}
Here, the removal of the summand for $S=\varnothing$ corresponds to the
subtraction of $f_{\bT,\bT',\sigma}(\bZ_{1:T})$. For each summand $A(S)$,
we apply Cauchy-Schwarz over indices $\bi \in [n]^S$ to give
\begin{align}
|A(S)|
&\leq \Bigg(\underbrace{\frac{1}{n}\sum_{\bi \in [n]^S}\prod_{a \in S}
\bE_{\sigma(a)}[i_a]^2}_{:=A_1(S)}\Bigg)^{1/2} \times\\
\hspace{1in}
&\Bigg(\underbrace{\frac{1}{n}\sum_{\bi \in [n]^S}
\Bigg(\sum_{\bi \in [n]^{[d+d'] \setminus S}} \sum_{j \in [n]}
\bT[i_1,\ldots,i_d,j]\bT'[i_{d+1},\ldots,i_{d+d'},j]
\prod_{a \in [d+d'] \setminus  S} \bZ_{\sigma(a)}[i_a]\Bigg)^2}_{:=A_2(S)}\Bigg)^{1/2}
\end{align}
Here, $A_1(S)=n^{-1}\prod_{a \in S} \|\bE_{\sigma(a)}\|_2^2 \prec n^{-1}$
by the given condition (\ref{eq:Econd}) for $\bE_{1:T}$. For $A_2(S)$, we write
$[\bZ_1,\ldots,\bZ_T]=[\bxi_1,\ldots,\bxi_T]\bSigma_T^{1/2}$. Then
$A_2(S)$ is a linear combination of terms of the form
\begin{align}
&\frac{1}{n}\sum_{\substack{\bi,\bi' \in [n]^{[d+d']}\\ j,j' \in [n]}}
\bT[i_1,\ldots,i_d,j]\bT'[i_{d+1},\ldots,i_{d+d'},j]
\bT[i_1',\ldots,i_d',j'] \bT'[i_{d+1}',\ldots,i_{d+d'}',j']\\
&\hspace{1in} \times \prod_{a \in S} \Id[i_a,i_a']
\prod_{a \in [d + d'] \setminus S}
\bxi_{\sigma(a)}[i_a]\bxi_{\sigma'(a)}[i_a']\label{eq:A2Sterm}
\end{align}
for some $\sigma,\sigma':[d+1] \to [T]$,
with coefficients given by products of entries of $\bSigma_T^{1/2}$.
For each such term (\ref{eq:A2Sterm}),
both conditions of Definition \ref{def:BCP} hold, where the
second condition holds
because the first two tensors $\bT,\bT'$ have a shared index $j$,
the last two tensors $\bT,\bT'$ have a shared index $j'$,
and either the first and third tensors $\bT,\bT$ or the second and fourth
tensors $\bT',\bT'$ have indices $(i_a,i_a')$
for some $a \in S$ since $S$ is non-empty. Thus, by Lemma \ref{lem:GauPolyVar},
$|\EE A_2(S)| \leq C$ and $\Var A_2(S) \leq C/n$ for a constant $C>0$. Then
Gaussian hypercontractivity implies as above that $A_2(S) \prec 1$.

Combining these bounds $A_1(S) \prec n^{-1}$ and $A_2(S) \prec 1$ gives
$A(S) \prec n^{-1/2}$, so also
\[\label{eq:fTTapproximation}
f_{\bT,\bT',\sigma}(\bZ_{1:T}+\bE_{1:T})-f_{\bT,\bT',\sigma}(\bZ_{1:T}) \prec
n^{-1/2}.\] A similar argument applied to the functions $f_{\bT,\sigma}$ of
(\ref{eq:fTsigma}) shows
\[\label{eq:fTapproximation}
f_{\bT,\sigma}(\bZ_{1:T}+\bE_{1:T})-f_{\bT,\sigma}(\bZ_{1:T})
\prec n^{-1/2}.\]
Combining (\ref{eq:fTTconcentration}) and (\ref{eq:fTconcentration}),
\[f_{\bT,\bT',\sigma}(\bZ_{1:T}+\bE_{1:T})-\E f_{\bT,\bT',\sigma}(\bZ_{1:T})
\prec n^{-1/2}.\]
Applying the tensor representations \eqref{eq:tensorpolyrepr},
the left side of \eqref{eq:makeStateEv0} for any $s,t \leq T-1$
is a sum of such quantities
over a number of tuples $(\bT,\bT',\sigma)$ independent of $n$.
Hence \eqref{eq:makeStateEv0} follows this bound and linearity.
Similarly, combining (\ref{eq:fTconcentration}) with
(\ref{eq:fTapproximation}),
\[f_{\bT,\sigma}(\bZ_{1:T}+\bE_{1:T})-\E f_{\bT,\sigma}(\bZ_{1:T}) \prec
n^{-1/2}.\]
The left side of \eqref{eq:Stein0} for $s \leq T$ and $t \leq T-1$
is a sum of such
quantities over a number of tuples $(\bT,\sigma)$ also independent of $n$,
showing \eqref{eq:Stein0}.
\end{proof}

\begin{proof}[Proof of Theorem \ref{thm:universality_poly_amp} when
$\bW \sim \GOE(n)$]

The given conditions of Theorem \ref{thm:universality_poly_amp} together with
Lemmas \ref{lem:BCP_SEbound} and \ref{lem:BCPimpliesstability} verify the
assumptions of Theorem \ref{thm:StrongSE0}. Thus
Theorem \ref{thm:StrongSE0} shows a decomposition
\[\bz_{1:T}=\bZ_{1:T}+\bE_{1:T}\]
where $\bZ_{1:T} \sim \cN(0,\bSigma_T \otimes \Id)$ and
$\|\bE_{1:T}\|_\Fro \prec 1$. For the functions
$\phi_1,\phi_2$ of Theorem \ref{thm:universality_poly_amp} that also belong to
the BCP-representable set $\cP$,
the same argument as in Lemma \ref{lem:BCPimpliesstability} shows
that \eqref{eq:makeStateEv0} holds for $\phi_1,\phi_2$, i.e.
\begin{align}
\phi(\bz_{1:T})
&=\frac{1}{n}\phi_1(\bZ_{1:T}+\bE_{1:T})^\top \phi_2(\bZ_{1:T}+\bE_{1:T})\\
&=\frac{1}{n}\E[\phi_1(\bZ_{1:T})^\top \phi_2(\bZ_{1:T})]
+\Oprec(n^{-1/2})=\EE[\phi(\bZ_{1:T})]+\Oprec(n^{-1/2}).
\end{align}
Then in particular
$\lim_{n \to \infty} \phi(\bz_{1:T})-\EE[\phi(\bZ_{1:T})]=0$ a.s.\ by the
Borel-Cantelli lemma.
\end{proof}

\section{Moment-method analysis of tensor networks}\label{sec:Univ}

In this appendix, we now carry out the moment method analyses that prove Theorem
\ref{thm:universality_poly_amp} in the setting of a general Wigner matrix $\bW$.
Appendix \ref{sec:universal_expectation} proves Lemma \ref{lem:ExpVal} on the
first moment of the tensor network value $\val_G(\cL)$,
Appendix \ref{sec:asconvergence} bounds $\E[(\val_G(\cL)-\E\val_G(\cL))^4]$, and
Appendix \ref{sec:proofconclusion} concludes the proof of
Theorem \ref{thm:universality_poly_amp}.

\subsection{Universality in expectation}\label{sec:universal_expectation}

We begin by providing a tensor network interpretation of
the Bounded Composition Property from Definition \ref{def:BCP}.
Denote the identity tensor as $\Id^k \in (\R^n)^{\otimes k}$ with entries
\[\Id^k\ss{i_1, \dots, i_k} = \1\{i_1 = \cdots = i_k\}.\]

\begin{definition}\label{def:BipCon}
An ordered multigraph $\TN=(\cV_\Id \sqcup \cV_T,\cE)$ is {\bf bipartite} if
its vertex set is the disjoint union of two sets $\cV_\Id,\cV_T$, and
each edge of $\cE$ connects a vertex of $\cV_\Id$ with a vertex of $\cV_T$.
A {\bf $(\Id,\cT)$-labeling} $\cL$ of such a multigraph $\TN$ is
a tensor labeling where each vertex $u \in \cV_\Id$ is labeled
with $\Id^{\deg(u)}$, and each vertex $v \in \cV_T$ has a label
$\bT_v \in \cT$.
\end{definition}

Definition \ref{def:BCP} of the BCP
is then equivalent to the following definition.

\begin{definition}[Alternative definition of BCP]\label{def:BCPequiv}
Let $\TN=(\cV_\Id \sqcup \cV_T,\cE)$ be any bipartite ordered multigraph
(independent of $n$) such that $\TN$ is connected and all vertices in
$\cV_\Id$ have even degree. Then there exists a constant $C>0$
independent of $n$ such that
\[\sup_\cL |\val_\TN(\cL)| \leq Cn\]
where the supremem is over all $(\Id,\cT)$-labelings $\cL$ of $\TN$.
\end{definition}

Indeed, the value $n^{-1}|\val_\TN(\cL)|$ is equivalent to the expression
inside the supremum of
(\ref{eq:BCPcondition}), where $m=|\cV_T|$ and $\ell=|\cV_\Id|$. The condition
that each vertex $u \in \cV_\Id$ has even degree is equivalent to the
first condition of Definition \ref{def:BCP} that
$|\{k:\pi(k)=j\}|$ is even for each $j \in [\ell]$, and condition that $G$ is
connected is equivalent to the second condition
of Definition \ref{def:BCP} that the tensors $\bT_1,\ldots,\bT_m$ do not
partition into two sets with disjoint indices.

\begin{proof}[Proof of Lemma \ref{lem:ExpVal}]
Throughout the proof, we fix the ordered multigraph $G=(\cV,\cE)$
and a decomposition of its vertex set $\cV=\cV_W \sqcup \cV_T$, where vertices
of $\cV_W$ have degree 2. It suffices to
prove the result for $\{\cT \cup \bW\}$-labelings $\cL$ that
assign label $\bW$ to $\cV_W$ and labels in $\cT$ to $\cV_T$, for each fixed
decomposition $\cV=\cV_W \sqcup \cV_T$.
By Lemma \ref{lemma:BCPId}, $\cT \cup \{\Id\}$
augmented with the identity matrix $\Id \in \RR^{n \times n}$ also satisfies the
BCP. Thus,
by inserting an additional degree-2 vertex with label $\Id$ between each
pair of adjacent vertices of $\cV_W$, we will assume without loss of
generality that no two vertices of $\cV_W$ are adjacent in $\TN$.

For any such decomposition $\cV=\cV_W \sqcup \cV_T$ and
labeling $\cL$, taking the expectation over $\bW$ in the definition of
the value (\ref{eq:val}),
\[\E\bigg[\frac{1}{n}\val_\TN(\cL)\bigg]=
\frac{1}{n^{1+|\cV_W|/2}}\sum_{\bi\in[n]^\cE} \E\Bigg[\prod_{v\in\cV_W} 
n^{1/2}\bW[i_e:e \sim v]\Bigg] \prod_{v\in\cV_T} \bT_v[i_e: e \sim v].\]
Let $\cP(\cE)$ be the set of all partitions of the edge set $\cE$.
Let $\pi_\bi \in \cP(\cE)$ denote the partition
that is induced by the index tuple $\bi \in [n]^\cE$: edges $e,e' \in \cE$
belong to the same block of $\pi_\bi$ if and only if $i_e=i_{e'}$. We write
$[e]$ for the block of $\pi$ that contains edge $e$. Then the
above summation may be decomposed as
\[\label{eq:Eval}
\E\bigg[\frac{1}{n}\val_\TN(\cL)\bigg]=
\sum_{\pi \in \cP(\cE)} \frac{1}{n^{1+|\cV_W|/2}}
\sum_{\bi \in [n]^{\pi}}^* \E\Bigg[\prod_{v\in\cV_W} 
n^{1/2}\bW[i_{[e]}:e \sim v]\Bigg] \prod_{v\in\cV_T} \bT_v[i_{[e]}: e \sim v].\]
Here, the first summation is over all possible edge partitions $\pi=\pi(\bi)$,
and the second summation $\sum_{\bi \in [n]^{\pi}}^*$
is over a distinct index $i_{[e]} \in [n]$ for each distinct block
$[e] \in \pi$, where $*$ denotes that indices $i_{[e]},i_{[e']}$ must be
distinct for different blocks $[e] \neq [e'] \in \pi$.

Let $\cP(\cV_W)$ be the set of all partitions of the vertex subset $\cV_W$.
Given a partition $\pi \in \cP(\cE)$, we associate to it a
partition $\pi_W(\pi) \in \cP(\cV_W)$ where $v,u \in \cV_W$ belong to the same
block of $\pi_W(\pi)$ if their incident edges belong to the same two blocks of
$\pi$. More precisely:

\begin{definition}\label{def:piW}
For any $v,u \in \cV_W$, let $e,e'$ be the two edges incident to $v$, and
$f,f'$ the two edges incident to $u$. The partition $\pi_W(\pi) \in \cP(\cV_W)$
{\bf associated to} $\pi$ is such that $v,u$ belong to the
same block of $\pi_W(\pi)$ if and only if
\[\{[e],[e']\}=\{[f],[f']\}\]
(as equality of unordered sets, where possibly $[e]=[e']$ and $[f]=[f']$).

Writing $[v] \in \pi_W(\pi)$ for the block of $\pi_W(\pi)$ containing $v$,
we say that these blocks $[e],[e'] \in \pi$ are {\bf incident to} the block
$[v] \in \pi_W(\pi)$ and denote this by $[e] \sim [v]$.
\end{definition}

This definition is such that for any $\bi \in [n]^\pi$ 
of the summation $\sum_{\bi \in [n]^{\pi}}^*$,
the entries $\bW[i_{[e]}:e \sim v]$ and $\bW[i_{[e]}:e \sim u]$ of
$\bW$ are equal if $v,u$ belong to the same block of $\pi_W(\pi)$, and are
independent otherwise. Thus each block $[v] \in \pi_W(\pi)$ corresponds to a
different independent entry of $\bW$.
For each $k \geq 1$, define $\bM_k \in \R^{n \times n}$ as the
matrix with entries
\[\label{eq:momentmatrix}\bM_k[i,j]=\E[n^{k/2}\bW[i,j]^k],\]
where Assumption \ref{assump:Wigner} guarantees that $\bM_k$ is symmetric and 
$|\bM_k[i,j]|<C_k$ for a constant $C_k>0$.
Then evaluating the expectation over $\bW$ in \eqref{eq:Eval} gives
\[\E\bigg[\frac{1}{n}\val_\TN(\cL)\bigg]=
\sum_{\pi \in \cP(\cE)}\frac{1}{n^{1+|\cV_W|/2}}
\sum_{\bi \in [n]^\pi}^* \prod_{[v] \in \pi_W(\pi)}
\bM_{k[v]}[i_{[e]}:[e] \sim [v]] \prod_{v \in \cV_T} \bT_v[i_{[e]}:e \sim v].\]
Here, the first product is over all blocks $[v] \in \pi_W(\pi)$, $k[v]$ denotes
the number of vertices of $\cV_W$ in the block $[v]$, and
$[i_{[e]}:[e] \sim [v]]$ is the index pair $[i_{[e]},i_{[e']}]$ for the blocks
$[e],[e']$ incident to $[v]$.

\begin{definition}
$\pi \in \cP(\cE)$ is {\bf single} if some block
$[v] \in \pi_W(\pi)$ has a single vertex, i.e.\ $k[v]=1$.
A block $[v] \in \pi_W(\pi)$ is {\bf paired} if $k[v]=2$ and if 
its incident blocks $[e],[e'] \in \pi$ are such that $[e] \neq [e']$.

(Thus if $\pi$ is not single and $[v] \in \pi_W(\pi)$ is not paired, then
either $k[v] \geq 3$ or $k[v]=2$ and $[e]=[e']$.)
\end{definition}
By the vanishing of first moments of $\bW[i,j]$ in Assumption \ref{assump:Wigner},
if $\pi$ is single then there is some $[v] \in \pi_W(\pi)$ for
which $k[v]=1$ and hence $\bM_{k[v]}=0$. By the assumption for second moments
of off-diagonal entries $\bW[i,j]$,
if $[v] \in \pi_W(\pi)$ is paired then $k[v]=2$ and
$M_{k[v]}[i_{[e]}:[e] \sim [v]]=1$.
Applying these observations above,
\begin{equation}\label{eq:EvalM}
\E\bigg[\frac{1}{n}\val_\TN(\cL)\bigg]=
\mathop{\sum_{\pi \in \cP(\cE)}}_{\text{not single}} \frac{1}{n^{1+|\cV_W|/2}}
\sum_{\bi \in [n]^\pi}^* \mathop{\prod_{[v] \in \pi_W(\pi)}}_{\text{not paired}}
\bM_{k[v]}[i_{[e]}:[e] \sim [v]] \prod_{v \in \cV_T} \bT_v[i_{[e]}:e \sim v].
\end{equation}

Next, we apply an inclusion-exclusion argument followed by
Cauchy-Schwarz to bound the difference of (\ref{eq:EvalM}) between $\cL$ and
$\cL'$. Endow $\cP(\cE)$ with the partial ordering $\tau \geq \pi$ if 
$\pi$ refines $\tau$ (i.e.\ each block of $\tau$ is a union of one or more
blocks of $\pi$). We will use
$\<e\> \in \tau$ to denote the block of $\tau$ containing edge $e$, to avoid
notational confusion with the block $[e] \in \pi$. Note that if $v,u \in \cV_W$
belong to the same block of $\pi_W(\pi)$, then the two edges incident to
$v$ and those incident to $u$ belong to the same blocks $[e],[e'] \in \pi$, and
hence also the same blocks $\<e\>,\<e'\> \in \tau$ since $\tau \geq \pi$.
Analogous to Definition \ref{def:piW}, we continue to say that
$\<e\>,\<e'\> \in \tau$ are the blocks {\bf incident to} $[v] \in \pi_W(\pi)$
and denote this by $\<e\> \sim [v]$.

Let $\mu(\pi,\tau)$ be the inclusion-exclusion (i.e.\ M\"obius inversion)
coefficients such that, for any fixed $\pi \in \cP(\cE)$ whose blocks we denote
momentarily by $[e_1],\ldots,[e_m]$ (where $e_1,\ldots,e_m$ are any choices of
a representative edge in each block), and for any function $f:[n]^\pi \to \R$,
\[\sum_{\bi \in [n]^\pi}^* f(i_{[e_1]},\ldots,i_{[e_m]})
=\sum_{\tau \in \cP(\cE):\tau \geq \pi} \mu(\pi,\tau) \sum_{\bi \in [n]^\tau}
f(i_{\<e_1\>},\ldots,i_{\<e_m\>}).\]
The sum $\sum_{\bi \in [n]^\tau}$ on the right side is over one index
$i_{\<e\>} \in [n]$ for each block $\<e\> \in \tau$, and no longer restricts
indices for different blocks $\<e\> \in \tau$ to be distinct.
Applying this inclusion-exclusion relation to (\ref{eq:EvalM}),
\begin{align}
\EE\bigg[\frac{1}{n}\val_\TN(\cL)\bigg]
&=\mathop{\sum_{\pi \in \cP(\cE)}}_{\text{not single}}
\sum_{\tau \in \cP(\cE):\tau \geq \pi}
\frac{\mu(\pi,\tau)}{n^{1+|\cV_W|/2}}
\underbrace{\sum_{\bi \in [n]^\tau}
\mathop{\prod_{[v] \in \pi_W(\pi)}}_{\text{not paired}}
\bM_{k[v]}[i_{\<e\>}:\<e\> \sim [v]] \prod_{v \in \cV_T}
\bT_v[i_{\<e\>}:e \sim v]}_{:=\val_{\check G}(\check \cL)}.
\label{eq:Evalfinal}
\end{align}
We clarify that here, $\pi_W(\pi)$ in the first product of
$\val_{\check G}(\check \cL)$ continues to be defined by the partition
$\pi$ (not by $\tau$), and $[i_{\<e\>}:\<e\> \sim [v]]$ is the index
tuple $[i_{\<e\>},i_{\<e'\>}]$ for the blocks $\<e\>,\<e'\> \in \tau$ that are
incident to $[v] \in \pi_W(\pi)$. For later reference in the proof, it is
helpful to interpret $\val_{\check G}(\check \cL)$ in (\ref{eq:Evalfinal}) as
the value of a $(\pi,\tau)$-dependent
tensor network $(\check G,\check \cL)$ constructed as follows:
\begin{itemize}
\item $\check G=(\check \cV,\check \cE)$ has three disjoint sets of vertices
$\check \cV=\check \cV_W \sqcup \check \cV_\Id \sqcup \check \cV_T$, and each
edge $e \in \check \cE$ connects a vertex of
$\check \cV_\Id$ with a vertex of either $\check \cV_W$ or $\check \cV_T$.
\item The vertices of $\check \cV_\Id$ are the blocks of $\tau$.
Each vertex $\<e\> \in \check \cV_\Id \equiv \tau$ is labeled by the identity
tensor $\Id^k$ of the appropriate order, and the
ordering of its edges is arbitrary (as the tensor $\Id^k$ is symmetric).
\item The vertices of $\check \cV_W$ are the blocks of $\pi_W(\pi)$.
Each vertex $[v] \in \check \cV_W \equiv \pi_W(\pi)$ is labeled by
$\Id \in \RR^{n \times n}$ if $[v]$ is paired or by $\bM_{k[v]}$ if $[v]$ is not paired,
and this vertex has two edges (ordered arbitrarily) connecting to the blocks
$\<e\>,\<e'\> \in \cV_\Id \equiv \tau$ that are incident to $[v]$.
\item $\check \cV_T$ is the same as the vertex set $\cV_T$ of $G$, with the same
tensor labels.
For each vertex $v \in \cV_T$ with ordered edges $e_1,\ldots,e_m$ in $G$,
the vertex $v \in \check \cV_T \equiv \cV_T$ has ordered edges connecting to
$\<e_1\>,\ldots,\<e_m\> \in \check \cV_\Id \equiv \tau$.
\end{itemize}
An example of this construction of $(\check G,\check \cL)$ from
$(G,\cL,\pi,\tau)$ is depicted in Figure \ref{fig:GpiEx}. It is direct to check
that the quantity $\val_{\check G}(\check \cL)$ defined in (\ref{eq:Evalfinal})
indeed equals the value of this tensor network $(\check G,\check \cL)$,
where the label $\Id^k$ on each vertex $\<e\>  \equiv \tau \in \check \cV_\Id$ ensures that
only summands which have the same index value $i_{\<e\>} \in [n]$ for all edges
incident to $\<e\>$ contribute to the tensor network value in (\ref{eq:val}).

Then, defining $\bM_k'$ and $\val_{\check G}(\check \cL')$ as in
(\ref{eq:momentmatrix}) and (\ref{eq:Evalfinal})
with $\bW'$ in place of $\bW$, we have
\begin{align}
&\bigg|\EE\bigg[\frac{1}{n}\val_\TN(\cL)\bigg]-
\EE\bigg[\frac{1}{n}\val_\TN(\cL')\bigg]\bigg|
\leq \mathop{\sum_{\pi \in \cP(\cE)}}_{\text{not single}}
\sum_{\tau \in \cP(\cE):\tau \geq \pi}
\frac{|\mu(\pi,\tau)|}{n^{1+|\cV_W|/2}} \times \\
&\Bigg|\underbrace{\sum_{\bi \in [n]^\tau}
\bigg(\mathop{\prod_{[v] \in \pi_W(\pi)}}_{\text{not paired}}
\bM_{k[v]}[i_{\<e\>}:\<e\> \sim [v]] 
-\mathop{\prod_{[v] \in \pi_W(\pi)}}_{\text{not paired}}
\bM_{k[v]}'[i_{\<e\>}:\<e\> \sim [v]]\bigg)
\prod_{v \in \cV_T} \bT_v[i_{\<e\>}:e \sim v]}_{=
\val_{\check G}(\check \cL)-\val_{\check G}(\check \cL')}\Bigg|.
\label{eq:valdiffbound}
\end{align}

\begin{figure}[h]
    \centering

    \hspace{-1.4cm}\begin{tikzpicture}[scale = .9]
        \begin{scope}[every node/.style={circle,thick,draw}]
            \node (W1) at (0,-1) {$\bW$};
            \node (W2) at (2,-1) {$\bW$};
            \node (W3) at (4,-1) {$\bW$};
            \node (W4) at (6,-1) {$\bW$};
            \node (W5) at (8,-1) {$\bW$};

            \node (T1) at (-1,2) {$\bT_1$};
            \node (T2) at (1,2) {$\bT_2$};
            \node (T3) at (3,2) {$\bT_3$};
            \node (T4) at (5,2) {$\bT_4$};
            \node (T5) at (7,2) {$\bT_5$};
        \end{scope}
        
        \begin{scope}[>={Stealth[black]},
                      every node/.style={fill=white,circle},
                      every edge/.style={draw=black,very thick},
                      line width = 1pt]
            \draw[-] (W1) to node{$[1]$} (T1);
            \draw[-] (W1) to node{$[2]$} (T2);
            \draw[-] (W2) to node{$[3]$} (T2);
            \draw[-] (W3) to node{$[1]$} (T2);
            \draw[-] (W2) to node{$[1]$} (T3);
            \draw[-] (W3) to node{$[3]$} (T3);
            \draw[-] (W4) to node{$[2]$} (T3);
            \draw[-] (W4) to node{$[1]$} (T4);
            \draw[-] (W5) to node{$[3]$} (T4);
            \draw[-] (W5) to node{$[1]$} (T5);
        \end{scope}

        \node at (9, 1) {$\rightarrow$};

        \begin{scope}[every node/.style={circle,thick,draw}]
            \node (P1) at (12,-1) {$\Id$};
            \node (P2) at (14,-1) {$\bM_3$};

            \node (E1) at (13,1) {$\Id$};
            \node (E2) at (11,1) {$\Id$};
            \node (E3) at (15,1) {$\Id$};

            \node (T1A) at (9,3) {$\bT_1$};
            \node (T2A) at (11,3) {$\bT_2$};
            \node (T3A) at (13,3) {$\bT_3$};
            \node (T4A) at (15,3) {$\bT_4$};
            \node (T5A) at (17,3) {$\bT_5$};
        \end{scope}
        \node at (13.8,.75) {$\langle 1 \rangle$};
        \node at (11.8,.75) {$\langle 2 \rangle$};
        \node at (15.8,.75) {$\langle 3 \rangle$};
        %\node at (10.75, -.5) {$\{[1],[2]\}$};
        %\node at (15.25, -.5) {$\{[1],[3]\}$};
        \node at (18, 1) {$\check \cV_\Id$};
        \node at (18,-1) {$\check \cV_W$};
        \node at (18, 3) {$\check \cV_T$};

        \begin{scope}[>={Stealth[black]},
            every node/.style={fill=white,circle},
            every edge/.style={draw=black,very thick},
            line width = 1pt]
                \draw[-] (T1A) to (E1);
                \draw[-] (T2A) to (E2);
                \draw[-] (T2A) to (E3);
                \draw[-] (T2A) to (E1);
                \draw[-] (T3A) to (E1);
                \draw[-] (T3A) to (E2);
                \draw[-] (T3A) to (E3);
                \draw[-] (T4A) to (E1);
                \draw[-] (T4A) to (E3);
                \draw[-] (T5A) to (E1);

                \draw[-] (E2) to (P1);
                \draw[-] (E1) to (P1);
                \draw[-] (E1) to (P2);
                \draw[-] (E3) to (P2);

        \end{scope}

        \begin{scope}[every node/.style={circle,thick,draw}]
            \node (G21) at (6,-5) {$\Id$};
            \node (G22) at (8,-5) {$\Id$};
            \node (B1) at (10,-5) {$\Id$};
            \node (B3) at (12,-5) {$\Id$};

            \node (T1B) at (5,-3) {$\bT_1$};
            \node (T2B) at (7,-3) {$\bT_2$};
            \node (T3B) at (9,-3) {$\bT_3$};
            \node (T4B) at (11,-3) {$\bT_4$};
            \node (T5B) at (13,-3) {$\bT_5$};

            \node (T1C) at (5,-7) {$\bT_1$};
            \node (T2C) at (7,-7) {$\bT_2$};
            \node (T3C) at (9,-7) {$\bT_3$};
            \node (T4C) at (11,-7) {$\bT_4$};
            \node (T5C) at (13,-7) {$\bT_5$};
        \end{scope}
        
        \begin{scope}[>={Stealth[black]},
                      every node/.style={fill=white,circle},
                      every edge/.style={draw=black,very thick},
                      line width = 1pt]
            \draw[-] (T1B) to (B1);
            \draw[-] (T2B) to (B1);
            \draw[-] (T2B) to (G21);
            \draw[-] (T2B) to (B3);
            \draw[-] (T3B) to (B1);
            \draw[-] (T3B) to (G21);
            \draw[-] (T3B) to (B3);
            \draw[-] (T4B) to (B1);
            \draw[-] (T4B) to (B3);
            \draw[-] (T5B) to (B1);

            \draw[-] (T1C) to (B1);
            \draw[-] (T2C) to (B1);
            \draw[-] (T2C) to (G22);
            \draw[-] (T2C) to (B3);
            \draw[-] (T3C) to (B1);
            \draw[-] (T3C) to (G22);
            \draw[-] (T3C) to (B3);
            \draw[-] (T4C) to (B1);
            \draw[-] (T4C) to (B3);
            \draw[-] (T5C) to (B1);
        \end{scope}

        \node at (5.1,-5) {$\langle 2 \rangle^1$};
        \node at (7.1,-5) {$\langle 2 \rangle^2$};    
        \node at (10.8,-5) {$\langle 1 \rangle$};
        \node at (12.8,-5) {$\langle 3 \rangle$};
        
        \node at (14, -3) {$ \cV_T^1$};
        \node at (14, -7) {$\cV_T^2$};
        \node at (14, -5) {$\tilde{\cV}_{\Id}$};
        \node[rotate = 240] at (11.5, -2) {$\conv{}$};

        \node at (3,3) {$(G, \cL)$};
        \node at (13,4) {$(\check G, \check \cL)$};
        \node at (9, -2) {$(\tilde G, \tilde \cL)$};
        
    \end{tikzpicture}
    \caption{An example conversion from $(G, \cL) \to (\check G, \check \cL) \to
    (\tilde G, \tilde \cL)$. (Top left) The initial graph $G$ with labels $\cL$
in $\bT_1,\ldots,\bT_5,\bW$, and an edge partition $\pi \in \cP(\cE)$ consisting
of three blocks $[1],[2],[3]$. This induces two blocks $[v] \in \pi_W(\pi)$, one
which is paired and has incident blocks $[1],[2] \in \pi$, and a second with
$k[v]=3$ and incident blocks $[1],[3] \in \pi$.
(Top right) The graph $(\check G, \check \cL)$ representing
(\ref{eq:Evalfinal}) in the case $\tau=\pi$ and $\langle e \rangle=[e]$ for each
$e=1,2,3$. The vertices of $\check G$ are
partitioned as $\check \cV_W \sqcup \check \cV_\Id \sqcup \check \cV_T$.
Two vertices in $\check \cV_W$ correspond to the blocks of
$\pi_W(\pi)$, one paired and labeled with $\Id$ and the second unpaired and
labeled with $\bM_3$.
One vertex of $\check \cV_\Id$ corresponds to each block of $\tau$. (Bottom)
The graph $(\tilde G, \tilde \cL)$ representing (\ref{eq:Fpitaubound}).
Here $\langle 2 \rangle \in \check \cV_\Id$ is good and thus corresponds to
two vertices in $\tilde \cV_\Id$, while
$\langle 1 \rangle,\langle 3 \rangle \in \check \cV_\Id$ are bad and each
correspond to a single vertex in $\tilde \cV_\Id$.}\label{fig:GpiEx}
\end{figure}

\begin{definition}\label{def:goodbad}
Given partitions $\pi,\tau \in \cP(\cE)$ with $\tau \geq \pi$, a
block $\<e\> \in \tau$ is {\bf bad} if there exists at least one block
$[v] \in \pi_W(\pi)$ that is not paired and that is incident to $\<e\>$,
and {\bf good} otherwise. We write $\tau=\tau^b \sqcup \tau^g$ where $\tau^b$ and
$\tau^g$ are the sets of bad and good blocks, respectively.
\end{definition}

Note that if $|\tau^b|=0$, i.e.\ all blocks of $\tau$ are good, then every
block $[v] \in \pi_W(\pi)$ must be paired, so the products
$\prod_{[v] \in \pi_W(\pi):\text{not paired}}$ defining
$\val_{\check G}(\check \cL),\val_{\check G}(\check \cL')$ are both trivial
and equal to 1, and $\val_{\check G}(\check \cL)-\val_{\check G}(\check
\cL')=0$. When $|\tau^b| \neq 0$, these products
involve only indices corresponding to $\<e\> \in \tau^b$ and not
$\<e\> \in \tau^g$. Thus
\begin{align}
\val_{\check G}(\check \cL)-\val_{\check G}(\check \cL')
&=\sum_{\bi \in [n]^{\tau^b}}
\Bigg[\bigg(\mathop{\prod_{[v] \in \pi_W(\pi)}}_{\text{not paired}}
\bM_{k[v]}[i_{\<e\>}:\<e\> \sim [v]] 
-\mathop{\prod_{[v] \in \pi_W(\pi)}}_{\text{not paired}}
\bM_{k[v]}'[i_{\<e\>}:\<e\> \sim [v]]\bigg) \times \\
&\hspace{2in}\sum_{\bi \in [n]^{\tau^g}}
\prod_{v \in \cV_T} \bT_v[i_{\<e\>}:e \sim v]\Bigg]\1\{|\tau^b| \neq 0\}.
\end{align}

Applying Cauchy-Schwarz over the outer summation $\sum_{\bi \in [n]^{\tau^b}}$,
\begin{align}
|\val_{\check G}(\check \cL)-\val_{\check G}(\check \cL')| &\leq 
\Bigg[\sum_{\bi \in [n]^{\tau^b}}
\bigg(\mathop{\prod_{[v] \in \pi_W(\pi)}}_{\text{not paired}}
\bM_{k[v]}[i_{\<e\>}:\<e\> \sim [v]] 
-\mathop{\prod_{[v] \in \pi_W(\pi)}}_{\text{not paired}}
\bM_{k[v]}'[i_{\<e\>}:\<e\> \sim [v]]\bigg)^2\Bigg]^{1/2} \times \\
&\hspace{1in}\Bigg[\sum_{\bi \in [n]^{\tau^b}}
\bigg(\sum_{\bi \in [n]^{\tau^g}}
\prod_{v \in \cV_T} \bT_v[i_{\<e\>}:e \sim v]\bigg)^2\Bigg]^{1/2}\1\{|\tau^b| \neq 0\}.
\end{align}
Then applying that $|\bM_k[i,j]| \leq C_k$ for a constant $C_k>0$ and all $i,j
\in [n]$, there exists a constant $C(\pi,\tau)>0$ for
which the first factor is at most $C(\pi,\tau)n^{|\tau^b|/2}$, so
\[|\val_{\check G}(\check \cL)-\val_{\check G}(\check \cL')| 
\leq \1\{|\tau^b| \neq 0\} C_{\pi,\tau}n^{|\tau^b|/2}
\Bigg[\underbrace{\sum_{\bi \in [n]^{\tau^b}} \bigg(\sum_{\bi \in [n]^{\tau^g}}
\prod_{v \in \cV_T} \bT_v[i_{\<e\>}:e \sim
v]\bigg)^2}_{:=\val_{\tilde G}(\tilde \cL)}\Bigg]^{1/2}.\label{eq:Fpitaubound}\]

We interpret the quantity $\val_{\tilde G}(\tilde \cL)$ in
(\ref{eq:Fpitaubound}) as the value of a $(\pi,\tau)$-dependent
bipartite tensor network $\tilde G=(\tilde \cV_\Id \sqcup \tilde \cV_T,
\tilde \cE)$ with $(\Id,\cT)$-labeling $\tilde \cL$, constructed as follows:
\begin{itemize}
\item $\tilde \cV_\Id$ has one vertex for each block $\<e\> \in \tau^b$, which
we denote also by $\<e\> \in \tilde \cV_\Id$, and two
vertices for each block $\<e\> \in \tau^g$, which we denote by
$\<e\>^1,\<e\>^2 \in \tilde \cV_\Id$. These are labeled by $\Id$,
and the ordering of their edges is arbitrary.
\item $\tilde \cV_T=\cV_T^1 \sqcup \cV_T^2$ consists of two copies
of the original vertex set $\cV_T$ of $G$, with the same tensor labels.
For each $v \in \cV_T$, we denote
its copies by $v^1 \in \cV_T^1$ and $v^2 \in \cV_T^2$.
Suppose $v \in \cV_T$ has ordered edges $e_1,\ldots,e_m$ in the original
graph $G$. If $\<e_i\> \in \tau^b$, then the $i^\text{th}$ edge of both
$v^1 \in \cV_T^1$ and $v^2 \in \cV_T^2$ connect to
$\<e_i\> \in \tilde \cV_\Id$. If $\<e_i\> \in \tau^g$
then the $i^\text{th}$ edge of $v^1 \in \cV_T^1$ connects to
$\<e_i\>^1 \in \tilde \cV_\Id$, and the $i^\text{th}$ edge of
$v^2 \in \cV_T^2$ connects to $\<e_i\>^2 \in \tilde \cV_\Id$.
\end{itemize}
An example of this construction is also illustrated in Figure \ref{fig:GpiEx}.
Note that since each edge $e \in \cE$ of the original graph $G=(\cV,\cE)$ is
incident to at least one vertex $v \in \cV_T$ (under our starting assumption
that no two vertices of
$\cV_W$ are adjacent), each block $\<e\> \in \tau^b \sqcup \tau^g$ has also at
least one vertex $v \in \cV_T$ that is incident to an edge of that block. Then
it is direct to check that the quantity $\val_{\tilde G}(\tilde \cL)$ of
(\ref{eq:Fpitaubound}) is indeed the value of this tensor network $(\tilde
G,\tilde \cL)$ as defined in (\ref{eq:val}).

Finally, we bound $\val_{\tilde G}(\tilde \cL)$ using the given BCP property of
$\cT$ and a combinatorial argument.
Fixing any $\pi \in \cP(\cE)$ that is not single, we categorize
the possible types of blocks $[v] \in \pi_W(\pi)$ based on 
$k[v]$ (the number of vertices belonging to $[v]$) and on its
incident blocks $[e],[e'] \in \pi$:
\begin{itemize}
\item Let $N_3$ be the number of blocks $[v]$ with $k[v] \geq 3$
\item Let $N_2$ be the number of paired blocks $[v]$, i.e.\ with $k[v]=2$ and
$[e] \neq [e']$
\item Let $N_1$ be the number of blocks $[v]$ with $k[v]=2$ and $[e]=[e']$.
\end{itemize}
Let $\c(\tilde G)$ be the number of connected components of $\tilde G$.
We claim the following combinatorial properties:
\begin{enumerate}
\item The number of vertices of $\cV_W$ satisfies $|\cV_W| \geq
3N_3+2N_2+2N_1$.
\item The number of blocks of $\tau^b$ satisfies $|\tau^b| \leq 2N_3+N_1$.
\item The degree of each vertex of $\tilde \cV_\Id$ in $\tilde G$ is even.
\item If $|\tau^b| \neq 0$, then the number of connected components of
$\tilde G$ satisfies $\c(\tilde G) \leq 1+2N_2+N_3$.
\end{enumerate}

Let us verify each of these claims: (1) holds because each block $[v] \in
\pi_W(\pi)$ counted by $N_1$ or $N_2$ contains exactly $k[v]=2$ vertices of
$\cV_W$, and each block counted by $N_3$ contains $k[v] \geq 3$ vertices.

(2) holds because any block of $\tau^b$ must be incident to some
block $[v] \in \pi_W(\pi)$ that is not paired.
Each non-paired block $[v] \in \pi_W(\pi)$ that is counted by $N_3$ is
incident to two distinct blocks $[e],[e'] \in \pi$ --- hence at most two
blocks in $\tau^b$ because $\tau \geq \pi$ --- and each non-paired
block counted by $N_1$ is incident to one distinct block $[e] \in \pi$ --- hence
also one block in $\tau^b$.

For (3), consider first a bad block $\<e\> \in \tau^b$. By construction, the
edges of
its corresponding vertex $\<e\> \in \tilde \cV_\Id$ come in pairs, connecting to pairs of
vertices $(v^1,v^2)$. Thus $\<e\>$ has even degree. Now consider a good block
$\<e\> \in \tau^g$ and its corresponding vertices $\<e\>^1,\<e\>^2 \in \tilde \cV_\Id$. Let $e_1,\ldots,e_m$ be the edges of $G$
that belong to this block $\<e\> \in \tau^g$. If such an edge $e_i$ connects two
vertices of $\cV_T$, then there are two corresponding edges in $\tilde G$ that
connect these vertices of $\cV_T^1$ with $\<e\>^1$. Otherwise $e_i$
connects a vertex $u \in \cV_T$ with a vertex $v \in \cV_W$. (This is the case
for the block $\<2\>$ in Figure \ref{fig:GpiEx}.)
Since $\<e\> \in \tau^g$ is good, the block $[v] \in \pi_W(\pi)$
containing this vertex $v \in \cV_W$ must be paired --- thus, there is exactly
one other vertex $v' \in \cV_W$ that belongs to $[v]$. If $v$ is incident
to exactly one edge in this block $\<e\>$, then so is $v'$, and if $v$ is
incident to two edges both in $\<e\>$ (which may occur if its incident blocks
$[e] \neq [e'] \in \pi$ are merged into a single block $\<e\> \in \tau$)
then so is $v'$. This shows that the edges among $e_1,\ldots,e_m$ that connect
$\cV_T$ to $\cV_W$ come in pairs, and each pair contributes two edges of
$\tilde G$ between $\cV_T^1$ and $\<e\>^1$. So
$\<e\>^1$ has even degree. Similarly $\<e\>^2$ has even degree, which shows (3).

For (4), note that $(\tilde G,\tilde\cL)$ may be obtained from
$(\check G,\check \cL)$ by removing all vertices of $\check \cV_W$ and their
incident edges from $\check G$, duplicating the
remaining graph on the vertex set $\check \cV_\Id \cup \check \cV_T$ into two
disjoint copies on $\check \cV_\Id^1 \cup \check \cV_T^1$ and
$\check \cV_\Id^2 \cup \check \cV_T^2$, and merging the vertices of
$\check \cV_\Id^1$ representing bad blocks $\<e\> \in \tau^b$ with their
copies in $\check \cV_\Id^2$ while keeping the remaining vertices of $\check
\cV_\Id^1,\check \cV_\Id^2$ (representing good blocks $\<e\> \in \tau^g$)
distinct. We may then bound $\c(\tilde G)$ via the following observations:

\begin{itemize}
\item $\check G$ is a connected graph, because the original graph $G$ is
connected by assumption. 
\item For any connected component $K$ of $\check G$, call it \emph{good} if
all vertices of $K \cap \check \cV_\Id$ represent good blocks $\<e\> \in
\tau^g$, and \emph{bad} if at least one vertex of $K \cap \check \cV_\Id$
represents a bad block $\<e\> \in \tau^b$. We track the number $N_g$ of good
connected components and $N_b$ of bad connected components
as we sequentially remove vertices of $\check \cV_W$ from $\check G$
one at a time:

Supposing that $|\tau^b| \neq 0$ as assumed in claim (4), the starting
connected graph $\check G$ is bad, so $N_g=0$ and $N_b=1$. Each vertex
$[v] \in \check \cV_W$ counted by $N_1$ can be connected to only one vertex of
$\check \cV_\Id$, so its removal does not change $(N_g,N_b)$. Each
vertex $[v] \in \check \cV_W$ counted by $N_3$ is connected to at most 2
vertices of $\check \cV_\Id$, both of which are bad by definition, so its
removal does not change $N_g$ and increases $N_b$ by at most 1. Each vertex
$[v] \in \check \cV_W$ counted by $N_2$ is connected to at most 2 vertices of $\check \cV_\Id$
which may be either good or bad, so its removal increases the total number of
connected components $N_b+N_g$ by at most
1. Thus, after removing all vertices of $\check \cV_W$ from $\check G$, we have
\[N_b+N_g \leq 1+N_2+N_3, \qquad N_g \leq N_2.\]

\item After removing all vertices of $\check \cV_W$ and
applying the above duplication process to obtain $\tilde G$,
each component counted by $N_b$ results in one
connected component of $\tilde G$, while each component counted by $N_g$ results
in two connected components of $\tilde G$. Thus
\[\c(\tilde G)=N_b+2N_g,\]
and applying the above bounds gives $\c(\tilde G) \leq 1+2N_2+N_3$ which is
claim (4).
\end{itemize}

We apply these combinatorial claims and the BCP property to conclude the proof:
Suppose $\pi,\tau \in \cP(\cE)$ are such that $\pi$ is not single,
$\tau \geq \pi$, and $|\tau^b| \neq 0$.
Recalling that $\val_{\tilde G}(\tilde \cL)$ factorizes as the product of the
values across connected components of $\tilde G$, and applying claims (3--4) and
the BCP for $\cT$ in the form of Definition \ref{def:BCPequiv} to each
connected component of $\tilde G$, we have
\begin{equation}\label{eq:tildeGval}
\val_{\tilde G}(\tilde \cL) \leq C(\tilde G)n^{\c(\tilde G)}
\leq C(\tilde G)n^{1+2N_2+N_3}
\end{equation}
for a constant $C(\tilde G)>0$. Since $\tilde G$ is
determined by $\pi$ and $\tau$,
applying (\ref{eq:tildeGval}) and claim (2) to (\ref{eq:Fpitaubound})
gives, for some different constant $C(\pi,\tau)>0$,
\[|\val_{\check G}(\check \cL)-\val_{\check G}(\check \cL')| 
\leq C(\pi,\tau)\cdot
n^{\frac{2N_3+N_1}{2}} \cdot n^{\frac{1+2N_2+N_3}{2}}.\]
Applying this and claim (1) back to (\ref{eq:valdiffbound}),
and noting that the number of such partitions $\pi,\tau \in \cP(\cE)$ is a
constant independent of $n$, we obtain as desired
\[\bigg|\EE\bigg[\frac{1}{n}\val_\TN(\cL)\bigg]-
\EE\bigg[\frac{1}{n}\val_\TN(\cL')\bigg]\bigg|
\leq C \cdot \frac{1}{n^{1+\frac{3N_3+2N_2+2N_1}{2}}}
\cdot n^{\frac{2N_3+N_1}{2}} \cdot n^{\frac{1+2N_2+N_3}{2}}
\leq Cn^{-1/2}.\]
\end{proof}

\subsection{Almost-sure convergence}\label{sec:asconvergence}

We now strengthen Lemma \ref{lem:ExpVal} to an almost-sure convergence
statement.

\begin{lemma}\label{lem:ConcValBCP}
Let $\cT$, $\bW,\bW'$, and $\cL,\cL'$ be as in 
Lemma \ref{lem:ExpVal}. Then almost surely
\[\lim_{n \to \infty} \frac{1}{n}\,\val_\TN(\cL)
-\frac{1}{n}\,\val_\TN(\cL')=0.\]
\end{lemma}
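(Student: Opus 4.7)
The plan is to upgrade the expectation-level result of Lemma \ref{lem:ExpVal} to almost-sure convergence by controlling the fourth central moment of $n^{-1}\val_\TN(\cL)$ and invoking the Borel-Cantelli lemma. Concretely, I would show that there is a constant $C>0$, uniform in $n$, such that
\[
\E\!\left[\Big(\tfrac{1}{n}\val_\TN(\cL)-\tfrac{1}{n}\E\,\val_\TN(\cL)\Big)^{4}\right]\leq \frac{C}{n^{2}},
\]
and likewise with $\cL'$ in place of $\cL$. Markov's inequality then produces a summable tail bound $\P[|n^{-1}\val_\TN(\cL)-n^{-1}\E\,\val_\TN(\cL)|>\epsilon]=O(n^{-2})$, Borel-Cantelli gives almost-sure convergence to $0$ of the centered quantities $n^{-1}\val_\TN(\cL)-n^{-1}\E\,\val_\TN(\cL)$ and $n^{-1}\val_\TN(\cL')-n^{-1}\E\,\val_\TN(\cL')$, and the triangle inequality combined with Lemma \ref{lem:ExpVal}'s deterministic $O(n^{-1/2})$ bound on $|\E\,n^{-1}\val_\TN(\cL)-\E\,n^{-1}\val_\TN(\cL')|$ concludes the proof.

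For the fourth-moment bound I would use the moment-cumulant identity $\E[(X-\E X)^{4}]=\kappa_{4}(X)+3\,\mathrm{Var}(X)^{2}$, reducing the task to showing $\mathrm{Var}(\val_\TN(\cL))\leq Cn$ and $\kappa_{4}(\val_\TN(\cL))\leq Cn^{2}$. Both bounds follow by repeating the expansion in the proof of Lemma \ref{lem:ExpVal}, now applied to the $k$-fold disjoint union $\TN^{(k)}:=\TN\sqcup\cdots\sqcup\TN$ (for $k\in\{2,4\}$) with the product labeling $\cL^{(k)}$, which satisfies $\val_{\TN^{(k)}}(\cL^{(k)})=\val_\TN(\cL)^{k}$. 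Taking expectations and decomposing by pairs $(\pi,\tau)$ of edge partitions on $\cE^{(k)}=\cE\sqcup\cdots\sqcup\cE$ exactly as in \eqref{eq:EvalM}--\eqref{eq:Evalfinal} yields a sum of contributions; those coming from partitions $\pi$ whose blocks (both $[e]\in\pi$ and $[v]\in\pi_W(\pi)$) each lie in a single copy of $\TN$ factor as products across copies, and by a standard cluster/cumulant argument these are exactly the terms that cancel when one passes to the fourth central moment (respectively to the cumulant $\kappa_{k}$).

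The main obstacle is then bounding the \emph{connecting} contributions, namely those from partitions that actually link the $k$ copies of $\TN$ together. For these, I would reuse the Cauchy-Schwarz reduction and the bipartite-network construction $(\tilde\TN,\tilde\cL)$ from the proof of Lemma \ref{lem:ExpVal}, producing a $k$-copy bipartite network $(\tilde\TN^{(k)},\tilde\cL^{(k)})$ to which Definition \ref{def:BCPequiv} of the BCP applies componentwise, giving $|\val_{\tilde\TN^{(k)}}(\tilde\cL^{(k)})|\leq Cn^{\c(\tilde\TN^{(k)})}$. The key additional combinatorial input, beyond what appears in the proof of Lemma \ref{lem:ExpVal}, is that a connecting partition forces at least $k-1$ fewer connected components in $\tilde\TN^{(k)}$ than in the disjoint baseline of $k$ independent copies of $\tilde\TN$, because each inter-copy link --- whether it arises from an edge-block of $\pi$ containing edges from two copies, or from a $\bW$-block of $\pi_W(\pi)$ grouping $\bW$-vertices across copies --- fuses at least two components. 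Carefully verifying this reduction, together with the multi-copy analogues of combinatorial claims (1)--(4) preceding \eqref{eq:tildeGval} (in particular accounting for $\bW$-blocks of high multiplicity $k[v]\geq 3$ that may join three or four copies simultaneously), is the main technical step; once it is done, assembling the pieces yields the stated bounds $\mathrm{Var}(\val_\TN(\cL))\leq Cn$ and $\kappa_{4}(\val_\TN(\cL))\leq Cn^{2}$, hence the $O(n^{-2})$ fourth-moment estimate, and completes the proof via Borel-Cantelli.
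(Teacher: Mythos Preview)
Your high-level strategy matches the paper's: establish $\E[(n^{-1}\val_\TN(\cL)-n^{-1}\E\val_\TN(\cL))^4]\leq Cn^{-2}$ via a partition expansion on copies of $\TN$ together with the BCP, then apply Markov and Borel--Cantelli and combine with Lemma~\ref{lem:ExpVal}. The organization of the fourth-moment bound differs, however. You decompose via the moment--cumulant identity $\E[(X-\E X)^4]=\kappa_4(X)+3\Var(X)^2$ and bound $\Var$ and $\kappa_4$ separately, relying on the cluster-expansion fact that only partitions connecting all $k$ copies contribute to $\kappa_k$. The paper instead introduces four \emph{independent} Wigner matrices $\bW^1,\dots,\bW^4$ and writes $\E[(X-\E X)^4]$ as the alternating sum $V_{1111}-4V_{1112}+6V_{1123}-3V_{1234}$, where $V_a=\E[\val_{\TN^{\sqcup4}}(\cL_a)]$ and the word $a=a_1a_2a_3a_4$ records which $\bW^j$ labels each copy; it then splits edge partitions of $\cE^{\sqcup4}$ into classes $\cA,\cB,\cC$ (no copies linked; exactly one pair linked; everything else), shows the $\cA$- and $\cB$-contributions cancel by explicit symmetry arguments in the word $a$, and bounds the $\cC$-contribution directly with the Cauchy--Schwarz/BCP machinery of Lemma~\ref{lem:ExpVal}, now using $\c(\tilde\TN)\leq 4+2N_2+N_3$ since $\check\TN$ starts with at most two components. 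Your cumulant route bypasses the somewhat delicate symmetry cancellation for $\cB$, while the paper's replica trick treats all surviving partitions in one uniform bound rather than separating $\Var$ from $\kappa_4$.

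One imprecision in your sketch: the partitions that cancel in $\kappa_k$ are not merely those whose blocks each lie in a single copy, but \emph{all} partitions whose induced connectivity on the $k$ copies is disconnected. Thus for $\kappa_4$ the surviving $\check\TN$ has exactly one component (not ``$k-1$ fewer than some baseline''), and running the paper's component tracking from that starting point gives the sharper $\c(\tilde\TN)\leq 2+2N_2+N_3$, yielding in fact $\kappa_4(\val_\TN(\cL))\leq Cn$, which together with $\Var(\val_\TN(\cL))\leq Cn$ gives the desired $O(n^{-2})$ bound.
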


\begin{proof}
We will show that for a constant $C>0$,
\begin{equation}\label{eq:4thmomentbound}
\E\bigg[\bigg(\frac{1}{n}\val_{\TN}(\cL)-\frac{1}{n}\E\val_{\TN}(\cL)\bigg)^4\bigg] \leq \frac{C}{n^2}.
\end{equation}
We again fix the ordered multigraph $G=(\cV,\cE)$
and a decomposition $\cV=\cV_W \sqcup \cV_T$ of its vertices,
and consider a labeling $\cL$ that assigns $\bW$ to $\cV_W$ and elements of
$\cT$ to $\cV_T$. We again assume without loss of generality that no two
vertices of $\cV_W$ are adjacent.

Let $G^{\sqcup 4}=(\cV^{\sqcup 4},\cE^{\sqcup 4})$
be the ordered multigraph consisting of four disjoint copies of $G$, where
$\cV^{\sqcup 4}=\cV^1 \sqcup \cV^2 \sqcup \cV^3 \sqcup \cV^4$ are the four
copies of $\cV$ decomposed as $\cV_j=\cV_W^j \sqcup \cV_T^j$ for $j=1,2,3,4$,
and $\cE^{\sqcup 4}=\cE^1 \sqcup \cE^2 \sqcup \cE^3 \sqcup
\cE^4$ are the four copies of $\cE$. Let
$\bW^1,\ldots,\bW^4$ be four independent copies of the Wigner matrix $\bW$.
For any word $a=a_1a_2a_3a_4$ with letters $a_1,a_2,a_3,a_4 \in \{1,2,3,4\}$,
define $\cL_a$ as the tensor labeling of $G^{\sqcup 4}$ such
that for each $j=1,2,3,4$, vertices of $\cV_W^j$ are labeled by the matrix
$\bW^{a_j}$, and vertices of $\cV_T^j$ have the same labels as $\cV_T$
under $\cL$. Then
    \begin{align}
            &\E[(\val_{\TN}(\cL)
            - \E \val_{\TN}(\cL))^4]\\
                &= \E[\val_{\TN}(\cL)^4] -
            4\E[\val_{\TN}(\cL)^3]\E[\val_{\TN}(\cL)]
            +6\E[\val_{\TN}(\cL)^2]\E[\val_{\TN}(\cL)]^2 -
            3\E[\val_{\TN}(\cL)]^4\\
                &=\EE[\val_{\TN^{\sqcup 4}}(\cL_{1111}) - 4\val_{\TN^{\sqcup
4}}(\cL_{1112}) 
        + 6\val_{\TN^{\sqcup 4}}(\cL_{1123}) - 3\val_{\TN^{\sqcup 4}}(\cL_{1234})]
        \label{eq:partSum}
        \end{align}
    where the expectation on the last line is over the independent Wigner
matrices $\bW^1,\ldots,\bW^4$.

Let $\cP(\cE^{\sqcup 4})$ be the set of all partitions of the combined
edge set $\cE^{\sqcup 4}$. For any $a=a_1a_2a_3a_4$, we have analogously
to (\ref{eq:Eval})
\begin{align}
\E\bigg[\frac{1}{n^4}\val_{\TN^{\sqcup 4}}(\cL_a)\bigg]
&=\sum_{\pi \in \cP(\cE^{\sqcup 4})} \underbrace{\frac{1}{n^{4+2|\cV_W|}}
\sum_{\bi \in [n]^\pi}^* \E\bigg[\prod_{j=1}^4 \prod_{v \in \cV_W^j}
n^{1/2}\bW^{a_j}[i_{[e]}:e \sim v]\bigg]
\prod_{j=1}^4 \prod_{v \in \cV_T^j} \bT_v[i_{[e]}:e \sim
v]}_{:=V_a(\pi)}.\label{eq:Vapi}
\end{align}
Let us split $\cP(\cE^{\sqcup 4})$ into three disjoint sets:
    \begin{itemize}
        \item $\cA$: Partitions $\pi$ such that every block $[e] \in
\pi$ satisfies $[e] \subseteq \cE^j$ for a single copy $j=1,2,3,4$.
        \item $\cB$: Partitions $\pi$ for which there is a
decomposition $\{1,2,3,4\}=\{j_1,j_2\} \sqcup \{k_1,k_2\}$ such that every block
$[e] \in \pi$ satisfies either $[e] \subseteq \cE^{j_1}$,
$[e] \subseteq \cE^{j_2}$, or
$[e] \subseteq \cE^{k_1} \cup \cE^{k_2}$, and at least one block
$[e] \in \pi$ has a nonempty intersection with both $\cE^{k_1}$ and $\cE^{k_2}$.
        \item $\cC$: All remaining partitions of $\cP(\cE^{\sqcup
4})$.
    \end{itemize}
We write correspondingly
\[V_a(\cA)=\sum_{\pi \in \cA} V_a(\pi),
\qquad V_a(\cB)=\sum_{\pi \in \cB} V_a(\pi),
\qquad V_a(\cC)=\sum_{\pi \in \cC} V_a(\pi)\]
so that
$\E[n^{-4}\val_{\TN^{\sqcup 4}}(\cL_a)]=V_a(\cA)+V_a(\cB)+V_a(\cC)$. Then
\[\EE\bigg[\bigg(\frac{1}{n}\val_{\TN}(\cL) -\frac{1}{n} \EE
\val_{\TN}(\cL)\bigg)^4\bigg]=\sum_{\cS \in \{\cA,\cB,\cC\}}
V_{1111}(\cS)-4V_{1112}(\cS)+6V_{1123}(\cS)-3V_{1234}(\cS).\label{eq:Vpartition}\]

We now analyze separately the terms of (\ref{eq:Vpartition}) for
$\cS=\cA,\cB,\cC$: For $\cA$, observe that for any $\pi \in \cA$, since the edge
sets $\cE^1,\cE^2,\cE^3,\cE^4$ are unions of disjoint blocks of $\pi$, the
indices of each of the matrices $\bW^1,\bW^2,\bW^3,\bW^4$ are distinct in
(\ref{eq:Vapi}). Then $V_a(\pi)$ has the same value for all words
$a=a_1a_2a_3a_4$, so $V_{1111}(\pi)=V_{1112}(\pi)=V_{1123}(\pi)=V_{1234}(\pi)$,
and hence
\[\label{eq:VAbound}
V_{1111}(\cA)-4V_{1112}(\cA)+6V_{1123}(\cA)-3V_{1234}(\cA)=0.\]

For $\cB$, recall that each $\pi \in \cB$ corresponds to a (unique) associated
decomposition $\{1,2,3,4\}=\{j_1,j_2\} \sqcup \{k_1,k_2\}$ where each block $[e]
\in \pi$ belongs to $\cE^{j_1}$, $\cE^{j_2}$, or $\cE^{k_1 \cup k_2}$.
We further decompose
\[V_{a_1a_2a_3a_4}(\cB)=V_{\u{a_1}\u{a_2}a_3a_4}
+V_{\u{a_1}a_2\u{a_3}a_4}+V_{\u{a_1}a_2a_3\u{a_4}}
+V_{a_1\u{a_2}\u{a_3}a_4}+V_{a_1\u{a_2}a_3\u{a_4}}+V_{a_1a_2\u{a_3}\u{a_4}}\]
where each term is a summation over those $\pi \in \cB$ corresponding to a
single such decomposition $\{1,2,3,4\}=\{j_1,j_2\} \sqcup \{k_1,k_2\}$,
and the underlined positions indicate the indices $\{k_1,k_2\}$ while the
non-underlined positions indicate the indices $\{j_1,j_2\}$. So for instance,
$V_{\u{a_1}a_2\u{a_3}a_4}$ is the summation of $V_{a_1a_2a_3a_4}(\pi)$ over
those $\pi \in \cB$ for which each block $[e] \in \pi$ belongs to either
$\cE^1 \cup \cE^3$, $\cE^2$, or $\cE^4$. Note that for any such $\pi$,
the indices of $\bW^2$ and $\bW^4$ in (\ref{eq:Vapi}) are distinct
from those of $\{\bW^1,\bW^3\}$, and hence for any $a_1,a_3 \in \{1,2,3,4\}$,
the value $V_{\u{a_1}a_2\u{a_3}a_4}$ is the same for all choices of $a_2,a_4$.
This type of observation, together with symmetry of
$V_{\u{a_1}a_2\u{a_3}a_4}$ under permutations of the four indices and
relabelings of the copies $\{1,2,3,4\}$, yields the identities
\begin{align}
V_{1111}(\cB)&=6V_{\u{1}\u{1}11}=6V_{\u{1}\u{1}23}\\
V_{1112}(\cB)&=3V_{\u{1}\u{1}12}+3V_{\u{1}11\u{2}}
=3V_{\u{1}\u{1}23}+3V_{\u{1}\u{2}34}\\
V_{1123}(\cB)&=V_{\u{1}\u{1}23}+2V_{\u{1}1\u{2}3}
+2V_{\u{1}12\u{3}}+V_{11\u{2}\u{3}}
=V_{\u{1}\u{1}23}+5V_{\u{1}\u{2}34}\\
V_{1234}(\cB)&=6V_{\u{1}\u{2}34}.
\end{align}
Applying these identities shows
\[\label{eq:VBbound}V_{1111}(\cB)-4V_{1112}(\cB)+6V_{1123}(\cB)-3V_{1234}(\cB)=0.\]

Finally, for $\cC$, we claim that there is a constant $C>0$ such that
for any $a=a_1a_2a_3a_4$, we have
\[|V_a(\cC)| \leq Cn^{-2}.\] The proof is similar to the analysis in Lemma
\ref{lem:ExpVal}: Fix any $a=a_1a_2a_3a_4$. Associated to any edge partition
$\pi \in \cC$, consider the vertex partition $\pi_W(\pi) \in \cP(\cV_W^1 \sqcup
\cV_W^2 \sqcup \cV_W^3 \sqcup \cV_W^4)$ such that $v,u$ belong to the same block
of $\pi_W(\pi)$ if and only if their incident edges belong to the same two
incident blocks of $\pi$ and, in addition, $v \in \cV_W^j$ and $u \in \cV_W^k$
for two indices $j,k \in \{1,2,3,4\}$ such that $a_j=a_k$ (i.e.\ $v,u$
correspond to the same Wigner matrix $\bW^{a_j}=\bW^{a_k}$). Let $k[v]$ be the
number of vertices in the block $[v] \in \pi_W(\pi)$, call $\pi$ single if some
block $[v] \in \pi_W(\pi)$ has $k[v]=1$, and call $[v] \in \pi_W(\pi)$ paired if
$k[v]=2$ and its incident blocks $[e],[e'] \in \pi$ satisfy $[e] \neq [e']$.
Then evaluating the expectation over $\bW^1,\ldots,\bW^4$ in (\ref{eq:Vapi}), we
get analogously to (\ref{eq:EvalM}) and (\ref{eq:Evalfinal})
\begin{align}
V_a(\cC)
&=\mathop{\sum_{\pi \in \cC}}_{\text{not single}} \frac{1}{n^{4+2|\cV_W|}}
\sum_{\bi \in [n]^\pi}^* \mathop{\prod_{[v] \in \pi_W(\pi)}}_{\text{not paired}}
\bM_{k[v]}[i_{[e]}:[e] \sim [v]] \prod_{j=1}^4 \prod_{v \in \cV_T^j}
\bT_v[i_{[e]}:e \sim v]\\
&=\mathop{\sum_{\pi \in \cC}}_{\text{not single}} 
\sum_{\tau \in \cP(\cE^{\sqcup 4}):\tau \geq \pi}
\frac{\mu(\pi,\tau)}{n^{4+2|\cV_W|}}
\underbrace{\sum_{\bi \in [n]^\tau} \mathop{\prod_{[v] \in \pi_W(\pi)}}_{\text{not paired}}
\bM_{k[v]}[i_{\<e\>}:\<e\> \sim [v]] \prod_{j=1}^4 \prod_{v \in \cV_T^j}
\bT_v[i_{\<e\>}:e \sim v]}_{\val_{\check G}(\check \cL)}\label{eq:valC}.
\end{align}
Let $\tau^b,\tau^g$ denote the sets of bad and good blocks of $\tau$ defined in
the same way as Definition \ref{def:goodbad}. Then applying Cauchy-Schwarz over
$\sum_{\bi \in [n]^{\tau^b}}$, we obtain analogously to (\ref{eq:Fpitaubound})
\[|\val_{\check G}(\check \cL)|
\leq C(\pi,\tau)n^{|\tau^b|/2}\bigg[\underbrace{\sum_{\bi \in [n]^{\tau^b}}
\bigg(\sum_{\bi \in [n]^{\tau^g}}
\prod_{j=1}^4 \prod_{v \in \cV_T^j} \bT_v[i_{\<e\>}:e \sim
v]\bigg)^2}_{:=\val_{\tilde G}(\tilde \cL)}\bigg]^{1/2}.\label{eq:valCbound}\]
Now let $N_3$, $N_2$, and $N_1$ be the numbers of blocks $[v] \in \pi_W(\pi)$
with $k[v] \geq 3$, with $k[v]=2$ and incident blocks $[e] \neq [e'] \in \pi$,
and with $k[v]=2$ and incident blocks $[e]=[e'] \in \pi$, respectively. Then
the same arguments as in Lemma \ref{lem:ExpVal} show that
\begin{enumerate}
\item $4|\cV_W| \geq 3N_3+2N_2+2N_1$.
\item $|\tau^b| \leq 2N_3+N_1$.
\item The degree of each vertex of $\tilde \cV_\Id$ in $\tilde G$ is even.
\end{enumerate}
Furthermore we may count the number of connected components $\c(\tilde G)$ of
$\tilde G$
by the following extension of the argument in Lemma \ref{lem:ExpVal}:
Analogous to Lemma \ref{lem:ExpVal}, $\check{G}$ above
is an ordered multigraph with three disjoint sets of
vertices $\check \cV_W \equiv \pi_W(\pi)$, $\check \cV_\Id \equiv \tau$,
and $\check \cV_T \equiv \cV_T^1 \sqcup \cV_T^2 \sqcup \cV_T^3 \sqcup \cV_T^4$,
and $\tilde{G}$ is again obtained from $\check{G}$ by removing all vertices of
$\check \cV_W$, duplicating the resulting graph on $\check \cV_\Id \cup \check
\cV_T$, and merging the two copies of vertices in $\check \cV_\Id$ that
correspond to bad blocks $\<e\> \in \tau^b$. Observe that:
\begin{itemize}
\item By definition, $G^{\sqcup 4}$ consists of 4 connected components. For
any $\pi \in \cC$, there are at least two different pairs of indices
$1 \leq j<k \leq 4$ for which a block of $\pi$ has non-empty intersection with
both $\cE^j$ and $\cE^k$. (Otherwise, we would have $\pi \in \cA$ or
$\pi \in \cB$.) Then $\check G$ has at most 2 connected components.
\item Call a connected component $K$ of $\check G$ good if all vertices
$K \cap \check \cV_\Id$ represent good blocks $\<e\> \in \tau^g$, and bad
otherwise. We again track the numbers $N_g$ and $N_b$ of good and bad connected
components of $\check G$ as we sequentially remove vertices of $\check \cV_W$.
The 1 or 2 connected components of the starting graph $\check G$ can be either
good or bad. Removing a vertex $[v] \in \check \cV_W$ counted by $N_1$ does not
change $(N_g,N_b)$, removing a vertex $[v] \in \check \cV_W$ counted by $N_3$
does not change $N_g$ and increases $N_b$ by at most 1, and removing a vertex
counted by $N_2$ increases $N_b+N_g$ by at most 1. Hence, after removing all
vertices of $\check \cV_W$ from $\check G$, we have
\[N_b+N_g \leq 2+N_2+N_3, \qquad N_g \leq 2+N_2.\]
\item After removing all vertices of $\check \cV_W$ and applying the duplication
procedure to obtain $\tilde G$, we have $\c(\tilde G)=N_b+2N_g$.
\end{itemize}
Thus we have also
\begin{enumerate}
\item[(4)] $\c(\tilde G) \leq 4+2N_2+N_3$.
\end{enumerate}
Applying these properties (1--4) and the BCP condition to (\ref{eq:valC}) and
(\ref{eq:valCbound}),
\[|V_a(\cC)| \leq C \cdot \frac{1}{n^{4+\frac{3N_3+2N_2+2N_1}{2}}}
\cdot n^{\frac{2N_3+N_1}{2}} \cdot n^{\frac{4+2N_2+N_3}{2}}
\leq Cn^{-2}\]
as claimed. Thus
\[\label{eq:VCbound}|V_{1111}(\cC)-4V_{1112}(\cC)+6V_{1123}(\cC)-3V_{1234}(\cC)|
\leq C'n^{-2}.\]
Applying (\ref{eq:VAbound}), (\ref{eq:VBbound}) and (\ref{eq:VCbound}) to
(\ref{eq:Vpartition}) proves the fourth moment bound (\ref{eq:4thmomentbound}).

Then by Markov's inequality, for any $\epsilon>0$,
    \[
        \P\bigg(\bigg|\frac{1}{n}\val_\TN(\cL) - \frac{1}{n}\EE
\val_\TN(\cL)\bigg| > \epsilon\bigg)
        \leq \frac{C}{\epsilon^4 n^2}.
    \]
   This bound is summable in $n$, so by the Borel-Cantelli Lemma, almost surely
    \[\lim_{n \to \infty} \frac{1}{n}\val_\TN(\cL) - 
    \EE\left[\frac{1}{n}\val_\TN(\cL)\right] = 0.\]
    The same statement holds for $\cL'$, and combining this with
    Lemma \ref{lem:ExpVal} concludes the proof.
\end{proof}

\subsection{Concluding the proof}\label{sec:proofconclusion}

We now conclude the proof of Theorem \ref{thm:universality_poly_amp} on the
universality of polynomial AMP for general Wigner matrices $\bW$.

\begin{proof}[Proof of Theorem \ref{thm:universality_poly_amp}]
Let $\bW$ be the given Wigner matrix, and let $\bW' \sim \GOE(n)$.
Let $\bz_{1:T}$ and $\bz_{1:T}'$ denote the iterates of the AMP
algorithm (\ref{eq:AMP}) applied with $\bW$ and $\bW'$.

By assumption, $\cP=\{f_0,f_1,\ldots,f_{T-1},\phi_1,\phi_2\}$ admit
representations (\ref{eq:tensorpolyrepr}) by a set of tensors $\cT$ satisfying
the BCP. Lemma \ref{lem:BCP_SEbound} then ensures that $|b_{ts}|$ are uniformly
bounded for all $1 \leq s<t \leq T$, so Lemma \ref{lem:tenUnroll} yields
representations of the test function values
\[\phi(\bz_{1:T})=\sum_{m=1}^M \frac{a_m}{n}\,\val_{G_m}(\cL_m),
\qquad \phi(\bz_{1:T}')=\sum_{m=1}^M \frac{a_m}{n}\,\val_{G_m}(\cL_m')\]
where $|a_m|<C$ for each $m=1,\ldots,M$, and $C,M>0$ are constants independent
of $n$. By Lemma \ref{lem:ConcValBCP}, for each fixed $m=1,\ldots,M$,
almost surely
\[\lim_{n \to \infty}
\frac{1}{n}\,\val_{G_m}(\cL_m)-\frac{1}{n}\,\val_{G_m}(\cL_m')=0.\]
Thus, almost surely
$\lim_{n \to \infty} \phi(\bz_{1:t})-\phi(\bz_{1:t}')=0$.
The theorem follows from this and the statement
$\lim_{n \to \infty} \phi(\bz_{1:t}')-\E \phi(\bZ_{1:t})=0$
for the iterates driven by $\bW' \sim \GOE(n)$,
as already shown in Appendix~\ref{sec:StrongSE}.
\end{proof}

In settings where the condition $\lambda_{\min}(\bSigma_t)>c$ of Theorem
\ref{thm:universality_poly_amp} may not hold, let us establish the following
corollary showing that the theorem holds for a random Gaussian
perturbation of the functions $f_0,f_1,\ldots,f_{T-1}$.

\begin{corollary}\label{cor:universality_perturbed}
Fix any $T \geq 1$, and let $\cP=\{f_0,f_1,\ldots,f_{T-1},\phi_1,\phi_2\}$
and $\bW$ satisfy all assumptions of Theorem \ref{thm:universality_poly_amp}
except possibly the condition $\lambda_{\min}(\bSigma_t)>c$ for each
$t=1,\ldots,T$.

Let $\bxi_1,\ldots,\bxi_T \in \RR^n$
be random vectors with i.i.d.\ $\cN(0,1)$ entries,
independent of each other and of $\bW$. Fix any $\delta>0$, and
consider the perturbed algorithm
\begin{equation}\label{eq:AMPperturbed}
\bz_t^\delta=\bW\bu_t^\delta-\sum_{s=1}^{t-1} b_{ts}^\delta \bu_s^\delta,
\qquad \bu_{t+1}^\delta=
f_t^\delta(\bz_1^\delta,\ldots,\bz_t^\delta) 
\equiv f_t(\bz_1^\delta,\ldots,\bz_t^\delta)+\delta \bxi_{t+1}
\end{equation}
with initialization
\[f_0^\delta(\cdot) \equiv \bu_1^\delta=\bu_1+\delta \bxi_1.\]
Here, we define $b_{ts}^\delta$, $\bSigma_t^\delta$, and $\bZ_t^\delta$ as in
Definition \ref{def:non_asymp_se} for the function
$f_0^\delta,\ldots,f_{T-1}^\delta$,
with all expectations taken conditional on the realization of $\bxi_{1:T}$.
Then for the test function $\phi=n^{-1}\phi_1^\top \phi_2$, almost surely
\begin{equation}\label{eq:perturbedpolySE}
\lim_{n \to \infty} \phi(\bz_{1:T}^\delta)-\E[\phi(\bZ_{1:T}^\delta)
\mid \bxi_{1:T}]=0.
\end{equation}
\end{corollary}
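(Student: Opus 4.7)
The plan is to apply Theorem~\ref{thm:universality_poly_amp} conditionally on the realization of $\bxi_1,\ldots,\bxi_T$, after verifying its two substantive hypotheses for the perturbed algorithm: (i) that $\cP^\delta=\{f_0^\delta,\ldots,f_{T-1}^\delta,\phi_1,\phi_2\}$ is BCP-representable, and (ii) that $\lambda_{\min}(\bSigma_t^\delta)>c$ for some $c>0$ independent of $n$, both holding almost surely with respect to $\{\bxi_{1:T}(n)\}_{n=1}^\infty$.

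For (i), let $\cT$ be a BCP-satisfying tensor set representing $\cP$. Viewing each $\bxi_{t+1}$ as an order-$1$ tensor, Corollary~\ref{cor:BCPgaussian} shows that $\cT\cup\{\bxi_1,\ldots,\bxi_T\}$ satisfies the BCP almost surely. The perturbed function $f_t^\delta$ then admits the same tensor representation (\ref{eq:tensorpolyrepr}) as $f_t$ except with constant term $\bT^{(0)}+\delta\,\bxi_{t+1}$ in place of $\bT^{(0)}$; by Lemma~\ref{lemma:BCPcontraction}(a) together with multilinear expansion of the sum in (\ref{eq:BCPcondition}), adjoining these shifted vectors to the tensor collection preserves the BCP, so $\cP^\delta$ is BCP-representable.

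For (ii), the key observation is that, since $\bxi_{t+1}$ is an independent standard Gaussian vector added to $f_t$, the perturbation contributes an isotropic $\delta^2\,\Id$ component to the state evolution covariance. Define the \emph{unconditional} covariance $\bar\bSigma_t^\delta$ by running Definition~\ref{def:non_asymp_se} on the perturbed functions while also integrating out $\bxi_{1:T}$, with auxiliary Gaussian iterates $\bar\bZ_{1:t}^\delta\sim\cN(0,\bar\bSigma_t^\delta\otimes\Id_n)$ drawn independent of $\bxi_{1:T}$. Expanding $f_r^\delta=f_r+\delta\,\bxi_{r+1}$ and using that $\bxi_{r+1},\bxi_{s+1}$ are independent of $\bar\bZ$ and of each other, the cross terms vanish and one obtains $\bar\bSigma_t^\delta=\bar\bSigma_t^f+\delta^2\,\Id_t$, where
\[
\bar\bSigma_t^f[r{+}1,s{+}1]=\frac{1}{n}\,\E\bigl[f_r(\bar\bZ_{1:r}^\delta)^\top f_s(\bar\bZ_{1:s}^\delta)\bigr]
\]
is a Gram matrix and hence positive semidefinite; in particular $\lambda_{\min}(\bar\bSigma_t^\delta)\ge\delta^2$. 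Next, by induction on $t$, I would show the entrywise concentration $\bSigma_t^\delta=\bar\bSigma_t^\delta+\Oprec(n^{-1/2})$: every entry of $\bSigma_t^\delta$ unrolls into a polynomial in $\bxi_{1:T}$ whose coefficients are BCP tensor contractions (in the spirit of the proof of Lemma~\ref{lem:BCP_SEbound}), so Lemma~\ref{lem:GauPolyVar} together with Gaussian hypercontractivity give concentration around the $\bxi$-mean $\bar\bSigma_t^\delta$. Consequently $\lambda_{\min}(\bSigma_t^\delta)\ge\delta^2/2$ almost surely for all large $n$.

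With (i) and (ii) established, Theorem~\ref{thm:universality_poly_amp} applies conditionally on $\bxi_{1:T}$ to the perturbed algorithm, yielding (\ref{eq:perturbedpolySE}) almost surely under $\bW$ for almost every realization of $\bxi_{1:T}$; Fubini's theorem upgrades this to almost-sure convergence under the joint law. The main technical obstacle is the inductive propagation in step (ii): the Gaussian vectors $\bZ_{1:t}^\delta$ entering the definition of $\bSigma_{t+1}^\delta$ already have a $\bxi$-dependent covariance $\bSigma_t^\delta$, so the decomposition $\bar\bSigma_t^\delta=\bar\bSigma_t^f+\delta^2\,\Id$ and the concentration $\bSigma_t^\delta\to\bar\bSigma_t^\delta$ must be advanced in tandem, and the polynomial-in-$\bxi$ expansion of each entry of $\bSigma_t^\delta$ must be arranged so that the BCP variance bound of Lemma~\ref{lem:GauPolyVar} applies at every level of the induction.
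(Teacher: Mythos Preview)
Your step (i) matches the paper.

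Step (ii) has a gap. You define $\bar\bSigma_t^\delta$ via a separate \emph{unconditional} state-evolution recursion (with $\bar\bZ_{1:t}^\delta$ independent of $\bxi$), for which the clean decomposition $\bar\bSigma_t^\delta=\bar\bSigma_t^f+\delta^2\Id$ indeed holds. You then assert that hypercontractivity concentrates $\bSigma_t^\delta$ ``around the $\bxi$-mean $\bar\bSigma_t^\delta$''---but these are different objects. In general $\E_{\bxi}[\bSigma_t^\delta]\neq\bar\bSigma_t^\delta$, because the map $\bSigma_t^\delta\mapsto\bSigma_{t+1}^\delta$ is nonlinear in the covariance (polynomial moments of $\bZ^\delta$ depend polynomially on the entries of $\bSigma_t^\delta$), so averaging $\bxi$ \emph{after} running the conditional recursion is not the same as building the averaging \emph{into} the recursion. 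Hypercontractivity would concentrate $\bSigma_t^\delta$ around its true $\bxi$-mean, and you still owe a separate continuity argument linking that mean to your $\bar\bSigma_t^\delta$. Your final paragraph acknowledges exactly this coupling issue, but the sketch does not resolve it.

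The paper sidesteps the issue by a different conditioning. At the inductive step it sets $\bar\bSigma_{t+1}^\delta:=\E[\bSigma_{t+1}^\delta\mid\bxi_{1:t}]$, averaging only over the \emph{newest} noise $\bxi_{t+1}$. The difference $\bSigma_{t+1}^\delta-\bar\bSigma_{t+1}^\delta$ then consists solely of terms linear or quadratic in $\bxi_{t+1}$ (cross terms $\delta n^{-1}\bxi_{t+1}^\top\E[f_s^\delta(\bZ^\delta)\mid\bxi_{1:t}]$ and $\delta^2(n^{-1}\|\bxi_{t+1}\|^2-1)$), and these vanish almost surely by elementary Gaussian tail bounds once Lemma~\ref{lem:BCP_SEbound} bounds $n^{-1}\E\|f_s^\delta\|^2$. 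The cost is that one loses the global $+\delta^2\Id$ structure; instead $\bar\bSigma_{t+1}^\delta=\bA_{t+1}+\diag(0,\ldots,0,\delta^2)$ where $\bA_{t+1}$ is the conditional Gram matrix of $(f_0^\delta,\ldots,f_{t-1}^\delta,f_t)$. The eigenvalue lower bound is then obtained by a direct quadratic-form argument: for a unit vector $\bw_{t+1}=(\bw_t,w)$, use $\delta^2 w^2$ when $|w|$ is large, and when $|w|$ is small use the inductive hypothesis $\lambda_{\min}(\bSigma_t^\delta)>c_t$ on the leading block together with a BCP bound on the last row/column of $\bA_{t+1}$.
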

\begin{proof}
The corollary follows directly from
Theorem \ref{thm:universality_poly_amp} upon checking that the
perturbed functions $\{f_0^\delta,\ldots,f_{T-1}^\delta,\phi_1,\phi_2\}$
are BCP-representable almost surely, and that
$\lambda_{\min}(\bSigma_t^\delta)>c$ for a constant $c>0$ and each
$t=1,\ldots,T$ almost surely for all large $n$.

For BCP-representability, note that $\{f_0,\ldots,f_{T-1},\phi_1,\phi_2\}$ 
must admit the representations (\ref{eq:tensorpolyrepr}) for a set of tensors
$\cT$ satisfying the BCP that has finite cardinality independent of $n$.
Then $\{f_0^\delta,\ldots,f_{T-1}^\delta,\phi_1,\phi_2\}$
admit the representations (\ref{eq:tensorpolyrepr}) for the set of tensors
$\cT \cup \{\delta\bxi_1,\ldots,\delta\bxi_T\}$. By Lemma
\ref{lemma:BCPcontraction} and Corollary
\ref{cor:BCPgaussian}, this set satisfies the BCP
almost surely with respect to $\bxi_1,\ldots,\bxi_T$. Thus
$\{f_0,\ldots,f_{T-1},\phi_1,\phi_2\}$ is almost surely BCP-representable.

To check that
$\lambda_{\min}(\bSigma_t^\delta)>c$ for each $t=1,\ldots,T$, we induct on $t$.
The state evolution covariances $\{\bSigma_t^\delta\}_{t=1}^T$
are defined conditionally on $\bxi_{1:T}$ by
\begin{align}
\bSigma_1^\delta&=\frac{1}{n}\|\bu_1^\delta\|_2^2
=\frac{1}{n}\|\bu_1+\delta \bxi_1\|_2^2,\\
\bSigma_{t+1}^\delta[r+1,s+1]&=\frac{1}{n}\,
\E[f_r^\delta(\bZ_{1:r}^\delta)^\top f_s^\delta(\bZ_{1:s}^\delta)
\mid \bxi_{1:(t+1)}]\\
&=\frac{1}{n}\,\E[(f_r(\bZ_{1:r}^\delta)+\delta\bxi_{r+1})^\top
(f_s(\bZ_{1:s}^\delta)+\delta \bxi_{s+1}) \mid \bxi_{1:(t+1)}]
 \text{ for } r,s=0,\ldots,t,
\end{align}
where $\bZ_{1:t}^\delta$ has i.i.d.\ rows with law $\cN(0,\bSigma_t^\delta)$
and $\bSigma_t^\delta$ depends on $\bxi_{1:t}$.
For the base case of $t=1$, writing
$\bar \bSigma_1^\delta=\E \bSigma_1^\delta=n^{-1}\|\bu_1\|_2^2+\delta^2$,
we have
\[\|\bSigma_1^\delta-\bar\bSigma_1^\delta\|_\op
\leq \frac{2\delta}{n}\,|\bu_1^\top \bxi_1-\E \bu_1^\top \bxi_1|
+\frac{\delta^2}{n}\,\big|\|\bxi_1\|_2^2-\E\|\bxi_1\|_2^2\big|\]
Since $n^{-1}\|\bu_1\|_2^2=\bSigma_1<C$ for all large $n$, this implies
$\lim_{n \to \infty} \|\bSigma_1^\delta-\bar\bSigma_1^\delta\|_\op=0$ a.s.\ by
a standard tail bound for $\bxi_1$. Then since
$\lambda_{\min}(\bar\bSigma_1^\delta) \geq \delta^2$, 
we have $\lambda_{\min}(\bSigma_1^\delta)>\delta^2/2$ a.s.\ for all large $n$.

Now suppose inductively that $\lambda_{\min}(\bSigma_t^\delta)>c$ 
for some $t \leq T-1$ a.s.\ for all large $n$. Define
$\bar \bSigma_{t+1}^\delta=\E[\bSigma_{t+1}^\delta \mid \bxi_{1:t}]$ with
expectation over only $\bxi_{t+1}$. Then observe that
\begin{align}
&\bSigma_{t+1}^\delta[r+1,s+1]-\bar \bSigma_{t+1}^\delta[r+1,s+1]\\
&=
\begin{cases} 0 & \text{ if } r,s \leq t-1 \\
\frac{\delta}{n}\bxi_{t+1}^\top \E[f_s^\delta(\bZ_{1:s}^\delta) \mid
\bxi_{1:(s+1)}] &
\text{ if } r=t \text{ and } s \leq t-1\\
\frac{\delta}{n} \E[f_r^\delta(\bZ_{1:r}^\delta) \mid \bxi_{1:(r+1)}]^\top \bxi_{t+1}
 & \text{ if } r \leq t-1 \text{ and } s=t\\
\frac{2\delta}{n}\E[f_t(\bZ_{1:t}^\delta) \mid \bxi_{1:t}]^\top
\bxi_{t+1}+\delta^2(\frac{1}{n}\|\bxi_{t+1}\|_2^2-1)
& \text{ if } r=s=t.
\end{cases}
\end{align}
Since $\{f_0^\delta,\ldots,f_{t-1}^\delta,f_t\}$ is
BCP-representable a.s.\ for all large $n$,
we have by Lemma \ref{lem:BCP_SEbound} that
for a constant $C>0$, a.s.\ for all large $n$,
$n^{-1}\E[\|f_s^\delta(\bZ_{1:s}^\delta)\|_2^2 \mid \bxi_{1:(s+1)}]<C$
for each $s=0,\ldots,t-1$ and
$n^{-1}\E[\|f_t(\bZ_{1:t}^\delta)\|_2^2 \mid \bxi_{1:t}]<C$.
Then a standard tail bound for $\bxi_{t+1}$ implies again that
\begin{equation}\label{eq:barSigmaconc}
\lim_{n \to \infty}
\|\bSigma_{t+1}^\delta-\bar \bSigma_{t+1}^\delta\|_\op=0 \text{ a.s.}
\end{equation}
To analyze $\bar\bSigma_{t+1}^\delta$, observe that
\[\bar \bSigma_{t+1}^\delta=\underbrace{\begin{pmatrix}
\bSigma_t^\delta & \bv_t \\
\bv_t^\top & \sigma_t^2
\end{pmatrix}}_{:=\bA_{t+1}} + \begin{pmatrix} 0 & 0 \\
0 & \delta^2 \end{pmatrix}\]
where
\[\bv_t=\Big(n^{-1}\E[f_s^\delta(\bZ_{1:s}^\delta)^\top
f_t(\bZ_{1:t}^\delta) \mid \bxi_{1:t}]\Big)_{s=0}^{t-1},
\quad \sigma_t^2=n^{-1}\E[\|f_t(\bZ_{1:t}^\delta)\|_2^2 \mid \bxi_{1:t}].\]
Applying again the above bounds
$n^{-1}\E[\|f_s^\delta(\bZ_{1:s}^\delta)\|_2^2 \mid \bxi_{1:(s+1)}]<C$ 
and $n^{-1}\E[\|f_t(\bZ_{1:t}^\delta)\|_2^2 \mid \bxi_{1:t}]<C$ 
a.s.\ for all large $n$, we have for a constant $C_t>0$ that
\[\|\bv_t\|_2<C_t.\]
Observe that $\bA_{t+1}$ is the conditional covariance
of $(f_0^\delta,\ldots,f_{t-1}^\delta,f_t)$, and hence is positive semidefinite.
Furthermore, by the inductive hypothesis, there is a constant $c_t>0$ for which
$\lambda_{\min}(\bSigma_t^\delta)>c_t$ a.s.\ for all large $n$.
Consider any unit vector $\bw_{t+1}=(\bw_t,w) \in \RR^{t+1}$.
If $|w|>\min(c_t/(8C_t),1/2)$ then let us
lower-bound $\bw_{t+1}^\top \bar\bSigma_{t+1}^\delta \bw_{t+1} \geq
\delta^2w^2$. If $|w| \leq \min(c_t/(8C_t),1/2)$, then let us bound
\begin{align}
\bw_{t+1}^\top \bar\bSigma_{t+1} \bw_{t+1}
&\geq \bw_{t+1}^\top \bA_{t+1} \bw_{t+1}
\geq \bw_t^\top \bSigma_t^\delta
\bw_t-2|w \cdot \bv_t^\top \bw_t|\\
&\geq c_t(1-w^2)-2C_t|w|\sqrt{1-w^2}
\geq 3c_t/4-2C_t|w| \geq c_t/2.
\end{align}
Combining these cases, $\bw_{t+1}^\top \bar\bSigma_{t+1}^\delta \bw_{t+1}
\geq c'$ for all unit vectors $\bw_{t+1}$ and some constant $c'>0$. Thus
$\lambda_{\min}(\bar\bSigma_{t+1}^\delta)>c'$ a.s.\ for all large $n$,
completing the induction and the proof.
\end{proof}

\section{Polynomial approximation}\label{sec:univ_poly_approx}

In this appendix, we prove \Cref{thm:main_universality} and
Corollary \ref{cor:equivalent_SE} on the universality of
AMP algorithms with BCP-approximable Lipschitz functions, using a polynomial
approximation argument.

Under the condition (\ref{eq:Lipschitz}) for $f_0,\ldots,f_{T-1}$ and Definition
\ref{def:non_asymp_se} for $\bSigma_t$, there exists a constant $C_0>0$
(depending on $T$ and $L$) for which
\begin{equation}\label{eq:C0def}
\|\bSigma_t\|_\op+1<C_0
\end{equation}
for all $t=1,\ldots,T$. Fixing this $C_0>0$ and
any small constant $\epsilon>0$, let $\cP=\bigsqcup_{t=0}^T \cP_t$ 
and $\cQ=\bigsqcup_{t=0}^T \cQ_t$ be the sets of polynomial functions given in
Definition \ref{def:BCP_approx} for BCP-approximability.
We introduce random vectors
$\bxi_1,\ldots,\bxi_T \in \RR^n$ having i.i.d. $\cN(0,1)$ entries independent of
each other and of $\bW$, and define an auxiliary AMP algorithm
\begin{align}
\begin{aligned}\label{eq:auxiliary_amp}
    \tilde\bz_t &= \bW \tilde\bu_t - \sum_{s=1}^{t-1} \tilde b_{ts} \tilde\bu_s,
\qquad 
    \tilde\bu_{t+1}=p_t^\epsilon(\tilde\bz_1,\ldots,\tilde\bz_t)
\equiv p_t(\tilde\bz_1,\ldots,\tilde\bz_t)+\epsilon \bxi_{t+1}
\end{aligned}
\end{align}
with initialization
\[\tilde \bu_1=p_0^\epsilon(\cdot) \equiv p_0(\cdot)+\epsilon \bxi_1,
\qquad \widetilde \bSigma_1=n^{-1}\|\tilde \bu_1\|_2^2.\]
Throughout this section, we will condition on a realization of
$\bxi_{1:T} \equiv \bxi_{1:T}(n)$ and establish statements which hold
almost surely over $\{\bxi_{1:T}(n)\}_{n=1}^\infty$.
The above coefficients $\tilde b_{ts}$ and polynomial
functions $p_t \in \cP_t$ are defined as follows:
\begin{enumerate}
\item Given $\widetilde\bSigma_t$ (defined conditionally on $\bxi_{1:T}$), let
$\widetilde\bZ_{1:t} \sim \cN(0,\widetilde\bSigma_t \otimes \Id_n)$,
and let $p_t \in \cP_t$ be a polynomial function such that 
\begin{equation}\label{eq:poly_approx_assumption}
\frac{1}{n}\,\EE[\|f_t(\widetilde\bZ)-p_t(\widetilde \bZ)\|_2^2 \mid
\bxi_{1:T}]<\epsilon \text{ a.s.\ for all large } n.
\end{equation}
(For $t=0$, this is a constant vector $p_0 \in \cP_0$ for which
$n^{-1}\|f_0-p_0\|_2^2<\epsilon$.)
For sufficiently small $\epsilon>0$,
Lemma \ref{lem:auxiliary_amp} below implies inductively that $\|\widetilde
\bSigma_t\|_\op<\|\bSigma_t\|_\op+\iota(\epsilon)<C_0$
a.s.\ for all large $n$, so such a polynomial
$p_t \in \cP_t$ exists a.s.\ for all large $n$ by \Cref{def:BCP_approx}. 
If $f_t(\bz_{1:t})$ depends only on the preceding iterates $\{\bz_s:s \in S_t\}$
for a subset $S_t \subset \{1,\ldots,t\}$, then Definition \ref{def:BCP_approx} 
guarantees that so does $p_t(\bz_{1:t})$. We set
\[p_t^\epsilon(\cdot)=p_t(\cdot)+\epsilon \bxi_{t+1}.\]
Note that since $\lim_{n \to \infty} n^{-1}\|\bxi_{t+1}\|_2^2=1$,
(\ref{eq:poly_approx_assumption}) implies also
\begin{equation}\label{eq:poly_approx_perturbed}
\frac{1}{n}\,\EE[\|f_t(\widetilde\bZ)-p_t^\epsilon(\widetilde\bZ)\|_2^2
\mid \bxi_{1:T}]<2(\epsilon+\epsilon^2) \text{ a.s.\ for all large } n.
\end{equation}
\item Then given $\widetilde\bSigma_t$ and
$p_1^\epsilon,\ldots,p_t^\epsilon$, define
$\{\tilde b_{t+1,s}\}_{s \leq t}$ in (\ref{eq:auxiliary_amp})
and $\widetilde\bSigma_{t+1} \in \RR^{(t+1) \times (t+1)}$
as in \Cref{def:non_asymp_se} by
\[\tilde b_{t+1,s}=\frac{1}{n}\E[\div\nolimits_s p_t^\epsilon(\tilde\bZ)
\mid \bxi_{1:T}], \quad \widetilde\bSigma_{t+1}[r+1,s+1]
=\frac{1}{n}\E[p_r^\epsilon(\widetilde\bZ)^\top
p_s^\epsilon(\widetilde\bZ) \mid \bxi_{1:T}].\]
\end{enumerate}

The following lemma shows that the iterates of this auxiliary AMP algorithm are 
well-defined and close to the original iterates.

\begin{lemma}\label{lem:auxiliary_amp}
Suppose the conditions of Theorem \ref{thm:main_universality} hold.
Then there are constants $C>0$ and $\iota(\epsilon)>0$ satisfying
$\iota(\epsilon) \to 0$ as $\epsilon\to 0$ such that
for the auxiliary AMP algorithm \eqref{eq:auxiliary_amp} defined with any
$\epsilon>0$ sufficiently small, for each $t=1,\ldots,T$,
almost surely for all large $n$,
\[\|\bSigma_t-\widetilde\bSigma_t\|_{\op} < \iota(\epsilon),
\qquad
\frac{1}{\sqrt{n}} \|\bz_t-\tilde\bz_t\|_2 
< \iota(\epsilon), \qquad \frac{1}{\sqrt{n}} \|\bz_t\|_2 < C.\]
\end{lemma}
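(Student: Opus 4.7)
The plan is to establish the three claims simultaneously by induction on $t=1,\ldots,T$, writing $\iota(\epsilon)$ for a generic constant (possibly changing from line to line) satisfying $\iota(\epsilon)\to 0$ as $\epsilon\to 0$. I would handle the base case $t=1$ directly: the decomposition $\bu_1-\tilde\bu_1=(\bu_1-p_0)-\epsilon\bxi_1$ together with condition~(1) of Definition~\ref{def:BCP_approx} and the almost-sure limit $n^{-1}\|\bxi_1\|_2^2\to 1$ gives $n^{-1/2}\|\bu_1-\tilde\bu_1\|_2<\iota(\epsilon)$; the covariance difference $|\bSigma_1-\widetilde\bSigma_1|=n^{-1}\big|\|\bu_1\|_2^2-\|\tilde\bu_1\|_2^2\big|$ is then bounded by $\iota(\epsilon)$ using the Lipschitz bound $\|\bu_1\|_2\le L\sqrt n$; and the inequality $n^{-1/2}\|\bz_1-\tilde\bz_1\|_2\le\|\bW\|_\op\cdot n^{-1/2}\|\bu_1-\tilde\bu_1\|_2$ together with the standard almost-sure Wigner bound $\|\bW\|_\op<C$ gives claims (b) and (c) at $t=1$.

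For the inductive step, claim (c) for $t+1$ will follow from the Lipschitz assumption \eqref{eq:Lipschitz}, which gives $\|\bu_{t+1}\|_2\le L\sqrt n+L\|\bz_{1:t}\|_\Fro$, combined with $\|\bW\|_\op<C$ and a uniform bound $|b_{t+1,s}|<C$. I expect the latter bound to follow from Stein's lemma restricted to the coordinates of $S_t$: writing $n^{-1}\E[\div_s f_t(\bZ_{1:t})]=\sum_{r\in S_t}(\bSigma_t[S_t,S_t]^{-1})[s,r]\cdot n^{-1}\E[\bZ_r^\top f_t(\bZ_{1:t})]$, the prefactors are bounded by $1/c$ via $\lambda_{\min}(\bSigma_t[S_t,S_t])>c$, and each correlation is controlled by Cauchy--Schwarz and Lipschitz continuity. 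Claim~(a) for $t+1$ I would prove by coupling $\bZ_{1:t}\sim\cN(0,\bSigma_t\otimes\Id_n)$ with $\widetilde\bZ_{1:t}\sim\cN(0,\widetilde\bSigma_t\otimes\Id_n)$ so that their difference has Frobenius norm $O(\sqrt n\cdot\iota(\epsilon))$; telescoping $f_r(\bZ)-p_r^\epsilon(\widetilde\bZ)=[f_r(\bZ)-f_r(\widetilde\bZ)]+[f_r(\widetilde\bZ)-p_r^\epsilon(\widetilde\bZ)]$ and applying Lipschitz continuity to the first bracket and the polynomial approximation \eqref{eq:poly_approx_perturbed} to the second will then bound each entry of $\bSigma_{t+1}-\widetilde\bSigma_{t+1}$ by $\iota(\epsilon)$. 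The same coupling argument, applied to Stein representations of the two Onsager coefficients, should also produce $|b_{t+1,s}-\tilde b_{t+1,s}|<\iota(\epsilon)$.

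The hard part is claim~(b) for $t+1$. From the AMP identity
\[\bz_{t+1}-\tilde\bz_{t+1}=\bW(\bu_{t+1}-\tilde\bu_{t+1})-\sum_{s\le t}\big[b_{t+1,s}(\bu_s-\tilde\bu_s)+(b_{t+1,s}-\tilde b_{t+1,s})\tilde\bu_s\big],\]
the Onsager differences are controlled by the inductive hypotheses and the bound on $|b_{t+1,s}-\tilde b_{t+1,s}|$ obtained above, so the task reduces to controlling $\|\bu_{t+1}-\tilde\bu_{t+1}\|_2$. I would decompose $\bu_{t+1}-\tilde\bu_{t+1}=[f_t(\bz_{1:t})-f_t(\tilde\bz_{1:t})]+[f_t(\tilde\bz_{1:t})-p_t(\tilde\bz_{1:t})]-\epsilon\bxi_{t+1}$; the first bracket is handled by Lipschitz continuity and the inductive hypothesis (b), but the second bracket is the central obstacle, since it requires comparing $f_t$ and $p_t$ on the \emph{actual} random iterates $\tilde\bz_{1:t}$ rather than on Gaussian state evolution vectors. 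Condition~(2) of Definition~\ref{def:BCP_approx} is engineered precisely for this situation: applying Corollary~\ref{cor:universality_perturbed} to the auxiliary polynomial AMP \eqref{eq:auxiliary_amp} --- whose driving functions $p_t^\epsilon$ lie in the BCP-representable set $\cP\cup\{\epsilon\bxi_1,\ldots,\epsilon\bxi_T\}$ (BCP-representable almost surely by Corollary~\ref{cor:BCPgaussian}) --- will yield the almost-sure moment convergence \eqref{eq:momentconvergence} for $\tilde\bz_{1:t}$ against $\cN(0,\widetilde\bSigma_t\otimes\Id_n)$ for any $q_1,q_2\in\cQ_t$ of bounded degree; condition~(2) of Definition~\ref{def:BCP_approx} then upgrades this to $\limsup_n\frac{1}{n}\|f_t(\tilde\bz_{1:t})-p_t(\tilde\bz_{1:t})\|_2^2<\epsilon$ almost surely, which combined with the earlier estimates closes the induction.
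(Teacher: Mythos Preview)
Your proposal is correct and follows essentially the same approach as the paper's proof: induction on $t$ with the same four-part inductive hypothesis (bounds on $\|\bu_t-\tilde\bu_t\|_2$, $|b_{ts}-\tilde b_{ts}|$, $\|\bz_t-\tilde\bz_t\|_2$, and $\|\bSigma_t-\widetilde\bSigma_t\|_\op$), the same Gaussian coupling for the state evolution covariances, Stein's lemma restricted to $S_t$ for the Onsager comparison, and---crucially---the same invocation of Corollary~\ref{cor:universality_perturbed} on the perturbed polynomial AMP to obtain the moment convergence \eqref{eq:momentconvergence} that triggers condition~(2) of Definition~\ref{def:BCP_approx}. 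Your identification of $\|f_t(\tilde\bz_{1:t})-p_t(\tilde\bz_{1:t})\|_2$ as the key obstacle and your mechanism for handling it match the paper exactly.
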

\begin{proof}
We prove by induction on $t$ the following claims, for constants $C>0$ and
$\iota(\epsilon)>0$ 
satisfying $\iota(\epsilon)\to0$ as $\epsilon\to0$:
\begin{enumerate}
\item $\frac{1}{\sqrt{n}}\|\bu_t-\tilde\bu_t\|_2<\iota(\epsilon)$ and
$\frac{1}{\sqrt{n}}\|\bu_t\|_2<C$ almost surely for all large
$n$;
\item $\max_{s=1}^{t-1} |b_{ts}-\tilde b_{ts}|<\iota(\epsilon)$;
\item $\frac{1}{\sqrt{n}}\|\bz_t-\tilde\bz_t\|_2<\iota(\epsilon)$
and $\frac{1}{\sqrt{n}} \|\bz_t\|_2 < C$ almost surely for all large $n$;
\item $\|\bSigma_t-\widetilde\bSigma_t\|_\op<\iota(\epsilon)$.
\end{enumerate}
For the base case $t=1$, (1) holds by the bounds
$n^{-1}\|\bu_1\|_2^2=\|\bSigma_1\|_\op<C_0$,
$n^{-1}\|\bu_1-\tilde\bu_1\|_2^2
\leq 2n^{-1}\|p_0-f_0\|_2^2+2\epsilon^2n^{-1}\|\bxi_1\|_2^2$,
and a standard chi-squared tail bound for $\|\bxi_1\|_2^2$.
(2) is vacuous. Since $\bz_1=\bW\bu_1$ and $\tilde \bz_1=\bW\tilde\bu_1$,
(3) holds by (1) and the operator norm bound $\|\bW\|_\op<3$
a.s.\ for all large $n$. (4) holds by (1) and the definitions 
$\bSigma_1=n^{-1}\|\bu_1\|_2^2$ and $\widetilde\bSigma_1=n^{-1}\|\tilde
\bu_1\|_2^2$.

Now suppose inductively that statements (1--4) all hold for $1,\ldots,t$, where
$t \leq T-1$. We
write $C>0$ and $\iota(\epsilon)>0$ for constants changing from instance to
instance, where $\iota(\epsilon) \to 0$ as $\epsilon \to 0$.
To check (1) for iteration $t+1$, observe from the 
definition of $\bu_{t+1}$ and $\tilde\bu_{t+1}$ that
\begin{equation}\label{eq:utildeucompare}
    \frac{1}{\sqrt{n}}\|\bu_{t+1}-\tilde\bu_{t+1}\|_2 
    \leq \frac{1}{\sqrt{n}} \|f_t(\bz_{1:t}) - f_t(\tilde\bz_{1:t})\|_2
    + \frac{1}{\sqrt{n}} \|f_t(\tilde\bz_{1:t}) - p_t^\epsilon(\tilde\bz_{1:t})\|_2.
\end{equation}
The first term of (\ref{eq:utildeucompare})
is at most $\iota(\epsilon)$ a.s.\ for all large $n$
by the Lipschitz condition (\ref{eq:Lipschitz}) and the induction hypothesis.
For the second term, note that for any $q_1,q_2 \in \cQ_t$ with degrees bounded
independently of $n$, Definition \ref{def:BCP_approx} ensures that
$\{p_0,\ldots,p_t,q_1,q_2\}$ is BCP-representable.
Then by Corollary \ref{cor:universality_perturbed},
\[\lim_{n \to \infty}
\frac{1}{n}q_1(\tilde\bz_{1:t})^\top q_2(\tilde\bz_{1:t})
-\frac{1}{n}\E[q_1(\widetilde\bZ_{1:t})^\top q_2(\widetilde\bZ_{1:t}) \mid
\bxi_{1:T}]=0 \text{ a.s.}\]
Then condition (2) of Definition \ref{def:BCP_approx} further ensures that
\[\limsup_{n \to \infty}
\frac{1}{n}\|f_t(\tilde\bz_{1:t})-p_t(\tilde\bz_{1:t})\|_2^2
<\epsilon\text{ a.s.,}\]
so (\ref{eq:poly_approx_assumption}) and the statement
$n^{-1}\|p_t^\epsilon(\tilde\bz_{1:t})-p_t(\tilde\bz_{1:t})\|_2^2<\iota(\epsilon)$
a.s.\ for all large $n$ together imply that the second
term of (\ref{eq:utildeucompare}) is at most $\iota(\epsilon)$. Thus 
$\frac{1}{\sqrt{n}}\|\bu_{t+1}-\tilde\bu_{t+1}\|_2<\iota(\epsilon)$ a.s.\ for
all large $n$. The bound
$\frac{1}{\sqrt{n}}\|\bu_{t+1}\|_2<C$ follows directly from
the Lipschitz condition (\ref{eq:Lipschitz}) and the induction hypothesis.

For (2), let $S_t \subseteq \{1,\ldots,t\}$ be the subset for which
$f_t(\bz_{1:t}) \equiv f_t(\bz_{S_t})$
and $p_t(\bz_{1:t}) \equiv p_t(\bz_{S_t})$ depend only on
$\bz_{S_t}=\{\bz_s:s \in S_t\}$. Note that for each $s \notin S_t$, we have
$b_{t+1,s}=\tilde b_{t+1,s}=0$. For $s \in S_t$, by definition we have 
\begin{align}
    (b_{t+1,s}-\tilde b_{t+1,s})_{s \in S_t}&=\bigg(\frac{1}{n} \EE[\div\nolimits_s
f_t(\bZ_{S_t})]
    - \frac{1}{n}\EE[\div\nolimits_s p_t^\epsilon(\widetilde\bZ_{S_t}) \mid
\bxi_{1:T}]\bigg)_{s \in S_t}\\
&=\frac{1}{n}\sum_{i=1}^n \bigg(\EE[\partial_{\bZ_s[i]}f_t(\bZ_{S_t})[i]]
    - \EE[\partial_{\widetilde \bZ_s[i]} p_t^\epsilon(\widetilde\bZ_{S_t})[i]
\mid \bxi_{1:T}]\bigg)_{s \in S_t}
\end{align}
For $\epsilon>0$ sufficiently small, the induction hypothesis and given
condition $\lambda_{\min}(\bSigma_t[S_t,S_t])>c$ imply that both
$\bSigma_t[S_t,S_t]$ and $\widetilde\bSigma_t[S_t,S_t]$ are non-singular
a.s.\ for all large $n$.
Then, applying Stein's lemma (Lemma \ref{lem:stein}) to each function
$f_t(\cdot)[i]$ and $p_t^\epsilon(\cdot)[i]$, we have
\begin{align}
(b_{t+1,s}-\tilde b_{t+1,s})_{s \in S_t}
    &= \frac{1}{n} \sum_{i=1}^n \bigg(\bSigma_t[S_t,S_t]^{-1}
\EE[\bZ_{S_t}[i]f_t(\bZ_{S_t})[i]]-\widetilde\bSigma_t[S_t,S_t]^{-1}
    \EE[\widetilde\bZ_{S_t}[i]p_t^\epsilon(\widetilde\bZ_{S_t})[i] \mid
\bxi_{1:T}]\bigg).
\end{align}
For $\epsilon>0$ sufficiently small, the induction hypothesis and condition
$\lambda_{\min}(\bSigma_t[S_t,S_t])>c$ imply also
$\|\bSigma_t[S_t,S_t]^{-1}-\widetilde\bSigma_t[S_t,S_t]^{-1}\|_\op<\iota(\epsilon)$,
and there exists a coupling of $\bZ_{1:t}$ (independent of $\bxi_{1:T}$)
and $\widetilde\bZ_{1:t}$ such that
$n^{-1}\E[\|\bZ_{1:t}-\widetilde\bZ_{1:t}\|_{\Fro}^2 \mid \bxi_{1:T}]<\iota(\epsilon)$ a.s.\ for
all large $n$. Then, together with the Lipschitz
condition (\ref{eq:Lipschitz}) for $f_t$, the approximation bound
(\ref{eq:poly_approx_perturbed}), and Cauchy-Schwarz, this implies
\begin{align}
&\|(b_{t+1,s}-\tilde b_{t+1,s})_{s \in S_t}\|_2\\
&\leq
\|\bSigma_t[S_t,S_t]^{-1}-\widetilde\bSigma_t[S_t,S_t]^{-1}\|_\op
\left\|\frac{1}{n} \sum_{i=1}^n 
\EE[\bZ_{S_t}[i]f_t(\bZ_{S_t})[i]]\right\|_2\\
&\hspace{0.3in}+\|\widetilde\bSigma_t[S_t,S_t]^{-1}\|_\op
\left\|\frac{1}{n} \sum_{i=1}^n \bigg(
\EE[\bZ_{S_t}[i]f_t(\bZ_{S_t})[i]]-\EE[\widetilde\bZ_{S_t}[i]f_t(\widetilde\bZ_{S_t})[i]
\mid \bxi_{1:T}]\bigg)\right\|_2\\
&\hspace{0.3in}+\|\widetilde\bSigma_t[S_t,S_t]^{-1}\|_\op
\left\|\frac{1}{n} \sum_{i=1}^n \bigg(
\EE[\widetilde\bZ_{S_t}[i]f_t(\widetilde\bZ_{S_t})[i] \mid
\bxi_{1:T}]-\EE[\widetilde\bZ_{S_t}[i]p_t^\epsilon(\widetilde\bZ_{S_t})[i] \mid
\bxi_{1:T}]\bigg)\right\|_2<\iota(\epsilon)
\end{align}
for some $\iota(\epsilon)>0$ a.s.\ for all large $n$, establishing (2).

For (3), from the definition of $\bz_{t+1}$ and $\tilde\bz_{t+1}$,
\begin{align}
    &\frac{1}{\sqrt{n}}\|\bz_{t+1}-\tilde\bz_{t+1}\|_2\\ &\leq \frac{1}{\sqrt{n}}
    \|\bW(\bu_{t+1}-\tilde\bu_{t+1})\|_2
    + \sum_{s=1}^t \Big(|b_{t+1,s}|\cdot \frac{1}{\sqrt{n}}\|\bu_s-\tilde\bu_s\|_2
    +|b_{t+1,s}-\tilde b_{t+1,s}|\cdot\frac{1}{\sqrt{n}}\|\tilde
\bu_s\|_2\Big),
\end{align}
so (3) follows from the bound $\|\bW\|_\op<3$ a.s.\ for all large $n$
and (1) and (2) already shown.

For (4), the entries of $\bSigma_{t+1}$ are given by
$n^{-1}\E[f_s(\bZ_{1:s})^\top f_r(\bZ_{1:r})]$, while those of $\widetilde
\bSigma_{t+1}$ are given by
$n^{-1}\E[p_s^\epsilon(\widetilde \bZ_{1:s})^\top p_r^\epsilon(\widetilde
\bZ_{1:r}) \mid \bxi_{1:T}]$.
The induction hypothesis implies that there exists a coupling of
$\bZ_{1:t}$ (independent of $\bxi_{1:T}$) with $\widetilde \bZ_{1:t}$ for which
$n^{-1}\E[\|\bZ_{1:t}-\widetilde\bZ_{1:t}\|_\Fro^2 \mid \bxi_{1:T}]<\iota(\epsilon)$.
Then (4) follows this coupling,
the Lipschitz condition (\ref{eq:Lipschitz}) for $f_t$, the approximation bound
(\ref{eq:poly_approx_perturbed}), and Cauchy-Schwarz, analogous to the above
argument for (2). This completes the induction.
\end{proof}

We now prove Theorem \ref{thm:main_universality} and Corollary
\ref{cor:equivalent_SE}.

\begin{proof}[Proof of \Cref{thm:main_universality}]\label{pf:main_universality}
Let $\bz_1,\ldots,\bz_T$ denote the iterates of the given AMP algorithm.
Fixing the constant $C_0>0$ satisfying (\ref{eq:C0def})
and any $\epsilon>0$ sufficiently small,
let $\tilde \bz_1,\ldots,\tilde\bz_T$
denote the iterates of the auxiliary AMP algorithm (\ref{eq:auxiliary_amp}).
We write $C>0$ and $\iota(\epsilon)>0$ for constants changing from instance to
instance, where $\iota(\epsilon)\to 0$ as $\epsilon\to0$.

We may decompose
\begin{align}
    \phi(\bz_{1:T})-\E \phi(\bZ_{1:T})
    &=[\phi(\bz_{1:T}) - \phi(\tilde\bz_{1:T})]
    +[\phi(\tilde\bz_{1:T})-\E[\phi(\widetilde\bZ_{1:T}) \mid \bxi_{1:T}]]\\
    &\hspace{0.3in}+[\E[\phi(\widetilde\bZ_{1:T}) \mid \bxi_{1:T}]-\E
\phi(\bZ_{1:T})].\label{eq:phi_W_G_decomposition}
\end{align}
For the first term of (\ref{eq:phi_W_G_decomposition}),
since both $\phi_1,\phi_2$ defining $\phi$ satisfy the Lipschitz condition
(\ref{eq:Lipschitz}), we have
\begin{align}
    &|\phi(\bz_{1:T})-\phi(\tilde\bz_{1:T})|\\
    &\leq \bigg|\frac{1}{n} \phi_1(\bz_{1:T})^\top\phi_2(\bz_{1:T})
    - \frac{1}{n} \phi_1(\tilde\bz_{1:T})^\top\phi_2(\bz_{1:T})\bigg|
    + \bigg|\frac{1}{n} \phi_1(\tilde\bz_{1:T})^\top\phi_2(\bz_{1:T})
    - \frac{1}{n} \phi_1(\tilde\bz_{1:T})^\top\phi_2(\tilde\bz_{1:T})\bigg|\\
    &\leq \frac{1}{n} \|\phi_2(\bz_{1:T})\|_2 \cdot 
    \|\phi_1(\bz_{1:T}) - \phi_1(\tilde\bz_{1:T})\|_2
    + \frac{1}{n} \|\phi_1(\tilde\bz_{1:T})\|_2 \cdot
    \|\phi_2(\bz_{1:T}) - \phi_2(\tilde\bz_{1:T})\|_2\\
    &\leq \frac{C}{n} \bigg(\sqrt{n}+\sum_{t=1}^T \|\bz_t\|_2 + \|\tilde\bz_t\|_2\bigg) 
    \bigg(\sum_{t=1}^T \|\bz_t-\tilde\bz_t\|_2\bigg)
\end{align}
for a constant $C>0$ depending on $L$.
Then by \Cref{lem:auxiliary_amp},
\begin{align}\label{eq:phi_aux_limit}
    |\phi(\bz_{1:T})-\phi(\tilde\bz_{1:T})|<\iota(\epsilon)
\text{ a.s.\ for all large } n.
\end{align}

For the second term of \eqref{eq:phi_W_G_decomposition},
let $\psi_1,\psi_2 \in \cP_T$ be the polynomials guaranteed by
\Cref{def:BCP_approx} for which
\begin{align}\label{eq:phi_poly_approx}
    \frac{1}{n}\,\EE[\|\phi_1(\widetilde\bZ_{1:T})
    -\psi_1(\widetilde\bZ_{1:T})\|_2^2 \mid \bxi_{1:T}] < \epsilon,\quad 
    \frac{1}{n}\,\EE[\|\phi_2(\widetilde\bZ_{1:T})
    -\psi_2(\widetilde\bZ_{1:T})\|_2^2 \mid \bxi_{1:T}] < \epsilon
\end{align}
almost surely for all large $n$.
Writing $\psi=n^{-1}\psi_1^\top \psi_2$, let us further decompose 
\begin{align}
    \phi(\tilde\bz_{1:T})-\E[\phi(\widetilde\bZ_{1:T}) \mid \bxi_{1:T}]
    &= [\phi(\tilde\bz_{1:T}) - \psi(\tilde\bz_{1:T})]
    +[\psi(\tilde\bz_{1:T}) - \E[\psi(\widetilde\bZ_{1:T}) \mid \bxi_{1:T}]]\\
&\hspace{0.3in}
+[\E[\psi(\widetilde\bZ_{1:T}) \mid \bxi_{1:T}]- \E[\phi(\widetilde\bZ_{1:T}) \mid
\bxi_{1:T}]].\label{eq:phi_diff}
\end{align}
For the first term of (\ref{eq:phi_diff}),
we apply the same decomposition as above to get
\begin{align}
    &|\phi(\tilde\bz_{1:T})-\psi(\tilde\bz_{1:T})|\\ &\leq 
    \bigg|\frac{1}{n}\phi_1(\tilde\bz_{1:T})^\top\phi_2(\tilde\bz_{1:T}) 
    - \frac{1}{n} \psi_1(\tilde\bz_{1:T})^\top \phi_2(\tilde\bz_{1:T})\bigg|
    + \bigg|\frac{1}{n}\psi_1(\tilde\bz_{1:T})^\top \phi_2(\tilde\bz_{1:T}) 
    - \frac{1}{n}\psi_1(\tilde\bz_{1:T})^\top \psi_2(\tilde\bz_{1:T})\bigg|\\
    &\leq \frac{1}{n} \|\phi_2(\tilde\bz_{1:T})\|_2 \cdot 
    \|\phi_1(\tilde\bz_{1:T}) - \psi_1(\tilde\bz_{1:T})\|_2
    + \frac{1}{n} \|\psi_1(\tilde\bz_{1:T})\|_2 \cdot 
    \|\phi_2(\tilde\bz_{1:T}) - \psi_2(\tilde\bz_{1:T})\|_2. 
    \label{eq:phi_approx_bound}
\end{align}
We will apply \eqref{eq:phi_poly_approx} to further bound the right side.
To do so, note that by Definition \ref{def:BCP_approx},
$\{p_0,\ldots,p_{T-1},q_1,q_2\}$ is BCP-representable
for any $q_1,q_2 \in \cQ_T$ of degrees bounded independently of $n$.
Then by Corollary \ref{cor:universality_perturbed},
\begin{align}
    \lim_{n\to\infty} \frac{1}{n} q_1(\tilde\bz_{1:T})^\top 
    q_2(\tilde\bz_{1:T})-\frac{1}{n}\EE[q_1(\widetilde\bZ_{1:T})^\top 
    q_2(\widetilde\bZ_{1:T}) \mid \bxi_{1:T}] &= 0 \text{ a.s.}
\end{align}
Then condition (2) of Definition \ref{def:BCP_approx} ensures
\begin{align}
    \limsup_{n\to\infty} \frac{1}{n} 
    \|\phi_1(\tilde\bz_{1:T})-\psi_1(\tilde\bz_{1:T})\|_2^2 
    <\epsilon \text{ a.s.},
\end{align}
and the same holds with $\phi_2,\psi_2$ in place of $\phi_1,\psi_1$.
It then follows from \eqref{eq:phi_poly_approx} that
almost surely for all large $n$,
\begin{align}\label{eq:phi_poly_approx_amp}
    \max\bigg\{\frac{1}{n} \|\phi_1(\tilde\bz_{1:T})-\psi_1(\tilde\bz_{1:T})\|_2^2, 
    \frac{1}{n} \|\phi_2(\tilde\bz_{1:T})-\psi_2(\tilde\bz_{1:T})\|_2^2\bigg\} 
    <\iota(\epsilon).
\end{align}
Moreover, $\frac{1}{\sqrt{n}}\|\phi_1(\tilde\bz_{1:T})\|_2<C$ a.s.\ for all
large $n$ by the Lipschitz property (\ref{eq:Lipschitz}) for $\phi_1$ and
\Cref{lem:auxiliary_amp}, and similarly for $\phi_2$.
Combining this with \eqref{eq:phi_poly_approx_amp}, also
$\frac{1}{\sqrt{n}}\|\psi_1(\tilde\bz_{1:T})\|_2<C$ a.s.\ for all large $n$, and
similarly for $\psi_2$.
Then, applying these bounds to \eqref{eq:phi_approx_bound},
\begin{align}\label{eq:phi_poly_approx_bound}
    |\phi(\tilde\bz_{1:T})-\psi(\tilde\bz_{1:T})|<\iota(\epsilon)
\text{  a.s.\ for all large } n.
\end{align}
For the second term of \eqref{eq:phi_diff},
we have from Corollary \ref{cor:universality_perturbed} that
$\lim_{n\to\infty} \psi(\tilde \bz_{1:T}) - \E[\psi(\tilde \bZ_{1:T}) \mid \bxi_{1:T}] = 0$.
The third term of \eqref{eq:phi_diff} is bounded via
\eqref{eq:phi_poly_approx} and an argument analogous to the preceding argument
for the first term.
Combining these bounds for the three terms of \eqref{eq:phi_diff}, we obtain
for the second term of (\ref{eq:phi_W_G_decomposition}) that
\begin{align}\label{eq:phi_diff_limit}
    |\phi(\tilde\bz_{1:T})-\E[\phi(\widetilde\bZ_{1:T}) \mid \bxi_{1:T}]|<\iota(\epsilon)
\text{ a.s.\ for all large } n.
\end{align}

Finally, for the third term of (\ref{eq:phi_W_G_decomposition}), we note that
the bound $\|\bSigma_T-\widetilde \bSigma_T\|_\op<\iota(\epsilon)$ of Lemma
\ref{lem:auxiliary_amp} implies there exists a coupling of
$\bZ_{1:T}$ (independent of $\bxi_{1:T}$) with $\widetilde \bZ_{1:T}$ such that
$n^{-1}\E[\|\bZ_{1:T}-\widetilde \bZ_{1:T}\|_\Fro^2 \mid \bxi_{1:T}]<\iota(\epsilon)$. Applying
this coupling, the Lipschitz condition (\ref{eq:Lipschitz}) for $\phi_1,\phi_2$
defining $\phi=n^{-1}\phi_1^\top\phi_2$,
and Cauchy-Schwarz, we obtain that
\begin{align}\label{eq:phi_diff_SE}
    |\E[\phi(\widetilde\bZ_{1:T}) \mid \bxi_{1:T}]-\E \phi(\bZ_{1:T})|<\iota(\epsilon)
\text{ a.s.\ for all large } n.
\end{align}

Collecting \eqref{eq:phi_W_G_decomposition}, \eqref{eq:phi_aux_limit},
\eqref{eq:phi_diff_limit}, and \eqref{eq:phi_diff_SE}, we have
\begin{align}
    |\phi(\bz_{1:T})-\E \phi(\bZ_{1:T})|<\iota(\epsilon) \text{ a.s.\ for all
large } n.
\end{align}
Since $\epsilon>0$ is arbitrary,
this implies $\lim_{n \to \infty} \phi(\bz_{1:T})-\E \phi(\bZ_{1:T})=0$
a.s.\ as desired.
\end{proof}

\begin{proof}[Proof of Corollary \ref{cor:equivalent_SE}]
Denote the AMP algorithm defined by $\{\bar b_{ts}\}$ as
\[\bar \bz_t=\bW\bar \bu_t-\sum_{s=1}^{t-1} \bar b_{ts} \bar \bu_s,
\qquad \bar \bu_{t+1}=f_t(\bar \bz_1,\ldots,\bar \bz_t),\]
with initialization $\bar\bu_1=\bu_1$. Using $\|\bW\|_\op<3$ a.s.\ for
all large $n$ and the Lipschitz condition (\ref{eq:Lipschitz}) for $f_t(\cdot)$,
a straightforward induction on $t$ (omitted for brevity) shows that for each
$t=1,\ldots,T$,
\begin{itemize}
\item $\lim_{n \to \infty} \frac{1}{\sqrt{n}}\|\bu_t-\bar\bu_t\|_2=0$ a.s.\ and
$\frac{1}{\sqrt{n}}\|\bu_t\|_2<C$ a.s.\ for all large $n$.
\item $\lim_{n \to \infty} \frac{1}{\sqrt{n}}\|\bz_t-\bar\bz_t\|_2=0$ a.s.\ and
$\frac{1}{\sqrt{n}}\|\bz_t\|_2<C$ a.s.\ for all large $n$.
\end{itemize}
Then, applying the Lipschitz condition (\ref{eq:Lipschitz}) for $\phi_1,\phi_2$
and Cauchy-Schwarz, also
$\lim_{n \to \infty} \phi(\bz_{1:T})-\phi(\bar\bz_{1:T})=0$ a.s. Letting
$\bar\bZ_{1:T}$ have i.i.d.\ rows with distribution $\cN(0,\bar\bSigma_T)$,
since $\lim_{n \to \infty} \bSigma_T-\bar\bSigma_T=0$,
there is a coupling of $\bZ_{1:T}$ with $\bar\bZ_{1:T}$ such that $\lim_{n \to
\infty} n^{-1}\E \|\bZ_{1:T}-\bar \bZ_{1:T}\|_{\Fro}^2 \to 0$ a.s.
Applying this coupling, the condition (\ref{eq:Lipschitz}) for $\phi_1,\phi_2$,
and Cauchy-Schwarz again, we have also
$\lim_{n \to \infty} \E\phi(\bZ_{1:T})-\E\phi(\bar\bZ_{1:T})=0$. Thus
\begin{equation}\label{eq:equiv_SE_conclusion}
\lim_{n \to \infty} \phi(\bar\bz_{1:T})-\E\phi(\bar\bZ_{1:T})=0 \text{ a.s.}
\end{equation}
\end{proof}

\section{Verification of BCP-representability and BCP-approximability}\label{sec:BCPexamples}

In this section, we verify the conditions of BCP-representability and
BCP-approximability for the three function classes of Section
\ref{sec:examples}.
We prove Proposition \ref{prop:local} in Appendix \ref{sec:BCPLocal},
Proposition \ref{prop:anisotropic} in Appendix \ref{sec:BCPAniso},
and Proposition \ref{prop:spectral} in Appendix \ref{sec:BCPSpectral}.

\subsection{Local functions}\label{sec:BCPLocal}

Recall the classes of polynomial and Lipschitz local functions from
Definitions \ref{def:polylocal} and \ref{def:local}.
We first show Proposition \ref{prop:local}(a),
that a set $\cP$ of polynomial local functions is BCP-representable,
via the following lemma.

\begin{lemma}\label{lem:summabletensor}
Suppose $\cT=\bigsqcup_{k=1}^K \cT_k$ is a class of tensors such that for a
constant $C_0>0$, every $\bT \in \cT_k$ satisfies the condition, for each fixed
position $\ell \in [k]$ and fixed index $j \in [n]$,
\begin{equation}\label{eq:summabletensor}
\sum_{i_1,\ldots,i_{\ell-1},i_{\ell+1},\ldots,i_k=1}^n
|\bT[i_1,\ldots,i_{\ell-1},j,i_{\ell+1},\ldots,i_k]|<C_0.
\end{equation}
(For $k=1$, this means $|\bT[j]|<C_0$ for each $j \in [n]$.)
Then for any connected tensor network $(G,\cL)$ with tensors
in $\cT$, there exists a constant $C>0$ depending only on $G$ and $C_0$ such
that
\[|\val_G(\cL)| \leq Cn.\]
In particular, $\cT$ satisfies the BCP.
\end{lemma}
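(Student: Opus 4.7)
The plan is to prove $|\val_G(\cL)| \leq Cn$ by induction on $|\cV|$, with $C$ depending only on $G$ and $C_0$. The BCP for $\cT$ then follows via the equivalent formulation in Definition~\ref{def:BCPequiv}, since the identity tensors $\Id^k$ also satisfy (\ref{eq:summabletensor}) with constant $1$, so this bound extends automatically to connected bipartite networks with $(\Id,\cT)$-labelings.

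First I would record an ``extended summability'' consequence of (\ref{eq:summabletensor}): for any $\bT \in \cT_k$, any position $\ell \in [k]$, and any fixed values at the remaining positions, $\sum_{i_\ell=1}^n |\bT[\ldots]| \leq C_0$. This follows by restricting the summation in (\ref{eq:summabletensor}) applied at any other position $\ell' \neq \ell$ to a specific setting. As a special case this also gives the entrywise bound $|\bT[\ldots]| \leq C_0$. The base case $|\cV|=2$ handles two vertices joined by $m$ parallel edges by pointwise bounding one tensor by $C_0$ and summing the indices of the other one at a time via the extended summability, giving $|\val_G(\cL)| \leq C_0^2 n$.

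For the inductive step ($|\cV| \geq 3$), I would pick a spanning tree $T$ of $G$ together with a leaf $v$ of $T$. The key structural point is that $G \setminus v$ is still a valid connected multigraph with no isolated vertices: $T \setminus v$ remains a spanning tree of $\cV \setminus \{v\}$, and for $|\cV| \geq 3$ the unique tree-neighbor of $v$ necessarily has at least one other tree edge (otherwise $T$ would have only two vertices). Bounding $|\bT_v[\ldots]| \leq C_0$ entrywise and grouping the remaining sum by which neighbor $w$ of $v$ an index belongs to, one obtains
\[|\val_G(\cL)| \;\leq\; C_0 \cdot \val_{G \setminus v}(\hat\cL),\]
where $\hat\cL$ replaces each neighbor's tensor $\bT_w$ by the nonnegative modified tensor
\[\hat\bT_w[\text{indices of edges at } w \text{ not incident to } v] \;=\; \sum_{\text{indices of edges between } v \text{ and } w} |\bT_w[\ldots]|,\]
and takes absolute values of $\bT_{v'}$ for all other $v'$.

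The main technical step is to verify that each $\hat\bT_w$ still satisfies (\ref{eq:summabletensor}) with the same constant $C_0$: fixing any index at a position of $\hat\bT_w$ and summing over the remaining positions of $\hat\bT_w$ is exactly the sum of $|\bT_w|$ over all positions of $\bT_w$ except that fixed one, which is at most $C_0$ by the original summability of $\bT_w$. With this preservation in hand, the induction hypothesis applied to $(G \setminus v, \hat\cL)$ produces the bound. The main subtlety to watch is the bookkeeping that ensures $G \setminus v$ remains connected with no isolated vertices; modulo that the argument is routine and accumulates a final constant of order $C_0^{|\cV|}$ depending only on $G$ and $C_0$.
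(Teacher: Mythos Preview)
Your argument is correct. The paper takes a somewhat different route: it first reduces to the case where $G$ is a tree by, for each edge $e$ lying in a cycle, replacing the shared index $i_e$ by two independent indices (this only increases the nonnegative bound $\sum_{\bi}\prod_v|\bT_v|$), which amounts to deleting $e$ and attaching two new degree-$1$ vertices labeled $\bone$ at its endpoints; this is iterated until the graph is a tree. On the tree, the paper then repeatedly contracts a leaf $u$ into its unique neighbor, multiplying $|\bT_u|$ into $|\bT_v|$ and checking that the resulting contracted tensor still satisfies \eqref{eq:summabletensor} (with constant $C_0^2$ rather than $C_0$), until only two vertices remain.

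Your approach skips the tree reduction entirely: you delete a spanning-tree leaf $v$ directly from the multigraph, bound $|\bT_v|$ pointwise by $C_0$, and absorb the sums over the $v$-incident edge indices into each neighboring tensor $\bT_w$ separately (the factorization works because those edge sets are disjoint across neighbors). The key observation---that fixing one index of $\hat\bT_w$ and summing the rest is exactly the original condition \eqref{eq:summabletensor} for $\bT_w$---shows the summability constant stays at $C_0$ throughout the induction; only the pointwise $C_0$ factor pulled out at each vertex removal accumulates, giving the clean bound $C_0^{|\cV|}n$. By contrast, the paper's contraction step multiplies two tensors together and so inflates the summability constant at each step. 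Both arguments are short; yours handles the general multigraph in one pass and tracks the constant more transparently, while the paper's tree detour makes each individual contraction marginally simpler (every leaf has exactly one neighbor). Your spanning-tree observation that every neighbor $w$ of $v$ retains at least one non-$v$ edge (via its own tree edge) is the right way to ensure $G\setminus v$ has no isolated vertices.
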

\begin{proof}
Let $\cL$ be any tensor labeling of $G=(\cV,\cE)$ with tensors $\{\bT_v:v \in
\cV\}$ belonging to $\cT$. We apply the upper bound
\begin{equation}\label{eq:crudevalbound}
|\val_G(\cL)|
\leq \sum_{\bi \in [n]^{\cE}}\prod_{v \in \cV}|\bT_v\ss{i_e:e \sim v}|.
\end{equation}

To analyze this bound, we may reduce to the case where $G=(\cV,\cE)$ is a
connected tree:
If $\cE$ contains a cycle, pick any edge $e=(u,v) \in \cE$ of the
cycle, and replace the sum over the shared index $i_e \in [n]$ of $\bT_u$ and
$\bT_v$ in (\ref{eq:crudevalbound})
by sums over two distinct indices $i_{e'} \in [n]$ for $\bT_u$
and $i_{e''} \in [n]$ for $\bT_v$. This does not decrease the upper bound,
as the terms with $i_{e'}=i_{e''}$ correspond precisely to
(\ref{eq:crudevalbound}) and the additional terms with $i_{e'} \neq i_{e''}$ are
non-negative. The resulting bound corresponds to (\ref{eq:crudevalbound}) 
for a graph in which we add vertices $w,x$ with the all-1's label
$\bone \in \RR^n$, add edges $e'=(u,w)$ and $e''=(v,x)$, and remove the edge
$(u,v)$. Repeating this process until the resulting graph is a tree, and
replacing $\cT$ by $\cT \cup \{\bone\}$ (where $\bone$ also
satisfies the condition (\ref{eq:summabletensor}) for $k=1$), it suffices to
bound (\ref{eq:crudevalbound}) when $G$ is a connected tree.

In the case where $G$ is a connected tree, pick any leaf vertex $u$ and suppose
$u$ is connected to $v$ via the edge $e=(u,v)$. Let $\cE_v$ denote the set of
all edges incident to $v$.
Then we may remove $e$ and contract $u,v$ into a single vertex
$w$, labeled by the contracted tensor $\bT_w$ having entries
\[\bT_w[i_{e'}:e' \in \cE_v \setminus e]
=\sum_{i_e=1}^n |\bT_v[i_e]| \cdot |\bT_u[i_e,\,i_e':e' \in \cE_v \setminus e|.\]
We note that the condition (\ref{eq:summabletensor}) for $\bT_v$ implies
$|\bT_v[i]| \leq C_0$ for
all $i \in [n]$. Then the condition (\ref{eq:summabletensor})
holds with the constant $C_0^2$ for $\bT_w$, since it holds with $C_0$
for for $\bT_u$.
Denoting by $G'=(\cV',\cE')$ the contracted tree graph with $u,v$ replaced by
$w$, (\ref{eq:crudevalbound}) becomes
\[|\val_G(\cL)|
\leq \sum_{\bi \in [n]^{\cE'}}\prod_{v \in \cV'}|\bT_v\ss{i_e:e \sim v}|\]
where each $\{\bT_v:v \in \cV'\}$ satisfies (\ref{eq:summabletensor}) with
constant $C_0^2$. Iterating this contraction procedure until $G'$ has only two
vertices $w,x$, we obtain
\[|\val_G(\cL)| \leq \sum_{i=1}^n |\bT_w[i]| \cdot |\bT_x[i]|\]
where $\bT_w,\bT_x \in \RR^n$ have all entries bounded by
a constant depending only on $C_0$ and $G$. This shows
$|\val_G(\cL)| \leq Cn$.

By Definition \ref{def:BCPequiv}, $\cT$ satisfies the BCP if
$\sup_\cL |\val_G(\cL)| \leq Cn$ where the supremum is taken
over all $(\Id,\cT)$-labelings $\cL$ of certain
bipartite multigraphs $G=(\cV_\Id \sqcup \cV_T,\cE)$. The identity tensor
$\Id$ of any order trivially satisfies the condition
(\ref{eq:summabletensor}), so the BCP for $\cT$ follows from
the above bound applied to
$\cT \cup \{\Id^1,\ldots,\Id^k\}$ where $k$ is the maximum vertex degree
of $\cV_\Id$.
\end{proof}

\begin{proof}[Proof of Proposition \ref{prop:local}(a)]
Let $\cP=\bigsqcup_{t=0}^T \cP_t$ where $\cP_t$ consists of the functions
$p:\RR^{n \times t} \to \RR^n$.
Letting $D,B>0$ be the degree and coefficient bounds of
Definition \ref{def:polylocal}, any $p \in \cP_t$ admits a
representation (\ref{eq:tensorpolyrepr}) with this value of $D$, where
each entry of $\bT^{(0)},\bT^{(\sigma)}$ is a coefficient of $p$ and hence has
magnitude at most $B$. Let $\cT=\bigsqcup_{k=1}^{D+1} \cT_k$ be the set of all
tensors arising in this representation for all $p \in \cP$.
For any $\bT \in \cT_k$, the locality condition implies that for
each fixed output index $i \in [n]$, we have
\[\sum_{i_1,\ldots,i_{k-1}=1}^n |\bT[i_1,\ldots,i_{k-1},i]|
=\sum_{i_1,\ldots,i_{k-1} \in A_i} |\bT[i_1,\ldots,i_{k-1},i]|
\leq A^{k-1} \cdot B\]
where $A \geq |A_i|$ for every $i \in [n]$.
Then also fixing the first input index $j \in [n]$,
\[\sum_{i_2,\ldots,i_k=1}^n |\bT[j,i_2,\ldots,i_k]|
=\sum_{i:j \in A_i} \sum_{i_2,\ldots,i_{k-1} \in A_i}
|\bT[j,i_2,\ldots,i_{k-1},i]|
\leq A^{k-1} \cdot B\]
where also $A \geq |\{i:j \in A_i\}|$ for every $j \in [n]$. Since $A,B$ are
constants independent of $n$, $\cT$ satisfies the BCP by Lemma
\ref{lem:summabletensor}, so $\cP$ is BCP-representable.
\end{proof}

\begin{proof}[Proof of Proposition \ref{prop:local}(b)]
Let $\cF=\bigsqcup_{t=0}^T \cF_t$, where $\cF_t$ consists
of the functions $f:\RR^{n \times t} \to \RR^n$.
Given any $C_0,\epsilon>0$ in Definition \ref{def:BCP_approx},
let $\zeta,\iota>0$ be constants depending on $L,C_0,\epsilon$ to be specified
later. We will track explicitly the dependence of our bounds
on $\zeta,\iota$, and write $C,C',c>0$ for constants changing from instance
to instance that \emph{do not} depend on $\zeta,\iota$.

We first construct a set of polynomial local functions $\cP=\bigsqcup_{t=0}^T
\cP_t$ to verify condition (1) in \Cref{def:BCP_approx}.
For $t=0$ and each constant vector $f=(\mathring{f}_i)_{i=1}^n \in \cF_0$,
we simply include $p=f$ in $\cP_0$, where $p$ has degree 0 and bounded entries
by the condition (2) of Definition \ref{def:local}.
For $t=1,\ldots,T$ and each $f \in\cF_t$, we construct an
approximating polynomial $p$ to include in $\cP_t$ via the following two steps:
\begin{enumerate}[label=(\roman*)]
\item \label{item:epsilon_net_local} 
For each $a=0,1,\ldots,A$, define
\begin{equation}\label{eq:finitelipschitzclass}
\mathring{\cF}_a=\{\mathring{f}:\RR^{a \times t}\to\RR: \mathring{f} \text{ is
$L$-Lipschitz with } |\mathring{f}(0)|\leq L\}.
\end{equation}
Let $\cN_a\subseteq\mathring{\cF}_a$ be a $\zeta$-net under the sup-norm over
the Euclidean ball of radius $1/\zeta^2$, i.e., for any
$\mathring{f}\in\mathring{\cF}_a$, there
exists $\mathring{g}\in\cN_a$ such that 
\begin{align}\label{eq:lipschitz_net_approx_local}
    \sup_{\bx\in\RR^{a \times t}:\|\bx\|_\Fro^2\leq (1/\zeta)^2} 
    |\mathring{g}(\bx)-\mathring{f}(\bx)|^2 < \zeta.
\end{align}
The definitions and cardinalities of $\cN_a$ depend only on
$L,\zeta,a,t$ and are independent of $n$.
For each $i\in [n]$, let $\mathring{g}_i\in\cN_{|A_i|}$ be the net
approximation for $\mathring{f}_i$ satisfying \eqref{eq:lipschitz_net_approx_local},
and define $g=(\mathring{g}_i)_{i=1}^n$.

\item For each $a=0,1,\ldots,A$ and each $\mathring{g}\in\cN_a$, let 
$\mathring{p}:\RR^{a \times t}\to\RR$ be a polynomial function that
approximates $\mathring{g}$ in the sense
\begin{align}\label{eq:poly_approx_local}
    \EE_{\bZ \sim \cN(0,\bSigma \otimes \Id_a)}[|\mathring{g}(\bZ)
    -\mathring{p}(\bZ)|^2] < \iota
\end{align}
for every $\bSigma \in \RR^{t \times t}$ satisfying $\|\bSigma\|_\op<C_0$.
We may construct this approximation as follows: First fixing any $\delta>0$,
Lemma \ref{lem:polyapprox} implies there exists a polynomial
$\mathring{p}:\RR^{a \times t} \to \RR$ which satisfies
\[\sup_{\bz \in \R^{a \times t}} e^{-\sum_{i,j} |\bz[i,j]|^{3/2}}
|\mathring{g}(\bz)-\mathring{p}(\bz)| \leq \delta.\]
Then, for any $\bSigma$ with $\|\bSigma\|_\op<C_0$, letting
$\bZ \sim \cN(0,\bSigma \otimes \Id_a) \in \RR^{a \times t}$,
\begin{align}
\EE[|\mathring{g}(\bZ)-\mathring{p}(\bZ)|^2]
&=\int_0^\infty \P[|\mathring{g}(\bZ)-\mathring{p}(\bZ)|^2>x] dx\\
&\leq \int_0^\infty \P\bigg[\sum_{i,j}
|\bZ[i,j]|^{3/2}>\log(x^{1/2}/\delta)\bigg]dx\\
&=\int_0^\infty 2\delta^2y \cdot \P\bigg[\sum_{i,j}
|\bZ[i,j]|^{3/2}>\log y\bigg]dy<C\delta^2
\end{align}
for a constant $C>0$ depending only on $C_0,a,t$. Then choosing
$\delta \equiv \delta(\iota)>0$ small enough
ensures (\ref{eq:poly_approx_local}). We note that
for each $\mathring{g} \in \cN_a$, the construction of this polynomial
$\mathring{p}$ depends only on $\iota,C_0,a,t$ and is again independent of $n$.

Letting $g=(\mathring{g}_i)_{i=1}^n$ be the construction of step (i), we set
$\mathring{p}_i$ to be this approximation of $\mathring{g}_i$
that satisfies \eqref{eq:poly_approx_local}, and
include $p=(\mathring{p}_i)_{i=1}^n$ in $\cP_t$.
\end{enumerate}
The components of $p:\RR^{n \times t} \to \RR^n$
constructed in this way are independent of $n$, and hence the maximum degree of
$p$ and maximum magnitude of its coefficients are also independent of $n$.
By definition, $p$ satisfies the same locality condition as $f$.
Thus $\cP$ is a set of polynomial local functions in the
sense of Definition \ref{def:polylocal}, which is BCP-representable by
Proposition \ref{prop:local}(a).

To verify condition (1) of \Cref{def:BCP_approx},
it remains to bound the error of the approximation of $f$ by $p$.
Let $\bSigma$ satisfy
$\|\bSigma\|_\op<C_0$, and let $\bZ \sim \cN(0,\bSigma \otimes \Id) \in \RR^{n
\times t}$. Denoting
$\bZ[A_i] \in \RR^{|A_i| \times t}$ as the rows of $\bZ$ belonging to $A_i$,
we have
\begin{align}
\frac{1}{n} \EE\big[\|f(\bZ)-p(\bZ)\|_2^2\big]
    &= \frac{1}{n} \sum_{i=1}^n
    \EE\left[\left|\mathring{f}_i(\bZ[A_i])-\mathring{p}_i(\bZ[A_i])\right|^2\right]\\
    &\leq \frac{2}{n} \sum_{i=1}^n
    \EE\left[|\mathring f_i(\bZ[A_i])-\mathring g_i(\bZ[A_i])|^2\right]
    + \frac{2}{n} \sum_{i=1}^n
    \EE\left[|\mathring g_i(\bZ[A_i])-\mathring p_i(\bZ[A_i])|^2\right]\\
    &\leq \frac{2}{n} \sum_{i=1}^n
    \EE\left[|\mathring f_i(\bZ[A_i])-\mathring g_i(\bZ[A_i])|^2\right]
    + 2\iota\label{eq:poly_approx_local_1}
\end{align}
where the last inequality follows from the approximation guarantee
\eqref{eq:poly_approx_local} for each $\mathring p_i$.
For the first term of \eqref{eq:poly_approx_local_1},
we split the expectation into two parts based on whether 
$\|\bZ[A_i]\|_\Fro^2\leq 1/\zeta^2$ or not, and then 
apply the guarantee in \eqref{eq:lipschitz_net_approx_local} 
and definition of the class $\mathring \cF_a$ in (\ref{eq:finitelipschitzclass})
to get
\begin{align}
    \frac{2}{n} \sum_{i=1}^n
    \EE\left[|\mathring f_i(\bZ[A_i])-\mathring g_i(\bZ[A_i])|^2\right]
    &\leq \frac{2}{n} \sum_{i=1}^n
    \EE\left[|\mathring f_i(\bZ[A_i])-\mathring g_i(\bZ[A_i])|^2 \cdot 
    \ind\{\|\bZ[A_i]\|_\Fro^2 > (1/\zeta)^2\}\right] + 2\zeta\\
    &\leq \frac{C}{n} \sum_{i=1}^n
    \EE\left[(1 + \|\bZ[A_i]\|_\Fro^2) \cdot 
    \ind\{\|\bZ[A_i]\|_\Fro^2 > (1/\zeta)^2\}\right] + 2\zeta.
\end{align}
Further applying Cauchy-Schwarz and Markov's inequality to bound the first term, we obtain
\[\frac{2}{n} \sum_{i=1}^n
    \EE\left[|\mathring f_i(\bZ[A_i])-\mathring g_i(\bZ[A_i])|^2\right]
    \leq C'\zeta.\]
Choosing $\zeta,\iota>0$ small enough depending on $\epsilon$,
the resulting bound of \eqref{eq:poly_approx_local_1} is at most $\epsilon$.
Thus $\cP$ satisfies condition (1) of \Cref{def:BCP_approx}.

We next verify condition (2) of \Cref{def:BCP_approx}. Let
$\cQ=\bigsqcup_{t=0}^T \cQ_t$ be the set
of all polynomial functions $q=(\mathring q_i)_{i=1}^n$ with
coefficients bounded in magnitude by 1 and satisfying the locality condition (1)
of Definition \ref{def:polylocal}. 
For any $q_1,q_2 \in \cQ$ with uniformly bounded degrees, note that
$\cP \cup \{q_1,q_2\}$ is a set of polynomial local functions satisfying
Definition \ref{def:polylocal}, and hence remains BCP-representable.
Consider any $\bSigma \in \RR^{t \times t}$
with $\|\bSigma\|_\op<C_0$ and any random $\bz\in\RR^{n\times t}$ satisfying,
for any $q_1,q_2 \in \cQ_t$ of uniformly bounded degrees, almost surely
\begin{align}\label{eq:q1q2}
    \lim_{n\to\infty} \frac{1}{n} q_1(\bz)^\top q_2(\bz)
    - \frac{1}{n} \EE_{\bZ \sim \cN(0,\bSigma \otimes \Id_n)}
[q_1(\bZ)^\top q_2(\bZ)] = 0.
\end{align}
To control $\|f(\bz)-p(\bz)\|_2^2$, we have
\begin{align}
    \frac{1}{n}\|f(\bz)-p(\bz)\|_2^2 &=
    \frac{1}{n} \sum_{i=1}^n \big(\mathring f_i(\bz[A_i]) - \mathring
p_i(\bz[A_i])\big)^2\\
&\leq \frac{2}{n} \sum_{i=1}^n \big(\mathring f_i(\bz[A_i]) - \mathring g_i(\bz[A_i])\big)^2
+\frac{2}{n} \sum_{i=1}^n \big(\mathring g_i(\bz[A_i]) - \mathring
p_i(\bz[A_i])\big)^2.
    \label{eq:poly_approx_local_lipschitz_bound_3}
\end{align}
For the first term, applying a similar argument as above,
\begin{align}
    \frac{2}{n} \sum_{i=1}^n \big(\mathring f_i(\bz[A_i]) - \mathring
g_i(\bz[A_i])\big)^2
    &\leq 2\zeta+\frac{2}{n}\sum_{i=1}^n
    \Big(\mathring f_i(\bz[A_i])-\mathring g_i(\bz[A_i])\Big)^2
\1\{\|\bz[A_i]\|_\Fro^2>(1/\zeta)^2\}\\
    &\leq 2\zeta + C\bigg(\frac{1}{n}\sum_{i=1}^n
    (1+\|\bz[A_i]\|_\Fro^2)^2\bigg)^{1/2}
    \bigg(\frac{1}{n}\sum_{i=1}^n
\1\{\|\bz[A_i]\|_\Fro^2>(1/\zeta)^2\}\bigg)^{1/2}\\
    &\leq 2\zeta + C\zeta \bigg(\frac{1}{n}\sum_{i=1}^n
    (1+\|\bz[A_i]\|_\Fro^2)^2\bigg)^{1/2}
    \bigg(\frac{1}{n}\sum_{i=1}^n \|\bz[A_i]\|_\Fro^2\bigg)^{1/2}.
\end{align}
Applying \eqref{eq:q1q2} with 
$q_1(\bz)=q_2(\bz)=(1+\|\bz[A_i]\|_\Fro^2)_{i=1}^n$, there exists a 
constant $C>0$ such that
\begin{equation}\label{eq:zAi4bound}
\frac{1}{n}\sum_{i=1}^n (1+\|\bz[A_i]\|_\Fro^2)^2<C \text{ a.s.\ for all large }
n.
\end{equation}
Then
\begin{equation}\label{eq:poly_approx_bound_1}
\frac{2}{n} \sum_{i=1}^n \big(\mathring f_i(\bz[A_i]) - \mathring
g_i(\bz[A_i])\big)^2 \leq C'\zeta \text{ a.s.\ for all large } n.
\end{equation}
For the second term of (\ref{eq:poly_approx_local_lipschitz_bound_3}),
define for each $a=0,1,\ldots,A$ and each $\mathring g\in\cN_a$ the index set
\begin{align}
    \cI_{a,\mathring g} = \{i\in[n]: |A_i|=a \text{ and } \mathring
g_i=\mathring g\}.
\end{align}
Clearly $[n]=\sqcup_{a=0}^A \sqcup_{\mathring g\in\cN_a}\cI_{a,\mathring g}$.
Note that there is a common polynomial approximation $\mathring p_i\equiv
p_{\mathring g}$ for all $i\in \cI_{a,\mathring g}$, so
\begin{align}
    \frac{2}{n}\sum_{i=1}^n \Big(\mathring g_i(\bz[A_i])-\mathring p_i(\bz[A_i])\Big)^2
    &= \sum_{a=0}^A \sum_{\mathring g\in\cN_a}
    \underbrace{\frac{2}{n} \sum_{i\in \cI_{a,\mathring g}}
    \Big(\mathring g(\bz[A_i])-p_{\mathring g}(\bz[A_i])\Big)^2}_{E_{a,\mathring g}}.
\end{align}
For each $a=0,1,\ldots,A$ and $\mathring g \in \cN_a$, we claim that
\begin{equation}\label{eq:Wassersteinclaim}
\limsup_{n \to \infty} E_{a,\mathring g} \leq 2\iota \text{ a.s.}
\end{equation}
Assuming momentarily this claim, we may apply it
to each pair $(a,\mathring g)$ above to show
\begin{equation}\label{eq:poly_approx_bound_2}
\frac{2}{n}\sum_{i\in\cI} \Big(\mathring g_i(\bz[A_i])-\mathring
p_i(\bz[A_i])\Big)^2<C(\zeta)\iota \text{ a.s.\ for all large } n
\end{equation}
for some constant $C(\zeta)>0$ that depends on $\zeta$
via the cardinalities $|\cN_a|$ for $a=0,1,\ldots,A$.
Applying \eqref{eq:poly_approx_bound_1} 
and \eqref{eq:poly_approx_bound_2} to
\eqref{eq:poly_approx_local_lipschitz_bound_3}, and first 
choosing $\zeta>0$ sufficiently small followed by $\iota>0$ sufficiently small
depending on $\zeta$, this is also at most $\epsilon$,
verifying condition (2) of \Cref{def:BCP_approx}.

To complete the proof, it remains to show the claim (\ref{eq:Wassersteinclaim}).
Suppose by contradiction that there exists
a positive probability event $\Omega$ (in the infinite sequence space as $n \to
\infty$) on which $\limsup_{n \to \infty} E_{a,\mathring g}>2\iota$. Let $D$ be
the maximum degree of polynomials in $\cP$, and let us
consider an event where
\begin{equation}\label{eq:goodevent}
\frac{1}{n}\sum_{i=1}^n (1+\|\bz[A_i]\|_\Fro^{2D})^2<C \text{ for all
large } n.
\end{equation}
This event holds with probability 1 analogously to (\ref{eq:zAi4bound}),
by applying (\ref{eq:q1q2}) with
$q_1(\bz)=q_2(\bz)=(1+\|\bz[A_i]\|_\Fro^{2D})_{i=1}^n$.
Let us consider also the class of test functions
$q(\cdot)=q_1(\cdot)^\top q_2(\cdot)$ where $q_1,q_2 \in \cQ$ are given by
\begin{align}\label{eq:q12def}
    q_1(\bz)[i]=\begin{cases}
        1 & \text{if $i\in\cI_{a,\mathring g}$}\\
        0 & \text{otherwise},
    \end{cases}
\qquad q_2(\bz)[i]=\begin{cases}
        \mathring q & \text{if $i\in\cI_{a,\mathring g}$}\\
        0 & \text{otherwise},
    \end{cases}
\end{align}
and $\mathring q:\RR^{a \times t} \to \RR$ is a fixed monomial (of arbitrary
multivariate degree) with
coefficient 1. Then the event where \eqref{eq:q1q2} holds
for $q(\cdot)=q_1(\cdot)^\top q_2(\cdot)$ defined by each such monomial
$\mathring q:\RR^{a \times t}$ also has probability 1, as the set of such
monomials $\mathring q$ is countable. Letting $\Omega'$ be the intersection of
$\Omega$ with these two probability-1 events, $\Omega'$ must be non-empty.

For any $\omega \in \Omega'$, let $\{n_j\}_{j=1}^\infty$ be a (random,
$\omega$-dependent) subsequence for which $E_{a,\mathring g}>2\iota$ for each
$n_j$. Since $|\cI_{a,\mathring g}|/n \in [0,1]$ and since $\bSigma$ belongs to
a fixed compact domain, passing to a further subsequence, we may assume that
along this subsequence $\{n_j\}_{j=1}^\infty$, we have
$|\cI_{a,\mathring g}|/n_j \to \alpha$ for some $\alpha \in [0,1]$ 
and $\bSigma \to \bar \bSigma$ for some $\bar\bSigma \in \RR^{t \times t}$ as
$n_j \to \infty$. If $\alpha=0$, then
using the condition (\ref{eq:finitelipschitzclass}) for
$\mathring g$ and the fact that $p_{\mathring g}$ has degree at most $D$
and coefficients of magnitude at most $B$ for some constants $D,B>0$,
for a constant $C(L,B,D)>0$ we have
\begin{align}
E_{a,\mathring g} &\leq \frac{C(L,B,D)}{n} \sum_{i\in\cI_{a,\mathring g}}
    (1+\|\bz[A_i]\|_\Fro^{2D})\\
    &\leq C(L,B,D) \bigg(\frac{1}{n}\sum_{i=1}^n 
    (1+\|\bz[A_i]\|_\Fro^{2D})^2\bigg)^{1/2}
    \bigg(\frac{|\cI_{a,\mathring g}|}{n}\bigg)^{1/2}.
\end{align}
Applying $\alpha=0$ and the bound (\ref{eq:goodevent}),
we have $E_{a,\mathring g} \to 0$ along the subsequence
$\{n_j\}_{j=1}^\infty$, contradicting $E_{a,\mathring g}>2\iota$ for each $n_j$.
If instead $\alpha>0$, then the statement \eqref{eq:q1q2} for each 
function $q_1(\cdot)^\top q_2(\cdot)$ in the class
(\ref{eq:q12def}), together with the convergence $\bSigma \to \bar\bSigma$,
imply
\begin{align}
\alpha \cdot \lim_{n_j \to \infty} \frac{1}{|\cI_{a,\mathring g}|}
\sum_{i \in \cI_{a,\mathring g}} \mathring q(\bz[A_i])
&=\lim_{n_j \to \infty} \frac{1}{n_j}\,q_1(\bz)^\top q_2(\bz)\\
&=\lim_{n_j \to \infty} \frac{1}{n_j}\E[q_1(\bZ)^\top q_2(\bZ)]
=\alpha \cdot \E_{\bar\bZ \sim \cN(0,\bar\bSigma \otimes \Id_a)}
[\mathring q(\bZ)].
\end{align}
This holds for each fixed monomial $\mathring q:\RR^{a \times t}\to\RR$,
so the empirical distribution of
$\{\bz[A_i]\}_{i\in\cI_{a,\mathring g}}$ converges to
$\cN(0,\bar\bSigma \otimes \Id_a)$ weakly and in Wasserstein-$k$ for every
order $k \in [1,\infty)$ (c.f.\ \cite[Theorem 30.2]{billingsley2012probability}
and \cite[Definition 6.8, Theorem 6.9]{villani2008optimal}).
Since $\mathring g-p_{\mathring g}$ is a fixed continuous function of polynomial
growth, this then implies
\begin{align}
\lim_{n_j \to \infty}
E_{a,\mathring g}&=\alpha \cdot \lim_{n_j \to \infty}
\frac{1}{|\cI_{a,\mathring g}|}
    \sum_{i\in\cI_{a,\mathring g}} \Big(\mathring g(\bz[A_i])-p_{\mathring
g}(\bz[A_i])\Big)^2
=\alpha \cdot \EE_{\bar\bZ \sim \cN(0,\bar\bSigma \otimes
\Id_a)} \Big[\Big(\mathring g(\bar\bZ)-p_{\mathring g}(\bar\bZ)\Big)^2\Big],
\end{align}
which is at most $\iota$ by the bound $\alpha \leq 1$ and
the approximation guarantee
\eqref{eq:poly_approx_local} for $p_{\mathring g}$.
This again contradicts $E_{a,\mathring g}>2\iota$ for each $n_j$.
Thus (\ref{eq:Wassersteinclaim}) holds, concluding the proof.
\end{proof}

\subsection{Anisotropic functions}\label{sec:BCPAniso}

We recall the classes of polynomial and Lipschitz anisotropic functions
from Definitions \ref{def:polyaniso} and \ref{def:aniso}.

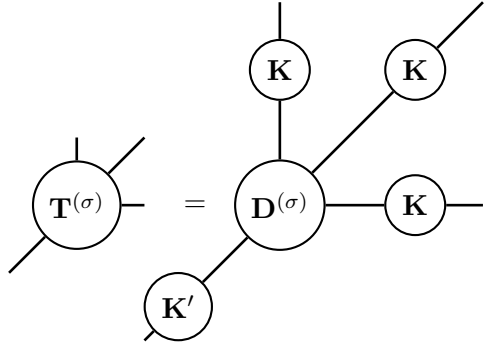
\begin{figure}
    \centering
    \begin{tikzpicture}[scale = .9]
        \begin{scope}[every node/.style={circle,thick,draw}]
            \node (A) at (0,0) {$\bD^{(\sigma)}$};
            \node (B) at (0,2) {$\bK$};
            \node (C) at (2,2) {$\bK$};
            \node (D) at (2,0) {$\bK$};
            \node (E) at (-1.5,-1.5) {$\bK'$};
            \node (P) at (-3, 0) {$\bT^{(\sigma)}$};
        \end{scope}
        
        \begin{scope}[>={Stealth[black]},
                      every node/.style={fill=white,circle},
                      every edge/.style={draw=black,very thick},
                      line width = 1pt]
            \draw [-] (A) to (B);
            \draw [-] (A) to (C);
            \draw [-] (A) to (D);
            \draw [-] (A) to (E);
            \draw [-] (C) -- (3,3);
            \draw [-] (B) -- (0,3);
            \draw [-] (D) -- (3,0);
            \draw [-] (E) -- (-2,-2);
            \draw [-] (P) -- (-4,-1);
            \draw [-] (P) -- (-3, 1);
            \draw [-] (P) -- (-2, 0);
            \draw [-] (P) -- (-2, 1);
        \end{scope}

        \node at (-1.25, 0) {$=$};
    \end{tikzpicture}
    \caption{An example of a tensor $\bT \in \cT$ for the class of polynomial
anisotropic functions.}\label{fig:genAniso}
\end{figure}

\begin{proof}[Proof of Proposition \ref{prop:anisotropic}(a)]
Let $\cP=\bigsqcup_{t=0}^T \cP_t$, where $\cP_t$ consists of the functions
$p:\RR^{n \times t} \to \RR^n$. Consider any $p \in \cP_t$ given by
\[p(\bz_{1:t})=\bK' q(\bK^\top \bz_{1:t}),\]
where $q:\RR^{n \times t} \to \RR^n$ is separable with degree at most $D$ and
all entries bounded in magnitude by $B$. Then $q$ admits a representation
of the form (\ref{eq:tensorpolyrepr}),
\[q(\bz_{1:t})=\bD^{(0)}+\sum_{d=1}^D \sum_{\sigma \in \cS_{t,d}}
\bD^{(\sigma)}[\bz_{\sigma(1)},\ldots,\bz_{\sigma(d)},\cdot]\]
where each tensor $\bD^{(0)},\bD^{(\sigma)}$ has entries bounded in magnitude by
$B$ and is diagonal because $q$ is separable. So $p$ admits the representation
(\ref{eq:tensorpolyrepr}), where
\begin{equation}\label{eq:anisoT0}
\bT^{(0)}=\bK'\bD^{(0)}
\end{equation}
and $\bT^{(\sigma)}$ for each $\sigma \in \cS_{t,d}$
is a contraction of $\bD^{(\sigma)}$ with $\bK',\bK$ in each dimension,
having entries
\begin{equation}\label{eq:anisoT}
\bT^{(\sigma)}[i_1,\ldots,i_{d+1}]
=\sum_{j=1}^n \bD^{(\sigma)}[j,\ldots,j]
\bK[i_1,j] \ldots \bK[i_d,j]\bK'[i_{d+1},j]
\end{equation}
This is visualized in Figure \ref{fig:genAniso}. We let $\cT$ be the set of all
such tensors $\bT^{(0)},\bT^{(\sigma)}$ arising in this representation for all
$p \in \cP$. Then the cardinality of $|\cT|$ is bounded independently of
$n$, by the boundedness of $|\cP|$ and of the degree of each $p \in \cP$.

By Definition \ref{def:BCPequiv}, $\cT$ satisfies the BCP if
$\sup_\cL |\val_G(\cL)| \leq Cn$ for any connected bipartite multigraph
$G=(\cV_\Id \sqcup \cV_T,\cE)$ such that each $\cV_\Id$ has even degree,
where the supremum is over all $(\Id,\cT)$-labelings $\cL$ of $G$.
In light of the forms (\ref{eq:anisoT0}) and (\ref{eq:anisoT}), we see that any 
such value $\val_G(\cL)$ has a form
\begin{equation}\label{eq:diagval}
\val_G(\cL)=\sum_{\bi,\bj \in [n]^\cE}
\prod_{v \in \cV_T} \bD_v[j_e:e \sim v]
\prod_{u \in \cV_\Id} \Id^{\deg(u)}[i_e:e \sim u]
\prod_{e \in \cE} \bK_e[i_e,j_e]
\end{equation}
where $\bD_v$ is one of the above diagonal tensors $\bD^{(0)},\bD^{(\sigma)}$
for each $v \in \cV_T$, and $\bK_e$ is a matrix in $\cK$ for each $e \in \cE$.
Under condition (1) where $\|\bK\|_{\ell_\infty \to
\ell_\infty}=\max_i \sum_j |\bK[i,j]|$ and $\|\bK^\top\|_{\ell_\infty \to
\ell_\infty}$ are uniformly bounded by a constant over $\bK \in \cK$,
all tensors in (\ref{eq:diagval}) satisfy the property (\ref{eq:summabletensor}).
Then $\sup_{\cL} |\val_G(\cL)| \leq Cn$ by Lemma \ref{lem:summabletensor},
implying that $\cT$ satisfies the BCP.

Under condition (2), let $\bar\cT$ be an independent copy of $\cT$ where the
orthogonal matrices $\bO,\bU$ defining $\cK$ are replaced by independent copies
$\bar\bO,\bar\bU$. Given any $(\Id,\cT)$-labeling $\cL$ of
$G$, denote by $\bar\cL$ the labeling that replaces each label $\bT \in \cT$ by
its corresponding copy $\bar\bT \in \bar\cT$, and write $\E$ for the expectation
over $\bO,\bar\bO,\bU,\bar\bU$. We claim that for any fixed connected
multigraph $G$ where all vertices of $\cV_\Id$ have even degree,
\begin{align}
\sup_\cL |\E[\val_G(\cL)]| &\leq Cn\label{eq:firstmomentO}\\
\sup_\cL \E\big[\big(\val_G(\cL)-\val_G(\bar\cL)\big)^4\big] &\leq Cn^2
\label{eq:fourthmomentO}
\end{align}
for a $G$-dependent constant $C>0$, where the suprema are over all
$(\Id,\cT)$-labelings $\cL$.
Assuming momentarily this claim, we then have by Markov's inequality and
Jensen's inequality that
\begin{align}
\P[|\val_G(\cL)| \geq (C+1)n]
&\leq \P[|\val_G(\cL)-\E \val_G(\cL)| \geq n]\\
&\leq \frac{\E[(\val_G(\cL)-\E \val_G(\cL))^4]}{n^4}
\leq \frac{\E[(\val_G(\cL)-\val_G(\bar\cL))^4]}{n^4}
\leq \frac{C}{n^2}.
\end{align}
The set of $(\Id,\cT)$-labelings of $G$ has cardinality bounded by a constant
independent of $n$, by the boundedness of $|\cT|$. Then taking a union bound,
$\P[\sup_\cL |\val_G(\cL)|>C'n] \leq C'/n^2$ for a constant $C'>0$.
So by the Borel-Cantelli lemma, $\sup_\cL |\val_G(\cL)|<C'n$
almost surely for all large $n$, implying that the BCP holds almost surely
for $\cT$.

To conclude the proof, it remains to show
(\ref{eq:firstmomentO}--\ref{eq:fourthmomentO}). Since $\bO,\bar\bO,\bU,\bar\bU$
are assumed independent, whose densities with respect to Haar measure are
bounded above by a constant, by a change of measure it suffices
to show (\ref{eq:firstmomentO}--\ref{eq:fourthmomentO}) in the case where
$\bO,\bar\bO,\bU,\bar\bU$ are independent Haar-orthogonal matrices.
We provide an argument that extends the ideas of \cite[Appendix C]{Wang2024}
using the orthogonal Weingarten calculus: Fix any set $\cE$ of even cardinality,
and let $\bi,\bj \in [n]^\cE$ be any two index tuples. Let
$\bO \in \RR^{n \times n}$ be a Haar-distributed orthogonal matrix.
Then (c.f.\ \cite[Corollary 3.4]{collins2006integration})
\begin{equation}\label{eq:weingarten}
\E \prod_{e \in \cE} \bO[i_e,j_e]
=\mathop{\sum_{\text{pairings } \pi,\pi' \in \sP}}_{\pi(\bi) \geq
\pi,\,\pi(\bj) \geq \pi'} \Wg_{n,\cE}(\pi,\pi')
\end{equation}
Here
\begin{itemize}
\item $\sP$ is the lattice of partitions of $\cE$ 
endowed with the partial ordering $\pi \geq \tau$ if $\tau$ refines $\pi$
(i.e.\ each block of $\pi$ is the union of one or more blocks of $\tau$).
\item $\pi,\pi'$ are pairings in $\sP$, i.e.\ partitions of $\cE$
where each block has size 2.
\item $\pi(\bi) \in \sP$ is the partition where $e,e'$ belong to
the same block of $\pi(\bi)$ if and only if $i_e=i_{e'}$.
Thus $\pi(\bi) \geq \pi$ for a pairing $\pi$
means $i_e=i_{e'}$ for each pair $(e,e') \in \pi$.
\item $\Wg_{n,\cE}(\pi,\pi')$ is the orthogonal Weingarten function, admitting
an asymptotic expansion
\begin{equation}\label{eq:weinasymp}
\Wg_{n,\cE}(\pi,\pi')=n^{-|\cE|/2-d(\pi,\pi')/2} \Big(\Wg_\cE^{(0)}(\pi,\pi')
-n^{-1}\Wg_\cE^{(1)}(\pi,\pi')+O(n^{-2})\Big)
\end{equation}
where $\Wg_\cE^{(0)}(\pi,\pi')$ and $\Wg_\cE^{(1)}(\pi,\pi')$ do not depend on
$n$, and $O(n^{-2})$ denotes an error at most $Cn^{-2}$ for a constant
$C \equiv C(|\cE|,\pi,\pi')>0$
and all large $n$. Here $d(\pi,\pi')$ is a metric on $\sP$ given by
\begin{equation}\label{eq:metricdef}
d(\pi,\pi')=|\pi|+|\pi'|-2|\pi \vee \pi'|
\end{equation}
where $|\pi|$ is the number of blocks of $\pi$, and $\pi
\vee \pi'$ is the join (i.e.\ least upper bound in $\mathscr{P}$).
For the equivalence between this and the $\ell(\cdot,\cdot)$ metric
of \cite{collins2006integration}, see \cite[Appendix C]{Wang2024}.
\item $\Wg_{n,\cE}(\pi,\pi'),\Wg_\cE^{(0)}(\pi,\pi'),\Wg_\cE^{(1)}(\pi,\pi')$
depend on $(\pi,\pi')$ only via the sizes of the blocks of $\pi \vee \pi'$.
Writing these sizes as $2k_1,2k_2,\ldots,2k_M$ (which must all be even),
\begin{align}
\Wg_\cE^{(0)}&=\prod_{m=1}^M (-1)^{k_m-1}c_{k_m-1},\label{eq:wein0}\\
\Wg_\cE^{(1)}&=\sum_{m=1}^M (-1)^{k_m-1}a_{k_m-1}
\mathop{\prod_{m'=1}^M}_{m' \neq m} (-1)^{k_{m'}-1}c_{k_{m'}-1},\label{eq:wein1}
\end{align}
where $c_k$ is the $k^\text{th}$ Catalan number, $a_k$ is the total area
under the set of all Dyck paths of length $k$, and we note that $\prod_{m=1}^M
(-1)^{k_m-1}=(-1)^{|\cE|/2-M}=(-1)^{d(\pi,\pi')/2}$. This form of $\Wg_\cE^{(0)}$ is
shown in \cite[Theorem 3.13]{collins2006integration}, of $\Wg_\cE^{(1)}$
in \cite[Theorem 3.13]{feray2012complete}, and we refer to
\cite[Theorem 4.6, Lemmas 4.12 and 4.13]{collins2017weingarten} for a summary.
\end{itemize}

To show (\ref{eq:firstmomentO}), further expanding $\bK_e=\bO\bD_e\bU^\top$,
we may express (\ref{eq:diagval}) as
\begin{equation}\label{eq:diagval1}
\val_G(\cL)=\sum_{\bi,\bj,\bk \in [n]^\cE}
\prod_{v \in \cV_T} \bD_v[j_e:e \sim v]
\prod_{u \in \cV_\Id} \Id^{\deg(u)}[i_e:e \sim u]
\prod_{e \in \cE} \bO[i_e,k_e]\bD_e[k_e,k_e]\bU[j_e,k_e].
\end{equation}
Let $\cE$ be the set of edges of $G$, which has even cardinality because each
vertex of $\cV_\Id$ has even degree.
Let $\sP$ be the lattice of partitions of $\cE$.
Let $\pi_T,\pi_\Id \in \mathscr{P}$ be the
two distinguished partitions where $e,e' \in \cE$ belong to the same block
of $\pi_T$ (or of $\pi_\Id$) if they are incident to the same vertex of $\cV_T$
(resp.\ of $\cV_\Id$); thus $|\pi_T|=|\cV_T|$ and $|\pi_\Id|=|\cV_\Id|$.
For each vertex $v \in \cV_T$, we write $e(v) \in \cE$
for an arbitrary choice of edge incident to this vertex.
Then, since $\bD_v$ and $\Id^{\deg(u)}$ are diagonal,
(\ref{eq:diagval1}) is further equivalent to
\begin{equation}\label{eq:diagvalfinal}
\val_G(\cL)=\mathop{\sum_{\bi,\bj,\bk \in [n]^\cE}}_{\pi(\bi) \geq \pi_\Id,
\,\pi(\bj) \geq \pi_T}
\prod_{v \in \cV_T} \bD_v[j_{e(v)},\ldots,j_{e(v)}]
\times
\prod_{u \in \cV_\Id} 1 \times \prod_{e \in \cE} \bO[i_e,k_e]\bD_e[k_e,k_e]\bU[j_e,k_e].
\end{equation}
Evaluating the expectations over $\bO$ and $\bU$ using (\ref{eq:weingarten}), 
noting that $\pi(\bj) \geq \pi_T$ and
$\pi(\bj) \geq \pi$ if and only if $\pi(\bj) \geq \pi_T \vee \pi$,
and similarly for $\pi(\bi)$ and $\pi(\bk)$, we have
\begin{align}
\E[\val_G(\cL)]&=\sum_{\text{pairings } \pi,\pi',\tau,\tau' \in \sP}
\Wg_{n,\cE}(\pi,\pi') \Wg_{n,\cE}(\tau,\tau')\\
&\hspace{1in}\mathop{\sum_{\bj \in [n]^\cE}}_{\pi(\bj) \geq \pi_T \vee \pi}
\prod_{v \in \cV_T} \bD_v[j_{e(v)},\ldots,j_{e(v)}]
\times
\mathop{\sum_{\bi \in [n]^\cE}}_{\pi(\bi) \geq \pi_\Id \vee \tau} 1
\times \mathop{\sum_{\bk \in [n]^\cE}}_{\pi(\bk) \geq \pi' \vee \tau'}
\prod_{e \in \cE} \bD_e[k_e,k_e].
\end{align}
To show (\ref{eq:firstmomentO}), we will only use the bound
$|\Wg_{n,\cE}(\pi,\pi')| \leq
O(n^{-|\cE|/2-d(\pi,\pi')/2})$ implied by (\ref{eq:weinasymp}). Then,
identifying $\sum_{\bj \in [n]^{\cE}:\pi(\bj) \geq \pi_T \vee \pi}$ as a
summation over a single index $j \in [n]$ for each block of $\pi_T \vee \pi$,
and similarly for $\bi$ and $\bk$, and
applying the uniform boundedness of entries of $\bD_v$ and $\bD_e$, we have
for a constant $C>0$ and all large $n$,
\[|\E[\val_G(\cL)]| \leq C\sum_{\text{pairings } \pi,\pi',\tau,\tau' \in \sP}
n^{-|\cE|/2-d(\pi,\pi')/2}\, n^{-|\cE|/2-d(\tau,\tau')/2}\,
n^{|\pi_T \vee \pi|}\,n^{|\pi_\Id \vee \tau|}\,n^{|\pi' \vee \tau'|}.\]
Recalling $|\pi_T|=|\cV_T|$, $|\pi_\Id|=|\cV_\Id|$,
and $|\pi|=|\pi'|=|\tau|=|\tau'|=|\cE|/2$ since these are pairings, we have
by definition (\ref{eq:metricdef}) of the metric $d(\cdot,\cdot)$ that
\begin{equation}\label{eq:piVEsize}
|\pi_T \vee \pi|=\frac{|\cV_T|+|\cE|/2-d(\pi_T,\pi)}{2},
\quad |\pi_\Id \vee \tau|=\frac{|\cV_\Id|+|\cE|/2-d(\pi_\Id,\tau)}{2},
\quad |\pi' \vee \tau'|=\frac{|\cE|-d(\pi',\tau')}{2}.
\end{equation}
We have also $|\pi_T \vee \pi_\Id|=1$ because $G$ is a connected graph, so
by the triangle inequality for $d(\cdot,\cdot)$,
\[d(\pi_T,\pi)+d(\pi,\pi')+d(\pi',\tau')+d(\tau',\tau)
+d(\tau,\pi_\Id) \geq d(\pi_T,\pi_\Id)=|\cV_T|+|\cV_\Id|-2.\]
Applying this above gives
$|\E[\val_G(\cL)]| \leq
C'n^{-\frac{|\cE|}{2}-\frac{|\cE|}{2}+\frac{|\cV_T|+|\cE|/2}{2}+\frac{|\cV_\Id|+|\cE|/2}{2}+\frac{|\cE|}{2}-\frac{|\cV_T|+|\cV_\Id|-2}{2}}=C'n$
for a constant $C'>0$. This shows (\ref{eq:firstmomentO}).

To show (\ref{eq:fourthmomentO}), let $G^{(s)}=(\cV_\Id^{(s)} \sqcup \cV_T^{(s)},\cE^{(s)})$ for
$s=1,2,3,4$ denote four copies of $G$. Let $G^{\sqcup 4}=(\cV_\Id^{\sqcup 4}
\sqcup \cV_T^{\sqcup 4},\cE^{\sqcup 4})$ denote the (disconnected)
graph formed by their disjoint union. We write $\sP$ for the lattice of
partitions of the combined edge set $\cE^{\sqcup 4}$. Let
$\pi_T,\pi_\Id \in \sP$ be the partitions where $e,e' \in \cE^{\sqcup 4}$
belong to the same block of $\pi_T$ (or of $\pi_\Id$) if $e,e' \in \cE^{(s)}$
for the same copy $s \in \{1,2,3,4\}$ and are incident to the same vertex of
$\cV_T^{(s)}$ (resp.\ of $\cV_\Id^{(s)}$).
Thus $\pi_T \vee \pi_\Id$ has 4 blocks which are exactly
$\cE^{(s)}$ for $s=1,2,3,4$. Letting $e(v) \in \cE^{\sqcup 4}$ be an
arbitrary choice of edge containing each vertex $v \in \cV_T^{\sqcup 4}$,
and applying (\ref{eq:diagvalfinal}),
\begin{align}
&(\val_G(\cL)-\val_G(\bar\cL))^4\\
&=\sum_{S \subseteq \{1,2,3,4\}} (-1)^{|S|}
\prod_{s \in S} \val_G(\cL) \prod_{s \notin S} \val_G(\bar\cL)\\
&=\sum_{S \subseteq \{1,2,3,4\}} (-1)^{|S|}
\mathop{\sum_{\bi,\bj,\bk \in [n]^{\cE^{\sqcup 4}}}}_{\pi(\bi) \geq \pi_\Id,
\,\pi(\bj) \geq \pi_T}
\bigg(\prod_{s \in S} \prod_{e \in \cE^{(s)}} \bO[i_e,k_e] 
\bU[j_e,k_e] \prod_{s \notin S}
\prod_{e \in \cE^{(s)}} \bar \bO[i_e,k_e] \bar \bU[j_e,k_e] \bigg)\\
&\hspace{1.5in}
\prod_{v \in \cV_T^{\sqcup 4}} \bD_v[j_{e(v)},\ldots,j_{e(v)}]
\times \prod_{u \in \cV_\Id^{\sqcup 4}} 1
\times \prod_{e \in \cE^{\sqcup 4}} \bD_e[k_e,k_e].
\end{align}
We apply (\ref{eq:weingarten}) to take expectations over
$\bO,\bU$ and $\bar\bO,\bar\bU$ separately.
Let $\pi_S \in \sP$ be the partition with the two blocks
\[\cE_S \equiv \bigcup_{s \in S} \cE^{(s)},
\qquad \cE_{\bar S} \equiv \bigcup_{s \notin S} \cE^{(s)}\]
(or with a single block if either $\cE_S$ or $\cE_{\bar S}$ is empty).
The application of (\ref{eq:weingarten}) to $\bO,\bU$ enumerates
over four pairings of $\cE_S$, and the application of
(\ref{eq:weingarten}) to $\bar\bO,\bar\bU$ enumerates over four pairings of
$\cE_{\bar S}$, which we may combine into four
pairings $\pi,\pi',\tau,\tau'$ of $\cE^{\sqcup 4}$ that refine $\pi_S$.
For any such pairings $\pi,\pi'$,
we write $\Wg_{n,\cE_S}(\pi,\pi')$ for the Weingarten function of
the restrictions of $\pi,\pi'$ to $\cE_S$, as partitions of $\cE_S$. Then
\begin{align}
&\E(\val_G(\cL)-\val_G(\bar\cL))^4\\
&=\sum_{S \subseteq \{1,2,3,4\}} (-1)^{|S|}
\mathop{\sum_{\bi,\bj,\bk \in [n]^{\cE^{\sqcup 4}}}}_{\pi(\bi) \geq \pi_\Id,\,
\pi(\bj) \geq \pi_T}
\mathop{\sum_{\text{pairings } \pi,\pi',\tau,\tau' \in
\sP}}_{\pi,\pi',\tau,\tau' \leq \pi_S,\,\tau \leq \pi(\bi),\,\pi \leq
\pi(\bj),\,\tau',\pi' \leq \pi(\bk)}\\
&\hspace{1in} \Wg_{n,\cE_S}(\pi,\pi') \Wg_{n,\cE_S}(\tau,\tau')
\Wg_{n,\cE_{\bar S}}(\pi,\pi') \Wg_{n,\cE_{\bar S}}(\tau,\tau')\\
&\hspace{1in}
\times \prod_{v \in \cV_T^{\sqcup 4}} \bD_v[j_{e(v)},\ldots,j_{e(v)}]
\times \prod_{u \in \cV_\Id^{\sqcup 4}} 1
\times \prod_{e \in \cE^{\sqcup 4}} \bD_e[k_e,k_e]\\
&=\mathop{\sum_{\text{pairings } \pi,\pi',\tau,\tau'\in \sP}}
\Bigg(\mathop{\sum_{S \subseteq \{1,2,3,4\}}}_{\pi_S \geq \pi,\pi',\tau,\tau'}
(-1)^{|S|}
\Wg_{n,\cE_S}(\pi,\pi') \Wg_{n,\cE_S}(\tau,\tau')
\Wg_{n,\cE_{\bar S}}(\pi,\pi') \Wg_{n,\cE_{\bar S}}(\tau,\tau')\Bigg)\\
&\hspace{1in}
\mathop{\sum_{\bj \in [n]^{\cE^{\sqcup 4}}}}_{\pi(\bj) \geq \pi_T \vee \pi}
\prod_{v \in \cV_T^{\sqcup 4}} \bD_v[j_{e(v)},\ldots,j_{e(v)}]
\times
\mathop{\sum_{\bi \in [n]^{\cE^{\sqcup 4}}}}_{\pi(\bi) \geq \pi_\Id \vee \tau}
1 \times
\mathop{\sum_{\bk \in [n]^{\cE^{\sqcup 4}}}}_{\pi(\bk) \geq \pi' \vee \tau'}
\prod_{e \in \cE^{\sqcup 4}} \bD_e[k_e,k_e]\\
&\leq C\mathop{\sum_{\text{pairings } \pi,\pi',\tau,\tau' \in \sP}}
\Bigg|\underbrace{\mathop{\sum_{S \subseteq \{1,2,3,4\}}}_{\pi_S \geq
\pi,\pi',\tau,\tau'} (-1)^{|S|}
\Wg_{n,\cE_S}(\pi,\pi') \Wg_{n,\cE_S}(\tau,\tau')
\Wg_{n,\cE_{\bar S}}(\pi,\pi') \Wg_{n,\cE_{\bar
S}}(\tau,\tau')}_{:=W(\pi,\pi',\tau,\tau')}\Bigg|\\
&\hspace{1in}
\times n^{|\pi_T \vee \pi|}\, n^{|\pi_\Id \vee \tau|}\, n^{|\pi' \vee \tau'|}.
\end{align}
Analogously to (\ref{eq:piVEsize}), we have
\[|\pi_T \vee \pi|=\frac{4|\cV_T|+2|\cE|-d(\pi_T,\pi)}{2},
\quad |\pi_\Id \vee \tau|=\frac{4|\cV_\Id|+2|\cE|-d(\pi_\Id,\pi)}{2},
\quad |\pi' \vee \tau'|=\frac{4|\cE|-d(\pi',\tau')}{2},\]
so the above gives
\begin{equation}\label{eq:fourthmomentgeneralbound}
\E(\val_G(\cL)-\val_G(\bar\cL))^4
\leq C\sum_{\text{pairings } \pi,\pi',\tau,\tau' \in \sP}
|W(\pi,\pi',\tau,\tau')| \cdot
n^{2|\cV_T|+2|\cV_\Id|+4|\cE|-\frac{d(\pi_T,\pi)+d(\pi_\Id,\tau)+d(\pi',\tau')}{2}}.
\end{equation}
We recall that $|\pi_T \vee \pi_\Id|=4$, with the blocks
$\{\cE^{(s)}\}_{s=1}^4$. We consider three cases for $\pi,\pi',\tau,\tau'
\in \sP$:

{\bf Case 1:} $|\pi_T \vee \pi \vee \pi' \vee \tau' \vee \tau \vee \pi_\Id|
\leq 2$. Let $\pi|_S$ and $\pi|_{\bar S}$ denote the restrictions
of $\pi$ to $\cE_S$ and $\cE_{\bar S}$. We apply again the bound
$|\Wg_{n,\cE_S}(\pi,\pi')| \leq
O(n^{-|\cE_S|/2-d(\pi|_S,\pi'|_S)/2})$ from (\ref{eq:weinasymp}),
and similarly for $\cE_{\bar S}$.
Since $|\cE_S|+|\cE_{\bar S}|=4|\cE|$ and
$d(\pi|_S,\pi'|_S)+d(\pi|_{\bar S},\pi'|_{\bar S})=d(\pi,\pi')$ by definition (\ref{eq:metricdef}) of the metric
$d(\cdot,\cdot)$, this bound gives
\[|\Wg_{n,\cE_S}(\pi,\pi')\Wg_{n,\cE_{\bar S}}(\pi,\pi')|
\leq Cn^{-2|\cE|-d(\pi,\pi')/2},\]
and similarly for $\tau,\tau'$. Then
$|W(\pi,\pi',\tau,\tau')| \leq Cn^{-4|\cE|-d(\pi,\pi')/2-d(\tau,\tau')/2}$.
Applying this to
(\ref{eq:fourthmomentgeneralbound}),
\begin{equation}\label{eq:fourthmomentCase1}
\E(\val_G(\cL)-\val_G(\bar\cL))^4
\leq Cn^{2|\cV_T|+2|\cV_\Id|-\frac{d(\pi_T,\pi)+d(\pi,\pi')+d(\pi',\tau')+
d(\tau',\tau)+d(\tau,\pi_\Id)}{2}}
\end{equation}
Here, the triangle inequality $d(\pi_T,\pi)+d(\pi,\pi')+d(\pi',\tau')+
d(\tau',\tau)+d(\tau,\pi_\Id) \geq d(\pi_T,\pi_\Id)$ is not tight,
because $\pi,\pi',\tau,\tau'$ are not all refinements of $\pi_T \vee \pi_\Id$.
We instead apply the following
observations about the metric $d(\cdot,\cdot)$:
\begin{itemize}
\item By the definition (\ref{eq:metricdef}), it is direct to check that
$d(\pi_1,\pi_2)=d(\pi_1,\pi_1 \vee \pi_2)+d(\pi_1 \vee \pi_2,\pi_2)$.
\item Applying this property and the triangle inequality,
\begin{align}
&d(\pi_1,\pi_2)+d(\pi_2,\pi_3)\\
&=d(\pi_1,\pi_1 \vee \pi_2)+d(\pi_1 \vee \pi_2,\pi_2)
+d(\pi_2,\pi_2 \vee \pi_3)+d(\pi_2 \vee \pi_3,\pi_3)\\
&\geq d(\pi_1,\pi_1 \vee \pi_2)+d(\pi_1 \vee \pi_2,\pi_2 \vee \pi_3)
+d(\pi_2 \vee \pi_3,\pi_3)\\
&=d(\pi_1,\pi_1 \vee \pi_2)+d(\pi_1 \vee \pi_2,\pi_1 \vee \pi_2 \vee \pi_3)
+d(\pi_1 \vee \pi_2 \vee \pi_3,\pi_2 \vee \pi_3)
+d(\pi_2 \vee \pi_3,\pi_3)\\
&\geq d(\pi_1,\pi_1 \vee \pi_2 \vee \pi_3)
+d(\pi_1 \vee \pi_2 \vee \pi_3,\pi_3).
\end{align}
\item Thus
\begin{align}
&d(\pi_1,\pi_2)+d(\pi_2,\pi_3)+\ldots+d(\pi_{k-1},\pi_k)\\
&\hspace{1in}\geq d\Big(\pi_1,\bigvee_{i=1}^k \pi_i\Big)+d\Big(\bigvee_{i=1}^k \pi_i,\pi_k\Big)
=|\pi_1|+|\pi_k|-2\Big|\bigvee_{i=1}^k \pi_i\Big|.\label{eq:dlowerbound}
\end{align}
This may be shown by the above property and induction on $k$:
\begin{align}
&d(\pi_1,\pi_2)+d(\pi_2,\pi_3)+\ldots+d(\pi_{k-1},\pi_k)\\
&\geq d(\pi_1,\pi_1 \vee \pi_2 \vee \pi_3)
+\underbrace{d(\pi_1 \vee \pi_2 \vee
\pi_3,\pi_3)+d(\pi_3,\pi_4)+\ldots+d(\pi_{k-1},\pi_k)}_{\text{apply induction
hypothesis}}\\
&\geq d(\pi_1,\pi_1 \vee \pi_2 \vee \pi_3)
+d\Big(\pi_1 \vee \pi_2 \vee \pi_3,\bigvee_{i=1}^k \pi_i\Big)
+d\Big(\bigvee_{i=1}^k \pi_i,\pi_k\Big)\\
&\geq d\Big(\pi_1,\bigvee_{i=1}^k \pi_i\Big)
+d\Big(\bigvee_{i=1}^k \pi_i,\pi_k\Big).
\end{align}
\end{itemize}
Applying (\ref{eq:dlowerbound}) gives, under our assumption for Case 1,
\begin{align}
d(\pi_T,\pi)+d(\pi,\pi')+d(\pi',\tau')+d(\tau',\tau)+d(\tau,\pi_\Id)
&\geq |\pi_T|+|\pi_\Id|-2|\pi_T \vee \pi \vee \pi' \vee \tau' \vee \tau \vee
\pi_\Id|\\
&\geq 4|\cV_T|+4|\cV_\Id|-4.
\end{align}
Applying this to (\ref{eq:fourthmomentCase1}) shows
$\E(\val_G(\cL)-\val_G(\bar\cL))^4 \leq Cn^2$ as desired.

{\bf Case 2:} $|\pi_T \vee \pi \vee \pi' \vee \tau' \vee \tau \vee \pi_\Id|=3$.
In this case we apply the leading order Weingarten expansion, by
(\ref{eq:weinasymp}),
\[\Wg_{n,\cE_S}(\pi,\pi')=n^{-\frac{|\cE_S|}{2}-\frac{d(\pi|_S,\pi'|_S)}{2}}
\Wg_{\cE_S}^{(0)}(\pi,\pi')+O(n^{-\frac{|\cE_S|}{2}-\frac{d(\pi|_S,\pi'|_S)}{2}-1}),\]
and similarly for $\cE_{\bar S}$ and $\tau,\tau'$. Then 
\begin{align}
W(\pi,\pi',\tau,\tau')&=n^{-4|\cE|-\frac{d(\pi,\pi')}{2}-\frac{d(\tau,\tau')}{2}}\underbrace{\mathop{\sum_{S \subseteq \{1,2,3,4\}}}_{\pi_S \geq
\pi,\pi',\tau,\tau'}
(-1)^{|S|}\Wg_{\cE_S}^{(0)}(\pi,\pi')\Wg_{\cE_S}^{(0)}(\tau,\tau')
\Wg_{\cE_{\bar S}}^{(0)}(\pi,\pi')\Wg_{\cE_{\bar S}}^{(0)}(\tau,\tau')
}_{:=W^{(0)}(\pi,\pi',\tau,\tau')}\\
&\hspace{1in}+O(n^{-4|\cE|-\frac{d(\pi,\pi')}{2}-\frac{d(\tau,\tau')}{2}-1}).
\end{align}
By the explicit form in (\ref{eq:wein0}), we see that
$\Wg_\cE^{(0)}(\pi,\pi')$ factorizes across blocks of $\pi \vee \pi'$, so
\[\Wg_{\cE_S}^{(0)}(\pi,\pi')\Wg_{\cE_{\bar S}}^{(0)}(\pi,\pi')
=\Wg_{\cE^{\sqcup 4}}^{(0)}(\pi,\pi')\]
which does not depend on $S$, and similarly for $\tau,\tau'$.
When $|\pi_T \vee \pi \vee \pi' \vee \tau' \vee \tau \vee \pi_\Id|=3$,
exactly two blocks $\cE^{(s)}$ of $\pi_T \vee \pi_\Id$ are merged in this
partition. Supposing without loss of generality that these are
$\cE^{(1)},\cE^{(2)}$,
then the summation over $S$ defining $W^{(0)}(\pi,\pi',\tau,\tau')$ is over all
subsets $S$ containing either both $\{1,2\}$ or neither $\{1,2\}$,
and we see that $\sum_{S \subseteq \{1,2,3,4\}:\pi_S \geq \pi,\pi',\tau,\tau'} (-1)^{|S|}=0$.
Thus $W^{(0)}(\pi,\pi',\tau,\tau')=0$, so
\[|W(\pi,\pi',\tau,\tau')| \leq Cn^{-4|\cE|-d(\pi,\pi')/2-d(\tau,\tau')-1}.\]
Under our assumption for Case 2 we have
\begin{align}
d(\pi_T,\pi)+d(\pi,\pi')+d(\pi',\tau')+d(\tau',\tau)+d(\tau,\pi_\Id)
&\geq |\pi_T|+|\pi_\Id|-2|\pi_T \vee \pi \vee \pi' \vee \tau' \vee \tau \vee
\pi_\Id|\\
&=4|\cV_T|+4|\cV_\Id|-6,
\end{align}
and applying these bounds in (\ref{eq:fourthmomentgeneralbound}) shows again
$\E(\val_G(\cL)-\val_G(\bar\cL))^4 \leq Cn^2$.

{\bf Case 3:} $|\pi_T \vee \pi \vee \pi' \vee \tau' \vee \tau \vee \pi_\Id|=4$.
In this case we apply the sub-leading order Weingarten expansion,
by (\ref{eq:weinasymp}),
\[\Wg_{n,\cE_S}(\pi,\pi')=n^{-\frac{|\cE_S|}{2}-\frac{d(\pi|_S,\pi'|_S)}{2}}
\Wg_{\cE_S}^{(0)}(\pi,\pi')
-n^{-\frac{|\cE_S|}{2}-\frac{d(\pi|_S,\pi'|_S)}{2}-1}
\Wg_{\cE_S}^{(1)}(\pi,\pi')
+O(n^{-\frac{|\cE_S|}{2}-\frac{d(\pi|_S,\pi'|_S)}{2}-2}),\]
and similarly for $\cE_{\bar S}$ and $\tau,\tau'$. Then
\begin{align}
W(\pi,\pi',\tau,\tau')
&=n^{-4|\cE|-\frac{d(\pi,\pi')}{2}-\frac{d(\tau,\tau')}{2}}
W^{(0)}(\pi,\pi',\tau,\tau')
-n^{-4|\cE|-\frac{d(\pi,\pi')}{2}-\frac{d(\tau,\tau')}{2}-1}
W^{(1)}(\pi,\pi',\tau,\tau')\\
&\hspace{1in}
+O(n^{-4|\cE|-\frac{d(\pi,\pi')}{2}-\frac{d(\tau,\tau')}{2}-2})
\end{align}
where $W^{(0)}(\pi,\pi',\tau,\tau')$ is as defined in Case 2 above, and
\begin{align}
&W^{(1)}(\pi,\pi',\tau,\tau')\\
&=\mathop{\sum_{S \subseteq \{1,2,3,4\}}}_{\pi_S \geq
\pi,\pi',\tau,\tau'} (-1)^{|S|}\Big[
\Big(\Wg_{\cE_S}^{(1)}(\pi,\pi')\Wg_{\cE_{\bar S}}^{(0)}(\pi,\pi')
+\Wg_{\cE_S}^{(0)}(\pi,\pi')\Wg_{\cE_{\bar S}}^{(1)}(\pi,\pi')\Big)
\Wg_{\cE_S}^{(0)}(\tau,\tau') \Wg_{\cE_{\bar S}}^{(0)}(\tau,\tau')\\
&\hspace{1in}
+\Big(\Wg_{\cE_S}^{(1)}(\tau,\tau')\Wg_{\cE_{\bar S}}^{(0)}(\tau,\tau')
+\Wg_{\cE_S}^{(0)}(\tau,\tau')\Wg_{\cE_{\bar S}}^{(1)}(\tau,\tau')\Big)
\Wg_{\cE_S}^{(0)}(\pi,\pi') \Wg_{\cE_{\bar S}}^{(0)}(\pi,\pi')\Big].
\end{align}
Here, for 
$|\pi_T \vee \pi \vee \pi' \vee \tau' \vee \tau \vee \pi_\Id|=4$,
the blocks $\cE^{(s)}$ remain disjoint in this partition,
so the summations defining $W^{(0)}$ and $W^{(1)}$ are over all subsets
$S \subseteq \{1,2,3,4\}$. Then we still
have $\sum_{S \subseteq \{1,2,3,4\}:\pi_S \geq \pi,\pi',\tau,\tau'}
(-1)^{|S|}=0$, so $W^{(0)}(\pi,\pi',\tau,\tau')=0$ as in Case 3 above.
For $W^{(1)}$, letting
$2k_1,\ldots,2k_M$ be the sizes of the blocks of $|\pi \vee \pi'|$, we have
from (\ref{eq:wein0}) and (\ref{eq:wein1}) that
\[\Wg_{\cE_S}^{(0)}(\pi,\pi')\Wg_{\cE_{\bar S}}^{(1)}(\pi,\pi')
+\Wg_{\cE_S}^{(1)}(\pi,\pi')\Wg_{\cE_{\bar S}}^{(0)}(\pi,\pi')
=\sum_{m=1}^M (-1)^{k_m-1}a_{k_m-1}
\mathop{\prod_{m'=1}^M}_{m' \neq m} (-1)^{k_{m'}-1} c_{k_{m'}-1},\]
where the summands corresponding to blocks $m \in \{1,\ldots,M\}$ belonging to
$\cE_S$ come from the second term
$\Wg_{\cE_S}^{(1)}(\pi,\pi')\Wg_{\cE_{\bar S}}^{(0)}(\pi,\pi')$,
and those for blocks belong to $\cE_{\bar S}$ come from the first term
$\Wg_{\cE_S}^{(0)}(\pi,\pi')\Wg_{\cE_{\bar S}}^{(1)}(\pi,\pi')$.
This quantity again does not depend on $S$, and similarly for $\tau,\tau'$.
Thus $W^{(1)}(\pi,\pi',\tau,\tau')=0$, so
\[|W(\pi,\pi')| \leq Cn^{-4|\cE|-d(\pi,\pi')/2-d(\tau,\tau')-2}.\]
Under our assumption for Case 3 we have
\begin{align}
d(\pi_T,\pi)+d(\pi,\pi')+d(\pi',\tau')+d(\tau',\tau)+d(\tau,\pi_\Id)
&\geq 4|\cV_T|+4|\cV_\Id|-8
\end{align}
(which coincides with the direct bound from the triangle inequality for
$d(\cdot,\cdot)$).
Applying these bounds in (\ref{eq:fourthmomentgeneralbound}) shows again
$\E(\val_G(\cL)-\val_G(\bar\cL))^4 \leq Cn^2$. Thus (\ref{eq:fourthmomentO})
holds in all cases, concluding the proof.
\end{proof}

\begin{proof}[Proof of \Cref{prop:anisotropic}(b)]
The ideas are similar to the proof of Proposition \ref{prop:local}(b), and we will omit details
to avoid repetition. Let $\cF=\bigsqcup_{t=0}^T \cF_t$, where $\cF_t$ consists
of the functions $f:\RR^{n \times t} \to \RR^n$.
Given any $C_0,\epsilon>0$,
we let $\zeta,\iota>0$ be constants depending on
$L,C_0,\epsilon$ to be specified later, and denote by $C,C'>0$ constants that do
not depend on $\zeta,\iota$.

To construct a set of polynomial anisotropic functions $\cP=\bigsqcup_{t=0}^T
\cP_t$ that verifies
condition (1) of Definition \ref{def:BCP_approx}, for each $f \in \cF_0$ we
include $p=f$ in $\cP_0$. For each $t=1,\ldots,T$ and $f \in\cF_t$, suppose
$f(\cdot)=\bK'g(\bK^\top \cdot)$ and $g=(\mathring g_i)_{i=1}^n$. We
construct an approximating $p \in \cP_t$ as follows:
\begin{enumerate}
\item For each $i \in [n]$, let $\bK[i]$ denote the $i^\text{th}$ column of
$\bK$. Define the index set
\[\cI=\{i \in [n]:\|\bK[i]\|_2^2 \geq \zeta\}.\]
For each $i \in \cI$, define $\tilde g:\RR^t \to \RR$ by
\begin{equation}\label{eq:tilderingg}
\tilde g_i(\bx)=\mathring g_i(\|\bK[i]\|_2 \bx)
\end{equation}
Let $L'$ be a constant larger than $L \cdot \|\bK\|_\op$, and define
\[\cG=\{\tilde g:\RR^t\to\RR \text{ such that }
\tilde g \text{ is $L'$-Lipschitz with }
|\tilde g(0)| \leq L\}.\]
Then, since $\mathring g_i$ satisfies the Lipschitz property
(\ref{eq:anisolipschitz}), we have that
 $\tilde g_i \in \cG$ for each $i \in \cI$.
Let $\cN \subseteq \cG$ be a $\zeta$-net defined independently of $n$ for which,
for each $\tilde g \in \cG$, there exists $h\in\cN$ such that 
\begin{align}\label{eq:lipschitz_net_approx_anisotropic}
    \sup_{\bx\in\RR^t:\|\bx\|_2^2\leq (1/\zeta)^2} |h(\bx)-\tilde g(\bx)|^2<\zeta.
\end{align}
For each $i \in \cI$, let $h_i \in \cN$ be this approximation of $\tilde g_i$.
\item Now for each $h \in \cN$, let $\tilde q:\RR^t\to\RR$ be a
polynomial that approximates $h$ in the sense
\begin{align}\label{eq:poly_approx_anisotropic}
    \EE_{\bZ \sim \bSigma}[(h(\bZ)-\tilde q(\bZ))^2]<\iota
\end{align}
for every $\bSigma \in \RR^{t \times t}$ satisfying $\|\bSigma\|_\op<C_0$.
For each $h \in \cN$, we may construct this polynomial
$\tilde q$ independently of $n$
in the same manner as in Proposition \ref{prop:local}(b).
For each $i \in \cI$, let
$\tilde q_i:\R^t \to \RR$ be this approximation of $h_i$ constructed in
step (i), and define
\begin{equation}\label{eq:mathringq1}
\mathring q_i(\bx)=\tilde q_i\bigg(\frac{1}{\|\bK[i]\|_2}\bx\bigg)
\text{ for } i \in \cI.
\end{equation}
Thus $\tilde q_i(\bx)=\mathring q_i(\|\bK[i]\|_2\bx)$, paralleling
(\ref{eq:tilderingg}). We set
\begin{equation}\label{eq:mathringq2}
\mathring q_i(\bx)=\mathring g_i(0) \text{ for } i \notin \cI,
\end{equation}
$q=(\mathring q_i)_{i=1}^n$, and $p(\cdot)=\bK'q(\bK^\top \cdot)$, and we
include $p$ in $\cP_t$.
\end{enumerate}
Note that the degrees and coefficients of each $(\tilde q_i:i \in \cI)$ are
bounded by a constant independent of $n$.
Then, since $1/\|\bK[i]\|_2$ is bounded for all $i \in \cI$, the degrees and
coefficients of $q=(\mathring q_i)_{i=1}^n$ are also bounded by a constant
independent of $n$. Thus $\cP$ constructed in this way is a set of
polynomial anisotropic functions satisfying Definition \ref{def:polyaniso}.
Furthermore, $|\cP|=|\cF|$ which is finite and independent of $n$.
Thus $\cP$ is BCP-representable by Proposition \ref{prop:anisotropic}(a).

To analyze the approximation error, consider any $\bSigma \in \RR^{t \times t}$
with $\|\bSigma\|_\op<C_0$,
and let $\bZ \sim \cN(0,\bSigma \otimes \Id_n) \in\RR^{n\times t}$. Then
\begin{align}
    &\frac{1}{n}\EE\left[\|f(\bZ)-p(\bZ)\|_2^2\right]
= \frac{1}{n} \EE\left[\|\bK'g(\bK^\top \bZ) - \bK'q(\bK^\top \bZ)\|_2^2\right]\\
    &\leq \frac{\|\bK'\|_\op^2}{n} \sum_{i=1}^n 
    \EE\left[|\mathring g_i(\bK[i]^\top \bZ)-\mathring q_i(\bK[i]^\top \bZ)|^2\right]\\
    &=\frac{\|\bK'\|_\op^2}{n} \sum_{i \in \cI}
    \EE\left[\left|\tilde g_i\bigg(\frac{\bK[i]^\top}{\|\bK[i]\|_2}\bZ\bigg)
-\tilde q_i\bigg(\frac{\bK[i]^\top}{\|\bK[i]\|_2}\bZ\bigg)\right|^2\right]
+\frac{\|\bK'\|_\op^2}{n} \sum_{i \neq \cI} \E[|\mathring g_i(\bK[i]^\top \bZ)
-\mathring g_i(0)|^2]
\end{align}
Applying the bound $\sup_{\bK \in \cK}\|\bK\|_\op<C$,
the Lipschitz property (\ref{eq:anisolipschitz}) for $\mathring g_i$,
the bound $\|\bSigma\|_\op<C_0$, and the condition $\|\bK[i]\|_2^2<\zeta$ for
all $i \notin \cI$, the second term is bounded by $C\zeta$. Then, also
decomposing the first term, we have
\begin{align}
\frac{1}{n}\EE\left[\|f(\bZ)-p(\bZ)\|_2^2\right]
&\leq \frac{C}{n} \sum_{i \in \cI}
    \EE\left[\left|\tilde g_i\bigg(\frac{\bK[i]^\top}{\|\bK[i]\|_2}\bZ\bigg)
-h_i\bigg(\frac{\bK[i]^\top}{\|\bK[i]\|_2}\bZ\bigg)\right|^2\right]\\
&\hspace{1in}+\frac{C}{n} \sum_{i \in \cI}
    \EE\left[\left|h_i\bigg(\frac{\bK[i]^\top}{\|\bK[i]\|_2}\bZ\bigg)
-\tilde q_i\bigg(\frac{\bK[i]^\top}{\|\bK[i]\|_2}\bZ\bigg)\right|^2\right]
+C\zeta\label{eq:poly_approx_anisotropic_bound_1}
\end{align}
where $h_i$ is the above net approximation of $\tilde g_i$.
Here, $(\bK[i]/\|\bK[i]\|_2)^\top \bZ \in \RR^t$ has law $\cN(0,\bSigma)$. Then
the first term of \eqref{eq:poly_approx_anisotropic_bound_1} is at most
$C'\zeta$ by (\ref{eq:lipschitz_net_approx_anisotropic})
and the same argument as in Proposition \ref{prop:local}(b), while
the second term of \eqref{eq:poly_approx_anisotropic_bound_1} is at most
$C'\iota$ from the guarantee in \eqref{eq:poly_approx_anisotropic}.
Thus choosing $\zeta,\iota$ sufficiently small based on $\epsilon$ shows
\begin{align}
    \frac{1}{n} \EE\left[\|f(\bZ)-p(\bZ)\|_2^2\right]<\epsilon,
\end{align}
verifying that condition (1) in \Cref{def:BCP_approx} holds.

Next, we verify condition (2) in \Cref{def:BCP_approx}. Let
$\cQ=\bigsqcup_{t=0}^T \cQ_t$ where $\cQ_t$ is the set
of all functions of the form $\bK'q(\bK^\top \cdot)$ where $\bK',\bK \in \cK$,
$q=(\mathring q_i)_{i=1}^n$ is a separable polynomial,
and $\mathring q_i:\RR^t \to \RR$
has all coefficients bounded by 1. For any $q_1,q_2 \in \cQ$ of bounded degrees,
$\cP \cup \{q_1,q_2\}$ is also BCP-representable.
Suppose $\bSigma \in \RR^{t \times t}$ (with $\|\bSigma\|_\op<C_0$) and
$\bz \in \RR^{n \times t}$ satisfy, for any
$q_1,q_2 \in \cQ_t$ of bounded degrees, almost surely
\begin{equation}\label{eq:q1q2aniso}
\lim_{n \to \infty} \frac{1}{n} q_1(\bz)^\top q_2(\bz)
-\frac{1}{n}\E_{\bZ \sim \cN(0,\bSigma \otimes \Id_n)}
[q_1(\bZ)^\top q_2(\bZ)]=0.
\end{equation}
Similar to the above, we may bound $n^{-1}\|f(\bz)-p(\bz)\|_2^2$ as
\begin{align}
    \frac{1}{n}\|f(\bz)-p(\bz)\|_2^2 &\leq \frac{C}{n}\sum_{i \in \cI}
    \left(\tilde g_i\bigg(\frac{\bK[i]^\top}{\|\bK[i]\|_2}\bz\bigg)
-h_i\bigg(\frac{\bK[i]^\top}{\|\bK[i]\|_2}\bz\bigg)\right)^2\\
    &\quad + \frac{C}{n}\sum_{i \in \cI}
\left(h_i\bigg(\frac{\bK[i]^\top}{\|\bK[i]\|_2}\bz\bigg)
- \tilde q_i\bigg(\frac{\bK[i]^\top}{\|\bK[i]\|_2}\bz\bigg)\right)^2
+\frac{C}{n}\sum_{i \notin \cI}
\Big(\mathring g_i(\bK[i]^\top\bz)-\mathring g_i(0)\Big)^2.
    \label{eq:poly_approx_anisotropic_bound_4}
\end{align}
The first and third terms may be bounded by $C'\zeta$ using 
(\ref{eq:q1q2aniso}), (\ref{eq:lipschitz_net_approx_anisotropic}),
and the same argument as in Proposition \ref{prop:local}(b).
The analysis for the second term is also similar to that in 
Proposition \ref{prop:local}(b):
For each function $h\in\cN$, define the index set
\begin{align}
    \cI_h = \{i\in\cI: h_i=h\}.
\end{align}
For all $i\in\cI_h$, the polynomial approximation $\tilde q_i$ of $h_i$
is the same, and we denote this as $\tilde q_h$. Then the second term may be
decomposed as
\begin{align}
C\sum_{h\in\cN} \underbrace{\frac{1}{n}\sum_{i\in\cI_h}
    \bigg(h\bigg(\frac{\bK[i]^\top}{\|\bK[i]\|_2}\bz\bigg)
    - q_h\bigg(\frac{\bK[i]^\top}{\|\bK[i]\|_2}\bz\bigg)\bigg)^2}_{:=E_h}.
\end{align}
We claim that for each $h \in \cN$,
$E_h<2\iota$ a.s.\ for all large $n$. If this does not
hold, we may consider a positive probability event where $E_h \geq 2\iota$
infinitely often, and (\ref{eq:q1q2aniso}) holds for $q_1,q_2$ in a 
suitably chosen countable subset of $\cQ_t$. We may pass to a subsequence
$\{n_j\}_{j=1}^\infty$ where $E_h \geq 2\iota$,
$|\cI_h|/n \to \alpha$, and
$\bSigma \to \bar\bSigma$. As in Proposition
\ref{prop:local}, if $\alpha=0$ then $E_h \to 0$, contradicting $E_h \geq
2\iota$. If $\alpha>0$, the convergence (\ref{eq:q1q2aniso}) over
a suitably chosen countable subset of $\cQ_t$ implies
the convergence in moments of the empirical distribution of
$\{\frac{\bK[i]^\top}{\|\bK[i]\|_2} \bz\}_{i\in\cI_h}$ to those of
$\cN(0,\bar\bSigma)$, and hence also
Wasserstein-$k$ convergence for any order $k \geq 1$.
Then since $h-q_h$ is of polynomial growth, this implies
\begin{align}
    E_h \to \alpha \cdot \EE_{\bZ\sim\cN(0,\bar\bSigma)}
    \left[(h(\bZ)-q_h(\bZ))^2\right].
\end{align}
This limit is at most $\alpha \cdot \iota$ by \eqref{eq:poly_approx_anisotropic},
again contradicting $E_h \geq 2\iota$. Thus
$E_h<2\iota$ a.s.\ for all large $n$ as claimed.
Applying this for each $h \in \cN$ 
shows that the second term of \eqref{eq:poly_approx_anisotropic_bound_4} is at
most $C(\zeta)\iota$ a.s.\ for all large $n$. Then choosing $\zeta$
sufficiently small followed by $\iota$ sufficiently small ensures
$n^{-1}\|f(\bz)-p(\bz)\|_2^2<\epsilon$,
establishing condition (2) of Definition \ref{def:BCP_approx} and
completing the proof.
\end{proof}

\subsection{Spectral functions}\label{sec:BCPSpectral}

We consider the following class of polynomial spectral functions
paralleling Definition \ref{def:spectral}, where $\bTheta_* \in \RR^{M \times
N}$ has a form $r_0(\bG_*)$ for a function $r_0:[0,\infty) \to \RR$ applied
spectrally to a matrix $\bG_*$.

\begin{definition}\label{def:SpecPolyApp}
$\cP=\bigsqcup_{t=0}^T \cP_t$ is a set of {\bf polynomial spectral functions}
with shift $\bG_* \in \RR^{M \times N}$ if, for some constants $C,K,D>0$:
\begin{itemize}
\item For each $t=0,1,\ldots,T$ and each $p \in \cP_t$, there exist polynomial
functions $r_0,r_1,\ldots,r_K:[0,\infty) \to \RR$ and coefficients
$\{c_{ks}\}$ with $|c_{ks}|<C$ for which
\begin{equation}\label{eq:polyspec}
p(\bz_1,\ldots,\bz_t)=\sum_{k=1}^K \vec\bigg(r_k\bigg(\sum_{s=1}^t
c_{ks}\mat(\bz_s)+r_0(\bG_*)\bigg)\bigg)
\end{equation}
where $r_k(\cdot)$ is applied spectrally to the singular values of
its input as in (\ref{eq:spectralcalculus}).
\item For each $k=0,1,\ldots,K$, the above polynomial $r_k(\cdot)$ takes a form
$r_k(\cdot)=N^{1/2}\bar r_k(N^{-1/2}\,\cdot\,)$ where
$\bar r_k$ is an odd-degree polynomial given by
\begin{equation}\label{eq:rform}
\bar r_k(x)=\sum_{\text{odd } d=1}^D a_{kd}x^d
\end{equation}
with coefficients $\{a_{kd}\}$ satisfying $|a_{kd}|<C$.
\end{itemize}
\end{definition}

We note that since the inputs to $r_k(\cdot)$ will have operator norm on the
order of $N^{1/2}$, the scalings of $N^{-1/2}$ and $N^{1/2}$ defining
$r_k(\cdot)$ ensure that $\bar r_k(\cdot)$ defined via (\ref{eq:rform})
is applied to an input with operator norm of constant order.

We show in Section \ref{sec:spectralrepr} that if the shift $\bG_* \equiv
\bG_*(n)$ has i.i.d.\ $\cN(0,1)$ entries, then any such set $\cP$ with bounded
cardinality is BCP-representable almost surely with respect to
$\{\bG_*(n)\}_{n=1}^\infty$. We then show in Section
\ref{sec:spectralapprox} that the Lipschitz spectral functions of
Definition \ref{def:spectral} are BCP-approximable via this polynomial class.

\subsubsection{BCP-representability}\label{sec:spectralrepr}

To describe a set of tensors representing the polynomial spectral functions of
Definition \ref{def:SpecPolyApp}, we will identify each index $i \in
[n]$ with its equivalent index pair $(j,j') \in [M] \times [N]$, and write
interchangeably
\[\bT[i_1,\ldots,i_k]=\bT[(j_1,j_1'),\ldots,(j_k,j_k')]\]
for a tensor $\bT \in (\RR^n)^{\otimes k} \equiv
(\RR^{M \times N})^{\otimes k}$. We represent the above class of
polynomial spectral functions by contractions of $\bG_*$ with tensors of the
following form.

\begin{definition}\label{def:Talt}
For each even integer $k \geq 2$, the {\bf alternating tensor of order
$\pmb{k}$} is the tensor $\bT_\alt^k \in (\RR^n)^{\otimes k}$ with entries
    \[\bT_\alt^k\ss{(j_1, j_1'), \ldots, (j_k, j_k')}=
    N^{1-k/2}\prod_{\text{odd } \ell \in [k]} \1\{j_\ell'=j_{\ell+1}'\}
\prod_{\text{even } \ell \in [k]} \1\{j_\ell=j_{\ell+1}\},\]
with the identification $j_{2k+1} \equiv j_1$. 
\end{definition}

\begin{lemma}\label{lem:spectralBCP}
Let $\bG_* \equiv \bG_*(n) \in \RR^{M \times N}$ have i.i.d.\ $\cN(0,1)$ entries,
and let $\bT_\alt^2,\bT_\alt^4,\ldots,\bT_\alt^K$ be the alternating tensors up
to a fixed even order $K \geq 2$. If $MN=n$ and $M,N \leq C\sqrt{n}$ for a
constant $C>0$, then 
$\cT=\{\bG_*,\bT_\alt^2,\bT_\alt^4,\ldots,\bT_\alt^K\}$ satisfies
the BCP almost surely with respect to $\{\bG_*(n)\}_{n=1}^\infty$.
\end{lemma}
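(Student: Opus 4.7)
I will verify the BCP via its equivalent form (Definition \ref{def:BCPequiv}): for any fixed connected bipartite multigraph $G = (\cV_\Id \sqcup \cV_T, \cE)$, independent of $n$, with $\cV_\Id$-vertices of even degree, I aim to show $\sup_\cL |\val_G(\cL)| \le Cn$ almost surely over all $(\Id, \cT)$-labelings $\cL$. Since $|\cT|$ is bounded independently of $n$, only finitely many labelings arise for each fixed $G$, so a union bound reduces the task to bounding a single $(G,\cL)$ with a sufficiently strong tail. I treat $\bG_* \in \cT_1$ as the order-$1$ tensor $\vec(\bG_*) \in \RR^n$, so each $\bG_*$-labeled vertex in $G$ has degree $1$; let $\cV_G \subseteq \cV_T$ denote these vertices and $\cV_\alt = \cV_T \setminus \cV_G$ the alternating-labeled ones.

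Then $\val_G(\cL)$ is a polynomial of bounded degree $|\cV_G|$ in the i.i.d.\ Gaussian entries of $\bG_*$, and its mean and higher moments can be expanded via Wick's rule. For $\E\val_G(\cL)$, each pairing $\pi$ of $\cV_G$ contributes a factor $\prod_{\{w,w'\} \in \pi}\delta_{j_{e_w},j_{e_{w'}}}\delta_{j'_{e_w},j'_{e_{w'}}} = \prod_{\{w,w'\} \in \pi} \delta_{i_{e_w},i_{e_{w'}}}$, effectively fusing each paired pair of dangling edges into a single edge with a new identity constraint. This produces a sum over $\pi$ of values of \emph{deterministic} tensor networks $G^\pi$ built only from alternating and identity tensors (with identities still of even degree). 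Applying the same expansion to four disjoint copies of $G$ yields the fourth central moment, in the spirit of Appendix \ref{sec:asconvergence}.

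The combinatorial heart is the bound $|\val_{G^\pi}(\cL^\pi)| \le Cn$ for each such deterministic network. I would decouple the constraints imposed by alternating tensors onto the $M$- and $N$-coordinates of incident indices: at a degree-$k$ alternating vertex with cyclically ordered edges $e_1, \ldots, e_k$, the $N$-coordinates are identified in pairs $\{(1,2),(3,4),\ldots\}$ and the $M$-coordinates in pairs $\{(2,3),\ldots,(k,1)\}$, while identity vertices identify both coordinates across all incident edges. The value therefore factorizes as
\[
|\val_{G^\pi}(\cL^\pi)| = N^A \cdot M^{c_M} \cdot N^{c_N},
\qquad A = \sum_{v \in \cV_\alt}\Big(1-\tfrac{\deg(v)}{2}\Big),
\]
where $c_M, c_N$ count connected components of two derived graphs $G_M, G_N$ whose vertices arise from the internal $M$- and $N$-structures of the alternating/identity vertices and whose edges are copies of $\cE$. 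Using $M, N \le C\sqrt n$, the desired bound reduces to an Euler-type inequality $A + c_M + c_N \le 2$. An analogous argument for the fourth-moment expansion should give $\E(\val_G(\cL) - \E\val_G(\cL))^4 \le C n^2$, and then Markov's inequality plus Borel-Cantelli and a union bound over labelings yield $\sup_\cL |\val_G(\cL)| \le C'n$ almost surely for all large $n$, completing the BCP verification.

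\textbf{Main obstacle.} The inequality $A + c_M + c_N \le 2$ is the crux. It is the analog for our setting of the ``planar maps dominate'' principle underlying the genus expansion of Gaussian matrix models, but here the two distinct cyclic identifications at each alternating vertex (one on $M$-coordinates, one on $N$-coordinates), together with identity vertices of arbitrary even degree, require careful book-keeping to set up the correct ribbon-graph structure on a surface of Euler characteristic at most $2$. Extending this planarity-type bound robustly to the multi-copy networks arising from the fourth-moment calculation — so as to obtain the extra suppression needed for summable tails, in the style of Appendix \ref{sec:asconvergence} — is where the main work of the proof lies.
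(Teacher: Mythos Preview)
Your plan is sound and correctly identifies the crux as the inequality $A+c_M+c_N\le 2$, which in the paper's notation reads $\c(G_R)+\c(G_B)\le \tfrac{|\cE|}{2}-m+2\c(G)$ for a connected network. The paper's route is the same in spirit but more economical in two places. First, rather than handling $\bG_*$ inline via Wick on four copies, the paper invokes the general Corollary~\ref{cor:BCPgaussian}: any BCP set of bounded cardinality stays BCP after adjoining finitely many i.i.d.\ Gaussian vectors, proved once via a variance bound (Lemma~\ref{lem:GauPolyVar}) and Gaussian hypercontractivity; this immediately reduces the present lemma to the \emph{deterministic} BCP for $\{\bT_\alt^2,\ldots,\bT_\alt^K\}$ alone and bypasses the multi-copy bookkeeping you outline. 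Second, for that deterministic core the paper does not set up a ribbon-graph/Euler argument; it encodes the $N$- and $M$-constraints of each $\bT_\alt^{k}$ as red and blue edges of a multigraph $G_\alt$ on the identity vertices, so that each alternating tensor contributes an even alternating red/blue cycle, and proves the key inequality as Lemma~\ref{lemma:graphlemma} by direct induction on $|\cE|$: pick a vertex, case-split on whether its incident edges are self-loops or lie in one versus two of the alternating cycles, and perform a local surgery (merge, splice, or delete) that reduces $|\cE|$ by $2$. Your Euler-type heuristic may well succeed and could be more conceptual, but the paper's induction is self-contained and sidesteps the need to realize identity vertices of arbitrary even degree as features of a surface embedding.
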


\begin{proof}
By Corollary \ref{cor:BCPgaussian}, it suffices to consider the set
$\cT=\{\bT_\alt^2,\ldots,\bT_\alt^K\}$ with $\bG_*$ removed and show that $\cT$
satisfies the BCP.

Consider any expression inside the supremum of (\ref{eq:BCPcondition}), where
each tensor $\bT_1,\ldots,\bT_m$
is given by $\bT_\alt^k$ for some even order $k \geq 2$. This takes the form
$n^{-1}|\val|$ for a value
\begin{equation}\label{eq:valalt}
\val=\sum_{i_1,\ldots,i_\ell=1}^n
\prod_{a=1}^m \bT_\alt^{k_a}[i_{\pi(k_{a-1}^++1)},\ldots,i_{\pi(k_a^+)}],
\end{equation}
so we must show for each fixed $m,\ell,k_1,\ldots,k_m$ and $\pi$ that
$|\val| \leq Cn$ for a constant $C>0$ and all large $n$.
Identifying each index $i \in [n]$ with its equivalent index pair $(j,j') \in [M]
\times [N]$ and applying the form of $\bT_\alt^k$ in Definition \ref{def:Talt},
we have
\begin{equation}\label{eq:valalt2}
\begin{aligned}
\val&=\sum_{j_1,\ldots,j_\ell=1}^M
\sum_{j_1',\ldots,j_\ell'=1}^N
\prod_{a=1}^m \Bigg(N^{1-k_a/2}\\
&\times \1\{j_{\pi(k_{a-1}^++1)}'=j_{\pi(k_{a-1}^++2)}'\}
\1\{j_{\pi(k_{a-1}^++3)}'=j_{\pi(k_{a-1}^++4)}'\}
\ldots \1\{j_{\pi(k_a^+-1)}'=j_{\pi(k_a^+)}'\}\\
&\times
\1\{j_{\pi(k_{a-1}^++2)}=j_{\pi(k_{a-1}^++3)}\}
\1\{j_{\pi(k_{a-1}^++4)}=j_{\pi(k_{a-1}^++5)}\}
\ldots \1\{j_{\pi(k_a^+)}=j_{\pi(k_{a-1}^++1)}\}\Bigg).
\end{aligned}
\end{equation}
Let us represent this value via a multigraph $G_\alt$ on the $\ell$ vertices
$\{v_1,v_2,\ldots,v_\ell\}$, with edges
$\cE=\cE_R \sqcup \cE_B$ having two colors red and blue. For each
equality constraint $\1\{j_a'=j_b'\}$ above, we add a red edge $(v_a, v_b)$ to
$\cE_{R}$; for each equality constraint $\1\{j_a = j_b\}$, we add a blue edge
$(v_a, v_b)$ to $\cE_{B}$. As an illustration, consider an example of
(\ref{eq:valalt})
with $\ell=4$ indices and $m=4$ tensors given by
\begin{equation}\label{eq:valtexample}
\val=\sum_{i_1,i_2,i_3,i_4=1}^n
\bT_\alt^2[i_1,i_2]\bT_\alt^2[i_1,i_2]
\bT_\alt^6[i_2,i_2,i_3,i_3,i_4,i_4]\bT_\alt^2[i_4,i_4].
\end{equation}
Then $G_\alt$ has 4 vertices $\{v_1,v_2,v_3,v_4\}$ corresponding to the 4
indices $i_1,i_2,i_3,i_4$. The first two tensors $\bT_\alt^2$ produce one red
edge and one blue edge each between $(v_1,v_2)$, the last
tensor $\bT_\alt^2$ produces one red and one blue self-loop on $v_4$,
and the tensor $\bT_\alt^6$ produces a red self-loop on each vertex
$v_2,v_3,v_4$ and a blue edge connecting each pair
$(v_2,v_3),(v_3,v_4),(v_4,v_2)$. The
resulting graph $G_\alt$ is depicted in Figure \ref{fig:Galt}.

\begin{figure}[t]
    \begin{tikzpicture}[ultra thick]
           % ← your TikZ here (e.g. a triangle)
      \node (u1)[draw,circle, thick]  at (-3, 0) {$v_1$};
      \node (u2)[draw,circle, thick]  at (-1.5, 0) {$v_2$};
      \node (u3)[draw,circle, thick] at (0, 0) {$v_3$};
      \node (u4)[draw,circle, thick] at (1.5, 0) {$v_4$};

      \draw[red] (u1) to[bend right=30] (u2);
      \draw[blue] (u1) to[bend left=30] (u2);
      \draw[red] (u1) to[bend right=60] (u2);
      \draw[blue] (u1) to[bend left=60] (u2);
      \draw[red] (u2) to[out=270 - 30,in=270 + 30, looseness=8] (u2);
      \draw[red] (u3) to[out=270 - 30,in=270 + 30, looseness=8] (u3);
      \draw[red] (u4) to[out=270 - 30,in=270 + 30, looseness=8] (u4);
      \draw[blue] (u2) to[bend left=30] (u3);
      \draw[blue] (u3) to[bend left=30] (u4);
      \draw[blue] (u2) to[bend left=60] (u4);
      \draw[blue] (u4) to[out=90 - 30,in=90 + 30, looseness=8] (u4);
      \draw[red] (u4) to[out=270 - 45,in=270 + 45, looseness=12] (u4);
    \end{tikzpicture}
  \caption{An example of the graph $G_\alt$ representing the value
of the tensor contraction (\ref{eq:valtexample}).}\label{fig:Galt}
\end{figure}
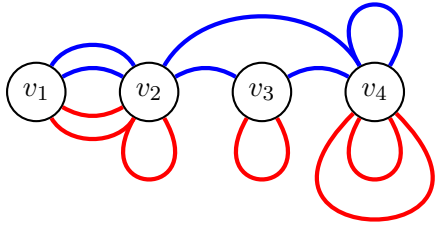

Let $\c(G_{\alt,R})$ and $\c(G_{\alt,B})$ be the numbers of connected components
in the subgraphs of $G_\alt$ given by the red edges and blue edges,
respectively. Each red component corresponds to a distinct index $j' \in [N]$ of
(\ref{eq:valalt2}), and each blue component corresponds to a distinct index $j
\in [M]$. Thus
\[\val=N^{m-\sum_{a=1}^m k_a/2}
\cdot N^{\c(G_{\alt,R})} \cdot M^{\c(G_{\alt,B})}.\]
To bound this quantity, we claim the following combinatorial lemma, whose proof
we defer below.

\begin{lemma}\label{lemma:graphlemma}
Let $G=(\cV,\cE)$ be any multigraph with edges $\cE=\cE_R \sqcup
\cE_B$ of two colors red and blue. Suppose, in each subgraph $G_R$ or
$G_B$ of red or blue edges only, each vertex $v \in \cV$ has non-zero even
degree (where a self-loop contributes a degree of 2 to its vertex). Suppose
also that $\cE$ can be
decomposed as a union of $m$ edge-disjoint cycles $\cE=S_1 \sqcup \cdots \sqcup
S_m$, where each $S_a$ for $a=1,\ldots,m$ is a non-empty cycle containing an even number of edges
that alternate between red edges of $\cE_R$ and blue edges of $\cE_B$. Then the
numbers of connected components of $G_R,G_B,G$ satisfy
\[\label{eq:CCbound}
\c(G_R)+\c(G_B) \leq \frac{|\cE|}{2}-m+2\c(G).\]
\end{lemma}

We apply the lemma to $G_\alt$ constructed above:
Each vertex $v_b$ of $G_\alt$ has non-zero even
degree in each of the red and blue subgraphs $G_{\alt,R}$ and $G_{\alt,B}$,
because each appearance of the corresponding index $i_b$ in (\ref{eq:valalt})
contributes 1 to both the red and blue degrees of $v_b$, and each index
$i_1,\ldots,i_\ell$ appears a non-zero even number of times in
(\ref{eq:valalt}) by surjectivity of $\pi$ and the first condition of
Definition \ref{def:BCP}. Each
tensor $\{\bT_a:a=1,\ldots,m\}$ contributes an even-length cycle $S_a$ of edges
of alternating colors, so the decomposition $\cE=S_1 \sqcup \cdots \sqcup S_m$ holds with a number of cycles $m$ equal to the number of tensors.
The total number of edges $|\cE|$ of $G_\alt$ is the total order of all tensors
$\sum_{a=1}^m k_a$. Finally, $G_\alt$ is connected, for
otherwise there is a partition of the indices $i_1,\ldots,i_\ell$ corresponding
to two disjoint sets of tensors in (\ref{eq:valalt}), contradicting the second
condition of Definition \ref{def:BCP}. Thus $\c(G_\alt)=1$. Under the
given conditions for $M,N$, there exists a constant $C>0$ for which $M/N<C$ and
$N^2<Cn$. Thus, Lemma \ref{lemma:graphlemma} implies
\[\val \leq C N^{m-\sum_{a=1}^m k_a/2+\c(G_{\alt,R})+\c(G_{\alt,B})}
\leq CN^{2\c(G_\alt)} \leq C'n\]
for some constants $C,C'>0$, as desired.
\end{proof}

\begin{proof}[Proof of Lemma \ref{lemma:graphlemma}]
Let $\deg_{G_R}(v)$ and $\deg_{G_B}(v)$ denote the degrees of the vertex
$v \in \cV$ in
the subgraphs of red and blue edges only. Note that the assumptions of the lemma
imply $\deg_{G_R}(v)=\deg_{G_B}(v)$ (because each alternating cycle
$S_1,\ldots,S_m$ must contribute the same degree to $v$ in both the red and blue
subgraphs) which is non-zero and even for each $v \in \cV$.

    We induct on the total number of edges $|\cE|$, which must be even since
each cycle $S_1,\ldots,S_m$ is of even length.
    For the base case $|\cE|=2$, we must have $\cE=S$ for a single
    alternating cycle $S$, and $\cV=\{u\}$ and $S=((u,u),(u,u))$ for a single vertex
    $u$ in order for $\deg_{G_R}(v)=\deg_{G_B}(v) \geq 2$ to hold for all
vertices $v \in \cV$. In this case $\c(G_R)=\c(G_B)=\c(G)=1$, $|\cE|=2$, and
    $m=1$, so (\ref{eq:CCbound}) holds with equality.

    Consider now $|\cE| \geq 4$, and suppose by induction that the result
    holds when the total number of edges is at most $|\cE|-2$.
    Pick any vertex $u \in \cV$ and consider the following cases:
    \begin{enumerate}[leftmargin=17pt]
        \item Some alternating cycle, say $S_1$, has only two edges, both of
        which are self-loops on $u$: $S_1=\{(u,u),(u,u)\}$. Then consider
        $G'=(\cV',\cE')$ obtained from $G=(\cV,\cE)$ by removing these two edges
        from $\cE$, and also removing the vertex $u$ from $\cV$ if it appears on
        no other edge. Clearly $\deg_{G_R'}(v),\deg_{G_B'}(v)$ remain non-zero
and even for each remaining vertex
        $v \in \cV'$, each remaining $S_a \subset \cE'$ is
        a non-empty even alternating cycle, the number of such cycles
constituting $\cE'$ is now $m'=m-1$, and $|\cE'|=|\cE|-2$. Thus
        the induction hypothesis applied to $G'$ yields
        \begin{equation} \label{eq:induction1} 
            \c(G_R')+\c(G'_B) \leq \frac{1}{2}|\cE'|-m'+2\c(G') 
            =\frac{1}{2}|\cE|-m+2\c(G').
        \end{equation}
        If $u$ appears on another edge in $\cE$, then
        $\deg_{G_R'}(u)=\deg_{G_B'}(u)>0$ so $\c(G_R')=c(G_R)$,
        $\c(G_B')=\c(G_B)$, and $\c(G')=\c(G)$. If $u$ appears only on these two
        edges of $\cE$ (meaning $u$ was its own connected component in $G$) then
        $\c(G_R')=\c(G_R)-1$, $\c(G_B')=\c(G_B)-1$, and $\c(G')=\c(G)-1$. In
        both cases, (\ref{eq:induction1}) implies that (\ref{eq:CCbound}) holds
        for $G$.
        \item Some alternating cycle, say $S_1$, has at least 4 edges including 
        a self-loop $(u,u)$:
        \[
            S_1=\Big\{(u,u),(u,u_3),(u_3,u_4),\ldots,(u_{2k},u)\Big\}.
        \]
        Then consider $G'=(\cV',\cE')$ obtained by merging $u$ and $u_3$ --- i.e.\
        replacing $u_3$ by $u$ in all edges of $\cE$ containing $u_3$ and then
        removing $u_3$ from $\cV$ --- and also replacing the edges of $S_1$ by
        $S_1'=\{(u,u_4),\ldots,(u_{2k},u)\}$ which removes the first two edges
        (now self-loops on $u$) from the cycle. Again
        $\deg_{G_R'}(v)=\deg_{G_B'}(v)$ remains non-zero and even for each $v
\in \cV'$, and $\cE$ is comprised of $m'=m$ non-empty alternating cycles of even
length. We  have $|\cE'|=|\cE|-2$, so the induction hypothesis applied to $G'$ yields
        \begin{equation} \label{eq:induction2} 
            \c(G_R')+\c(G_B') \leq \frac{1}{2}|\cE'|-m'+2\c(G') 
            =\frac{1}{2}|\cE|-m+2\c(G')-1.
        \end{equation}
        Suppose (without loss of generality) $(u,u_3)$ is red.
        Then $\c(G')= \c(G)$ and $\c(G_R')=\c(G_R)$, whereas $
        \c(G_B') \in \{\c(G_B),\c(G_B)-1\}$ depending on whether $u$ and $u_3$ 
        belong to the same connected component of $G_B$.
        In particular $\c(G_B') \geq \c(G_B)-1$, so (\ref{eq:induction2}) 
        implies that (\ref{eq:CCbound}) holds for $G$.
        
        \item Some alternating cycle, say $S_1$, has at least 4 non-self-loop 
        edges incident to $u$:
        \[
S_1=\Big\{(u,u_2),(u_2,u_3),\ldots,(u_j,u),(u,u_{j+2}),\ldots,(u_{2k},u)\Big\}
        \]
        where $j$ is odd. Suppose $(u,u_2)$ is red and $(u_{2k},u)$ is blue;
        then $(u_j,u)$ is red and $(u,u_{j+2})$ is blue. Consider the graph
        $G'$ that merges $u$, $u_2$, and $u_{2k}$, and that also replaces
        the edges of $S_1$ by those of two alternating cycles
        \begin{align}
            S_1'
            &=\Big\{(u,u_3),\ldots,(u_{j-1},u_j),(u_j,u)\Big\},\\
            S_1''
            &=\Big\{(u,u_{j+2}),(u_{j+2},u_{j+3}),\ldots,(u_{2k-1},u)\Big\}.
        \end{align}
        This replaces the two red edges $(u_j,u),(u,u_2)$ (the latter now a
self-loop on $u$) by a single red
        edge $(u_j,u)$, and the two blue edges $(u_{2k},u),(u,u_{j+2})$ (the
former now a self-loop on $u$)
        by a single blue edge $(u,u_{j+2})$. Then $S'_1$ and
        $S''_1$ are both alternating cycles of non-zero even length, and $G'$
has $|\cE'|=|\cE|-2$ edges comprised of $m'=m+1$ alternating cycles. The
induction hypothesis applied to $G'$ yields
        \begin{equation}\label{eq:induction3} 
            \c(G_R')+\c(G_B') \leq \frac{1}{2}|\cE'|-m'+2\c(G') 
            =\frac{1}{2}|\cE|-m+2\c(G')-2.
        \end{equation}
        We have $\c(G')=\c(G)$, because all vertices connected to $u/u_2/u_{2k}$ 
        in $G$ remain connected to $u$ in $G'$.
        We have also $\c(G_R') \geq \c(G_R)-1$, because merging $(u,u_2)$
        does not change $\c(G_R)$, merging $(u,u_{2k})$ decreases $\c(G_R)$ by 
        at most
        1, and replacing $(u_j,u),(u,u_2)$ by the single edge $(u_j,u)$ and
replacing $(u_{2k},u),(u,u_{j+2})$ by the single edge $(u,u_{j+2})$ do not change $\c(G_R)$.
        Similarly, $\c(G_B') \geq \c(G_B)-1$, and applying these statements to 
        (\ref{eq:induction3}) shows that (\ref{eq:CCbound}) holds for $G$.
        
        \item Some alternating cycle, say $S_1$, has at least 4 non-self-loop 
        edges incident to $u$:
        \[
S_1=\Big\{(u,u_2),(u_2,u_3),\ldots,(u_j,u),(u,u_{j+2}),\ldots,(u_{2k},u)\Big\}
        \]
        where $j$ is even.
        Then we may split $S_1$ into the two cycles,
        \begin{align}
            S_1'
            &=\Big\{(u,u_2),(u_2,u_3),\ldots,(u_j,u)\Big\}\\
            S_1''
            &=\Big\{(u,u_{j+2}),(u_{j+2},u_{j+3}),\ldots,(u_{2k},u)\Big\},
        \end{align}
        both of which are of non-zero even length. This reduces to the final case below, which shows that in fact
        \[
            \c(G_R)+\c(G_B) \leq \frac{1}{2}|\cE|-(m+1)+2\c(G).
        \]
        \item Some two alternating cycles, say $S_1,S_2$, each contains at least 
        two consecutive non-self-loop edges incident to $u$, denoted by:
        \begin{align}
            S_1
            &=\Big\{(u,u_2),(u_2,u_3),\ldots,(u_{2j},u)\Big\}\\
            S_2
            &=\Big\{(u,v_2),(v_2,v_3),\ldots,(v_{2k},u)\Big\}
        \end{align}
        By reversing the orderings of the cycles, we may assume 
        $(u,u_2),(u,v_2)$ are red and $(u_{2j},u),(v_{2k},u)$ are blue.
        Consider the graph $G'=(\cV',\cE')$ obtained by replacing the edges of 
        $S_1 \sqcup S_2$ by
        \[
            S'=\Big\{(u_2,u_3),\ldots,(u_{2j-1},u_{2j}),(u_{2j},v_{2k}),
            (v_{2k},v_{2k-1}),\ldots,(v_3,v_2),(v_2,u_2)\Big\}
        \]
        and removing $u$ from $\cV$ if no other edge of $\cE$ except the above 
        four edges of $S_1,S_2$ are incident to $u$.
        This replaces the two red edges $(v_2,u),(u,u_2)$ by a single red edge 
        $(v_2,u_2)$, and the two blue edges $(u_{2j},u),(u,v_{2k})$ by a single 
        blue edge $(u_{2j},v_{2k})$.
        These actions do not change the degree of any vertex besides $u$, and
the red/blue degrees of $u$ are each decreased by 2.

        Note that $S'$ remains an alternating cycle of non-zero even length, so
$G'$ has $|\cE'|=|\cE|-2$ edges comprised of $m'=m-1$ alternating cycles.
        The induction hypothesis applied to $G'$ yields
        \begin{equation}\label{eq:induction5} 
            \c(G_R')+\c(G_B') \leq \frac{1}{2}|\cE'|-m'+2\c(G') 
            =\frac{1}{2}|\cE|-m+2\c(G').
        \end{equation}
        If $S'$ is disconnected from the component containing $u$ in $G'$, 
        then $\c(G')=\c(G)+1$.
        In this case the component of $G_R'$ containing $(v_2,u_2)$ is also 
        disconnected from the component of $G_R'$ containing $u$, so 
        $\c(G_R')=\c(G_R')+1$, and similarly $\c(G_B')=\c(G_B')+1$.
        Then applying these to (\ref{eq:induction5}) shows that (\ref{eq:CCbound}) 
        holds for $G$.
        If $u$ is no longer present in $G'$ or if $S'$ remains connected to the 
        component containing $u$ in $G'$, then $\c(G')=\c(G)$.
        In this case, we note simply that the above operation of replacing 
        $(v_2,u),(u,u_2)$ by $(v_2,u_2)$ and $(u_{2j},u),(u,v_{2k})$ by 
        $(u_{2j},v_{2k})$ cannot decrease $\c(G_R)$ or $\c(G_B)$, so 
        $\c(G_R') \geq \c(G_R)$ and $\c(G_B') \geq \c(G_B)$.
        Then applying these to (\ref{eq:induction5}) also shows that 
        (\ref{eq:CCbound}) holds for $G$.
    \end{enumerate}
    Since $\deg_{G_R}(u)=\deg_{G_B}(u) \geq 2$, these cases exhaust all 
    possibilities for the vertex $u$.
    So (\ref{eq:CCbound}) holds for $G$, completing the induction.
\end{proof}

Using Lemma \ref{lem:spectralBCP}, we now verify that polynomial spectral
functions are BCP-representable.

\begin{lemma}\label{lem:BCPreprspectral}
Let $\cP=\bigsqcup_{t=0}^T \cP_t$ be a set of polynomial spectral functions
as given by Definition \ref{def:SpecPolyApp}, with shift $\bG_* \equiv \bG_*(n) \in \RR^{M
\times N}$ having i.i.d.\ $\cN(0,1)$ entries. Suppose $|\cP|<C$
for a constant $C>0$ independent of $n$. Then $\cP$ is BCP-representable almost
surely with respect to $\{\bG_*(n)\}_{n=1}^\infty$.
\end{lemma}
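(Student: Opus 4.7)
My plan is to show that each $p \in \cP_t$ admits a representation of the form (\ref{eq:tensorpolyrepr}) using a finite set of tensors $\cT$ built from $\bG_*$ and the alternating tensors $\bT_\alt^2, \bT_\alt^4, \ldots, \bT_\alt^{D+1}$, and then invoke the closure properties of the BCP together with Lemma \ref{lem:spectralBCP}.

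\textbf{Step 1 (Algebraic reduction).} The crucial property of Definition \ref{def:SpecPolyApp} is that each $\bar r_k$ has only odd-degree monomials. For any $\bX \in \RR^{M \times N}$ with SVD $\bX = \bO\bD\bU^\top$ and any odd integer $d \geq 1$, one checks directly that applying $x^d$ spectrally yields $\bX(\bX^\top \bX)^{(d-1)/2} = \bO \bD^d \bU^\top$. Hence
\[
r_k(\bX) = \sum_{\text{odd } d=1}^D a_{kd}\, N^{(1-d)/2}\,\bX(\bX^\top \bX)^{(d-1)/2}.
\]
Expanding $r_0(\bG_*)$ by the same identity and substituting $\bX_k = \sum_s c_{ks}\mat(\bz_s) + r_0(\bG_*)$ into the display above, then distributing the product, writes $p(\bz_1,\ldots,\bz_t)$ as a finite linear combination (with bounded coefficients times explicit powers of $N$) of vectorized \emph{chain products}
\[
\vec\!\left(\bY_1 \bY_2^\top \bY_3 \bY_4^\top \cdots \bY_{d'}\right)
\]
where the number of factors $d'$ is odd and each $\bY_i$ is either $\mat(\bz_{s_i})$ for some $s_i \in [t]$ or $\bG_*$.

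\textbf{Step 2 (Chain products as alternating-tensor contractions).} I would verify directly from Definition \ref{def:Talt} the identity
\[
\vec\!\left(\bY_1 \bY_2^\top \bY_3 \cdots \bY_{d'}\right)
= N^{(d'-1)/2}\, \bT_\alt^{d'+1}\big[\vec(\bY_1), \vec(\bY_2), \ldots, \vec(\bY_{d'}), \,\cdot\,\big].
\]
This is a direct check: the alternating sequence of row-index and column-index constraints encoded by $\bT_\alt^{d'+1}$ exactly matches the contraction pattern of the matrix chain under the identification $i = (j,j')$. The scaling $N^{(d'-1)/2}$ perfectly cancels the $N^{(1-d)/2}$ factors from Step 1. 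Grouping terms by which factors are inputs $\bz_s$ and which are $\bG_*$, each monomial in $\bz_{1:t}$ of degree $d \leq D$ becomes a tensor contraction, and so the homogeneous degree-$d$ component of $p$ admits a representation $\sum_\sigma \bT^{(\sigma)}[\bz_{\sigma(1)},\ldots,\bz_{\sigma(d)},\,\cdot\,]$ where each $\bT^{(\sigma)} \in (\RR^n)^{\otimes (d+1)}$ is a finite (bounded-cardinality, $n$-independent) sum of contractions obtained by starting with some $\bT_\alt^{d'+1}$ and partially contracting it in $d'-d$ of its dimensions with copies of $\bG_*$. The constant term $\bT^{(0)}$ for $t=0$ is handled the same way, being obtained by contracting an alternating tensor with $d'$ copies of $\bG_*$ in all but one dimension.

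\textbf{Step 3 (BCP closure).} Set $\cT_0 = \{\bG_*,\bT_\alt^2,\bT_\alt^4,\ldots,\bT_\alt^{D+1}\}$. By Lemma \ref{lem:spectralBCP}, $\cT_0$ satisfies the BCP almost surely with respect to $\{\bG_*(n)\}_{n=1}^\infty$. Lemma \ref{lemma:BCPcontraction}(c), applied iteratively, ensures that every tensor obtained as a sequence of contractions of elements of $\cT_0$ may be adjoined to $\cT_0$ while preserving the BCP. A direct inspection of Definition \ref{def:BCP} shows that adjoining a tensor which is itself a sum of a bounded number of BCP-admissible tensors also preserves the BCP (one expands the sum in any expression of the form (\ref{eq:BCPcondition}) into a bounded number of admissible expressions). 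Since $|\cP| \leq C$ and $D$ are bounded independently of $n$, the total number of tensors $\bT^{(\sigma)}$ produced in Step 2 is bounded, so adjoining all of them to $\cT_0$ yields a set $\cT$ satisfying the BCP almost surely, establishing the BCP-representability of $\cP$.

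\textbf{Main obstacle.} The delicate part is Step 2: matching the precise alternating pattern of $\bT_\alt^k$ (with row/column constraints on alternating positions and the cyclic identification $j_{k+1} \equiv j_1$) against the natural index-sharing structure of matrix chain products, and verifying that the $N$-scalings align exactly between the odd-monomial spectral identity and the definition of $\bT_\alt^k$. The remaining steps are more combinatorial bookkeeping once this identity is in place.
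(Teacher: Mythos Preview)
Your proposal is correct and follows essentially the same route as the paper: both verify the chain-product identity $\vec(\bY_1\bY_2^\top\cdots\bY_{d'}) = N^{(d'-1)/2}\bT_\alt^{d'+1}[\vec(\bY_1),\ldots,\vec(\bY_{d'}),\cdot]$, use it to write each $\bT^{(\sigma)}$ as (a bounded sum of) contractions of alternating tensors with copies of $\bG_*$, and then invoke Lemma~\ref{lem:spectralBCP} together with the closure properties of Lemma~\ref{lemma:BCPcontraction}.

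One small bookkeeping correction: because you fully expand $r_k\big(\cdot + r_0(\bG_*)\big)$ into a single chain, the chain length $d'$ can be as large as $D \cdot D = D^2$ (an outer chain of length $\leq D$ with each slot potentially replaced by an inner $r_0$-chain of length $\leq D$). So in Step~3 your base set should be $\cT_0=\{\bG_*,\bT_\alt^2,\bT_\alt^4,\ldots,\bT_\alt^{D^2+1}\}$, not $\ldots,\bT_\alt^{D+1}$. Lemma~\ref{lem:spectralBCP} applies for any fixed maximal order, so this changes nothing conceptually. (The paper sidesteps this by not fully expanding: it writes $\bT^{(\sigma)}$ as a nested contraction of $\bT_\alt^{d_k+1}$ with inputs $\bT_\alt^{d_0+1}[\bG_*,\ldots,\bG_*,\cdot]$, which keeps all alternating-tensor orders at most $D+1$.) Your explicit remark about closure under bounded sums is also correct and is used implicitly by the paper as well.
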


\begin{proof}
For any odd integer $d \geq 1$, consider the multivariate monomial
\[q(\bX_1,\ldots,\bX_d)=N^{1/2-d/2}\bX_1\bX_2^\top\ldots
\bX_{d-2}\bX_{d-1}^\top\bX_d.\]
Writing $\langle \cdot,\cdot \rangle$ for the Euclidean inner-product in
$\RR^n \equiv \RR^{M \times N}$, observe
for any $\bX_{d+1} \in \RR^{M \times N}$ that
\begin{align}
\langle q(\bX_1,\ldots,\bX_d),\bX_{d+1} \rangle
&=N^{1/2-d/2}\Tr \bX_1\bX_2^\top \ldots \bX_d\bX_{d+1}^\top\\
&=N^{1/2-d/2}\sum_{j_1,\ldots,j_{d+1}=1}^M
\sum_{j_1',\ldots,j_{d+1}'=1}^N
\bX_1[j_1,j_1']\1\{j_1'=j_2'\}\bX_2[j_2,j_2']\1\{j_2=j_3\}\ldots\\
&\hspace{1.5in}\bX_d[j_d,j_d']\1\{j_d'=j_{d+1}'\}
\bX_{d+1}[j_{d+1},j_{d+1}']\1\{j_{d+1}=j_1\}\\
&=\sum_{i_1,\dots,i_{d+1}=1}^n \bT_\alt^{d+1}\ss{i_1, \dots, i_{d+1}}
\prod_{a=1}^{d+1}\bX_a\ss{i_a}.
\end{align}
Thus
\[q(\bX_1,\ldots,\bX_d)=\bT_\alt^{d+1}[\bX_1,\ldots,\bX_d,\,\cdot\,].\]
In (\ref{eq:polyspec}), if each $\bar r_k(x)=x^{d_k}$ is a single monomial of
odd degree, then $r_k(x)=N^{1/2-d_k/2}\bar r_k(x)$, so this implies
\begin{align}
p(\bz_1,\ldots,\bz_t)&=\sum_{k=1}^K
\bT_\alt^{d_k+1}\Bigg[\sum_{s=1}^t c_{ks}\bz_s+r_0(\bG_*),
\ldots,\sum_{s=1}^t c_{ks}\bz_s+r_0(\bG_*),\,\cdot\,\Bigg]\\
&=\sum_{k=1}^K \bT_\alt^{d_k+1}\Bigg[\sum_{s=1}^t
c_{ks}\bz_s+\bT_\alt^{d+1}[\bG_*,\ldots,\bG_*,\,\cdot\,],
\ldots,\sum_{s=1}^t c_{ks}\bz_s+\bT_\alt^{d+1}[\bG_*,\ldots,\bG_*,\,\cdot\,]
,\,\cdot\,\Bigg]
\end{align}
Then multi-linearity of $\bT_\alt^{d_k+1}$ shows that $p(\bz_{1:t})$
takes the form (\ref{eq:tensorpolyrepr}) for tensors $\bT^{(0)},\bT^{(\sigma)}$
that are given by scalar multiples of contractions of
$\bT_\alt^{d_k+1}$, $\bT_\alt^{d+1}$, and $\bG_*$. Then again by
multi-linearity, the same holds true for any $p(\bz_{1:t})$ defined by
(\ref{eq:polyspec}) where $\bar r_k$ are given by general odd polynomials of the form (\ref{eq:rform}). Let $\cT$ be the set of all tensors arising
in this representation (\ref{eq:tensorpolyrepr}) for all polynomials $p \in
\cP$. Since the cardinality $|\cP|$ is bounded independently of $n$, so is
$|\cT|$. Each tensor in $\cT$ is a contraction of some number of tensors
$\{\bG_*,\bT_\alt^2,\ldots,\bT_\alt^{D+1}\}$ multiplied by a scalar that is
also bounded independently of $n$. By Lemma \ref{lem:spectralBCP},
$\{\bG_*,\bT_\alt^2,\ldots,\bT_\alt^{D+1}\}$ satisfies the BCP almost surely,
and hence by Lemma \ref{lemma:BCPcontraction} so does $\cT$. Thus $\cP$ is
almost surely BCP-representable.
\end{proof}

\subsubsection{BCP-approximability}\label{sec:spectralapprox}

We now prove Proposition \ref{prop:spectral} on the BCP-approximability of 
Lipschitz spectral functions. As a first step, we show that $\bG_*$ in Lemma
\ref{lem:BCPreprspectral} may be replaced by a matrix $\bX_*$ with the same
singular values as $\bG_*$, but with singular vectors satisfying the
conditions of Proposition \ref{prop:spectral}.

\begin{corollary}\label{cor:BCPreprspectral}
Let $\bTheta_*=\bO\bD\bU^\top \in \RR^{M \times N}$ be a shift matrix satisfying
the conditions of Proposition \ref{prop:spectral}.
Suppose $\bX_*=\bO\bS\bU^\top$ where $\bO$ and $\bU$ are the singular vector
matrices of $\bTheta_*$, and $\bS$ is independent of $(\bO,\bU)$ and equal in
law to the matrix of singular values (sorted in increasing order)
of $\bG_* \in \RR^{M \times N}$ having
i.i.d.\ $\cN(0,1)$ entries. Then Lemma \ref{lem:BCPreprspectral} holds also with
$\bX_*$ in place of $\bG_*$.
\end{corollary}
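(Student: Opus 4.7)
My plan is to reduce the claim to Lemma \ref{lem:BCPreprspectral} via a bounded-density change-of-measure argument. The starting observation is that by the SVD of a standard Gaussian matrix, $\bG_*(n) \stackrel{d}{=} \tilde\bO(n)\bS(n)\tilde\bU(n)^\top$ where $\tilde\bO,\tilde\bU$ are independent Haar-distributed orthogonal matrices, independent of $\bS(n)$, with $\bS(n)$ having the same distribution as the singular values of $\bG_*(n)$. Since in the definition of $\bX_*$, the matrices $\bO,\bU$ are independent of $\bS$ and each have density with respect to Haar bounded by $C$, their joint conditional density given $\bS$ with respect to $\mathrm{Haar}\times\mathrm{Haar}$ is bounded by $C^2$. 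Consequently, for any measurable event $A$ depending only on the matrix $\bX_*(n)$,
\[
\P[\bX_*(n)\in A]\;\leq\;C^2\,\P[\bG_*(n)\in A].
\]

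Next, following the structure of the proof of Lemma \ref{lem:BCPreprspectral}, it suffices to show that $\{\bX_*,\bT_\alt^2,\ldots,\bT_\alt^K\}$ satisfies the BCP almost surely with respect to $\{\bO(n),\bU(n),\bS(n)\}_{n=1}^\infty$; the remainder of that proof --- writing each $p\in\cP_t$ as contractions of $\bT_\alt^{d+1}$ and $\bX_*$ and then invoking Lemma \ref{lemma:BCPcontraction} --- then carries through without modification. For each fixed combinatorial structure $(m,\ell,k_1,\ldots,k_m,\pi)$ of Definition \ref{def:BCP} and each assignment of tensors from $\{\bX_*,\bT_\alt^2,\ldots,\bT_\alt^K\}$ to the $m$ positions, the resulting value $\val$ is a polynomial function of the entries of $\bX_*$ only. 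The corresponding value with $\bG_*$ in place of $\bX_*$ is covered by Corollary \ref{cor:BCPgaussian}, whose argument via Lemma \ref{lem:GauPolyVar} and Gaussian hypercontractivity yields the summable tail bound
\[
\P\!\left[\tfrac{1}{n}\bigl|\val(\bG_*)\bigr|>C\right]\;\leq\;C'\,e^{-(cn)^{1/m'}}.
\]
Applying the change-of-measure bound above shows $\P[\tfrac{1}{n}|\val(\bX_*)|>C]\leq C^2 C'\,e^{-(cn)^{1/m'}}$, which is still summable in $n$. Since the number of tensor assignments is bounded independently of $n$, a union bound followed by the Borel-Cantelli lemma yields the desired BCP property almost surely.

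I expect the main subtlety to lie in verifying the change-of-measure inequality cleanly at the level of the joint law of $\bX_*$ as a random matrix, which is what allows us to transport a Gaussian-specific tail bound to the non-Gaussian $\bX_*$. This requires combining the SVD decomposition of the Gaussian law (in which the singular vectors are Haar and independent of the singular values) with the independence and bounded-density assumptions on $(\bO,\bU)$. Once that reduction is in place, the remainder is a direct transfer of the machinery developed in Lemma \ref{lem:BCPreprspectral} and Corollary \ref{cor:BCPgaussian}, with only a bookkeeping adjustment to carry the constant factor $C^2$ through the Borel-Cantelli step.
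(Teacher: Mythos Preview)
Your proposal is correct and follows essentially the same approach as the paper: both arguments recall the summable tail bound $\P[n^{-1}|\val(\bG_*)|>C]\leq C'e^{-(cn)^{1/m'}}$ from Corollary~\ref{cor:BCPgaussian}, use the SVD representation $\bG_*\stackrel{d}{=}\tilde\bO\bS\tilde\bU^\top$ with independent Haar factors together with the bounded-density assumption on $(\bO,\bU)$ to transfer this tail bound to $\val(\bX_*)$ via change of measure, and then conclude by Borel--Cantelli that $\{\bX_*,\bT_\alt^2,\ldots,\bT_\alt^K\}$ satisfies the BCP almost surely so that the remainder of Lemma~\ref{lem:BCPreprspectral} carries over unchanged. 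Your write-up is in fact slightly more explicit than the paper's about the $C^2$ factor in the change-of-measure inequality.
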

\begin{proof}
In the proof of Lemma \ref{lem:spectralBCP}, the BCP for
$\{\bG_*,\bT_\alt^2,\ldots,\bT_\alt^K\}$ follows from Corollary
\ref{cor:BCPgaussian}, which applies Wick's rule and Gaussian
hypercontractivity to verify that
\begin{equation}\label{eq:valprobboundGstar}
\P[|n^{-1}\val(\bG_*)|>C] \leq C'e^{cn^{1/m}}
\end{equation}
for some constants $C,C',c,m>0$, where $|n^{-1}\val(\bG_*)|$ is any expression
appearing inside the supremum of (\ref{eq:BCPcondition}) viewed as a function of
the Gaussian input $\bG_*$. Writing $\bG_*=\bO'\bS{\bU'}^\top$ for the singular
value decomposition of $\bG_*$,
we note that $\bO',\bS,\bU'$ are independent, and $\bO' \in
\RR^{M \times M}$ and $\bU' \in \RR^{N \times N}$ are Haar-distributed.
Then, by the given assumption that $\bO,\bU$ have bounded densities with respect
to Haar measure, (\ref{eq:valprobboundGstar}) implies also
\[\P[|n^{-1}\val(\bX_*)|>C] \leq C'e^{cn^{1/m}}\]
for the given matrix $\bX_*$ and a different constant $C'>0$.
Then the argument of Corollary
\ref{cor:BCPgaussian} implies that $\{\bX_*,\bT_\alt^2,\ldots,\bT_\alt^K\}$ also
satisfies the BCP almost surely, and hence Lemma \ref{lem:BCPreprspectral} holds
equally with $\bX_*$ in place of $\bG_*$.
\end{proof}

Next, we argue that the singular value matrix $\bD$ of $\bTheta_*$ 
may be approximated by $g(\bS)$ for some Lipschitz
function $g(\cdot)$ applied to the singular value matrix $\bS$ of Corollary
\ref{cor:BCPreprspectral}. The idea of the approximation is encapsulated in the
following lemma.

\begin{lemma}\label{lem:poly_approx_bounded_monotone}
Fix any constant $C_0>0$ and any probability distribution $\mu$ on an interval
$(a,b)$ with $0\leq a<b$, where $\mu$ has continuous and strictly positive
density on $(a,b)$. Then for any $\epsilon>0$, there exists a constant
$L_\epsilon>0$ such that the following holds:

Let $\cL_\epsilon$ be the set of functions $g:[a,b]\to[0,C_0]$ such that 
\[g(a)=0,\qquad |g(x)-g(y)| \leq L_\epsilon|x-y| \text{ for all } x,y \in
[a,b].\]
Let $s^{(j)}$ be the $j/M$-quantile of $\mu$, i.e.\ the value where
$\mu([a,s^{(j)}])=j/M$, for each $j=1,\ldots,M$.
Then for all large enough $M$ and for any $0\leq d^{(1)}\leq\cdots\leq d^{(M)}\leq C$, there exists $g\in\cL_\epsilon$ such that
\begin{align}
    \frac{1}{M}\sum_{j=1}^M (g(s^{(j)})-d^{(j)})^2 \leq \epsilon.
\end{align}
\end{lemma}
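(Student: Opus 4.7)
The plan is to embed the discrete problem into the continuous setting by associating to the sequence $(d^{(j)})$ the monotone bounded step function $\bar d\colon[a,b]\to[0,C_0]$ defined by $\bar d(s)=d^{(j)}$ for $s\in(s^{(j-1)},s^{(j)}]$ (with $s^{(0)}=a$), then approximating $\bar d$ in $L^2(\mu)$ by a Lipschitz function $g$ whose Lipschitz constant depends only on $\epsilon$ and $\mu$, and finally converting this integral bound into the claimed sum bound. The key identity used throughout is $\mu((s^{(j-1)},s^{(j)}])=1/M$, which will let $\frac{1}{M}\sum_j h(s^{(j)})$ be read as an integral against $\mu$ of a step function built from the values $(h(s^{(j)}))$. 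A basic fact I will use is that the CDF $F$ of $\mu$, being continuous and strictly increasing from $[a,b]$ onto $[0,1]$, has a continuous (hence uniformly continuous on $[0,1]$) inverse $F^{-1}$.

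To construct $g$, I would fix an integer $K\ge 4C_0^2/\epsilon$ depending only on $\epsilon$ and $C_0$, and set $y_k=F^{-1}(k/K)$, so that $a=y_0<y_1<\cdots<y_K=b$ and $\mu((y_{k-1},y_k])=1/K$. Then take $g$ to be the piecewise-linear interpolant through $(y_0,0),(y_1,\bar d(y_1)),\ldots,(y_K,\bar d(y_K))$. Because the knots $y_k$ are fixed by $K$ and $\mu$ alone (not by the sequence $(d^{(j)})$) and the nodal values lie in $[0,C_0]$, every slope is bounded by $L_\epsilon:=C_0/\min_k(y_k-y_{k-1})$, a quantity depending only on $\epsilon$ and $\mu$. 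Monotonicity of $\bar d$ makes $g$ monotone with $g(a)=0$ and $g\in[0,C_0]$. On each piece $(y_{k-1},y_k]$ both $g$ and $\bar d$ take values inside $[\bar d(y_{k-1}),\bar d(y_k)]$ (using $\bar d(y_0^+)=d^{(1)}\in[0,\bar d(y_1)]$ on the first piece), so the telescoping bound $\sum_k(\bar d(y_k)-\bar d(y_{k-1}))^2\le C_0\cdot\sum_k(\bar d(y_k)-\bar d(y_{k-1}))\le C_0^2$ yields
\[\|g-\bar d\|_{L^2(\mu)}^2\le \frac{1}{K}\sum_{k=1}^K(\bar d(y_k)-\bar d(y_{k-1}))^2\le \frac{C_0^2}{K}\le \frac{\epsilon}{4}.\]

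To pass from this integral bound to the desired sum bound, I would fix any $s\in(s^{(j-1)},s^{(j)}]$, where $\bar d(s)=d^{(j)}$, and split
\[(g(s^{(j)})-d^{(j)})^2\le 2L_\epsilon^2(s^{(j)}-s)^2+2(g(s)-\bar d(s))^2\]
using the Lipschitz property of $g$ and $(A+B)^2\le 2A^2+2B^2$. Integrating in $s$ against $\mu$ on $(s^{(j-1)},s^{(j)}]$ (which has mass $1/M$) and summing in $j$ then gives
\[\frac{1}{M}\sum_{j=1}^M(g(s^{(j)})-d^{(j)})^2\le 2L_\epsilon^2\delta_M^2+2\|g-\bar d\|_{L^2(\mu)}^2,\]
where $\delta_M:=\max_j(s^{(j)}-s^{(j-1)})$. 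Since $F^{-1}$ is uniformly continuous on $[0,1]$, $\delta_M\to 0$, and so for all sufficiently large $M$ the first term is at most $\epsilon/2$, while the second is at most $\epsilon/2$ by the construction of $g$, giving the claim. The main obstacle is the order of quantifiers: $L_\epsilon$ must be chosen before $M$ and independently of the particular sequence $(d^{(j)})$, and the quantile-based knot construction is precisely what delivers a uniform Lipschitz constant while keeping the $L^2(\mu)$-error at $C_0^2/K$ via monotonicity alone, without invoking any regularity of $\bar d$.
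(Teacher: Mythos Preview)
Your proof is correct and takes a genuinely different route from the paper's. The paper constructs $g$ \emph{iteratively on the data points themselves}: starting from $g(s^{(0)})=0$, it sets $g(s^{(j)})=d^{(j)}$ whenever the required slope from $g(s^{(j-1)})$ is at most $\iota^{-1}$, and otherwise increases at the maximal allowed rate $\iota^{-1}$; the error analysis then tracks the ``catch-up'' indices where $g(s^{(j)})=d^{(j)}$, truncates a $\delta$-fraction of indices near the endpoints to exploit density bounds on compact subintervals of $(a,b)$, and bounds the lag between consecutive catch-up points. This requires balancing two parameters $\delta,\iota$ and some case analysis. Your approach instead fixes $K$ \emph{quantile knots} $y_k=F^{-1}(k/K)$ depending only on $\epsilon$ and $\mu$, and linearly interpolates the values $\bar d(y_k)$; the uniform Lipschitz bound is then immediate from the fixed knot spacing, and the $L^2(\mu)$ error is controlled by the telescoping estimate $\sum_k(\bar d(y_k)-\bar d(y_{k-1}))^2\le C_0^2$, which uses only monotonicity and boundedness. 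Your argument is shorter and avoids the boundary truncation and the two-parameter optimization; the paper's construction, on the other hand, is adaptive to the data and actually achieves $g(s^{(j)})=d^{(j)}$ on many indices, which is not needed for the lemma but is perhaps more natural if one wanted pointwise guarantees.
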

\begin{proof}
Set $s^{(0)}=a$, and note that $s^{(M)}=b$.
For any $0\leq d^{(1)}\leq\cdots\leq d^{(M)}\leq C_0$, we construct $g$ as
follows: First let $g(s^{(0)})=g(a)=0$. Then for $j=1,\ldots,M$, fixing a small constant
$\iota>0$ to be determined later, let
\begin{align}
    g(s^{(j)}) = \begin{cases}
        d^{(j)} & \text{if } d^{(j)} - g(s^{(j-1)}) \leq (s^{(j)}-s^{(j-1)})
\iota^{-1}, \\
        g(s^{(j-1)}) + (s^{(j)}-s^{(j-1)}) \iota^{-1} & \text{otherwise},
    \end{cases}
\end{align}
and let $g$ be the linear interpolation of the points
$(s^{(j)},g(s^{(j)}))$ for $j=0,\ldots,M$.
Note that $g$ is $\iota^{-1}$-Lipschitz, $g$ is monotonically increasing,
and $g(s^{(j)}) \leq d^{(j)}$ for all $j \in [M]$.

For some small $\delta\in(0,1)$ to be determined later, let
$j_0<j_1<\ldots<j_K$ be all indices in the range
$[\delta M,(1-\delta)M]$ for which $g(s^{(j)})=d^{(j)}$. 
Observe that $g(s^{(j)})=g(s^{(j-1)})+(s^{(j)}-s^{(j-1)})\iota^{-1}$
for each $j=\lceil \delta M \rceil,\ldots,j_0-1$, so
\[s^{(j_0-1)}-s^{(\lceil \delta M \rceil-1)}
=\iota[g(s^{(j_0-1)})-g(s^{(\lceil \delta M \rceil-1)})] \leq C_0\iota.\]
Since the density of $\mu$ is bounded above and below on compact sub-intervals
of $(a,b)$, there exist constants $C_\delta,c_\delta>0$ depending
on $\delta$ such that
\begin{equation}\label{eq:mudensitybound}
\mu(x)\in[c_\delta,C_\delta] \text{ for all }
x\in[s^{(\lceil \delta M \rceil-1)},s^{(\lfloor (1-\delta)M\rfloor+1)}].
\end{equation}
Thus
\begin{equation}\label{eq:j0bound}
\frac{j_0-\lceil \delta M \rceil}{M}
\leq c_\delta^{-1}(s^{(j_0-1)}-s^{(\lceil \delta M \rceil-1)})
\leq C_0c_\delta^{-1}\iota.
\end{equation}
By a similar argument,
\begin{equation}\label{eq:jKbound}
\frac{\lfloor (1-\delta)M \rfloor-j_K}{M}
\leq C_0c_\delta^{-1}\iota.
\end{equation}
We can then decompose the total error as
\begin{align}
    \frac{1}{M} \sum_{j=1}^M (g(s^{(j)}) - d^{(j)})^2 &= \frac{1}{M}
\sum_{j=1}^{j_0-1} (g(s^{(j)})-d^{(j)})^2 + \frac{1}{M} \sum_{k=1}^{K}
\sum_{j=j_{k-1}+1}^{j_k-1} (g(s^{(j)})-d^{(j)})^2\\
    &\qquad + \frac{1}{M} \sum_{j=j_K+1}^M (g(s^{(j)})-d^{(j)})^2\\
    &\leq 2C_0^2(\delta+C_0c_\delta^{-1}\iota)
+ \frac{1}{M} \sum_{k=1}^{K}\underbrace{\sum_{j=j_{k-1}+1}^{j_k-1} (g(s^{(j)})-d^{(j)})^2}_{A_k}\label{eq:lipschitz_approx_bound1}
\end{align}
where the inequality applies $d^{(j)}-g(s^{(j)})\in[0,C_0]$ for all $j\in[M]$
and the bounds (\ref{eq:j0bound}) and (\ref{eq:jKbound}) for $j_0,j_K$.

Now for each $k\in[K]$, $\{(s^{(j)}, g(s^{(j)}))\}_{j=j_{k-1}+1}^{j_k-1}$ are
points on the line segment connecting
$(s^{(j_{k-1})},g(s^{(j_{k-1})}))=(s^{(j_{k-1})},d^{(j_{k-1})})$ and
$(s^{(j_k-1)}, g(s^{(j_k-1)}))$ with slope $\iota^{-1}$.
Applying $g(s^{(j)}) \leq d^{(j)} \leq d^{(j_k)}$ for all
$j=j_{k-1}+1,\ldots,j_k-1$, we have
\begin{align}
    A_k &\leq \sum_{j=j_{k-1}+1}^{j_k-1} (d^{(j_{k})}-g(s^{(j)}))^2
    = \sum_{j=j_{k-1}+1}^{j_k-1} \Big(d^{(j_k)}-d^{(j_{k-1})}-(s^{(j)}-s^{(j_{k-1})})
\iota^{-1}\Big)^2.
\end{align}
Since $d^{(j_{k-1})}=g(s^{(j_{k-1})})$, $d^{(j_k)}=g(s^{(j_k)})$ and $g$ is
$\iota^{-1}$-Lipschitz, we have $d^{(j_k)}-d^{(j_{k-1})} \leq
(s^{(j_k)}-s^{(j_{k-1})})\iota^{-1}$.
Meanwhile, $d^{(j_k)}-d^{(j_{k-1})}\geq (s^{(j)}-s^{(j_{k-1})})\iota^{-1}$
for all $j=j_{k-1}+1,\ldots,j_k-1$.
Therefore, we can further bound $A_k$ as
\begin{align}
    A_k &\leq \sum_{j=j_{k-1}+1}^{j_k-1}
\Big((s^{(j_k)}-s^{(j_{k-1})})\iota^{-1} -
(s^{(j)}-s^{(j_{k-1})})\iota^{-1}\Big)^2
    = \iota^{-2} \sum_{j=j_{k-1}+1}^{j_k-1}  (s^{(j_k)}-s^{(j)})^2\\
    &\leq \iota^{-2} (j_k-1-j_{k-1}) (s^{(j_k)}-s^{(j_{k-1})})^2 \label{eq:lipschitz_approx_bound2}
\end{align}
where the second inequality holds because $0<s^{(j_k)}-s^{(j)} < s^{(j_k)}-s^{(j_{k-1})}$ for all $j=j_{k-1}+1,\ldots,j_k-1$.
Next, observe that
\[d^{(j_k)}-d^{(j_{k-1})} \geq 
d^{(j_k-1)}-d^{(j_{k-1})}=(s^{(j_k-1)}-s^{(j_{k-1})}) \iota^{-1}\]
for all $k\in[K]$.
Applying this bound and (\ref{eq:mudensitybound}),
\begin{align}
    \sum_{k=1}^{K}\frac{j_k-1-j_{k-1}}{M} &\leq C_\delta \sum_{k=1}^{K} (s^{(j_k-1)}-s^{(j_{k-1})})
\leq C_\delta\iota \sum_{k=1}^K (d^{(j_k)}-d^{(j_{k-1})})
    \leq C_0C_\delta \iota,\\
\max_{k=1}^K (s^{(j_k)}-s^{(j_{k-1})})
&\leq \max_{k=1}^K (s^{(j_k)}-s^{(j_k-1)})+\max_{k=1}^K
(s^{(j_k-1)}-s^{(j_{k-1})})
\leq c_\delta^{-1}M^{-1}+C_0\iota.
\end{align}
Then applying these bounds to \eqref{eq:lipschitz_approx_bound2}
and \eqref{eq:lipschitz_approx_bound1}, we obtain 
\begin{align}
\frac{1}{M}\sum_{j=1}^M g(s^{(j)}-d^{(j)})^2
&\leq 2C_0^2(\delta+C_0c_\delta^{-1}\iota)
+C_0C_\delta\iota^{-1} (c_\delta^{-1}M^{-1}+C_0\iota)^2.
\label{eq:lipschitz_approx_bound3}
\end{align}
Finally, for any target error level $\epsilon$, we can choose $\delta \equiv
\delta(\epsilon)$ small enough followed by $\iota \equiv \iota(\delta,\epsilon)$ small enough such that for all large $M$, the above
error is less than $\epsilon$. The Lipschitz constant $L_\epsilon$ is given by
$\iota^{-1}$, completing the proof.
\end{proof}

\begin{proof}[Proof of Proposition~\ref{prop:spectral}]
We may assume without loss of generality that $M \leq N$, hence $\delta=\lim_{n
\to \infty} M/N \in (0,1]$, and $\bD=\diag(d_1,\ldots,d_M)$ where $d_1 \leq \ldots \leq d_M$.
Let $\bX_*=\bO\bS\bU^\top$ be as defined in Corollary \ref{cor:BCPreprspectral},
where $\bS=\diag(s_1,\ldots,s_M)$ coincides with the singular values of a matrix
$\bG_* \in \RR^{M \times N}$ having i.i.d.\ $\cN(0,1)$ entries, and
$s_1 \leq \ldots \leq s_M$.
Let $\nu$ be the Marcenko-Pastur density with aspect ratio $\delta$,
which describes the asymptotic eigenvalue distribution of $\bG_*\bG_*^\top/N$,
and let $\mu$ be the density of $\sqrt{\lambda}$ when $\lambda \sim \nu$.
We note that $\mu$ is a continuous and strictly positive density on a single
interval of support $(a,b)$, where $a=0$ if $\delta=1$. Then
letting $s^{(j)}$ be the $j/M$-quantile of $\mu$,
the almost-sure weak convergence
$\frac{1}{M}\sum_{j=1}^M \delta_{s_j/\sqrt{N}} \to \mu$
(c.f.\ \cite{silverstein1995empirical}) implies the converges of quantiles
\begin{equation}\label{eq:quantileconvergence}
\max_{j=1}^M |s_j/\sqrt{N}-s^{(j)}| \to 0 \text{ a.s.}
\end{equation}
Let $d^{(j)}=d_j/\sqrt{N}$.
By Lemma~\ref{lem:poly_approx_bounded_monotone}, for any $\epsilon>0$, there
exists a $n$-independent class $\cL_\epsilon$ of $L_\epsilon$-Lipschitz
functions $g:[a,b]\to[0,C]$ with $g(a)=0$ such that for some $\bar g_0 \in\cL_\epsilon$,
\begin{align}
    \frac{1}{M}\sum_{j=1}^M (\bar g_0(s^{(j)})-d^{(j)})^2 \leq \epsilon. \label{eq:lipschitz_approx_monotone}
\end{align}
For each $g \in \cL_\epsilon$ and any constant $B_\epsilon \geq b$,
we may extend $g$ to an odd function on $[-B_\epsilon,B_\epsilon]$ by setting
$g(x)=0$ for $x \in [0,a]$, $g(x)=g(b)$ for $x \in [b,B_\epsilon]$, and
$g(x)=-g(-x)$ for $x \in [-B_\epsilon,0]$.
By the Weierstrass approximation theorem, we may then construct a
$n$-independent net
$\cN_\epsilon$ of polynomial functions such that for any  $g\in\cL_\epsilon$,
there exists $r\in\cN_\epsilon$ for which
\begin{equation}\label{eq:rgapprox}
\max_{x\in[-B_\epsilon,B_\epsilon]}(g(x)-r(x))^2 \leq \epsilon.
\end{equation}
Replacing $r(x)$ by $(r(x)-r(-x))/2$, we may assume that each polynomial
function $r \in \cN_\epsilon$ is odd.
Then for the Lipschitz function $\bar g_0$ in \eqref{eq:lipschitz_approx_monotone}, the corresponding odd polynomial $\bar r_0\in\cN_\epsilon$ that approximates
$\bar g_0$ in the sense (\ref{eq:rgapprox}) further satisfies
\begin{align}
    \frac{1}{M}\sum_{j=1}^M (\bar r_0(s^{(j)})-d^{(j)})^2 \leq 4\epsilon.
\end{align}
Set $r_0(\cdot)=N^{1/2}\bar r_0(N^{-1/2}\,\cdot\,)$. Then
$\|r_0(\bX_*)-\bTheta_*\|_\Fro=N^{1/2}\|\bar r(N^{-1/2}\bS)-N^{-1/2}\bD\|_\Fro$,
so this and (\ref{eq:quantileconvergence}) imply, almost surely for all large
$n$,
\begin{align}\label{eq:poly_approx_monotone}
    \frac{1}{n}\|r_0(\bX_*)-\bTheta_*\|_\Fro^2 &=
    \frac{1}{M}\sum_{j=1}^M (\bar r_0(s_j/\sqrt{N})-d_j/\sqrt{N})^2
    <5\epsilon.
\end{align}

Now consider any $f \in \cF$, which by assumption takes a form
\[f(\bz_{1:t})=\sum_{k=1}^K \vec\bigg(g_k\bigg(\sum_{s=1}^t
c_{ks}\mat(\bz_s)+\bTheta_*\bigg)\bigg).\]
For any $\bSigma_t \in \RR^{t \times t}$ satisfying $\|\bSigma_t\|_\op<C_0$,
if $\bZ_{1:t} \sim \cN(0,\bSigma_t \otimes \Id_n)$, then
there is a constant $B>0$ such that
\begin{equation}\label{eq:inputopbound}
\max_{k=1}^K \bigg\|\sum_{s=1}^t
c_{ks}\mat(\bZ_s)+\bTheta_*\bigg\|_\op<B\sqrt{N}
\text{ a.s.\ for all large } n.
\end{equation}
For each $k=1,\ldots,K$, define $\bar g_k(\cdot)$ such that
$g_k(\cdot)=N^{1/2}\bar g_k(N^{-1/2}\,\cdot\,)$, and note that $\bar g_k$ is
also $L$-Lipschitz. In the definition of the above net $\cN_\epsilon$, we may
assume that $L_\epsilon$ is larger than this Lipschitz constant $L$,
and that $B_\epsilon$ is larger than this constant $B$. 
Let $\bar r_k \in \cN_\epsilon$ be the approximation for $\bar g_k$ satisfying
(\ref{eq:rgapprox}), set $r_k(\cdot)=N^{1/2}\bar r_k(N^{-1/2}\,\cdot\,)$, and
consider the polynomial approximation
\begin{align}
    p(\bz_{1:t}) = \sum_{k=1}^K
\vec\bigg(r_k\bigg(\sum_{s=1}^t c_{ks} \mat(\bz_s) + r_0(\bX_*)\bigg)\bigg)
\end{align}
for $f$. Let $\cP$ be the set of polynomial spectral functions
consisting of this approximation for each $f \in \cF$. Then $\cP$ is
BCP-representable by Corollary \ref{cor:BCPreprspectral}. Furthermore, we have
\begin{align}
\frac{1}{\sqrt{n}}\|f(\bZ_{1:t})-p(\bZ_{1:t})\|_2
 &\leq \sum_{k=1}^K
\frac{1}{\sqrt{n}}\bigg\|g_k\bigg(\sum_{s=1}^t
c_{ks}\mat(\bZ_s) + \bTheta_*\bigg)-r_k\bigg(\sum_{s=1}^t
c_{ks}\mat(\bZ_s) + r_0(\bX_*)\bigg)\bigg\|_\Fro.
\end{align}
Since $g_k$ is $L$-Lipschitz and satisfies $g_k(0)=0$, the matrix function
given by applying $g_k$ spectrally to the singular values of its input
is also $L$-Lipschitz in the Frobenius norm
\cite[Theorem 1.1]{andersson2016operator}. Thus
\begin{equation}\label{eq:Thetaapprox}
\bigg\|g_k\bigg(\sum_{s=1}^t c_{ks}\mat(\bZ_s) + \bTheta_*\bigg)
-g_k\bigg(\sum_{s=1}^t c_{ks}\mat(\bZ_s) + r_0(\bX_*)\bigg)\bigg\|_\Fro
\leq L\|\bTheta_*-r_0(\bX_*)\|_\Fro
\leq C\sqrt{\epsilon n},
\end{equation}
the last inequality holding a.s.\ for all large $n$ by
\eqref{eq:poly_approx_monotone}. By the
approximation property (\ref{eq:rgapprox}) for $\bar g_k$ and $\bar r_k$
and the operator norm bound (\ref{eq:inputopbound}) where $B<B_\epsilon$, also
\begin{align}
&\bigg\|g_k\bigg(\sum_{s=1}^t
c_{ks}\mat(\bZ_s) + r_0(\bX_*)\bigg) - r_k\bigg(\sum_{s=1}^t
c_{ks}\mat(\bZ_s) + r_0(\bX_*)\bigg)\bigg\|_\Fro\\
&=N^{1/2}\bigg\|\bar g_k\bigg(N^{-1/2}\bigg(\sum_{s=1}^t
c_{ks}\mat(\bZ_s) + r_0(\bX_*)\bigg)\bigg) - \bar r_k\bigg(N^{-1/2}\bigg(\sum_{s=1}^t
c_{ks}\mat(\bZ_s) + r_0(\bX_*)\bigg)\bigg)\bigg\|_\Fro\\
&\leq N^{1/2} \cdot M^{1/2} \sqrt{\epsilon}=\sqrt{\epsilon n}\label{eq:grapprox}
\end{align}
a.s.\ for all large $n$. Combining (\ref{eq:Thetaapprox}) and
(\ref{eq:grapprox}),
\begin{align}
\frac{1}{\sqrt{n}} \|f(\bZ_{1:t})-p(\bZ_{1:t})\|_2 \leq C'\sqrt{\epsilon}
\text{ a.s.\ for all large $n$}.
\end{align}
Applying the dominated convergence theorem, this implies
$n^{-1}\E[\|f(\bZ_{1:t})-p(\bZ_{1:t})\|_2^2 \mid \bX_*]<C\epsilon$ for a
constant $C>0$ a.s.\ for all large $n$,
verifying the first condition of BCP-approximability.

For the second condition of BCP-approximability, let $\cQ=\bigsqcup_{t=0}^T
\cQ_t$ be the 
set of all polynomial functions of the form (\ref{eq:polyspec})
where $r_0(\cdot)$ is as defined above, $\{c_{ks}\}$ have the same uniform bound
as in $\cP$, and $r_k(\cdot)=N^{1/2}\bar r_k(N^{-1/2}\,\cdot\,)$ for some
monomial $\bar r_k(x)=x^{d_k}$ of odd degree $d_k \geq 1$. Then $\cP \cup
\{q_1,q_2\}$ continues to satisfy the BCP for any $q_1,q_2 \in \cQ$ of uniformly
bounded degrees. Let $\bz_{1:t}$ be any random vectors such that
\begin{equation}\label{eq:specQconverge}
n^{-1}q_1(\bz_{1:t})^\top q_2(\bz_{1:t})
-n^{-1}\EE[q_1(\bZ_{1:t})^\top q_2(\bZ_{1:t}) \mid \bX_*] \to 0 \text{ a.s.}
\end{equation}
for all $q_1,q_2 \in \cQ_t$ of uniformly bounded degrees. Applying
(\ref{eq:Thetaapprox}), for a constant $C_1>0$,
\begin{equation}\label{eq:fspectralapprox}
\frac{1}{n}\bigg\|f(\bz_1,\ldots,\bz_t)-\underbrace{\sum_{k=1}^K \vec\bigg(g_k\bigg(\sum_s c_{ks}
\mat(\bz_s)+r_0(\bX_*)}_{\tilde
f(\bz_{1:t})}\bigg)\bigg)\bigg\|_2^2<C_1\epsilon
\text{ a.s.\ for all large $n$}.
\end{equation}
Applying (\ref{eq:grapprox}) and the dominated convergence theorem, also
\begin{equation}\label{eq:fspectralapprox2}
\frac{1}{n}\,\EE[\|\tilde f(\bZ_{1:t})-p(\bZ_{1:t})\|_2^2 \mid
\bX_*]<C_1\epsilon \text{ a.s.\ for all large $n$}.
\end{equation}
Suppose by contradiction that there is a positive-probability event $\Omega$
on which
\begin{equation}\label{eq:spectralcontra}
n^{-1}\|f(\bz_{1:t})-p(\bz_{1:t})\|_2^2>5C_1\epsilon
\end{equation}
infinitely often. Let $\Omega'$ be the intersection of $\Omega$ with
the probability-one
event where (\ref{eq:fspectralapprox}) holds, and where
(\ref{eq:specQconverge}) holds for all $q_1,q_2$ in a suitably chosen countable 
subset of $\cQ$.
For any $\omega \in \Omega'$, we may pass to a subsequence
$\{n_j\}_{j=1}^\infty$ along which (\ref{eq:spectralcontra}) holds and where
the expectation over $\bZ_{1:t}$ of the empirical singular value distribution of
$N^{-1/2}(\sum_{s=1}^t c_{ks} \mat(\bZ_s)+r_0(\bX_*))$
converges weakly and in Wasserstein-$j$ to a limit $\nu_k$, for each
$k=1,\ldots,K$ and every order $j \geq 1$.
Then the statement (\ref{eq:specQconverge}) over a suitably chosen
countable subset of $\cQ$
implies that the singular value distribution of $N^{-1/2}(\sum_{s=1}^t c_{ks}
\mat(\bz_s)+r_0(\bX_*))$ converges weakly and in Wasserstein-$j$ to the same limit
$\nu_k$, for each $k=1,\ldots,K$ and $j \geq 1$. Since $(\bar g_k-\bar r_k)^2$
is a function of polynomial growth, this implies that
\[n_j^{-1}\|\tilde f(\bz_{1:t})-p(\bz_{1:t})\|_2^2
-n_j^{-1}\EE[\|\tilde f(\bZ_{1:t})-p(\bZ_{1:t})\|_2^2 \mid \bX_*] \to 0\]
along this subsequence $\{n_j\}_{j=1}^\infty$. Then combining
with (\ref{eq:fspectralapprox}) and (\ref{eq:fspectralapprox2}), we have
\[\limsup_{j \to \infty} n_j^{-1}\|f(\bz_{1:t})-p(\bz_{1:t})\|_2^2 \leq 4C_1\epsilon,\]
contradicting (\ref{eq:spectralcontra}).
So $n^{-1}\|f(\bz_{1:t})-p(\bz_{1:t})\|_2^2 \leq 5C_1\epsilon$
a.s.\ for all large $n$, showing
the second condition of BCP-approximability and completing the proof.
\end{proof}

\section{Auxiliary proofs}

\subsection{Tensor network representation of polynomial AMP}\label{sec:GenAlg}

We prove Lemma \ref{lem:tenUnroll} on the unrolling of polynomial AMP into
tensor network values. It is convenient to introduce the following
object which will represent the vector-valued iterates $\bu_1,\ldots,\bu_t$.

\begin{definition}
An {\bf open $\cT$-labeling} $\cL^*$ of a connected
ordered multigraph $G$ is an
assignment of a label $*$ to a vertex $v^* \in G$ with $\deg(v^*)=1$,
and a tensor label $\bT_v \in \cT$ 
to each remaining vertex $v \in \cV \setminus \{v^*\}$ such that
$\bT_v$ has order equal to $\deg(v)$.

The {\bf vector value} $\vecval_G(\cL^*) \in \R^n$ of this open labeling is
the vector satisfying, for any $\bv \in \R^n$,
\[\langle \vecval_G(\cL^*),\bv \rangle=\val_G(\cL^\bv)\]
where $\cL^\bv$ is the labeling of $G$ that completes $\cL^*$ by assigning
the label $\bv \in \R^n$ to $v^*$.
\end{definition}

One may understand $v^*$ and the (unique) edge $e^*$ incident to $v^*$ as 
``placeholders'': the vector value $\vecval_G(\cL^*)$ is obtained by
contracting all tensor-tensor products represented by edges $\cE \setminus e^*$,
with the final index $i_{e^*} \in [n]$ associated to $e^*$ left unassigned.

\begin{lemma}\label{lem:tenUnrollVec}
Fix any $T \geq 1$.
Under the assumptions of Lemma \ref{lem:tenUnroll}, there exist constants
$C,M>0$, a list of connected ordered multigraphs $G_1,\ldots,G_M$
depending only on $T,D,C_0$ and
independent of $n$, and a list of open $(\cT \cup \bW)$-labelings
$\cL_1^*,\ldots,\cL_M^*$ of $G_1,\ldots,G_M$ and
coefficients $a_1,\ldots,a_M \in \R$ with $|a_m|<C$, such that
\[\bu_T=\sum_{m=1}^M a_m \vecval_{G_m}(\cL_m^*).\]
\end{lemma}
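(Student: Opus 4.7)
The plan is to prove by induction on $t$ that each iterate $\bu_t$ admits a representation
\[
\bu_t = \sum_{m=1}^{M_t} a_m^{(t)}\, \vecval_{G_m^{(t)}}(\cL_m^{*,(t)})
\]
with all graphs $G_m^{(t)}$ connected and independent of $n$, all open labelings taking values in $\cT \cup \{\bW\}$, and $M_t$, the graph sizes, and the coefficients $|a_m^{(t)}|$ bounded by constants depending only on $t$, $D$, and $C_0$. Setting $t=T$ then gives the lemma.

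For the base case $t=1$, we have $\bu_1 = f_0(\cdot) = \bT^{(0)} \in \cT_1$ by the BCP-representation (\ref{eq:tensorpolyrepr}) applied to the constant function $f_0$. This order-$1$ tensor is the vector value of the two-vertex graph with a single edge, one endpoint labeled $\bT^{(0)}$ and the other being the placeholder $v^*$.

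For the inductive step, I will do two sub-steps. First, I will show that each $\bz_t$ (for $t \leq T-1$) admits an analogous open-tensor-network representation, assuming the claim for $\bu_1,\ldots,\bu_t$. For the piece $\bW\bu_t$, take each summand $a_m \vecval_{G_m}(\cL_m^*)$ for $\bu_t$ and modify the graph by inserting a new degree-$2$ vertex labeled $\bW$ on the placeholder edge and attaching a new placeholder vertex; the resulting open tensor network still evaluates to $\bW$ times the original vector value, and stays connected. For the pieces $b_{ts}\bu_s$, simply rescale the coefficients by $b_{ts}$ (which is bounded by $C_0$). Combining these with the sign produces the desired representation of $\bz_t = \bW\bu_t - \sum_{s<t} b_{ts}\bu_s$. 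Second, I will apply the polynomial representation
\[
f_t(\bz_1,\ldots,\bz_t) = \bT^{(0)} + \sum_{d=1}^{D} \sum_{\sigma \in \cS_{t,d}} \bT^{(\sigma)}[\bz_{\sigma(1)},\ldots,\bz_{\sigma(d)},\,\cdot\,]
\]
together with multi-linearity. For each term indexed by $(d,\sigma)$, I place a central vertex $v_0$ carrying the tensor $\bT^{(\sigma)} \in \cT_{d+1}$, designate the last of its $d+1$ ordered edges as the new placeholder edge $e^*$, and for each $j=1,\ldots,d$ glue the $j^{\text{th}}$ edge of $v_0$ onto the placeholder edge of an open tensor network representing $\bz_{\sigma(j)}$ (here ``gluing'' means identifying the placeholder vertex of the $\bz_{\sigma(j)}$-network with $v_0$ along that edge). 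Because each constituent network is connected and we attach them to a common central vertex $v_0$, the resulting graph is connected. Expanding the sums and products of vector values by multi-linearity produces $\bu_{t+1}$ as a sum of open tensor network values, with coefficients given by products of the $a_m^{(s)}$'s, each bounded by $C^d$ for a constant $C$ depending on $T,D,C_0$, and with the total number of summands bounded by a function of $t,D,C_0$ only.

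The routine bookkeeping steps will be tracking (i) the edge orderings, since the tensors $\bT^{(\sigma)}$ need not be symmetric, (ii) the inductive bounds on $M_t$, $|a_m^{(t)}|$, and on the number of vertices/edges per graph, all of which multiply by a factor of at most $C(D, C_0, t)$ at each induction step and therefore remain independent of $n$ after $T$ steps, and (iii) the fact that only finitely many distinct graphs and tensors appear (this follows from BCP-representability giving a single finite tensor set $\cT$ independent of $n$). The main conceptual point — and the only non-mechanical part — is the gluing construction above and the verification that connectedness is preserved under the substitution $\bz_{\sigma(j)} \mapsto$ (open network for $\bz_{\sigma(j)}$); everything else is bookkeeping.
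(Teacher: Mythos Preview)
Your proof is correct and follows essentially the same approach as the paper's: both unroll the AMP iterates via multi-linearity of the tensor representation \eqref{eq:tensorpolyrepr} and build the open tensor networks inductively by gluing copies of previously-constructed networks onto a central vertex labeled $\bT^{(\sigma)}$ (inserting a $\bW$-labeled degree-2 vertex for the $\bW\bu_t$ pieces). The only organizational difference is that the paper first fully expands $\bu_T$ into a sum of ``monomial'' iterations (each using a single tensor and a single choice of $\bz_t$ at every step) and then shows each such monomial yields a single tree, whereas you carry the sum-of-networks representation through the induction---these are equivalent repackagings of the same construction.
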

\begin{proof}
By assumption, each function $f_0,\ldots,f_T$ admits a
representation
\[f_s(\bz_1,\ldots,\bz_s)=\bT_s^{(0)}
+\sum_{d=1}^D \sum_{\sigma \in \cS_{s,d}} \bT_s^{(\sigma)}[\bz_{\sigma(1)},
\ldots,\bz_{\sigma(d)},\,\cdot\,]\]
for some tensors $\bT_s^{(0)},\bT_s^{(\sigma)} \in \cT$. Then by definition of
the algorithm (\ref{eq:AMP}) and multi-linearity, there exists a constant $M>0$
(depending only on $(T,D)$) and coefficients $a_1,\ldots,a_M \in \R$ for which
\[\bu_T=\sum_{m=1}^M a_m \bu_T^{(m)},\]
each $a_m$ is a product of a subset of the Onsager coefficients
$\{{-}b_{ts}\}_{s<t \leq T}$,
and each $\bu_T^{(m)}$ is the output of an iterative algorithm
with initialization $\bu_1^{(m)}=\bu_1$ and
\begin{align}
\bz_t &\in \{\bW\bu_t,\bu_1,\ldots,\bu_{t-1}\}\\
\bu_{t+1} &\in \begin{cases}
\bT_t^{(0)} \\ \bT_t^{(\sigma)}[\bz_{\sigma(1)},\ldots,
\bz_{\sigma(d)},\cdot] \text{ for some } d \in [D] \text{ and }
\sigma \in \cS_{t,d}\end{cases}
\end{align}
for $t=1,2,\ldots,T-1$. That is, in each iteration,
the algorithm is defined by a single (fixed) choice for $\bz_t
\in \{\bW\bu_t,\bu_1,\ldots,\bu_{t-1}\}$ and a non-linear function representable
by a single (fixed) tensor in $\cT$. Thus it suffices to
show that for any such algorithm and any $t \in \{1,\ldots,T\}$,
there exists a connected ordered multigraph $G$
independent of $n$ --- in fact, a tree rooted at $v^*$ ---
and an open $(\cT \cup \bW)$-labeling $\cL^*$ of $G$ for which
\[\label{eq:singlenetworkrepr}\bu_t=\vecval_G(\cL^*).\]

This follows from an easy induction on $t$: For $t=1$,
$\bu_1$ is given by $\vecval_G(\cL^*)$ for a tree $G$
with root $v^*$ and a single edge connecting to a child with label
$\bu_1 \in \R^n \cap \cT$. Assuming that
(\ref{eq:singlenetworkrepr}) holds for $s=1,\ldots,t-1$, let $(G_s,\cL_s^*)$
be the tree graph and open labeling for which
$\bu_s=\vecval_{G_s}(\cL_s^*)$, and let $d+1$ be the order of the tensor
$\bT_{t-1}^{(\sigma)}$ defining $\bu_t$. Then define a tree graph
$G$ with open labeling $\cL^*$ such that $G$ is rooted at $v^*$, and $v^*$ has
a single child $v$ labeled by $\bT_{t-1}^{(\sigma)}$, with $\deg(v)=d+1$ and ordered
edges $e_1,\ldots,e_{d+1}$ where the last edge $e_{d+1}$ connects to $v^*$.
For each other edge $e_i$ with $i \in [d]$:
\begin{itemize}
\item If $\bz_{\sigma(i)}=\bu_j$ for some $j \in [t-1]$, then the $i^\text{th}$
subtree
$v \overset{e_i}{-} T_i$ rooted at $v$ coincides with $(G_j,\cL_j^*)$ with $v$
replacing the root of $(G_j,\cL_j^*)$.
\item If $\bz_{\sigma(i)}=\bW\bu_j$ for $j=\sigma(i) \in [t-1]$, then
this $i^\text{th}$ subtree has a form
\[v \overset{e_i}{-} v_i \overset{e_i'}{-} T_i\]
where $v_i$ has $\deg(v_i)=2$ and label $\bW$, its first edge $e_i'$ connects to
$T_i$, and its second edge $e_i$ connects to $v$. The subtree $v_i
\overset{e_i'}{-} T_i$ coincides with $(G_j,\cL_j^*)$ with $v_i$
replacing the root of $(G_j,\cL_j^*)$.
\end{itemize}
%By definition of an open labeling, each $(G_j,\cL_j^*)$ has no
%two adjacent vertices labeled by $\bW$, and the vertex connected to the root of
%$(G_j,\cL_j^*)$ also not labeled by $\bW$. Then the same two properties
%continue to hold for this construction of $(G,\cL^*)$.
It is readily checked from the definition of $\vecval$
and the inductive hypothesis $\bu_s=\vecval_{G_s}(\cL_s^*)$ for each $s \in
[t-1]$ that $\bu_t=\vecval_G(\cL^*)$, completing the induction and the proof.
\end{proof}

\begin{proof}[Proof of Lemma \ref{lem:tenUnroll}]
By Lemma \ref{lem:tenUnrollVec} and the given condition that $\phi_1,\phi_2$
are also $\cT$-reprsentable, we have
\begin{align}
\phi_1(\bz_1,\ldots,\bz_T)&=\sum_{m=1}^M a_m \val_{G_m}(\cL_m^*)\\
\phi_2(\bz_1,\ldots,\bz_T)&=\sum_{m=1}^{M'} a_m' \val_{G_m'}({\cL_m^*}')
\end{align}
where $|a_m|,|a_m'| \leq C$, $G_m,G_m'$ are connected ordered multigraphs
(independent of $n$) with open labelings $\cL_m^*,{\cL_m^*}'$, and
$C,M_1,M_2>0$ are constants independent of $n$. Then
\[\phi(\bz_1,\ldots,\bz_T)=\frac{1}{n}
\phi_1(\bz_1,\ldots,\bz_T)^\top \phi_2(\bz_1,\ldots,\bz_T)
=\sum_{m=1}^M \sum_{m'=1}^{M'} \frac{a_ma_{m'}}{n}
\langle \vecval_{G_m}(\cL_m^*),\vecval_{G_m'}({\cL_m^*}') \rangle.\]
The lemma then follows from the observation that for any two connected
ordered multigraphs $G_1,G_2$ with open labelings $\cL_1^*,\cL_2^*$, we have
\[\langle \vecval_{G_1}(\cL_1^*),\vecval_{G_2}(\cL_2^*) \rangle
=\val_G(\cL)\]
where $(G,\cL)$ is the tensor network obtained removing the
distinguished vertex $v^*$ from both $G_1$ and $G_2$, and replacing the edge
$v_1-v^*$ in $G_1$ and the edge $v_2-v^*$ in $G_2$ by a single edge $v_1-v_2$.
(If $v_1-v^*$ is the $i^\text{th}$ ordered edge of $v_1$ in $G_1$ and $v_2-v^*$
is the $j^\text{th}$ ordered edge of $v_2$ in $G_2$,
then $v_1-v_2$ remains the $i^\text{th}$ ordered edge of
$v_1$ and $j^\text{th}$ ordered edge of $v_2$ in $G$.)
\end{proof}

\subsection{Extension to asymmetric AMP}\label{sec:Rectangle}

We prove Theorem \ref{thm:RectSE} on the extension of our main results to AMP
with an asymmetric matrix $\bW \in \RR^{m \times n}$.

\begin{proof}[Proof of Theorem \ref{thm:RectSE}]
We ``embed'' the asymmetric AMP algorithm (\ref{eq:RectAMP})
into the symmetric AMP algorithm \eqref{eq:AMP} by setting
\[\bW^\sym = \sqrt{\frac{m}{m+n}}
\begin{bmatrix} \bA & \bW \\ \bW^\top &\bB \end{bmatrix} \in \RR^{(n+m) \times
(n+m)}\]
where $\bA \in \RR^{m \times m}$ and $\bB \in \RR^{n \times n}$ have
independent Gaussian entries with mean 0 and variance $1/m$. Then
$\bW^\sym$ is a Wigner matrix of size $n+m$,
satisfying Assumption \ref{assump:Wigner}.

We consider the initialization
\[f_0^\sym(\cdot) \equiv \bu_1^\sym=\sqrt{\frac{m+n}{m}}\begin{bmatrix} 0\\
\bu_1 \end{bmatrix}\]
and the sequence of non-linear functions $f_t^\sym:\RR^{(n+m) \times t}
\to \RR^{n+m}$ given by
\begin{equation}\label{eq:fgEmbed}
\begin{aligned}
f_{2t-1}^\sym(\bz_{1:(2t-1)}^\sym) &= \sqrt{\frac{m+n}{m}} \begin{bmatrix}
f_t((\bz_{2j-1}^\sym[1:m])_{j=1}^t) \\ 0 \end{bmatrix},\\
f_{2t}^\sym(\bz_{1:2t}^\sym) &= \sqrt{\frac{m+n}{m}}
\begin{bmatrix} 0 \\ g_t((\bz_{2j}^\sym[(m+1):(m+n)])_{j=1}^t)\end{bmatrix}.
\end{aligned}
\end{equation}
We then consider the iterates of the symmetric AMP algorithm \eqref{eq:AMP},
\[\label{eq:RectToSym}\begin{split}
    \bz_t^\sym &= \bW^\sym \bu_t^\sym - \sum_{s=1}^{t-1} b_{ts}^\sym \bu_s^\sym\\
    \bu_{t+1}^\sym &= f_t^\sym(\bz_{1:t}^\sym)
\end{split}\]
where $b_{ts}^\sym$ is as defined in Definition \ref{def:non_asymp_se} for the
function sequence $\{f_t^\sym\}_{t \geq 0}$. It is direct to verify that the
iterates of the asymmetric AMP algorithm (\ref{eq:RectAMP}) are embedded within
the iterates of this algorithm as
\[\bz_t=\bz_{2t-1}^\sym[1:m],
\quad \by_t=\bz_{2t}^\sym[(m+1):(m+n)],\]
\[\bu_t=\sqrt{\frac{m}{m+n}}\bu_{2t-1}^\sym[(m+1):(m+n)],
\quad \bv_t=\sqrt{\frac{m}{m+n}}\bv_{2t}^\sym[1:m],\]
and that the Onsager coefficients and state evolution covariances 
of Definition \ref{def:RectSE} are related to those of this symmetric AMP
algorithm (\ref{eq:RectToSym}) by
\[b_{ts}=\sqrt{\frac{m}{m+n}}b_{2t-1,2s}^\sym,
\qquad a_{ts}=\sqrt{\frac{m}{m+n}}b_{2t,2s-1}^\sym,\]
\begin{equation}\label{eq:SECovEmbed}
\bOmega_t=(\bSigma_{2t-1}^\sym[2j-1,2k-1])_{j,k=1}^t,
\qquad \bSigma_t=(\bSigma_{2t}^\sym[2j,2k])_{j,k=1}^t.
\end{equation}
Furthermore, for $i=1,2$, defining
\begin{equation}\label{eq:phipsiEmbed}
\begin{aligned}
\phi_i^\sym(\bz_{1:2T}^\sym) &= \sqrt{\frac{m+n}{m}} \begin{bmatrix}
\phi_i((\bz_{2j-1}^\sym[1:m])_{j=1}^T) \\ 0
\end{bmatrix},\\
\psi_i^\sym(\bz_{1:2T}^\sym) &= \sqrt{\frac{m+n}{m}}
\begin{bmatrix} 0 \\ \psi_i((\bz_{2j}^\sym[(m+1):(m+n)])_{j=1}^T)\end{bmatrix},
\end{aligned}
\end{equation}
and setting $\phi^\sym=(n+m)^{-1}(\phi_1^\sym)^\top(\phi_2^\sym)$ and
$\psi^\sym=(n+m)^{-1}(\psi_1^\sym)^\top(\psi_2^\sym)$, we have
\[\phi(\bz_{1:T})=\phi^\sym(\bz_{1:2T}),
\qquad \psi(\bz_{1:T})=\psi^\sym(\bz_{1:2T}).\]
Thus Theorem \ref{thm:RectSE} follows directly from Theorems
\ref{thm:universality_poly_amp} and \ref{thm:main_universality} applied up to
iteration $2T$ of the symmetric AMP algorithm \eqref{eq:RectToSym}, provided
that the assumptions in Theorems
\ref{thm:universality_poly_amp} and \ref{thm:main_universality} hold.

To check these assumptions
in the polynomial AMP setting of Theorem \ref{thm:RectSE}(a), note that
$\bSigma_{2T}^\sym$ is block-diagonal with even rows/columns constituting one
block equal to $\bSigma_T$ and odd rows/columns constituting a second block
equal to $\bOmega_T$. Then $\lambda_{\min}(\bSigma_{2T}^\sym)>c$ by the given
conditions for $\bSigma_T$ and $\bOmega_T$, implying also that
$\lambda_{\min}(\bSigma_t^\sym)>c$ for each $t=1,\ldots,2T$. To apply
Theorem \ref{thm:universality_poly_amp}, it remains to check that
$\cF^\sym=\{f_0^\sym,\ldots,f_{2T-1}^\sym,\phi_1^\sym,\phi_2^\sym\}$ is
BCP-representable. As $\cG=\{g_0,\ldots,g_{T-1},\psi_1,\psi_2\}$ is
BCP-representable, there exists a set of tensors
$\cT^g=\bigsqcup_{k=1}^K \cT_k^g$ with $\cT_k^g \subset (\RR^n)^{\otimes k}$
that satisfies the BCP, 
for which each $g \in \cG$ admits the representation (\ref{eq:tensorpolyrepr})
with tensors in $\cT^g$. Similarly, there exists a set of tensors
$\cT^f=\bigsqcup_{k=1}^K \cT_k^f$ with $\cT_k^f \subset (\RR^m)^{\otimes k}$
that satisfies the BCP,
for which each $f \in \cF=\{f_1,\ldots,f_T,\phi_1,\phi_2\}$ admits
the representation (\ref{eq:tensorpolyrepr}) with tensors in $\cT^f$.
Let $\cT_k^{g,\sym} \subset (\RR^{m+n})^{\otimes k}$ be the embeddings of the
tensors $\cT_k^g$ into the diagonal block of $(\RR^{m+n})^{\otimes k}$
corresponding to the last $n$ coordinates $m+1,\ldots,m+n$, similarly let
$\cT_k^{f,\sym} \subseteq (\RR^{m+n})^{\otimes k}$ be the embeddings of
$\cT_k^f$ into the diagonal block of $(\RR^{m+n})^{\otimes k}$ corresponding to
the first $m$ coordinates $1,\ldots,m$, and define
\[\cT^\sym=\cT^{g,\sym} \sqcup \cT^{f,\sym}
=\bigsqcup_{k=1}^K \cT_k^{g,\sym} \sqcup \bigsqcup_{k=1}^K \cT_k^{f,\sym}.\]
Then each function $f \in \cF^\sym$ admits the representation
(\ref{eq:tensorpolyrepr}) with tensors in $\cT^\sym$. To see that $\cT^\sym$
satisfies the BCP, consider the expression on the left side of
(\ref{eq:BCPcondition}). If all tensors in this expression belong to
$\cT^{g,\sym}$, then (\ref{eq:BCPcondition}) holds by the BCP for
$\cT^g$. Similarly if all tensors belong to $\cT^{f,\sym}$, then
(\ref{eq:BCPcondition}) holds by the BCP for $\cT^f$.
If there is at least one tensor belonging to both $\cT^{g,\sym}$ and
$\cT^{f,\sym}$, then the second condition of Definition \ref{def:BCP} requires
that there is at least one index $i_j$ for some $j \in \{1,\ldots,\ell\}$
that is shared between a tensor $\bT_a \in \cT^{g,\sym}$ and a tensor
$\bT_b \in \cT^{f,\sym}$. Then the $\bT_a$ factor is 0 for all summands where
$i_j \in \{1,\ldots,m\}$, the $\bT_b$ factor is 0 for all summands where $i_j
\in \{m+1,\ldots,m+n\}$, so (\ref{eq:BCPcondition}) holds trivially as the left
side is 0. This verifies that $\cT^\sym$ satisfies the BCP. Then
$\cF^\sym$ is BCP-representable, and Theorem \ref{thm:RectSE}(a)
follows from Theorem \ref{thm:universality_poly_amp}.

For Theorem \ref{thm:RectSE}(b), we check the conditions of
Theorem \ref{thm:main_universality}: The boundedness and Lipschitz properties
(\ref{eq:Lipschitz}) for
$\cF^\sym=\{f_0^\sym,\ldots,f_{2T-1}^\sym,\phi_1^\sym,\phi_2^\sym\}$ follow
from the given property (\ref{eq:RectLipschitz}) for
$\cF=\{f_1,\ldots,f_T,\phi_1,\phi_2\}$ and
$\cG=\{g_0,\ldots,g_{T-1},\psi_1,\psi_2\}$. The condition that
$\lambda_{\min}(\bSigma_t^\sym[S_t,S_t])>c$ for the set of preceding iterates
$S_t$ on which $f_t^\sym$ depends, for each $t=1,\ldots,2T-1$, follows also
from the given conditions for $\bSigma_t,\bOmega_t$ and the above
identifications (\ref{eq:SECovEmbed}). For BCP-approximability of $\cF^\sym$,
fix any $C_0,\epsilon>0$, and let $\cP^g,\cQ^g$ and $\cP^f,\cQ^f$ be the sets
of polynomial functions guaranteed by Definition \ref{def:BCP_approx}
for the BCP-approximable families $\cG$ and $\cF$ respectively.
For each $p \in \cP^g$ where $p:\RR^{n \times t} \to \RR^n$,
consider the embedding $p^\sym:\RR^{(n+m) \times 2t} \to \RR^{n+m}$ given by
\[p^\sym(\bz_{1:2t}^\sym)
=\sqrt{\frac{m+n}{m}}\begin{pmatrix} 0 \\
p((\bz_{2j}^\sym[(m+1):(m+n)])_{j=1}^t) \end{pmatrix},\]
and for each $p \in \cP^f$ where $p:\RR^{n \times t} \to \RR^n$,
consider the embedding $p^\sym:\RR^{(n+m) \times (2t-1)} \to \RR^{n+m}$ given by
\[p^\sym(\bz_{1:(2t-1)}^\sym)
=\sqrt{\frac{m+n}{m}}\begin{pmatrix} p((\bz_{2j-1}^\sym[1:m])_{j=1}^t) \\ 0
 \end{pmatrix}.\]
Let $\cP^\sym=\bigsqcup_{t=0}^{2T} \cP_t^\sym$
be the set of such embeddings for all $p \in \cP^g$ and $p \in
\cP^f$, and define similarly $\cQ^\sym$ as the set of such embeddings
for all $q \in \cQ^g$ and $q \in \cQ^f$.
The preceding argument shows that $\cP^\sym \cup
\{q_1^\sym,q_2^\sym\}$ for any $q_1^\sym,q_2^\sym \in \cQ^\sym$ of uniformly
bounded degrees continues to satisfy the BCP. Then, in light
of (\ref{eq:fgEmbed}) and (\ref{eq:phipsiEmbed}), both conditions of Definition 
\ref{def:BCP_approx} hold for $\cF^\sym$ and the constants
$C_0,[(m+n)/m]\epsilon>0$, via these sets $\cP,\cQ$. Thus $\cF^\sym$ is
BCP-approximable, and Theorem \ref{thm:RectSE}(b)
follows from Theorem \ref{thm:main_universality}.
\end{proof}

\subsection{Auxiliary lemmas}

\begin{lemma}[Stein's Lemma]\label{lem:stein}
Let $\bX \sim \cN(0,\bSigma)$ be a multivariate Gaussian vector in $\R^t$
with non-singular covariance $\bSigma \in \R^{t \times t}$,
and let $g:\RR^t \to \RR$ be a weakly differentiable function such that
$\E|\partial_j g(\bX)|<\infty$ for each $j=1,\ldots,t$. Then
\[\EE[\bX\,g(\bX)] = \bSigma \cdot \EE \nabla g(\bX)\]
\end{lemma}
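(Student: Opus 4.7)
The plan is to reduce the multivariate statement with general non-singular $\bSigma$ to the case of a standard Gaussian vector in $\R^t$, and then reduce that further to the classical one-dimensional Gaussian integration-by-parts formula. Since $\bSigma$ is non-singular, I can write $\bX = \bSigma^{1/2} \bZ$ with $\bZ \sim \cN(0,\Id_t)$, and define $\tilde g(\bz) = g(\bSigma^{1/2}\bz)$. By the chain rule for weak derivatives (which is valid because $\bz \mapsto \bSigma^{1/2}\bz$ is a bi-Lipschitz linear map), $\tilde g$ is weakly differentiable with $\nabla \tilde g(\bz) = \bSigma^{1/2} \nabla g(\bSigma^{1/2}\bz)$, and the integrability of $\partial_j g(\bX)$ transfers to integrability of $\partial_j \tilde g(\bZ)$.

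The core step is the one-dimensional identity: if $Z \sim \cN(0,1)$ and $h:\R \to \R$ is weakly differentiable with $\E|h'(Z)|<\infty$, then $\E[Z h(Z)] = \E[h'(Z)]$. This follows by integration against the standard normal density $\phi$, using the identity $-\phi'(z) = z\phi(z)$ and the vanishing of boundary terms at $\pm \infty$ (which I would justify rigorously by first mollifying $h$ to produce a smooth compactly supported approximation, applying the classical integration by parts, and passing to the limit using $\E|h'(Z)|<\infty$ and dominated convergence). Applying this coordinatewise to $\tilde g(\bZ)$, with the remaining $t-1$ coordinates held as parameters, and invoking Fubini's theorem yields $\E[Z_i \tilde g(\bZ)] = \E[\partial_i \tilde g(\bZ)]$ for every $i \in [t]$.

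Collecting these coordinate identities into vector form gives $\E[\bZ \tilde g(\bZ)] = \E \nabla \tilde g(\bZ) = \bSigma^{1/2} \E \nabla g(\bX)$. Left-multiplying by $\bSigma^{1/2}$ and using $\bX = \bSigma^{1/2}\bZ$ then produces $\E[\bX g(\bX)] = \bSigma^{1/2} \E[\bZ \tilde g(\bZ)] = \bSigma \cdot \E \nabla g(\bX)$, which is the stated identity. The only genuinely delicate point in this plan is justifying the one-dimensional integration by parts when $h$ is merely weakly differentiable rather than smooth; this is a standard density argument via mollification and truncation, and the hypothesis $\E|\partial_j g(\bX)|<\infty$ is exactly what is needed to validate the limit.
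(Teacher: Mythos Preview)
Your argument is correct and is the standard route to the multivariate Stein identity. The paper does not actually supply its own proof of this lemma; it simply cites \cite[Lemma 6.20]{Feng2021}, so there is nothing substantive to compare against.
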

\begin{proof}
See \cite[Lemma 6.20]{Feng2021}.
\end{proof}

\begin{lemma}[Wick's rule]\label{lem:wick}
Suppose $\bxi_1,\ldots,\bxi_t \in \R^n$ are i.i.d.\ $\cN(0,\Id)$ vectors,
$\bT \in (\R^n)^{\otimes d}$ is a deterministic tensor, and
$\sigma:[d] \to [t]$ is any
index map. Let $\pi=\{\sigma^{-1}(1),\ldots,\sigma^{-1}(t)\}$ be the partition
of $[d]$ where each block is the pre-image of a single index $s \in [t]$ under
$\sigma$, and let
\[\cP=\{\text{pair partitions } \tau \text{ of } [d]:\tau \leq \pi\}\]
be the set of pairings of $[d]$ that refine $\pi$ (where $\cP=\emptyset$ if any
block of $\pi$ has odd cardinality). Then
\[\E\,\bT[\bxi_{\sigma(1)},\ldots,\bxi_{\sigma(d)}]
=\sum_{\tau \in \cP}\,\sum_{\bi \in [n]^d}
\bT[i_1,\ldots,i_d]\prod_{(a,b) \in \tau} \1\{i_a=i_b\}.\]
\end{lemma}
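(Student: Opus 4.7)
The plan is to prove this identity by a direct computation: first expand the multi-linear contraction, then use independence of $\bxi_1, \ldots, \bxi_t$ to factorize the moment into a product over the blocks of $\pi$, then apply the classical scalar Wick/Isserlis formula within each block, and finally recognize the resulting combinatorial sum as a sum over $\cP$.

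First I would use multi-linearity of $\bT$ in its arguments to write
\[
\E\,\bT[\bxi_{\sigma(1)}, \ldots, \bxi_{\sigma(d)}] = \sum_{\bi \in [n]^d} \bT[i_1, \ldots, i_d] \cdot \E \prod_{a=1}^d \bxi_{\sigma(a)}[i_a],
\]
which reduces the lemma to proving the combinatorial identity
\[
\E \prod_{a=1}^d \bxi_{\sigma(a)}[i_a] = \sum_{\tau \in \cP} \prod_{(a,b) \in \tau} \1\{i_a = i_b\}
\]
for each fixed $\bi \in [n]^d$. Since the vectors $\bxi_1, \ldots, \bxi_t$ are independent, the left side factorizes over the blocks of $\pi$ as
\[
\E \prod_{a=1}^d \bxi_{\sigma(a)}[i_a] = \prod_{s=1}^t \E \prod_{a \in \sigma^{-1}(s)} \bxi_s[i_a].
\]

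Second, for each $s \in [t]$, I would apply the classical scalar Isserlis/Wick theorem to the single standard Gaussian vector $\bxi_s$ (whose entries are i.i.d.\ $\cN(0,1)$), yielding
\[
\E \prod_{a \in \sigma^{-1}(s)} \bxi_s[i_a] = \sum_{\tau_s} \prod_{(a,b) \in \tau_s} \E\bigl[\bxi_s[i_a] \bxi_s[i_b]\bigr] = \sum_{\tau_s} \prod_{(a,b) \in \tau_s} \1\{i_a = i_b\},
\]
where $\tau_s$ ranges over pair partitions of $\sigma^{-1}(s)$, and the sum is empty (hence the moment vanishes) when $|\sigma^{-1}(s)|$ is odd. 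Distributing the product over $s \in [t]$, a tuple $(\tau_1, \ldots, \tau_t)$ of block-wise pair partitions is in bijection with a pair partition $\tau$ of $[d]$ satisfying $\tau \leq \pi$, i.e.\ with an element of $\cP$: every pair of $\tau$ lies inside a single block $\sigma^{-1}(s)$ precisely when $\tau$ refines $\pi$. Substituting back into the multi-linear expansion gives the claim.

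The argument is essentially combinatorial bookkeeping; the only substantive input is the scalar Isserlis formula for an i.i.d.\ standard Gaussian vector, which is what lets us handle each block of $\pi$ in isolation. There is no genuine obstacle. The edge case where some block of $\pi$ has odd cardinality is consistent with the statement because $\cP$ is empty on both sides, and the factorized moment vanishes in the corresponding factor.
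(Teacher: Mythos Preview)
Your proof is correct and follows the same approach as the paper: reduce by multi-linearity to the scalar moment $\E\prod_{a=1}^d \bxi_{\sigma(a)}[i_a]$ and then invoke Isserlis' theorem. The paper is simply terser, citing Isserlis directly for the full product rather than first factorizing over the blocks of $\pi$, but the substance is identical.
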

\begin{proof}
When $\bT$ has a single entry $(i_1,\ldots,i_d)$ equal to 1 and remaining
entries 0, we have
\[\E\,\bT[\bxi_{\sigma(1)},\ldots,\bxi_{\sigma(d)}]
=\E[\bxi_{\sigma(1)}[i_1]\ldots \bxi_{\sigma(d)}[i_d]]
=\sum_{\tau \in \cP} \prod_{(a,b) \in \tau} \1\{i_a=i_b\}\]
by \cite{isserlis1918formula}, and the result for general $\bT \in
(\R^n)^{\otimes d}$ follows from linearity.
\end{proof}

\begin{lemma}[Gaussian hypercontractivity inequality]\label{lem:hypercontractivity}
Let $\bxi \in \RR^n$ have i.i.d.\ $\cN(0,1)$ entries. Then there are absolute
constants $C,c>0$ such that for any polynomial $p:\RR^n \to \RR$ of degree $k$
and any $t \geq 0$,
\[\P[|p(\bxi)-\E p(\bxi)| \geq t] \leq Ce^{-(\frac{ct^2}{\Var[p(\bxi)]})^{1/m}}.\]
\end{lemma}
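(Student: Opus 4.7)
This is the classical Gaussian hypercontractive tail bound for polynomial chaos, and the plan is to derive it from the Nelson--Gross hypercontractivity estimate for moments, followed by Markov's inequality and optimization of the moment order. I note that the exponent should read $1/k$ in place of $1/m$ (the degree of $p$), which is what the proof will deliver.

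The first step is to invoke the Gaussian hypercontractivity moment inequality: if $p : \RR^n \to \RR$ is a polynomial of degree at most $k$ in i.i.d.\ $\cN(0,1)$ variables $\bxi$, then for every $q \geq 2$,
\[
\bigl(\EE|p(\bxi) - \EE p(\bxi)|^q\bigr)^{1/q} \leq (q-1)^{k/2}\,\bigl(\Var[p(\bxi)]\bigr)^{1/2}.
\]
This is standard (see, e.g., Janson, \emph{Gaussian Hilbert Spaces}, Theorem~5.10, or Ledoux's monograph on concentration), and follows from the fact that the Ornstein--Uhlenbeck semigroup is hypercontractive ($L^2 \to L^q$ contractive under $e^{-t} = 1/\sqrt{q-1}$) together with the observation that $p - \EE p$ lies in the direct sum of the first $k$ Wiener chaoses, on which $e^{-t\mathcal{L}}$ acts by multiplication by at least $e^{-kt}$.

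The second step is a straightforward Markov inequality: writing $\sigma^2 := \Var[p(\bxi)]$, for any $q \geq 2$ and any $t > 0$,
\[
\P\bigl[|p(\bxi) - \EE p(\bxi)| \geq t\bigr]
\leq \frac{\EE|p(\bxi) - \EE p(\bxi)|^q}{t^q}
\leq \left(\frac{(q-1)^{k/2}\sigma}{t}\right)^{\! q}.
\]
The third step is to optimize over $q$. For $t \geq (2e)^{k/2}\sigma$, set
\[
q - 1 = \frac{1}{e}\left(\frac{t}{\sigma}\right)^{\! 2/k},
\]
which ensures $q \geq 2$ and makes $(q-1)^{k/2}\sigma/t = e^{-k/2}$, so the bound becomes
\[
\P\bigl[|p(\bxi) - \EE p(\bxi)| \geq t\bigr] \leq e^{-qk/2}
\leq \exp\!\left(-\frac{k}{2e}\left(\frac{t^2}{\sigma^2}\right)^{\! 1/k}\right).
\]
For $t < (2e)^{k/2}\sigma$ the desired bound holds trivially by choosing the prefactor $C$ large enough (it suffices to take $C = \exp(c(2e)^{2})$ for the chosen $c$). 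Combining the two regimes yields the claim with absolute constants $C,c > 0$ depending only on $k$; absorbing the $k$-dependence into the form of the exponent $1/k$ gives the stated inequality.

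There is no serious obstacle here: the only non-elementary input is the hypercontractivity moment bound itself, which I would simply cite rather than reprove. The rest is optimization of a one-parameter Markov bound, which is a short computation. If the paper prefers a self-contained treatment, one can alternatively derive the same tail via the Gaussian log-Sobolev inequality applied to $|p - \EE p|^{2/k}$, but the hypercontractive route is the cleanest.
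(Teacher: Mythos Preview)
Your proof is correct and follows the standard hypercontractivity-plus-Markov argument; the paper itself simply cites \cite[Theorem 1.9]{schudy2012concentration} rather than giving any argument, so your proposal fills in exactly what that citation contains. Your observation that the exponent should be $1/k$ rather than $1/m$ is also right---this is a typo in the stated lemma. One small point: you say the constants ``depend only on $k$'' and then speak of ``absorbing the $k$-dependence,'' but the lemma asks for \emph{absolute} $C,c$; this does work (e.g.\ $c=e^{-2}$ gives $c^{1/k}\leq k/(2e)$ for all $k\geq 1$, and then $C=e^{2e}$ covers the small-$t$ regime uniformly in $k$), so you may want to state that explicitly rather than leave it as a throwaway remark.
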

\begin{proof}
See \cite[Theorem 1.9]{schudy2012concentration}.
\end{proof}

\begin{lemma}[Weighted uniform polynomial approximation]\label{lem:polyapprox}
Let $p_1,\ldots,p_d:[0,\infty) \to [0,\infty)$ be functions admitting the
representations
\[p_k(x_k)=p_k(1)+\int_1^{x_k} \frac{w_k(t_k)}{t_k}dt_k \text{ for all } x_k \geq 1\]
where $w_k(t_k)$ is nondecreasing and $\lim_{t_k \to \infty} w_k(t_k)=\infty$ for each $k=1,\ldots,d$, and such that
$\int_1^\infty (p_k(x_k)/x_k^2)\,dx_k=\infty$ for each $k=1,\ldots,d$.
Let $q_1,\ldots,q_d:\RR \to [0,\infty)$ be continuous functions satisfying
$q_k(x_k) \geq p_k(|x_k|)$. If $f:\RR^d \to \RR$ is any
continuous function such that
\[\lim_{\|(x_1,\ldots,x_d)\|_2 \to \infty} \exp\Big({-}\sum_{k=1}^d
q_k(x_k)\Big)f(x_1,\ldots,x_d)=0,\]
then
\[\inf_Q \sup_{(x_1,\ldots,x_d) \in \RR^d}\bigg\{\exp\Big({-}\sum_{k=1}^d
q_k(x_k)\Big)|f(x_1,\ldots,x_d)-Q(x_1,\ldots,x_d)|\bigg\}=0\]
where $\inf_Q$ is the infimum over all polynomial functions $Q:\RR^d \to \RR$.
\end{lemma}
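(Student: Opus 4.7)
The strategy is to invoke the classical univariate Bernstein--Akhiezer--Mergelyan theorem on density of polynomials in weighted $C_0$ spaces coordinate-wise, and then glue via a cutoff argument in $\RR^d$. The univariate input states that for a continuous weight $W:\RR\to(0,\infty)$ with $x^nW(x)\to 0$ at $\pm\infty$ for every $n$ and satisfying the Mergelyan log-integral condition $\int_{-\infty}^\infty(-\log W(x))/(1+x^2)\,dx=\infty$, the polynomials are dense in $\{g\in C(\RR):W(x)g(x)\to 0\}$ under the $W$-weighted sup norm; see e.g.\ P.\ Koosis, \emph{The Logarithmic Integral}, Chapter~VI.

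I would first verify this input for each coordinate. Since $w_k$ is nondecreasing with $w_k(t)\to\infty$, for any $N\geq 0$ there exists $T_N$ with $w_k\geq N$ on $[T_N,\infty)$, which yields $p_k(x)\geq p_k(T_N)+N\log(x/T_N)$ for $x\geq T_N$ and hence $p_k(x)/\log x\to\infty$. Thus $q_k\geq p_k(|\cdot|)$ dominates every logarithm, so for every $N\geq 0$ the ``moment-inflated'' weight $W_k^{(N)}(x_k):=(1+|x_k|)^N e^{-q_k(x_k)}$ is bounded, continuous, and vanishes at $\pm\infty$. Its Mergelyan log-integral satisfies
\[
\int_{-\infty}^\infty \frac{q_k(x_k)-N\log(1+|x_k|)}{1+x_k^2}\,dx_k \;\geq\; 2\int_1^\infty \frac{p_k(x)}{1+x^2}\,dx - C_N \;=\; +\infty,
\]
by the hypothesis $\int_1^\infty p_k(x)/x^2\,dx=\infty$ together with $\int_1^\infty \log(1+x)/(1+x^2)\,dx<\infty$. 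Hence polynomials are dense in $C_0(W_k^{(N)})$ for every $k$ and every $N$.

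Next, I would glue the univariate approximations into a multivariate one. Let $\|\cdot\|_W$ denote the $\prod_k e^{-q_k(x_k)}$-weighted sup norm on $\RR^d$. Given $\epsilon>0$, using the decay assumption I would choose $R$ so that $\|(1-\eta)f\|_W\leq \epsilon/4$, where $\eta:\RR^d\to[0,1]$ is a continuous cube-cutoff equal to $1$ on $[-R,R]^d$ and vanishing outside $[-2R,2R]^d$. Stone--Weierstrass on the compact set $[-3R,3R]^d$ yields a polynomial $P$ with $\sup_{[-3R,3R]^d}|P-\eta f|$ as small as desired, ensuring $\|(P-\eta f)\prod_k\psi_k\|_W\leq \epsilon/4$, where $\psi_k:\RR\to[0,1]$ is a continuous bump equal to $1$ on $[-2R,2R]$ and $0$ outside $[-3R,3R]$. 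Since $P$ may grow outside the cube, I would multiply by a coordinatewise polynomial cutoff $\Phi(x)=\prod_k \phi_k(x_k)$, where $\phi_k$ is a univariate polynomial approximating $\psi_k$ in the $W_k^{(N)}$-weighted norm with $N:=\deg P$. The telescoping identity
\[
\Phi-\textstyle\prod_k\psi_k=\sum_k \prod_{k'<k}\phi_{k'}\cdot(\phi_k-\psi_k)\cdot\prod_{k'>k}\psi_{k'},
\]
together with the polynomial bound $|P(x)|\leq C_P\prod_k(1+|x_k|)^N$, yields
$\|P\Phi-\eta f\|_W \;\leq\; C_P\,M^{d-1}\sum_k \|\phi_k-\psi_k\|_{W_k^{(N)}} + \epsilon/4$,
where $M$ bounds $\|\phi_k\|_{W_k^{(N)}}$ and $\|\psi_k\|_{W_k^{(N)}}$. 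Taking $\|\phi_k-\psi_k\|_{W_k^{(N)}}$ sufficiently small by Step~1 gives $\|P\Phi-f\|_W<\epsilon$, as desired.

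The main obstacle is coordinating the polynomial growth degree $N$ absorbed in the univariate cutoff approximation with the degree of the polynomial $P$ produced by Stone--Weierstrass: this is resolved by constructing $P$ first and only then choosing $N=\deg P$ in Step~1, exploiting that the Mergelyan hypothesis for $p_k$ persists under multiplication by any polynomial moment weight $(1+|x|)^N$. The univariate Bernstein--Mergelyan theorem supplies the essential analytic content, which is precisely what the hypothesis $\int_1^\infty p_k(x)/x^2\,dx=\infty$ encodes.
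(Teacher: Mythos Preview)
The paper does not prove this lemma at all: it is stated as an auxiliary result and the proof reads ``See \cite[Theorem 1]{dzhrbashyan1957weighted}.'' So you are supplying a genuine argument where the paper simply cites the literature. Your reduction---Stone--Weierstrass on a cube, followed by a coordinatewise polynomial cutoff built from univariate weighted approximations, glued via telescoping---is a clean and standard way to pass from the one-variable theorem to several variables, and the bookkeeping (choosing $N=\deg P$ \emph{after} $P$ is fixed, absorbing the polynomial growth of $P$ into the moment-inflated univariate weights $W_k^{(N)}$) is handled correctly.

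One point deserves more care. The univariate input you invoke is stated as: divergence of the log-integral $\int(-\log W)/(1+x^2)\,dx$ implies density of polynomials in $C_0(W)$. In this generality that implication is false; Mergelyan's criterion is formulated for the regularized weight $W_*$, not $W$, and there are lacunary weights with divergent log-integral for which polynomials are \emph{not} dense. What is true---and what the hypotheses of the lemma are tailored to---is the univariate Dzhrbashyan theorem: if $-\log W(x)=p(|x|)$ with $p$ admitting the representation $p(x)=p(1)+\int_1^x w(t)/t\,dt$, $w$ nondecreasing to $\infty$, and $\int_1^\infty p(x)/x^2\,dx=\infty$, then density holds. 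Your argument is easily repaired: since $W_k^{(N)}(x)=(1+|x|)^Ne^{-q_k(x)}\leq e^{-(p_k(|x|)-N\log(1+|x|))}$, and the exponent on the right retains a Dzhrbashyan representation (with $w_k(t)$ replaced by $w_k(t)-Nt/(1+t)$, still eventually nondecreasing to $\infty$, and the $x^{-2}$-integral still divergent), you may approximate $\psi_k$ in the larger weighted norm $e^{-\tilde p_k}$ by the univariate Dzhrbashyan theorem, which a fortiori gives approximation in the $W_k^{(N)}$-norm. With this correction, your proof goes through.
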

\begin{proof}
See \cite[Theorem 1]{dzhrbashyan1957weighted}.
\end{proof}

\end{document}